\documentclass[a4paper,11pt]{article}
\usepackage[english]{babel}
\usepackage[tbtags]{amsmath}
\usepackage{amssymb}
\usepackage{amsfonts}
\usepackage{amsthm}
\usepackage{calrsfs}
\usepackage{array}
\usepackage{bbm}
\usepackage{stmaryrd}
\usepackage{upgreek}
\usepackage[svgnames]{xcolor}
\usepackage{hyperref}
\usepackage{mathtools}
\def\pp{\frac{\p}{\sqrt{1-|y|^2}}}
\def\p{\rho_{\eta}}
\def\div{\, \mbox{div}\,}
\def\d{\displaystyle}

\def\ps{\phi(s)}

\numberwithin{equation}{section}
\def\ibint{\int_{B}}
\def\iint{\int_B}
\def\grad{\nabla}

\def\I{B(x_0,T(x_0)-t)}
\def\nn{\nu_0}
\def\L{M_{\eta}(w(s))}

\def\v{{\mathrm{d}}v}

\def\Box{\hfill\rule{2.5mm}{2.5mm}}
\def\t{{\mathrm{d}}\tau}

\def\y{{\mathrm{d}}y}
\def\e{\varepsilon}
\def\no{\nonumber}

\def\R{\mathbb R}

\def\R{{\mathbb {R}}}

\newcommand\ttttaa{s\ge     3-\ln( T^*(x))}
\newcommand\tttaa{s\ge     2-\ln( T^*(x))}
\newcommand\ttaa{s\ge     1-\ln( T^*(x))}
\newcommand\taa{s\ge     -\ln( T^*(x))}

\newcommand{\ds}{\displaystyle}

\renewcommand{\H}{{\mathcal H}}

\theoremstyle{plain}
\newtheorem{thm}{Theorem}
\newtheorem*{thm*}{Theorem}

\newtheorem{prop}{Proposition}[section]
\newtheorem{cors}{Corollary}[section]
\newtheorem{lem}[prop]{Lemma}

\theoremstyle{remark}
\newtheorem{nb}{Remark}[section]

\makeatletter
\def\blfootnote{\xdef\@thefnmark{}\@footnotetext}
\makeatother
\title{\bf The blow-up rate for a log non-scaling invariant semilinear wave equation in the conformal regime}

\date{}

\author{Mohamed  Ali  Hamza\\
{\it \small 
Imam Abdulrahman Bin Faisal University,
 Dammam, 34212, Saudi Arabia}}

\allowdisplaybreaks

\begin{document}
\maketitle

\begin{abstract}
We consider the blow-up behavior of solutions to the semilinear wave equation
\[
\partial_t^2 u - \Delta u = |u|^{p-1}u \ln^a(u^2+2),
\qquad (x,t)\in \mathbb{R}^n \times [0,T),
\]
in the conformal case \( p = p_c = 1 + \frac{4}{n-1} \).
Previous results in \cite{HZjmaa2020, HZ2022} show that for \( a \in \mathbb{R} \), solutions in the subconformal regime \( p < p_c \) blow up with a Type~I rate
at any non-characteristic point.
The objective of this work is to extend this blow-up rate to the conformal regime under the assumption \( a < 0 \).
We establish an a priori upper bound for any blow-up solution and construct a Lyapunov functional in similarity variables.
The resulting functional exhibits only weak dissipation, which necessitates delicate energy arguments to obtain the sharp blow-up rate in the conformal case.
To the best of our knowledge, this provides the first result for the  blow-up rate  in a critical framework for an evolution problem where the scaling symmetry is broken.
\end{abstract}

 {\bf MSC 2010 Classification}:  35L05, 35B44, 35L71, 35L67, 35B40

\noindent {\bf Keywords:} Semilinear wave equation, blow-up, non-scaling invariance, critical exponent.


\section{Introduction}

\subsection{ Motivation of the problem }
This paper is devoted to the study of blow-up solutions for the
following semilinear  wave equation:
\begin{equation}\label{gen}
\left\{
\begin{array}{l}
\partial_t^2 u =\Delta u +|u|^{p-1}u \ln^a (u^2+2),\qquad  (x,t)\in \R^n\times [0,T),\\
\\
u(x,0)=u_0(x)\in  H^{1}_{loc,u}(\R^n),\qquad 
\partial_tu(x,0)=u_1(x)\in  L^{2}_{loc,u}(\R^n),
\end{array}
\right.
\end{equation}
where $u(t):x\in{\R^n} \rightarrow u(x,t)\in{\R}.$ 
 We assume in addition  that  $n\ge 2$ and
\begin{equation}\label{pc}
p=p_c\equiv 1+\frac4{n-1}. 
\end{equation}
The spaces  $L^{2}_{loc,u}(\R^n)$ and $H^{1}_{loc,u}(\R^n)$ are  defined by
\begin{equation*}
L^{2}_{loc,u}(\R^n)=\{u:\R^n\rightarrow \R/ \sup_{d\in \R^n}(\int_{|x-d|\le 1}|u(x)|^2dx)<+\infty \},
\end{equation*}
and
\begin{equation*}
H^{1}_{loc,u}(\R^n)=\{u\in L^{2}_{loc,u}(\R^n),|\grad u|\in L^{2}_{loc,u}(\R^n) \}.
\end{equation*}

\medskip

Semilinear wave equations with a nonlinearity featuring a logarithmic factor have been used in various nonlinear physical models, such as nuclear physics, wave mechanics, optics, and geophysics (see, e.g., \cite{Bia75, Bia76}).

\bigskip

  The Cauchy problem for the  equation ({\ref{gen}}) is well-posed in
$H^{1}_{loc}(\R^n)\times L^{2}_{loc}(\R^n)$, locally in
time. This follows from the finite speed
of propagation and the well-posedness in $H^{1} (\R^n)\times L^{2}(\R^n)$,
valid whenever $ 1< p< p_S$, where $p_S\equiv 1+\frac4{n-2}.$
Combining the blow-up solutions for the associated ordinary differential equation of (\ref{gen})   and 
the finite speed of
propagation, we conclude that there exists a
 blow-up solution $u(t)$ of (\ref{gen})  which depends non-trivially on the space variable. 

\medskip

If $u$ is an arbitrary blow-up solution of   \eqref{gen}, we define
a 1-Lipschitz curve $\Gamma=\{(x,T(x))\}$
such that
the domain of definition of $u$ is written as
\begin{equation}\label{defdu}
D=\{(x,t)\;|\; t< T(x)\}.
\end{equation}
The justification of this fact can be found in Alinhac
\cite{Apndeta95}, where such a set is referred to as the \textit{maximal influence domain}. 
 In fact, for any $x_0\in \R^n$, we derive a solution
which is defined in $K_{x_0,t_0}$, the backward light cone
defined by
\[
K_{x_0,t_0}=\{(x,t)\in \R^n\times[0,t_0)\;|\;|x-x_0|<t_0-t\}.
\]

\medskip

Let $T(x_0)$ denote the supremum of all such $t_0$.
 This leads to two distinct scenarios:
 \begin{itemize}
 \item Global Existence: If $T(x_1) = \infty$ for some $x_1 \in \mathbb{R}^n$, then by the geometry of light cones, $T(x) = \infty$ for all $x \in \mathbb{R}^n$, and the solution exists globally in time.
 \item Finite-time Blow-up: If $T(x) < +\infty$ for all $x \in \mathbb{R}^n$, the domain $D$ is the union of all such maximal backward light cones of the type $K_{x_0,t_0}$. This union  defines a 1-Lipschitz function $x \mapsto T(x)$ as described in \eqref{defdu}.
 \end{itemize}
The time
$\bar T=\inf_{x\in {\R^n}}T(x)$ and $\Gamma$ are called the blow-up time and the blow-up graph of $u$.
A point $x_0$ is non characteristic
if  there are
\begin{equation}\label{nonchar}
\delta_0\in(0,1)\mbox{ and }t_0<T(x_0)\mbox{ such that }
u\;\;\mbox{is defined on }{\mathcal C}_{x_0, T(x_0), \delta_0}\cap \{t\ge t_0\},
\end{equation}
where ${\cal C}_{\bar x, \bar t, \bar \delta}=\{(x,t)\;|\; t< \bar
t-\bar \delta|x-\bar x|\}$. If not, $x_0$ is said to be characteristic.

\bigskip

In the case $a=0$, equation \eqref{gen} reduces to the semilinear wave equation with power nonlinearity:
\begin{equation}\label{NLW}
\partial^2_{t} u =\Delta  u+|u|^{p-1}u,   \,\,\,(x,t)\in \R^n \times [0,T),
\end{equation}
The blow-up rate of singular solutions to \eqref{NLW} was extensively studied by Merle and Zaag in \cite{MZajm03, MZimrn05, MZma05}, where they established that in the subconformal and conformal regimes ($1 < p \leq p_c$), the blow-up rate is governed by the dynamics of the associated ordinary differential equation (ODE) $u'' = u^{p}$.
 This specific behavior is classified as Type I blow-up (or the ODE regime). 
 Conversely, any solution that does not follow this rate is classified as Type II.

\medskip

 Specifically, if $u$ is a solution  of equation \eqref{NLW} with a blow-up graph $\Gamma: x \mapsto T(x)$, and $x_0$ is a non-characteristic point, then for all $t \in [\frac{3}{4}T(x_0), T(x_0)]$, the following estimate holds:
 \begin{eqnarray}\label{mzmz}
  0 < \varepsilon_{0}(p,n)\leq (T(x_{0})-t)^{\frac{2}{
   p-1}}\frac{\|u(t)\|_{L^{2}(\I )}}
{(T(x_{0})-t)^{\frac{n}2}}\\
  +(T(x_{0})-t)^{\frac{2}{ p-1}+1}\Big(\frac{\|\partial_{t} u(t)\|_{L^{2}(\I)}}
  {(T(x_{0})-t)^{\frac{n}2}}
  + \frac{\|\partial_x u(t)\|_{L^{2}(\I )}}{{(T(x_{0})-t)^{\frac{n}2}}}\Big)\leq K_0,\nonumber
  \end{eqnarray}
 where the constant $K_0$ depends only on  $p, n, T(x_{0}), \delta_{0}(x_{0})$, together with the
 norm of initial data
 in $H^{1}_{loc,u}(\R^n)\times L^{2}_{loc,u}(\R^n)$.

\medskip

The proof is given in the
framework of similarity variables first introduced in Antonini and Merle \cite{AM} and
defined for any $x_0 \in \R^n$ and $T_0 \in  [0, T (x_0))$ by
$$ y = \frac{x-x_0}{
T_0-t} ,\ \  s = -\ln(T_0- t),\quad 
\textrm{and} \qquad w_{x_0,T_0} (y, s)= (T_0-t)^{\frac2{p-1}}u(x, t).$$
From \eqref{NLW}, the  function $w_{x_0,T_0}$  (we write $w$ for
simplicity) satisfies the following equation for all $y\in B$ and
$s\ge  -\ln T_0$,
where $B\equiv B(0,1)$ denotes the unit ball in $\mathbb{R}^n$:
\begin{equation*}\label{w0}
\partial_{s}^2w=\frac{1}{\rho_{\alpha_0}}\div(\rho_{\alpha_0} \grad w-\rho_{\alpha_0}(y\cdot \grad w)y)
-\frac{2p+2}{(p-1)^2}w
-\frac{p+3}{p-1}\partial_s w
-2y\cdot \grad \partial_{s}w+
|w|^{p-1}w,
\end{equation*}
where $\rho_{\alpha_0} (y)=(1-|y|^2)^{\alpha_0}$, with
$\alpha_0=\alpha_0(p) =\frac{2}{p-1}-\frac{n-1}2$.
As established   in \cite{AM,MZajm03, MZimrn05, MZma05}, by multiplying \eqref{w0}
by $\partial_sw\rho_{\alpha_0}$,   and   integrating over the unit ball $B$, 
 we construct a Lyapunov functional $\tilde E_{\alpha_0}(w(s))$ 
that satisfies the following dissipative property:
\begin{equation*}
\frac{\mathrm{d}}{\mathrm{d}s} \tilde E_{\alpha_0}(w(s)) = 
\begin{cases} 
-2\alpha_0 \displaystyle\int \frac{(\partial_{s}w)^2 \rho_{\alpha_0}}{1-|y|^2} \, \mathrm{d}y, & \text{if } p < p_c, \\[15pt]
-\displaystyle\int_{\partial B} (\partial_{s}w)^2 \, \mathrm{d}\sigma, & \text{if } p = p_c.
\end{cases}
\end{equation*} 
 Establishing \eqref{mzmz} relies fundamentally on the construction of a Lyapunov functional and the analysis of its dissipative term.  Furthermore, the argument employs Sobolev interpolations, critical Gagliardo-Nirenberg estimates, and a geometric covering technique specifically adapted to the blow-up surface to deduce the required estimate.  It is worth noting that, unlike the sub-conformal case, the dissipation in the conformal regime degenerates at the boundary, necessitating refined arguments to address this additional difficulty.

\medskip

In the one-dimensional setting,  further refinements were established in \cite{MZjfa07, MZcmp08, MZajm11, MZisol10, CZcpam13}. These results were adapted to the radial case away from the origin in \cite{MZbsm11} and further extended to the Klein-Gordon equation and other damped lower-order perturbations in \cite{HZkg12}. In later works \cite{MZods, MZods14}, Merle and Zaag extended their analysis to higher dimensions under subconformal conditions, proving that the explicit self-similar solution is stable with respect to perturbations of the blow-up point and initial data.

\medskip

In the super-conformal regime, ($p_c < p < p_S$),  the understanding of blow-up behavior is far from complete. 
Nevertheless, significant progress has been made: 
an upper bound for the blow-up rate was first proved in \cite{KSVma14}, and  later improved in \cite{HZdcds13, HZ2025JFA}. 
 In  parallel to these results, Donninger and Schörkhuber \cite{DSdpde12, DStams14, DScmp16, DSaihp17, DDuke} employed spectral-theoretic techniques to rigorously demonstrate the stability of ODE-type blow-up solutions within the backward light cone under small perturbations of the initial data.

\medskip

In the context of critical and supercritical   case  ($p \ge p_S$), a distinct phenomenon known as Type-II blow-up can occur. This phenomenon is distinguished by a blow-up rate that exceeds the  scaling of the corresponding ordinary differential equation. 
For a thorough overview of the mathematical theory and recent developments regarding Type-II blow-up, readers can consult the extensive surveys provided in 
\cite{Collot2018,DKM2012, Krieger2020, KS2014, KST2009}
and the literature cited within those works.

\medskip

In a more general context, we introduced in \cite{HZjhde12,HZnonl12} 
a perturbative framework to determine the blow-up rate for the Klein–Gordon equation and, more generally, for nonlinear wave equations of the form
\begin{equation}\label{NLWP}
\partial_t^2 u =\Delta u+|u|^{p-1}u+f(u)+g(\partial_t u ),\,\,\,(x,t)\in \R^n \times [0,T),
 \end{equation}
under the assumptions  $|f(u)|\leq C(1+|u|^q)$ and $|g(v)|\leq C(1+|v|)$,   for   $1<q<p\le  p_c$. 
In fact, we proved a similar result to $\eqref{mzmz}$,
valid in the subconformal  and conformal case.
  Let us also mention that in  \cite{H1, omar1, omar2}, the
results obtained in  \cite{HZjhde12,HZnonl12}   were extended  to the  strongly perturbed equation  (\ref{NLWP})  with  $|f(u)|\leq C\big[1+|u|^p\ln^{-a}(u^2+2)\big]$,  for some $a > 1$, 
while the same assumption on $g$ was retained.

\medskip

The validity of these results relies on the fact that, although \eqref{NLWP} is not invariant under the scaling $u_\lambda(x,t) = \lambda^{\frac{2}{p-1}}u(\lambda x, \lambda t),$ it remains asymptotically equivalent to the pure power equation with nonlinearity  $|u|^{p-1}u$. Passing 
to similarity variables,  the  equation can be viewed as an asymptotically autonomous problem, namely as a perturbation of the scale-invariant case. 
In particular, the leading-order dynamics are governed by the ordinary differential equation $u''=u^p$, which yields the estimate \eqref{mzmz}, while the perturbative terms remain lower order.
  It is important to note that, in the conformal regime, the dissipation degenerates at the boundary even in the unperturbed case, and this property persists under perturbations. 
As a consequence, the conformal case is more delicate and requires refined arguments. 
In particular, the construction of an appropriate Lyapunov functional necessitates additional analysis (see \cite{HZnonl12,H1,omar2}).

 \bigskip

The blow-up dynamics associated to equation \eqref{gen} differ from those of the classical equations \eqref{NLW} and \eqref{NLWP}, since the asymptotic behavior of the nonlinearity at infinity is no longer governed by a pure power.
  Accordingly, in the subconformal and conformal regimes, it is natural to  expect that the blow-up rate is dictated by the solution of the  ordinary differential equation:
\begin{equation}\label{v}
v_T'' (t)=  |v_T (t)|^{p-1}v_T(t) \ln^{a}\big(v_T^2(t)  +2\big), \quad v(T)=\infty.
\end{equation}
Clearly, as $t \to T$, the solution satisfies 
 \begin{equation*}\label{equivv}
 v_T(t) \sim \kappa_{a} \psi_T(t), \text{ as } t \to\ T,\quad \textrm{ where}\quad  \kappa_{a} =  \left(\frac{2^{1-2a}(p+1)}{(p-1)^{2-a}} \right)^{\frac1{p-1}},
 \end{equation*}
and
 \begin{equation}\label{psi}
 \psi_T(t)=(T - t)^{-\frac{2}{p-1}} (-\ln (T - t))^{-\frac{a}{p-1}},
 \end{equation}
(see Lemma A.2  in \cite{HZjmaa2020} for the  proof of \eqref{psi}).

\bigskip

In the subconformal regime, we previously investigated the blow-up dynamics for the nonlinear wave equation \eqref{gen} (see \cite{HZjmaa2020, HZ2022}). In those studies, we established an analogue to the sharp estimate \eqref{mzmz}, demonstrating the persistence of Type I blow-up. This classification was further extended by Roy and Zaag \cite{RZ} to include nonlinearities with double-logarithmic perturbations of the form $|u|^{p-1}u \ln^a (\ln(10 + u^2))$ for $a \in \mathbb{R}$.

\medskip

Our current goal is to demonstrate that the blow-up rate in the conformal regime is still  Type I, if $a<0$. The cental of our proof is the construction of a Lyapunov functional in similarity variables.
However, the strategies we employed for the subconformal regime in \cite{HZjmaa2020, HZ2022}  fail in this setting. 
Consequently, the construction requires novel techniques to overcome the specific  difficulties inherent to the conformal case.


\subsection{Outline of the proof}
We return to the equation under consideration, namely \eqref{gen}, under the condition \eqref{pc}. 
For this purpose, we introduce the following similarity variables, defined for any
$x_0 \in \R^n$ and $T_0$ such that $0<T_0\le T(x_0)$, by
\begin{equation}\label{scaling}
y=\frac{x-x_0}{T_0-t}, \quad s=-\ln(T_0-t), \quad
u(x,t)=\psi_{T_0}(t)\, w_{x_0,T_0}(y,s).
\end{equation}
From (\ref{gen}), the  function $w_{x_0,T_0}$  (we write $w$ for
simplicity) satisfies the following equation for all $y\in B$ and
$s\ge  -\ln T_0$,
where $B\equiv B(0,1)$ stands for
the unit ball in $\R^n$ and throughout the paper:
\begin{align}\label{A}
\partial_{s}^2w&=\frac{1}{\rho}\div(\rho \grad w-\rho(y\cdot \grad w)y)
-\frac{2p_c+2}{(p_c-1)^2}w+\gamma(s)w\nonumber\\
&-\Big(\frac{p_c+3}{p_c-1}+2\alpha (s)\Big)\partial_s w
-2y\cdot\grad \partial_{s}w+\frac{1}{s^a}   |w|^{p_c-1}w\ln^a(\phi^2w^2+2),
\end{align}
with 
\begin{equation}\label{defrho}
\rho:=\rho (y,s)=(1-|y|^2)^{\alpha(s)},\end{equation}
\begin{equation}\label{alpha}
\alpha (s):=-\frac{a}{(p_c-1)s}>0,
\end{equation}
\begin{equation}
   \gamma(s):=\frac{a(p_c+5)}{(p_c-1)^2s}-\frac{a(p_c+a-1)}{(p_c-1)^2s^2},\label{defgamma}
\end{equation}
and 
\begin{equation}
\phi:=\ps = e^{\frac{2s}{p_c-1}}s^{-\frac{a}{p_c-1}}.\label{defphi}
\end{equation}
In the new set of variables $(y,s),$ the behavior of $u$ as $t \rightarrow T_0$
is equivalent to the behavior of $w$ as $s \rightarrow +\infty$.  Moreover, if $T_0=T(x_0)$, then we simply write $w_{x_0}$
instead of $w_{x_0,T(x_0)}$.

\bigskip

Throughout this paper, unless otherwise specified, we let $u$ be a solution to \eqref{gen} with blow-up graph $\Gamma = \{ (x, T(x)) : x \in \mathbb{R}^n \}$, and we assume $x_0$ is a non-characteristic point. By exploiting the time-translation invariance of the equation\eqref{gen}, we can shift the initial time to $t_0(x_0)$ such that the remaining lifespan $T(x_0) - t_0(x_0)$ is arbitrarily small. Consequently, without loss of generality, we may assume 
\begin{equation}\label{ss00}
T(x_0) \ll 1,
\end{equation}
 which implies $-\ln T(x_0) \gg 1$. In this setting, the original initial data at $t=0$ are replaced by the corresponding solution values at $t=t_0(x_0)$.

\begin{nb}
Under the self-similar change of variables \eqref{scaling}, it would be natural to replace $\psi_{T_0}(t)$ with the exact solution $v_{T_0}(t)$ of \eqref{v}. However, since $v_{T_0}(t)$ lacks an explicit expression, its use would render the subsequent computations intractable. We therefore employ the explicit equivalent $\psi_{T_0}(t)$ defined in \eqref{psi}, This approach follows the strategy previously employed in \cite{HZjmaa2020, HZ2022, RZ}  and \cite{HZarma22}  within the context of parabolic equation.
\end{nb}

Our main strategy relies on the construction of a Lyapunov functional in similarity variables. 
As previously noted, the conformal case poses significantly greater challenges than the subconformal regime, requiring a novel approach. This is precisely the difficulty addressed in the present work: the methodologies developed in \cite{HZjmaa2020, HZ2022} are insufficient to resolve the specific technical obstacles inherent to the conformal setting. Below, we briefly outline the method used in \cite{HZ2022} and discuss where it fails in the conformal case, thereby motivating the current study.

\medskip

In the subconformal regime ($p < p_c$),  the approach in \cite{HZ2022} begins with the introduction of a functional $g_0(s)$ associated with equation \eqref{gen} in similarity variables. This functional satisfies the following differential inequality:
\begin{equation}\label{f0}
\frac{d}{ds}g_0(s) \le -\alpha_0 \int_{B} (\partial_s w)^2 \frac{\rho_{\alpha_0}(y)}{1 - |y|^2} dy + \frac{C}{s}g_0(s),
\end{equation}
which yields a rough polynomial bound in $s$ for the solution in space-time averages. Furthermore, by utilizing elementary estimates for the nonlinear term, the analysis can be reduced to the pure power case up to an $\epsilon$-perturbation. Consequently, by employing the Sobolev embedding of $H^1(\mathbb{R}^n)$ into $L^{2^*}(\mathbb{R}^n)$ for $n \geq 3$ (or $L^q(\mathbb{R}^n)$ for any $q \geq 2$ if $n=1,2$), we establish improved bounds that remove the  time averaging. By exploiting these refined polynomial bounds and the specific structure of the nonlinear term, we construct a second functional $g_1(s)$ satisfying:$$\frac{d}{ds}g_1(s) \leq -\alpha_0 \int_{B} (\partial_s w)^2 \frac{\rho_{\alpha_0}(y)}{1 - |y|^2} dy + \frac{C}{s^{\frac32}}g_1(s).$$
From this ordinary differential inequality, we naturally derive a Lyapunov functional. With this functional established, the interpolation strategy from our previous work can be adapted in a straightforward manner.

\bigskip

We recall that in the conformal case ($p=p_c$), the parameter $\alpha_0(p_c)$ vanishes, rendering the construction of a functional satisfying the differential inequality \eqref{f0} inapplicable. Consequently, addressing this regime necessitates novel insights beyond those applied in the subconformal setting. In what follows, we detail the strategy developed in this article to overcome these difficulties and extend the results of \cite{HZ2022} to the conformal case.

\medskip

The first step consists of introducing a functional associated with equation \eqref{gen}  in similarity variables 
 which satisfies the following  differential inequality:
 $$
\frac{d}{ds}h_0(s)\le -c\eta \ibint (\partial_{s}w)^2\frac{\rho_{\eta}(y)}{1-|y|^2}\y
+ C\eta h_0(s), \qquad \forall \eta>0.$$
By utilizing this functional, we derive an exponential bound (in $s$) for the $H^1 \times L^2(B)$ norm of the solution of \eqref{A} in terms of space-time averages. Specifically, we obtain the estimates \eqref{feb19eta} and \eqref{feb19etaNL} below. Subsequently, we derive a first functional from a Pohozaev identity by multiplying equation \eqref{A1} by $y \cdot \nabla w \rho_{\eta} \sqrt{1-|y|^2}$. A second functional is then obtained by multiplying equation \eqref{A1} by $w \frac{\rho_{\eta}}{\sqrt{1-|y|^2}}$. By combining these two functionals, we establish a new estimate that controls the time-averaged $L^{p+1}(B \times [s,s+1])$ and $L^{2}(B \times [s,s+1])$ norms of $w$ and $\nabla_{\theta}w$, respectively, both weighted by the singular term $\frac{\rho_{\eta}}{\sqrt{1-|y|^2}}$. These estimates  presented in Proposition \ref{prop123} are  one of the primary novelties of this paper and provide significantly better control near the boundary of the ball.

\medskip

By exploiting these   exponential bounds, we construct a second functional $h_1(s)$ satisfying:$$\frac{d}{ds}h_1(s) \leq -\alpha(s) \int_{B} (\partial_s w)^2 \frac{\rho}{1 - |y|^2} dy + \frac{C}{s}h_1(s).$$
Consequently, we establish a polynomial bound (in terms of $s$) for the $H^1 \times L^2(B)$ norm of the solution to \eqref{A}, expressed in space-time averages.
Using this result, we derive a Lyapunov functional for equation \eqref{A}. Specifically, we construct a third functional, $h_2(s)$, which satisfies the following  inequality:
$$\frac{d}{ds}h_2(s) \leq -\alpha(s) \int_{B} (\partial_s w)^2 \frac{\rho}{1 - |y|^2} dy.$$
Although this functional provides the foundation for our optimal estimate, the weakness of the damping term presents a novel complication. Indeed, by using the estimate related to  $h_2(s)$, we improve upon the previous control and conclude that the time average of the  $H^1 \times L^2(B)$ norm of the solution to \eqref{A} is bounded by $C s$.

\medskip

This complication arises because the problem resides in the logarithmically subconformal regime. This specific obstacle is particular  to our study and represents a gap in the literature. To overcome this difficulty, 
 by expressing the equation \eqref{A} as a perturbation  from the conformal regime and multiplying the new equation by $\partial_sw$ over the ball, we construct a new energy functional $E_0(w(s),s)$. 
 A deeper understanding of the dynamics allows us to establish more precise control over the damping term; specifically, we obtain the following uniform estimate:
 \begin{equation}\label{introbord}
\int_{s}^{s+1}\int_{\partial B}(\partial_{s} w)^2{\mathrm{d}}\sigma {\mathrm{d}}\tau\leq
 C. 
 \end{equation}
By virtue of this improvement, we conclude that the time average of the $H^1 \times L^2(B)$ norm of the solution to \eqref{A} is bounded. With the Lyapunov functional $h_2(s)$ and the boundedness of these space-time averages at hand, the adaptation of the interpolation strategy from our previous work follows straightforwardly.

\subsection{Main results}
The equation $\eqref{A}$ will be studied in the space 
\begin{equation*}
\H =\{(w_{1},w_{2})| \int_{B} \Big(w_{2}^2+|\nabla w_{1}|^2(1-|y|^2)+w_{1}^2\Big)\rho {\mathrm{d}}y< +\infty\}. 
\end{equation*}
To state our main result, we start by introducing the following functionals:
\begin{align}
  E(w(s),s)=&\iint \Big(\frac{1}{2}(\partial_{s}w)^2+\frac{1}{2}(|\grad  w|^2-(y\cdot \grad w)^2)+\big(\frac{p_c+1}{(p_c-1)^2}-\frac12  \gamma (s)\big)w^2\nonumber\\
  &-e^{-\frac{2(p_c+1)s}{p_c-1}}s^{\frac{2a}{p_c-1}}   f(\phi w)\Big)\rho \y,\label{Ealpha}\\
J(w(s),s)=&-\frac{1}{s\sqrt{s}}\int_{B} w\partial_{s}w \rho \y +\frac{n }{2s\sqrt{s}}\int_{ B}w^2\rho  {\mathrm{d}}y,\label{Jalpha}\\
G(w(s),s)=&E(w(s),s)+J(w(s),s)\label{Lalpha},
\end{align}
where
\begin{equation}\label{defF}
 f(u)=\int_0^u|v|^{p_c-1}v \ln^{{a}}(v^2  + 2)\v.
\end{equation}
Moreover,    we define the functional
\begin{equation}\label{calLalpha}
L(w(s),s)=\exp\Big(\frac{p_c+3}{\sqrt{s}}\Big) G(w(s),s)
+\frac{\theta_1}{ s},
\end{equation}
where $\theta_1$ is a sufficiently large constant that will be determined later.

\medskip

We will show that
the functional  $L(w(s),s)$  has a decreasing property, provided
that $s$ is large enough. Clearly, by \eqref{Jalpha}, and \eqref{Lalpha}, the functional 
$L(w(s),s)$ is a small
perturbation of the “natural” energy $
  E(w(s),s)$.

\medskip

Here is  the statement of  our main theorem in this paper.
 \begin{thm}\label{t1}
 Let $u$ be a solution to \eqref{gen} with blow-up graph $\Gamma := \{ (x, T(x)) : x \in \R^n \}$. Suppose $x_0$ is a non-characteristic point satisfying conditions \eqref{pc} and \eqref{ss00}. Then,  for all $T_0 \in (0, T(x_0)]$ and $s \ge 6 - \ln(T_0 )$, the following inequality holds:
\begin{equation*}
L(w(s+1),s+1)  -L(w(s),s) 
 \le
\frac{a(n-1)}{4(s+1)} \exp\Big(\frac{p_c+3}{\sqrt{s+1}}\Big)\int_{s}^{s+1}\! \iint \frac{(\partial_{s}w)^2 \rho}{1-|y|^2}\y{\mathrm{d}}\tau,
 \end{equation*}
where  $w=w_{x_0,T_0}$ is defined in \eqref{scaling}.
Moreover, 
we have 
 \begin{equation}\label{posi1D}
L(w(s),s)\geq 0,  \qquad \forall s \geq 6-\ln( T_0). 
\end{equation}
\end{thm}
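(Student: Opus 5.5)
The plan is to differentiate $E$ and $J$ along the flow \eqref{A}, close the resulting differential inequality with the integrating factor $\exp((p_c+3)/\sqrt{s})$ and the correction $\theta_1/s$, and obtain \eqref{posi1D} by a blow-up contradiction argument.

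\textbf{Step 1: energy identity for $E$.} Multiply \eqref{A} by $\partial_s w\,\rho$ and integrate over $B$. Because $\rho=(1-|y|^2)^{\alpha(s)}$ depends on $s$, differentiating gives extra terms containing $\alpha'(s)\ln(1-|y|^2)\rho$. These are $O(s^{-2})$ multiplied by weighted integrals that carry a logarithm.

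The damping term $-2\alpha(s)\int(\partial_s w)^2\rho$ combines with the transport term $-2y\cdot\nabla\partial_s w$. After integration by parts against $\rho$, this produces the main dissipation
\[
-2\alpha(s)\int_B (\partial_s w)^2\frac{\rho}{1-|y|^2}\,dy.
\]
Recall $\alpha(s)=-a/((p_c-1)s)$ and $(p_c-1)/2=2/(n-1)$. The coefficient $a(n-1)/(4s)$ in the statement is then a fraction of this dissipation, so the remaining part is available to absorb error terms.

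The time derivative of the nonlinear potential term $e^{-\frac{2(p_c+1)s}{p_c-1}}s^{\frac{2a}{p_c-1}}f(\phi w)$ produces mismatch terms. I will use the elementary estimate $|u f'(u)-(p_c+1)f(u)|\le C|u|^{p_c+1}\ln^{a-1}(u^2+2)+C$. With $\phi w$ large, this shows the mismatch is of size $\frac1s\int|w|^{p_c+1}\ln^a(\cdots)s^{-a}\rho$ plus $O(e^{-cs})$. Derivatives of $\gamma(s)$ contribute $O(s^{-2})\int w^2\rho$.

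\textbf{Step 2: the functional $J$.} Multiply \eqref{A} by $w\rho$ to compute $\frac{d}{ds}$ of $\int w\partial_s w\rho$. This yields $-\int(\text{gradient and } w^2 \text{ terms})\rho+\int s^{-a}|w|^{p_c+1}\ln^a\rho+\int(\partial_s w)^2\rho$ plus boundary-type terms. Scaled by $s^{-3/2}$, the identity for $J$ generates a \emph{negative} multiple of $s^{-3/2}\times\int\big(|\nabla w|^2-(y\cdot\nabla w)^2+w^2+\text{nonlinear}\big)\rho$.

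I will choose the factor $n/(2s\sqrt s)$ in $J$ so that the cross term $\int w\,y\cdot\nabla\partial_s w\,\rho$ cancels. The result is
\[
\frac{d}{ds}G\le -\frac{c}{s^{3/2}}\Big(\text{coercive part}\Big)+\frac{C}{s^{3/2}}E\cdot\frac{p_c+3}{2}+O\Big(\frac1{s^2}\Big)\Big(\cdots\Big)-(\text{dissipation}).
\]
The errors from Step 1 are of order $s^{-1}\cdot(\text{nonlinear integral})$ only through the $\ln^{a-1}$ gain. Since $\ln(\phi^2w^2+2)\sim s$ where $w$ is not tiny, that gain is an extra factor $1/s$, which makes these errors $O(s^{-2})$ and lets the $s^{-3/2}$ coercivity absorb them. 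Multiplying by $\exp((p_c+3)/\sqrt s)$ removes the term proportional to $G$ itself, since $\frac{d}{ds}\exp((p_c+3)/\sqrt s)=-\frac{p_c+3}{2s^{3/2}}\exp(\cdot)$. The residual $O(s^{-2})$ constants are killed by $\frac{d}{ds}(\theta_1/s)=-\theta_1/s^2$ for $\theta_1$ large.

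\textbf{Step 3: integration.} Integrating over $[s,s+1]$ and using monotonicity of $\exp((p_c+3)/\sqrt\tau)/\tau$ gives the stated inequality. The $\ln(1-|y|^2)$ and boundary terms are handled by Hardy-type weighted inequalities, for instance $\int w^2\rho|\ln(1-|y|^2)|\le C\int(|\nabla w|^2(1-|y|^2)+w^2)\rho$ with $\alpha$ small.

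\textbf{Step 4: positivity \eqref{posi1D}.} I will argue by contradiction, as in the Antonini--Merle approach. If $L(w(s_0),s_0)<0$, the inequality for $\frac{d}{ds}\int w^2\rho$ together with the Step 2 identity shows that $\int w^2\rho$ blows up in finite $s$. That contradicts the fact that $w$ is defined for all $s$, which holds because $T_0\le T(x_0)$ and $x_0$ is non-characteristic.

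\textbf{Main obstacle.} The hard part is Step 2: controlling the nonlinear mismatch and the $\alpha'(s)\ln(1-|y|^2)$ terms near $\partial B$ with only the weak $O(1/s)$ dissipation available. I would try first to rely on the a priori bounds, namely the polynomial and then uniform space-time bounds and \eqref{introbord} established earlier. These bound $\int_s^{s+1}$ of the norms, so the $O(s^{-2})$ errors integrate to $C/s^2$, which $\theta_1/s$ absorbs.
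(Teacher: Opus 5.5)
Your architecture matches the paper's: the energy identity for $E$, the $s^{-3/2}$ functional $J$, the factor $\exp((p_c+3)/\sqrt{s})$ and the correction $\theta_1/s$. However, your estimate of the nonlinear mismatch is off by a factor $s$, and this is exactly where the proof is delicate. Write the potential as $s^{-a}\phi^{-(p_c+1)}f(\phi w)$. The contribution of $uf'(u)-(p_c+1)f(u)$ to $\frac{d}{ds}E$ carries the prefactor $\phi'/\phi\approx\frac{2}{p_c-1}$, which is of order one. After the $\ln^{a-1}$ gain it is $\approx -\frac{a}{(p_c+1)s}\int_B s^{-a}|w|^{p_c+1}\ln^a(\phi^2w^2+2)\rho\,dy$. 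That is of relative size $s^{-1}$, with the bad sign since $a<0$. There is no additional factor $1/s$: the $\ln^{a-1}$ gain \emph{is} the $1/s$ you already counted in Step 1, so your claim in Step 2 that these errors are $O(s^{-2})$ double counts. An error of relative size $s^{-1}$ cannot be absorbed by the $O(s^{-3/2})$ coercivity from $J$. Nor can it be absorbed by the a priori bounds available at this stage, which give time averages of order $s^{1+q}$.

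Bounding $|uf'-(p_c+1)f|$ in absolute value discards the only mechanism that works. Differentiating the factor $s^{-a}$ produces $\frac{a}{s^{a+1}}\phi^{-(p_c+1)}\int_B f(\phi w)\rho\,dy$, of the same size and opposite sign. The two combine into $\chi_1$ of \eqref{regroup}, whose bracket is $\ln(\phi^2w^2+2)-\frac{4s}{p_c-1}=\ln(2\phi^{-2}+w^2)-\frac{2a}{p_c-1}\ln s$ by \eqref{16dec102b}. This bracket is $\ge -C\ln s$ wherever $w^2\ge s^{(2a-4)/(p_c-1)}$. Since $a<0$, only this lower bound matters, because a large positive bracket gives a favorable term. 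On the complementary set the term is $\le\frac{C}{s^2}\int_B w^2\rho\,dy$. Together this gives $\chi_1\le\frac{C\ln s}{s^{a+2}}\int_B|w|^{p_c+1}\ln^a(\phi^2w^2+2)\rho\,dy+\frac{C}{s^2}\int_B w^2\rho\,dy$, which the $s^{-3/2}$ coercivity does absorb. This cancellation is where the logarithmic correction in $\psi_T$ and the hypothesis $a<0$ enter, and it is missing from your plan.

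The second gap concerns the weight-derivative terms $\partial_s\rho=\alpha'(s)\ln(1-|y|^2)\rho$. They have the bad sign and multiply the whole energy density, including $|\nabla_\theta w|^2$. Hardy's inequality handles the $w^2$ part, and the dissipation handles the $(\partial_s w)^2$ part. Nothing in the energy controls $\int_B|\nabla_\theta w|^2|\ln(1-|y|^2)|\rho\,dy$.

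The paper (Lemma~\ref{lem41}) splits $B$ at $1-|y|^2=s^{-64}$. Inside, the logarithm costs only $64\ln s$, giving $\frac{C\ln s}{s^2}\times$(energy terms), which $J$ absorbs. In the boundary layer one gets $\frac{C}{s^4}\int_B|\nabla_\theta w|^2\frac{\rho}{(1-|y|^2)^{1/8}}\,dy$. Its time average is $\le Ks^{3/2}$ by the singular-weight estimate \eqref{14sep2b}, which comes from the Pohozaev-type functional of Proposition~\ref{prop123} fed with the almost-linear bounds of Proposition~\ref{prop21bb2428} at $q=1/2$. The resulting $O(s^{-5/2})$ is then dominated by $\theta_1/s^2$.

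Your fallback does not replace this. The uniform space-time bounds and \eqref{introbord} are derived in Section~\ref{section5} from Corollary~\ref{cor01sec4}, which comes from the very Lyapunov inequality you are proving, so invoking them is circular. With the bounds actually available, $\int_s^{s+1}(\cdots)\,d\tau\le Ks^{1+q}$, an error of size $s^{-2}\times$(norm) integrates to $Ks^{q-1}$, which $\theta_1/s$ cannot absorb. Only errors with a coefficient like $s^{-4}$, produced by the boundary-layer splitting, can be handled through a priori bounds.
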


\medskip

\begin{nb}
As established previously, owing to the time-translation invariance of \eqref{gen}, we may shift the intial  time  at any non-characteristic point $x_0$ within the backward light cone such that the residual lifespan, $T(x_0) - t_0(x_0)$, is arbitrarily small. Consequently, without loss of generality, we assume \eqref{ss00}, which implies $-\ln T(x_0) \gg 1$. In this setting, the solution values at $t=t_0(x_0)$ serve as the effective initial data for our analysis.
\end{nb}
\begin{nb}
It should be noted that our method is specifically adapted to the case $a < 0$ and is not applicable when $a > 0$. This limitation is consistent with the shift in the problem's structure, which becomes ``log- superconformal'' in the $a > 0$ regime.
\end{nb}

\begin{nb}
As explained before, in the conformal case with  pure nonlinearity, 
the dissipation of the associated Lyapunov functional is supported on the boundary. In contrast, Theorem \ref{t1} demonstrates that in our setting, dissipation occurs over the entire ball, suggesting that the problem is subconformal. Furthermore, this damping is preceded   by an equivalent to the term  $c/s$ for large $s$, characterizing it as a weak damping. Consequently, this problem presents a  case not previously seen in the literature, which can be described  and  denoted as ``log-subconformal''.
\end{nb}
\begin{nb}
Since the existence of a Lyapunov functional in similarity variables constitutes the most critical and delicate step of our analysis, we present this result as Theorem \ref{t1} to emphasize its foundational role in this work.
\end{nb}
\begin{nb}
We remark that our method is not applicable to characteristic points. The construction of the Lyapunov functional in similarity variables relies on a covering argument that fails in the characteristic case. Consequently, it remains an open question whether the results of Theorem \ref{t1} extend to such points $x_0$.
\end{nb}

\medskip

As previously noted, the existence of the Lyapunov functional $L(w(s),s)$, combined with a blow-up criterion for equation \eqref{A}, constitutes a crucial step in deriving the blow-up rate for equation \eqref{gen}. However, a significant challenge arises from the weak nature of the damping, as the dissipation term is preceded  by the term $\frac{a(n-1)}{4(s+1)} \exp\Big(\frac{p_c+3}{\sqrt{s+1}}\Big)=\mathcal{O}(\frac1{s}),$
 we derive the following   estimate: 
\begin{equation*}
\int_{s}^{s+1}\! \big( \|w(\tau )\|^2_{H^1(B)}+\| \partial_{s}w(\tau )\|_{L^2(B)}^2\big) \t \leq C s.
\end{equation*}
 To address this difficulty, 
we begin by expressing the equation \eqref{A} as a perturbation  of the conformal regime, namely the following form:
\begin{align}\label{Ac}
\partial_{s}^2w&=\div( \grad w-(y\cdot \grad w)y)-2\alpha(s)y\cdot \grad w
-\big(\frac{2p_c+2}{(p_c-1)^2}-\gamma(s)\big)w\\
&-\big(\frac{p_c+3}{p_c-1}+2\alpha(s)\big)\partial_s w
-2y\cdot\grad \partial_{s}w
+\frac{1}{s^a}   |w|^{p_c-1}w\ln^a(\phi^2w^2+2),\ \   \forall s\ge -\ln T_0. \nonumber
\end{align} 
It is  natural to   introduce the functional associated with equation \eqref{Ac}:
\begin{align}
  E_0(w(s),s)=&\iint
             \Big(\frac{1}{2}(\partial_{s}w)^2+\frac{1}{2}(|\grad
             w|^2-(y\cdot \grad w)^2)+\big(\frac{p_c+1}{(p_c-1)^2}-\frac12  \gamma (s)\big)w^2\nonumber\\
  &-e^{-\frac{2(p_c+1)s}{p_c-1}}s^{\frac{2a}{p_c-1}}   f(\phi w)\Big) \y.\label{E}
\end{align}
By analyzing the dynamics of this energy and observing that the functional $E_0(w(s),s)$ is a small perturbation of the Lyapunov functional $L(w(s),s)$, 
we
establish the estimate \eqref{introbord}.
With this key estimate in hand, we are in position to state and prove our main theorem
\begin{thm}\label{t2}
{\bf {(Blow-up rate for equation \eqref{gen})}}.
Let $u$ be a solution to \eqref{gen} with blow-up graph $\Gamma:\{x\mapsto T(x)\}$. Suppose $x_0$ is a non-characteristic point satisfying conditions \eqref{pc} and \eqref{ss00}. Then,

i)
 For all
 $s\ge 13-\ln T(x_0)$,
\begin{equation*}
0<\varepsilon_0\le \|w_{x_0}(s)\|_{H^{1}(B)}+ \|\partial_s
w_{x_0}(s)\|_{L^{2}(B)} \le K,
\end{equation*}
where $w_{x_0}=w_{x_0,T(x_0)}$ is defined in (\ref{scaling}), for some $\varepsilon_0=\varepsilon_0 (n,a)$
and \\
$K=K(n, a, T(x_0),\|(u(0),\partial_tu(0))\|_{
H^{1}\times
L^{2}(B(x_0,\frac{T(x_0)}{\delta_0(x_0)}) )})$, 
where   $\delta_0(x_0)$ is given by  \eqref{nonchar}.
.\\
ii)  For all
  $t\in [(1-\frac1{e^{13}})T(x_0),T(x_0))$,  we have
\begin{align}
&&0<\varepsilon_0\le \frac1{\psi_{T(x_0)}(t)}\frac{\|u(t)\|_{L^2(B(x_0,{T(x_0)-t}))}}{ {(T(x_0)-t)^{\frac{n}2}}}\label{main1}\\
&&+ \frac{1}{\psi_{T(x_0)}(t)}\Big
(\frac{\|\partial_tu(t)\|_{L^2(B(x_0,{T(x_0)-t}))}}{
{(T(x_0)-t)^{{\frac{n}2}-1}}}+
 \frac{\| \grad u(t)\|_{L^2(B(x_0,{T(x_0)-t}))}}{ {(T(x_0)-t)^{{\frac{n}2}-1}}}\Big )\le K,\nonumber
\end{align}
where 
 $\varepsilon_0$ and $K$ are introduced in item i), and where  $\psi_{T(x_0)}(t)$ is defined in \eqref{psi}.

%
\end{thm}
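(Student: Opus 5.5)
Given Theorem \ref{t1} and the boundary estimate \eqref{introbord}, the plan is to adapt the Merle--Zaag interpolation scheme for the conformal case, with the logarithmic factor handled perturbatively.

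\textbf{Time-averaged bound.} From the monotonicity in Theorem \ref{t1} and the positivity \eqref{posi1D}, $L(w(s),s)\le L(w(s_0),s_0)\le C_0$ for $s\ge s_0=6-\ln T_0$. The constant $C_0$ is controlled by the initial data via the covering argument at the non-characteristic point $x_0$: for $T_0$ near $T(x_0)$ and nearby $x$, the cones lie inside $\mathcal C_{x_0,T(x_0),\delta_0}$. Summing the dissipation then gives
\[
\int_{s_0}^{\infty}\frac{1}{\tau}\iint\frac{(\partial_s w)^2\rho}{1-|y|^2}\,\y\,\t\le C .
\]
Next I introduce the functional $E_0$ of \eqref{E} and differentiate it along \eqref{Ac}. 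The term $-2y\cdot\nabla\partial_s w\,\partial_s w$ produces the boundary flux $-\int_{\partial B}(\partial_s w)^2\,d\sigma$. The $\alpha(s)$ and $\gamma(s)$ terms are $O(1/s)$. The mismatch between the weights $\rho$ and $1$ is bounded by $\alpha(s)\,|\ln(1-|y|^2)|$ times the energy density. Comparing $E_0$ with $L$ (they differ by $O(1/s)$ times averaged norms) yields \eqref{introbord}.

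\textbf{Bootstrap to uniform bounds.} Multiply \eqref{A} by $w\rho$ and integrate over $[s,s+1]$. Combined with the bound $G\le C$, this gives the standard identity that the averaged nonlinear term and the averaged $\|w\|^2_{H^1}+\|\partial_s w\|^2_{L^2}$ are controlled by $C+C\big(\int_s^{s+1}\|\partial_s w\|^2\big)^{1/2}(\cdots)^{1/2}$. The key new input is \eqref{introbord}, together with the Pohozaev-type weighted bounds of Proposition \ref{prop123}. These upgrade the earlier $Cs$ control to
\[
\int_s^{s+1}\big(\|w\|_{H^1(B)}^2+\|\partial_sw\|_{L^2(B)}^2\big)\,\t\le C .
\]
To pass from averages to pointwise-in-$s$ bounds, I follow \cite{MZimrn05}. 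First bound $\|w(s)\|_{L^2}$ using $w(s)=w(\tau)-\int\partial_s w$. Then apply the critical Gagliardo--Nirenberg inequality on the ball, with the factor $s^{-a}\ln^a(\phi^2w^2+2)$ bounded by $C$ when $|w|\ge 1$, since $\ln(\phi^2 w^2)\sim \frac{4s}{p_c-1}$. The remaining ingredient is a covering argument with nearby points $x$ and times $T_0=T(x)$, using the 1-Lipschitz blow-up graph, to control the $H^1$ norm on the full ball through the energy bound. This gives $K$.

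\textbf{Lower bound and item ii).} The lower bound $\varepsilon_0$ follows from a blow-up criterion for \eqref{A}: if the $\H$-norm were small at some $s$, then $E$ would be $\le 0$ with $w$ in a non-blowing-up regime. Small data is globally controlled by local well-posedness, which would contradict $T(x_0)$ being the blow-up time, since $u$ would then extend past $T(x_0)$ near $x_0$. Item ii) is simply item i) rewritten through \eqref{scaling}, using $\nabla_y w=(T-t)\psi^{-1}\nabla u$ and $\partial_s w=(T-t)\psi^{-1}(\partial_t u+\dots)$.

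The main obstacle is the second step: turning the $O(1/s)$-weak dissipation into $s$-uniform averaged bounds. This requires careful control of the weight discrepancy $\rho$ versus $1$ near $\partial B$ using the singular-weight estimates of Proposition \ref{prop123}.
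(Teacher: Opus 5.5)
Your route to \eqref{introbord} works, and it is more direct than the paper's, provided you make the weighted comparison explicit. Use $1-\rho\le\alpha(s)|\ln(1-|y|^2)|$ on all of $B$, and dominate $|\ln(1-|y|^2)|$ (up to constants) as follows: by $(1-|y|^2)^{\alpha-1}$ for the $(\partial_s w)^2$ term; by $(1-|y|^2)^{\eta-\frac12}$ for $|\nabla_\theta w|^2$ and the nonlinear term; and, via the Hardy inequality \eqref{xi52b}, by $H^1$ norms for $w^2$. The linear bounds of Proposition \ref{prop21bb} and Corollary \ref{4corol1alpha} then give $\int_s^{s+1}|E-E_0|\,d\tau\le K$ at once, hence a bounded flux. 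The paper instead splits $B$ at $1-|y|^2=s^{-32}$ and uses unweighted norms in the interior. It therefore obtains only $K\ln s$ (Proposition \ref{psec5}) and needs a second pass (Proposition \ref{prop21bbb2}, Corollary \ref{csec5}) to reach $K$. Note that with unweighted norms $E-E_0$ is \emph{not} $O(1/s)$ times the norms, so this log-weight domination is essential to your one-step claim.

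The genuine gap is the step from \eqref{introbord} to $s$-uniform averaged bounds. Your closing inequality needs $\int_s^{s+1}\|\partial_s w\|_{L^2(B)}^2\,d\tau$ to be bounded. More precisely, it needs the singular-weighted $\int_s^{s+1}\int_B(\partial_s w)^2\frac{\rho}{1-|y|^2}$, since that is what pairs with the quantity $(1-|y|^2)|\nabla w|^2\rho$ appearing in $G$. But \eqref{introbord} only controls $\partial_s w$ on $\partial B$, and nothing you list moves this into the interior:
\begin{itemize}
\item the dissipation of $L$ and the term $-2\alpha\int_B(\partial_s w)^2$ in $\frac{d}{ds}E_0$ still give only $Cs$;
\item Proposition \ref{prop123} bounds the singular-weighted nonlinear and tangential terms by averages of $\mathcal N_\eta$, which itself contains $\int_B(\partial_s w)^2\rho_\eta$, so it cannot supply $\partial_s w$ control.
\end{itemize}
The missing idea is the conformal-case averaging argument of Proposition 4.2 in \cite{MZimrn05}. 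The flux bound holds uniformly for all $x$ near $x_0$ and all admissible $T_0$, and the corresponding boundaries sweep out the interior of the cone. Integrating over this family and changing variables gives $\int_s^{s+1}\int_B(\partial_s w-\lambda(\tau,s)w)^2\,dy\,d\tau\le K$ with $0\le\lambda\le C(\delta_0)$. This controls $\partial_s w-\lambda w$, not $\partial_s w$, and the singular-weighted norm of $\partial_s w$ remains out of reach (as the paper remarks after Proposition \ref{psec5}). So the subconformal Cauchy--Schwarz closure you describe cannot be used. One must instead run the conformal-case argument of \cite{MZimrn05,MZma05} with this quantity and the bound \eqref{L1212} on $G$, which is what the paper does in Proposition \ref{prop21bbb255}.
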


%

\begin{nb}
As noted earlier, for a non-characteristic point $x_0$ in the backward light cone, where the function  $u$ is defined, the time-translation invariance of \eqref{gen} allows us to shift the initial time to $t_0(x_0)$ such that the remaining lifespan, $T(x_0) - t_0(x_0)$, is arbitrarily small. Thus, without loss of generality, we may assume \eqref{ss00}, which implies $-\ln T(x_0) \gg 1$. In this configuration, the solution values at $t=t_0(x_0)$ serve as the effective initial data for our analysis
\end{nb}

\begin{nb}
This   work  should be considered as  a starting point  for the understanding of 
the superconformal range ($p>p_c$) related to the blow-up rate of the  solution of  equation \eqref{NLW} below.
Hoping to extend the validity of our argument to the conformal case  ($p=p_c$)
 in some forthcoming work, we may see the case $a > 0$  of \eqref{gen} as a further
step in the understanding of blow-up dynamics in the superconformal case related to
equation \eqref{NLW} below.
\end{nb}

\begin{nb}
Since we crucially need a covering technique in the  argument of the construction of the Lyapunov functional,
our method also breaks down  in the case  of a characteristic point  and we are not able to obtain the sharp estimate
as in the unperturbed case \eqref{NLW}. 
\end{nb}
\begin{nb}
As in  \cite{MZimrn05}  in the pure power nonlinearity  case   \eqref{NLW}, the proof of
Theorem \ref{t2} relies on four ideas: the existence of a Lyapunov functional,  interpolation in Sobolev spaces, some
critical Gagliardo-Nirenberg estimates and
a covering technique adapted to the geometric shape of the blow-up surface.
As we said before,  the first point where  we construct  a Lyapunov functional in similarity variables 
 is far from being trivial 
and represent  a  crucial  step. Consequently, 
we have chosen to present our main contribution as Theorem \eqref{t1}  and we 
 write a detailed  proof.  However,   for the other three points, the adaptation of 
the proof of
\cite{MZimrn05} given in the  pure power nonlinearity  case \eqref{NLW} is straightforward   except for a key argument, where we
bound the nonlinear term  $e^{-\frac{2(p_c+1)s}{p_c-1}}s^{\frac{2a}{p_c-1}} f(\phi w)$.
Therefore, 
 in order to avoid unnecessary repetition, we prove this step and  kindly  refer to \cite{MZajm03, MZimrn05, MZma05, HZjhde12, HZnonl12,H1, omar1, omar2}  for the rest of the proof.
\end{nb}
%
\bigskip

Finally, we situate our work within the growing body of research focused on non-scale invariant regimes, where general non-homogeneous nonlinearities are studied in the subcritical case across elliptic and parabolic frameworks. Significant progress has been made in establishing universal profiles and blow-up rates for these models, particularly through the development of methodologies that do not rely on  scaling symmetries; for a detailed treatment, we refer to \cite{Souplet2023DCDS, QuittnerSouplet2025, ChabiSouplet2024}.\\

To the best of our knowledge, the present work constitutes the first treatment of the critical (conformal) case in this setting. This regime represents a significant departure from previous subcritical results, as the conformal power introduces additional analytical obstructions not encountered in the subcritical case. The transition to this critical setting marks a novel advancement in the study of non-scale invariant equations.
In particular, by following the strategy developed in this paper, we are able to extend the results obtained by Roy and Zaag \cite{RZ} for log-log nonlinearities from the subconformal regime to the conformal case. Furthermore, our approach remains robust even when the nonlinearity exhibits explicit spatial dependence, provided the growth satisfies reasonable conditions. This versatility underscores the strength of our framework in handling both critical growth and non-homogeneous perturbations.

%

\medskip

Throughout this paper,
$C$  denotes a  generic positive constant
 depending only on $n$  and $a,$  which may vary from line to line.
 In addition, we  will use $K_1,K_2,K_3...$  as    positive constants
 depending only on $n, a, \delta_0(x_0)$  and initial data,   which
 may also vary from line to line.  We write $A(s)\sim B(s)$ to indicate 
$\displaystyle{\lim_{s\to \infty}\frac{A(s)}{B(s)}=1}.$

Notice that in the rest of this  paper,  we  define 
\begin{equation}\label{24nov1}
 \nabla_r w=\frac{y\cdot \nabla w}{|y|^2} y\quad \ {\textrm{and}}\quad  \nabla_{\theta}w=
\nabla w-
\frac{y\cdot \nabla w}{|y|^2} y.
\end{equation}
Given  \eqref{24nov1}, we can write $\nabla w=\nabla_rw+\nabla_{\theta}w$ and
we have  the identities 
\begin{equation}\label{12nov5}
|y|^2|\grad w|^2-(y\cdot \grad w)^2=|y|^2|\nabla_{\theta}w|^2,
\end{equation}
and
\begin{equation}\label{12nov6}
|\grad w|^2-(y \cdot\grad w)^2=|\nabla_{\theta}w|^2+(1-|y|^2)|\nabla_{r}w|^2.
\end{equation}

\bigskip

This paper is organized as follows: In Section \ref{section2}, we establish exponential bounds for the time average (in $s$) of the $H^1 \times L^2(B)$ norm of the solution $(w, \partial_s w)$. These estimates are subsequently refined in Section \ref{section3} to obtain nearly linear (arbitrarily small super-linear) bounds.
In Section \ref{section4}, leveraging this nearly linear control, we demonstrate that the functional $L(w(s),s)$ defined in \eqref{calLalpha} serves as a Lyapunov functional for equation \eqref{A}, thereby yielding Theorem \ref{t1}. This result constitutes a crucial step in deriving the blow-up rate for equation \eqref{A}.However, as previously noted, the logarithmic sub-conforming nature of the problem results in weak dissipation. Consequently, by utilizing the framework of the preceding theorem in section \ref{section5}, we first  improve the previous estimates  for the time-average of the $H^1\times L^2(B)$  norm only to linear bounds. Then, by expressing equation \eqref{A} as a perturbation of the conformal regime and constructing a new energy functional $E_0(w(s), s)$, we  refine the control over the damping term, establish the boundedness of space-time averages, and conclude with the proof of Theorem \ref{t2}, which characterizes the sharp Type I blow-up rate for the non-scaling invariant semilinear wave equation.

\section{Exponential 
bounds for the time average of the $H^{1}\times L^2$ norm  and the  non linear term of $w$ with 
singular weigh for  solution of equation 
(\ref{A})}
\label{section2}

This section presents the statements and proofs of Proposition \ref{prop21} and Proposition \ref{prop123}. It is divided into two subsections, with each one dedicated to
prove one of the propositions.
\begin{itemize}
\item  The first subsection is devoted to the proof of Proposition \ref{prop21}, which relies on a set of classical energy estimates. These estimates are derived by multiplying equation \eqref{A} by $w(1-|y|^2)^{\eta}$ and $\partial_s w(1-|y|^2)^{\eta}$, then integrating over the unit ball for any $\eta \in (0,1)$. By combining these energy estimates, a Lyapunov functional is constructed for $\eta \in (0, \eta_1)$, where $\eta_1$ is sufficiently small. This functional provides an exponential bound for the time average of the $H^{1}$ norm, which concludes the proof of Proposition \ref{prop21}.

\item The second subsection is devoted to  two new  estimates
  following from a Pohozaev multiplier. More precisely, we multiply
  equation \eqref{A} by $y\cdot\grad w(1-|y|^2)^{\eta}$ and
  $w(1-|y|^2)^{-\frac12+\eta}$,  for any $\eta\in (0,1)$, then integrate over $B$, in order to get
  some  new identities.  By combining the above energy estimates with  the  exponential bound 
on the time average of the $H^{1}$ norm (which was established
in  Proposition \ref{prop21}), we deduce the proof of  Proposition \ref{prop123}.
This is one of the novelties of this paper.
\end{itemize}

\medskip
In this section and for the remainder of the paper, unless stated otherwise, let $u$ be a solution to \eqref{gen} with blow-up graph $\Gamma: \{x \mapsto T(x)\}$, and assume $x_0$ is a non-characteristic point, under the conditions \eqref{pc}, and \eqref{ss00}.
For $T_0 \in (0, T(x_0)]$, and $x \in \mathbb{R}^n$ satisfying $|x - x_0| \le \frac{T_0}{\delta_0(x_0)}$ (where $\delta_0(x_0)$ is defined by \eqref{nonchar}), we denote the rescaled variable $w_{x, T^*(x)}$ from \eqref{scaling} simply by $w$, where
 \begin{equation}\label{18dec1}
T^*(x)=T_0-\delta_0(x_0)|x-x_0|.
\end{equation}

\medskip
Let $\eta > 0$. We can express the  equation for $w$ as a perturbation of the subconformal dynamics, namely:
\begin{eqnarray}\label{A1}
\partial^2_{s}w&=&\frac{1}{\rho_{\eta}} div(\rho_{\eta}\nabla w-\rho_{\eta} (y \cdot \nabla w)y)+(2\eta-2\alpha(s))  y\cdot \nabla w-\frac{2(p_{c}+1)}{(p_{c}-1)^2}w+\gamma (s) w \nonumber \\
&&-\Big(\frac{p_c+3}{p_c-1}+2\alpha(s)\Big)\partial_s w
-2y\cdot \grad \partial_{s}w+ \frac{1}{s^a}   |w|^{p_c-1}w\ln^a(\phi^2w^2+2).\end{eqnarray}
where 
$ \rho_{\eta}:=(1-|y|^2)^{\eta},$
and where $\alpha(s)$, $\gamma(s)$, and $\psi(s)$ are given respectively by \eqref{alpha}, \eqref{defgamma}, and \eqref{defphi}.

\medskip

As explained before, the first   subsection is  devoted to deriving   
 an exponential  estimate  for  solution of \eqref{A} valid  for $x$ near $x_0$  a non
characteristic point.  
  More precisely,  the aim of this subsection   is the following:
\begin{prop}\label{prop21}
\noindent  Let $\eta_0>0$  be  sufficiently  small, and fix $\eta \in (0, \eta_0)$.
 Consider $u $   a solution of ({\ref{gen}}) with
blow-up graph $\Gamma:\{x\mapsto T(x)\}$ and  $x_0$  a non
characteristic point under the condition \eqref{ss00}. Then,     for all $T_0 \in (0,T(x_{0})]$,  $s\geq -\ln T_0,$   and $x\in \R^n,$ such that $|x-x_0|\le \frac{e^{-s}}{\delta_0(x_0)}$,  
we have
\begin{equation}\label{feb19eta}
\int_{s}^{s+1}\!\ibint \big( w^2+|\grad w|^2 +(\partial_{s}w)^2 \frac{\rho_{\eta}}{1-|y|^2}
\big)\y \t \leq K_1 e^{\frac{\eta(p_c+3)s} 2},
\end{equation}
\begin{equation}\label{feb19etaNL}
\frac{1}{s^a}\int_{s+1}^{s}\! \int_{B}   |w|^{p_c+1}\ln^a(\phi^2w^2+2){\mathrm{d}}y{\mathrm{d}}\tau
\leq K_1 e^{\frac{\eta(p_c+3)s} 2},
\end{equation}
where $w=w_{x,T^*(x)}$ is defined in \eqref{scaling},
 and $\delta_{0}(x_{0})$ defined in \eqref{nonchar}, and where  $K_1$ depends on $ n, a,  \eta,  \delta_{0}(x_{0})$, 
 $T(x_{0})$,  and
$\|(u(0),\partial_tu(0))\|_{H^{1}\times
L^{2}(B(x_0,\frac{T(x_0)}{\delta_0(x_0)}) )}$.
\end{prop}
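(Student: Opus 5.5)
We follow the classical route of Antonini--Merle and of our perturbative works \cite{HZjhde12,HZnonl12}, adapted to the weight $\rho_\eta=(1-|y|^2)^\eta$ with the rewritten equation \eqref{A1}. The first step is to multiply \eqref{A1} by $\partial_s w\,\rho_\eta$ and integrate over $B$. This gives an energy
\[
E_\eta(w(s),s)=\ibint\Big(\tfrac12(\partial_s w)^2+\tfrac12(|\grad w|^2-(y\cdot\grad w)^2)+\big(\tfrac{p_c+1}{(p_c-1)^2}-\tfrac12\gamma(s)\big)w^2-e^{-\frac{2(p_c+1)s}{p_c-1}}s^{\frac{2a}{p_c-1}}f(\phi w)\Big)\p\y
\]
satisfying
\[
\frac{d}{ds}E_\eta\le -2\eta\ibint(\partial_s w)^2\frac{\p}{1-|y|^2}\y+(2\eta-2\alpha)\ibint(y\cdot\grad w)\partial_s w\,\p\y+R(s).
\]
The dissipation comes from the term $-2y\cdot\grad\partial_s w$ integrated by parts against $\p$, which produces $-2\eta\int(\partial_s w)^2|y|^2\p/(1-|y|^2)$ plus a bounded-weight term. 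The remainder $R(s)$ collects the $s$-derivative of the coefficients: $\gamma'(s)w^2$, $\partial_s$ of the nonlinear potential, and $-2\alpha\int(\partial_s w)^2\p$, which has a good sign since $\alpha>0$ for $a<0$. For the potential, the derivative $\partial_s[e^{-\frac{2(p_c+1)s}{p_c-1}}s^{\frac{2a}{p_c-1}}f(\phi w)]$ at fixed $w$ is computed from $f(u)\sim\frac{1}{p_c+1}|u|^{p_c+1}\ln^a u^2$. After this computation, it equals $O(\frac1s)$ times the nonlinear term. The cross term $(2\eta-2\alpha)\int (y\cdot\grad w)\partial_s w\,\p$ is absorbed by Cauchy--Schwarz: half goes into the dissipation, and the rest is bounded by $C\eta\int|\grad w|^2(1-|y|^2)\p$.

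The second step is to multiply \eqref{A1} by $w\p$. This gives
\[
\frac{d}{ds}\ibint\big(w\partial_s w+\tfrac{p_c+3}{2(p_c-1)}w^2\big)\p\y=\ibint(\partial_s w)^2\p\y-2E_\eta+\frac{p_c-1}{p_c+1}\cdot\frac{1}{s^a}\ibint|w|^{p_c+1}\ln^a(\phi^2w^2+2)\p\y+\dots,
\]
so the nonlinear term appears with a positive coefficient, up to errors controlled by $\eta$ and $1/s$. Combining the two identities yields the functional $h_0=E_\eta+\eta\,\ibint(w\partial_s w+c\,w^2)\p$. Using the bound $\frac{1}{s^a}\int|w|^{p_c+1}\ln^a\p\le C(-E_\eta+\int(\partial_s w)^2+\dots)$, one obtains
\[
h_0'\le -c\eta\ibint(\partial_s w)^2\frac{\p}{1-|y|^2}\y+C\eta\,|h_0|+\dots,
\]
as stated in the outline.

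The third step is the blow-up criterion, which we expect to be the main obstacle. Exactly as in \cite{AM} and \cite{HZnonl12}, if $h_0(s_1)<0$ (roughly, the energy becomes negative) at some time, then $w$ blows up in finite $s$-time. This must exclude the case, because $w=w_{x,T^*(x)}$ is defined for all $s\ge-\ln T^*(x)$: the condition $|x-x_0|\le e^{-s}/\delta_0$ together with \eqref{nonchar} keeps the cone inside the domain $D$. Hence $h_0\ge0$. This yields $h_0'\le C\eta h_0$, so $h_0(s)\le h_0(s_0)e^{C\eta s}$. Here $h_0(s_0)$ is bounded by the initial data norm via the covering/finite-speed argument, and one tunes the constants so that the exponent is $\eta(p_c+3)/2$. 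The delicate part is proving the criterion with the log factor and the non-autonomous coefficients. For this we would take the functional $\int w^2\p$, use its convexity inequality with the $\ln^a$ nonlinearity treated through the lower bound $f(u)\ge \frac{1}{p_c+1}(1-\epsilon)|u|^{p_c+1}\ln^a$ for large $u$, and handle small $u$ separately.

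Finally, integrating the dissipation inequality over $[s,s+1]$ gives the $(\partial_s w)^2\p/(1-|y|^2)$ part of \eqref{feb19eta}. Integrating the $w\p$-identity over $[s,s+1]$ gives \eqref{feb19etaNL}, with the weight $\p$ removed as follows. Covering $B$ by the unit balls of neighbouring points $x'$ and using the change of variables between $w_{x,T^*(x)}$ and $w_{x',T^*(x')}$, as in \cite{MZimrn05}, transfers weighted bounds on inner balls to unweighted bounds on $B$. The $H^1$ part then follows from the energy expression for $E_\eta$, using the same covering.
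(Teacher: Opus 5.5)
Your architecture is the paper's: a weighted energy $E_\eta$, a correction from the multiplier $w\rho_\eta$, an exponentially weighted monotone quantity, positivity, and covering. However, the central combination has the wrong sign, and the mechanism you describe cannot give $h_0'\le -c\eta\int_B(\partial_s w)^2\frac{\rho_\eta}{1-|y|^2}+C\eta|h_0|$.

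Write $N=s^{-a}\int_B|w|^{p_c+1}\ln^a(\phi^2w^2+2)\rho_\eta$ and $G_\eta=\int_B(|\nabla w|^2-(y\cdot\nabla w)^2)\rho_\eta$. The $w$-identity is for $\int_B(w\partial_s w-\frac n2 w^2)\rho_\eta$; with $+\frac n2w^2$ a term $2n\int_B w\partial_s w\,\rho_\eta$ survives. Its derivative is $\int_B(\partial_s w)^2\rho_\eta-G_\eta-\frac{2(p_c+1)}{(p_c-1)^2}\int_B w^2\rho_\eta+N+2\int_B\partial_s w\,y\cdot\nabla w\,\rho_\eta+\dots$. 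Adding it with coefficient $+\eta$, as you do, puts $+\eta N$ into $h_0'$ and makes the cross terms add up instead of cancelling. For every real $\mu$, the coefficient of $N$ in $h_0'-\mu\eta h_0$ is $\eta(1+\frac{\mu}{p_c+1})$ and that of $G_\eta$ is $-\eta(1+\frac{\mu}{2})$; these are never both nonpositive. Trading $N$ for $-E_\eta$, as you propose, is the case $\mu=-(p_c+1)$. It leaves $+\frac{(p_c-1)\eta}{2}G_\eta$, which $|h_0|$ cannot control, since the energy can be small while $G_\eta$ and $N$ are large.

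The paper takes the opposite sign, $I_\eta=-(\eta-\alpha)\int_B w\partial_s w\,\rho_\eta+\frac{(n-2\alpha)(\eta-\alpha)}{2}\int_B w^2\rho_\eta$. Then the cross terms $\pm 2(\eta-\alpha)\int_B\partial_s w\,y\cdot\nabla w\,\rho_\eta$ cancel exactly and $N$ enters with $-\eta$. In $H_\eta'-\mu\eta H_\eta$, the coefficients of $\int_B(\partial_s w)^2\rho_\eta$, $G_\eta$ and $\int_B w^2\rho_\eta$ are positive multiples of $\eta(1-\frac{\mu}{2})$, and that of $N$ is $\eta(\frac{\mu}{p_c+1}-1)$. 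All are negative exactly when $2<\mu<p_c+1$, and the midpoint $\mu=\frac{p_c+3}{2}$ is where the exponent in \eqref{feb19eta} comes from. The singular leftovers $4\eta(\eta-\alpha)\int_B w\partial_s w\frac{|y|^2\rho_\eta}{1-|y|^2}-2\eta(\eta-\alpha)^2\int_B w^2\frac{|y|^2\rho_\eta}{1-|y|^2}$ are absorbed by Young and \eqref{AHardy}. The lower-order terms cost an additive constant $C_0$, coming from the $e^{-2s}$ errors in \eqref{equiv2}--\eqref{equiv3} and from \eqref{17j}, which uses $a<0$. So the monotone quantity is $\mathcal{E}_\eta=(H_\eta+\theta_2)e^{-\frac{\eta(p_c+3)s}{2}}$. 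Then \eqref{pGeta} dissipates $(\partial_s w)^2\frac{\rho_\eta}{1-|y|^2}$, $G_\eta$, $w^2\rho_\eta$ and $N$ simultaneously, so no separate integration of the $w$-identity is needed.

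The positivity step is also left open. You reduce it to a Levine-type blow-up criterion for $\int_B w^2\rho_\eta$ that you do not prove. Because of $C_0$, what must be shown is $H_\eta\ge-\theta_2$, i.e.\ $\mathcal{E}_\eta\ge 0$, not $h_0\ge 0$. A convexity argument here would have to handle the cross term $\int_B\partial_s w\,y\cdot\nabla w\,\rho_\eta$, the $s$-dependent coefficients, the non-monotonicity of $H_\eta$ itself and the $\ln^a$ factor; you address none of these.

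The paper needs no criterion. Suppose $\mathcal{E}_\eta(w(s_1),s_1)<0$; then $\mathcal{E}_\eta(w(s_1+1),s_1+1)<0$. By continuity in $\delta$, the same holds for $\tilde w^{\delta}=w_{x,T^*(x)-\delta}$, which solves \eqref{A1} and, for $\delta$ small, is defined on $B\times[s_1+1,\infty)$. Monotonicity then forces $\liminf_{s\to\infty}\mathcal{E}_\eta(\tilde w^{\delta}(s),s)<0$. But by \eqref{blowupbefore}, $\tilde w^{\delta}(s)$ is a rescaling of $w$ at times $-\ln(\delta+e^{-s})\to-\ln\delta$, so its $L^{p_c+1}(B)$ norm stays bounded. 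Combine this with $H_\eta(\tilde w^\delta(s),s)\ge -e^{-\frac{2(p_c+1)s}{p_c-1}}s^{\frac{2a}{p_c-1}}\int_B f(\phi\tilde w^{\delta})\rho_\eta$ (complete the square in $I_\eta$, with $\eta$ small) and \eqref{equiv6}. This gives $\liminf_{s\to\infty}\mathcal{E}_\eta(\tilde w^{\delta}(s),s)\ge 0$, a contradiction. With $\mathcal{E}_\eta\ge0$ and \eqref{pGeta} in hand, your covering step matches the paper.
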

For convenience, we introduce the following
\begin{equation}\label{NN}
{\cal {N}}_{\eta}(w(s))= \!\int_{B}\Big(|\nabla  w(s)|^2+(\partial_sw(s))^2+w^2(s)  + \frac{1}{s^a}  |w(s)|^{p_c+1}\ln^a(\phi^2w^2(s)+2)\Big) \p{\mathrm{d}}y.
\end{equation}

As we mentioned before, our first novelty in this paper lays in the following proposition:
 \begin{prop}\label{prop123}
  For all $T_0 \in (0,T(x_{0})]$, $s\geq 3-\ln (T_0),$
and   $x\in \R^n,$ such that $|x-x_0|\le \frac{e^{-s}}{\delta_0(x_0)}$,  
we have
\begin{align}\label{w1eta}
&\frac{1}{s^a}\int_s^{s+1} \int_{B}   |w|^{p_c+1}\ln^a(\phi^2w^2+2)\frac{\p}{\sqrt{1-|y|^2}}{\mathrm{d}}y{\mathrm{d}}\tau
 \le C\int_{s-2}^{s+3}{\cal {N}}_{\eta}(w(\tau)){\mathrm{d}}\tau+Ce^{-2s},
\end{align}
and
\begin{align}\label{wtangeta}
&\int_s^{s+1} 
\int_{B}   |\nabla_{\theta}w|^2\frac{\p}{\sqrt{1-|y|^2}}{\mathrm{d}}y{\mathrm{d}}\tau
 \le C\int_{s-3}^{s+4}{\cal {N}}_{\eta}(w(\tau)){\mathrm{d}}\tau,
\end{align}
where   the constant $C$ depends only on $n, a$, and $\eta$. 
\end{prop}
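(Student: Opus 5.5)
The plan follows the outline in the introduction. We derive two integral identities from equation \eqref{A1} and combine them so that the singular-weight terms appear with a good sign on the left.

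\textbf{First identity (Pohozaev).} Multiply \eqref{A1} by $y\cdot\nabla w\,\rho_\eta\sqrt{1-|y|^2}$, integrate over $B$, and integrate in time over $[\tau_1,\tau_2]\subset[s-3,s+4]$. The divergence term gives, after integrating by parts with \eqref{12nov6}, a term whose derivative falls on $\sqrt{1-|y|^2}$. This produces
\[
\tfrac12\int_B |\nabla_\theta w|^2 \frac{\rho_\eta}{\sqrt{1-|y|^2}}\,dy
\]
with a sign we can exploit, plus regular terms bounded by $C\,\mathcal N_\eta$. Similarly, the nonlinear term is written as $\frac{1}{s^a}\,y\cdot\nabla\big(F\big)$, where $F$ is the primitive $e^{-\frac{2(p_c+1)s}{p_c-1}}s^{\frac{2a}{p_c-1}}f(\phi w)$. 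Integrating by parts yields $\int F\,\frac{\rho_\eta|y|^2}{\sqrt{1-|y|^2}}$ with a sign, plus $n\int F\rho_\eta\sqrt{1-|y|^2}$, which is $\le C\mathcal N_\eta$.

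The following ingredients are used here. First, $f(u)$ is comparable to $\frac{1}{p_c+1}|u|^{p_c+1}\ln^a(u^2+2)$ up to an error $C(1+u^2)$ for $a<0$. Second, $\ln(\phi^2w^2+2)\sim 2s\cdot\frac{2}{p_c-1}$ in the relevant regime. The region $|\phi w|\lesssim 1$ contributes only $Ce^{-2s}$, because of the prefactor $e^{-\frac{2(p_c+1)s}{p_c-1}}$. Third, the time-derivative terms $\partial_s^2 w\,(y\cdot\nabla w)$ and $y\cdot\nabla\partial_s w\,(y\cdot\nabla w)$ are handled by moving $\partial_s$ off and integrating by parts in $y$. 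This gives boundary-in-time terms $\int \partial_s w\,y\cdot\nabla w\,\rho_\eta\sqrt{1-|y|^2}$, bounded by $\mathcal N_\eta(\tau_i)$, together with terms like $(\partial_s w)^2\rho_\eta\sqrt{1-|y|^2}$ and their derivative $(\partial_s w)^2\rho_\eta|y|^2/\sqrt{1-|y|^2}$.

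\textbf{Second identity (singular multiplier).} Multiply \eqref{A1} by $w\,\frac{\rho_\eta}{\sqrt{1-|y|^2}}$. This gives
\[
\frac1{s^a}\int |w|^{p_c+1}\ln^a\frac{\rho_\eta}{\sqrt{1-|y|^2}}
\]
expressed as $\int(|\nabla_\theta w|^2+(1-|y|^2)|\nabla_r w|^2)\frac{\rho_\eta}{\sqrt{1-|y|^2}}$, minus $\int (\partial_s w)^2\frac{\rho_\eta}{\sqrt{1-|y|^2}}$, plus lower-order terms. The lower-order terms come from $w\nabla w\cdot y$ against the derivative of the weight. They are handled via the Hardy-type bound $\int w^2\rho_\eta(1-|y|^2)^{-1/2}\le C\int(w^2+|\nabla w|^2)\rho_\eta$, which holds since $\eta>0$ and the singularity $-\tfrac12+\eta>-1$ is integrable. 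The $\partial_s$-terms are again integrated by parts in time.

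Combining the two identities with suitable coefficients, the $|\nabla_\theta w|^2$ singular term from the second identity is absorbed by the first. The term $(\partial_s w)^2\rho_\eta/\sqrt{1-|y|^2}$ is dominated by $(\partial_s w)^2\rho_\eta/(1-|y|^2)$, and this is integrated in time using the energy dissipation identity behind Proposition \ref{prop21}. That identity, obtained by multiplying by $\partial_s w\,\rho_\eta$, controls $\eta\int_{s-3}^{s+4}\!\int(\partial_s w)^2\frac{\rho_\eta}{1-|y|^2}$ by $C\sup\mathcal N_\eta$ endpoints and thus, after averaging the endpoints over unit intervals, by $C\int\mathcal N_\eta$. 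Using a cutoff-in-time averaging, we choose $\tau_1\in[s-2,s]$ and $\tau_2\in[s+1,s+3]$ where $\mathcal N_\eta(\tau_i)$ is at most its average; this yields \eqref{w1eta}. Feeding \eqref{w1eta} back into the first identity on the slightly larger window $[s-3,s+4]$ gives \eqref{wtangeta}.

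\textbf{Main obstacle.} The delicate step is the sign bookkeeping of the singular weight near $\partial B$. We must verify that the coefficients of the nonlinear singular term and of the $|\nabla_\theta w|^2$ singular term in the combination are genuinely positive, with $p_c=1+\frac4{n-1}$ making the Pohozaev coefficients balance exactly up to $O(\eta)$ and $O(1/s)$ errors. We must also check that the $(\partial_s w)^2/\sqrt{1-|y|^2}$ and $\partial_s w\,\nabla w$ cross terms do not require more than the dissipation available. The first check we would try is the exact conformal computation with $a=0$, $\eta=0$, treating the $\eta,\alpha(s),\gamma(s)$ and logarithmic terms as perturbations absorbed using $s\ge 3-\ln T_0\gg1$.
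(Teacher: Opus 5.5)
Your plan is essentially the paper's proof. Your two multipliers are exactly those generating $M_{\frac12+\eta}$ and $J_\eta$, and the combination $\mathcal{L}_\eta=M_{\frac12+\eta}+(\frac12+\eta)J_\eta$ cancels the singular $|\nabla_\theta w|^2$ term exactly while leaving the coefficient $-(\frac12+\eta)\frac{p_c-1}{p_c+1}$ in front of the singular nonlinear term; this sign is favourable for any $p_c>1$, so no conformal balancing is involved. The cross terms are handled by Hardy, Lemma \ref{bws010}, and mean-value choices of endpoints in $[s-1,s]$ and $[s+1,s+2]$, and \eqref{wtangeta} then follows by feeding \eqref{w1eta} into a single identity (the paper uses $J_\eta$; your use of the Pohozaev identity works equally well).

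Two ingredients need correcting. First, $f(u)-\frac{1}{p_c+1}|u|^{p_c+1}\ln^a(u^2+2)$ is of size $|u|^{p_c+1}\ln^{a-1}(u^2+2)$, not $C(1+u^2)$; after rescaling it contributes $\frac{C}{s}$ times the singular nonlinear term plus $Ce^{-2s}$, which must be absorbed into the left-hand side as in \eqref{As1}. Second, the terms $w\,y\cdot\nabla w\,\rho_\eta/\sqrt{1-|y|^2}$ (and $|J_\eta|$ at the endpoints) require the stronger Hardy bound \eqref{Hardy} with weight $\rho_\eta/(1-|y|^2)$, not merely $\rho_\eta/\sqrt{1-|y|^2}$.
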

Clearly, by  combining    Proposition \ref{prop21} and Proposition \ref{prop123},
we deduce the following crucial estimates:
\begin{cors}\label{corol1}
For all $T_0 \in (0,T(x_{0})]$, $s\geq 3-\ln (T_0),$  
and  $x\in \R^n,$ such that $|x-x_0|\le \frac{e^{-s}}{\delta_0(x_0)}$,  
we have
\begin{equation}\label{14sep1}
\frac{1}{s^a}\int_s^{s+1} 
\int_{B}   |w|^{p_c+1}\ln^a(\phi^2w^2+2)\frac{\p}{\sqrt{1-|y|^2}}{\mathrm{d}}y{\mathrm{d}}\tau
 \leq K_2 e^{\frac{\eta(p_c+3)s} 2},
\end{equation}
and
\begin{align}\label{14sep2}
&\int_s^{s+1} 
\int_{B}   |\nabla_{\theta}w|^2\frac{\p}{\sqrt{1-|y|^2}}{\mathrm{d}}y{\mathrm{d}}\tau \le  K_2 e^{\frac{\eta(p_c+3)s} 2}.
\end{align}
where  $K_2$ depends on $  a, n, \eta,  \delta_{0}(x_{0})$, 
 $T(x_{0})$, and
$\|(u(0),\partial_tu(0))\|_{H^{1}\times
L^{2}(B(x_0,\frac{T(x_0)}{\delta_0(x_0)}) )}$.
\end{cors}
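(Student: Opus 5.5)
The corollary follows directly from Proposition \ref{prop123} once the right-hand sides there are controlled by Proposition \ref{prop21}. The plan is to bound the time integrals of ${\cal N}_{\eta}(w(\tau))$ over $[s-2,s+3]$ and $[s-3,s+4]$ by a finite sum of unit-interval integrals, each estimated by \eqref{feb19eta} and \eqref{feb19etaNL}.

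\textbf{Pointwise comparison of ${\cal N}_\eta$.} Since $\rho_\eta=(1-|y|^2)^{\eta}\le 1$ and $\rho_\eta\le \frac{\rho_\eta}{1-|y|^2}$, we have for each $\tau$
\[
{\cal N}_{\eta}(w(\tau))\le \int_B\Big(|\nabla w|^2+w^2+(\partial_s w)^2\frac{\rho_\eta}{1-|y|^2}\Big)\mathrm{d}y+\frac{1}{\tau^a}\int_B |w|^{p_c+1}\ln^a(\phi^2w^2+2)\,\mathrm{d}y .
\]
The last integrand is nonnegative, since $\ln(\phi^2w^2+2)\ge\ln 2>0$, so no sign issue arises for $a<0$.

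\textbf{Splitting the time interval.} Write $[s-3,s+4]=\bigcup_{k=-3}^{3}[s+k,s+k+1]$, and similarly $[s-2,s+3]$ as a union of five such intervals. On each piece apply \eqref{feb19eta} and \eqref{feb19etaNL} at time $s+k$. This gives a bound $K_1e^{\frac{\eta(p_c+3)(s+k)}{2}}\le K_1e^{\frac{7\eta(p_c+3)}{2}}e^{\frac{\eta(p_c+3)s}{2}}$, and the constant factor is absorbed into $K_2$. The term $Ce^{-2s}$ in \eqref{w1eta} is trivially dominated by $e^{\frac{\eta(p_c+3)s}{2}}$.

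\textbf{Checking that Proposition \ref{prop21} applies at the shifted times.} This admissibility check is the main point needing care.

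First, Proposition \ref{prop21} requires $s+k\ge-\ln T_0$. This holds for $k\ge-3$ because $s\ge 3-\ln T_0$.

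Second, it requires $|x-x_0|\le \frac{e^{-(s+k)}}{\delta_0(x_0)}$, whereas we only assume $|x-x_0|\le \frac{e^{-s}}{\delta_0(x_0)}$. For $k\ge0$ this is weaker than needed. I expect to resolve this in one of two ways:
\begin{itemize}
\item observe that the proof of Proposition \ref{prop21} only uses $|x-x_0|\le T_0/\delta_0(x_0)$, which guarantees that $T^*(x)$ is well defined and that the cone lies in the non-characteristic region, so it is uniform in $s$; or
\item note that the condition on $x$ in Proposition \ref{prop21} is stated for the base point, and the estimate holds for all later times $s'\ge s$ for the same $w=w_{x,T^*(x)}$.
\end{itemize}
Either way, the hypotheses are met for every $k\in\{-3,\dots,3\}$. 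Also, the factor $\tau^{-a}$ versus $s^{-a}$ differs by at most the harmless constant $(1+7/s)^{|a|}$.

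Summing the seven (respectively five) unit-interval bounds then yields \eqref{14sep1} and \eqref{14sep2}, with $K_2$ depending on the same quantities as $K_1$ and on $C(n,a,\eta)$.
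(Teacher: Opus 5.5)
Your proposal is correct and follows the paper's route: the paper presents Corollary \ref{corol1} as an immediate consequence of inserting the bounds of Proposition \ref{prop21} into the right-hand sides of \eqref{w1eta} and \eqref{wtangeta}, and you carry this out explicitly by splitting into unit time intervals. The paper does not address the mismatch you flag for the shifted times $s+k$ with $k>0$. Your first resolution is the right way to close it: Corollary \ref{cor01} gives the underlying bounds for every $|x-x_0|\le T_0/\delta_0(x_0)$ and every $s\ge -\ln T^*(x)$, so the restriction tying $x$ to $s$ is inessential.
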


\subsection{An exponential   bound for the solution of equation   (\ref{A1})}
As previously noted, we first introduce two classical energy estimates. These are obtained by multiplying equation \eqref{A1} by $\partial_sw\rho_{\eta}$ and $ w\rho_{ \eta}$, respectively, and then integrating. Indeed,
 we first introduce the following functionals:
\begin{eqnarray}
E_{\eta}(w(s),s)&=&\ibint \Big(\frac{1}{2}(\partial_{s}w)^2+\frac{1}{2}|\grad w|^2-\frac12(y\cdot \grad w)^2
+\frac{p_c+1}{(p_c-1)^2}w^2\Big) \rho_{\eta}\y\nonumber \\
&&- e^{-\frac{2(p_c+1)s}{p_c-1}}s^{\frac{2a}{p_c-1}} \ibint  f(\phi w) \rho_{\eta}\y-\frac{\gamma (s)}2\ibint w^2 \rho_{\eta}\y,\qquad  \label{Eeta}\\
I_{\eta}(w(s),s)&=&-(\eta-\alpha (s))\int_{B} w\partial_{s}w \rho_{\eta} dy+\frac{(n-2\alpha(s))(\eta-\alpha(s))}{2}\int_{ B}w^2\rho_{\eta} {\mathrm{d}}y ,\qquad \qquad \label{Ieta}\\
H_{\eta}(w(s),s)&=&E_{\eta}(w(s),s)+I_{\eta}(w(s),s),\label{Heta} \\
{\cal E}_{\eta}(w(s),s)&=&H_{\eta}(w(s),s)e^{-\frac{\eta(p_{c}+3)s}{2}}+\theta_2 e^{-\frac{\eta(p_{c}+3)s}{2}},\label{Geta}
\end{eqnarray}
where $f$  is defined by \eqref{defF}, and 
where $\theta_2=\theta_2(\eta)$ is a sufficiently large constant that will be determined later.

\medskip

In this subsection, we first demonstrate that ${\cal E}_{\eta}(w(s),s)$ is non-increasing in time. This result yields a rough (specifically, exponentially fast) estimate for ${ E}_{\eta}(w(s),s)$, as well as a bound on the time average of $\|(w,\partial_{s} w)\|_{H^{1}(B)\times L^2(B)}$. More precisely, we shall establish the following proposition:
\begin{prop}\label{prop2.2} There exists $\lambda_{0}>0$ such that ${\cal E}_{\eta}(w(s),s)$ defined in \eqref{Geta}
 satisfies for all $s'\ge s\geq  -\ln (T^*(x))$, 
\begin{eqnarray} 
{\cal E}_{\eta}(w(s'),s')-{\cal E}_{\eta}(w(s),s)&\leq &-\lambda_{0}\int_{s}^{s'}e^{\frac{-\eta(p_{c}+3)\tau}{2}}\int_{B}(\partial_{s} w)^2\frac{\rho_{\eta}}{1-|y|^2}{\mathrm{d}}y{\mathrm{d}}\tau\nonumber \\
&&-\lambda_{0}\int_{s}^{s'}e^{\frac{-\eta(p_{c}+3)\tau}{2}}\int_{B}w^2\rho_{\eta} {\mathrm{d}}y{\mathrm{d}}\tau\label{pGeta}\\
&&-\lambda_{0}\int_{s}^{s'}e^{\frac{-\eta(p_{c}+3)\tau}{2}}\frac1{\tau^a}\int_{B}|w|^{p_{c}+1}\ln^a(\phi^2w^2+2)\rho_{\eta} {\mathrm{d}}y{\mathrm{d}}\tau\no\\
&&-\lambda_{0}\int_{s}^{s'}e^{\frac{-\eta(p_{c}+3)\tau}{2}}\int_{B}(|\nabla w|^2-(y\cdot \nabla u)^2)\rho_{\eta} {\mathrm{d}}y{\mathrm{d}}\tau ,\no
\end{eqnarray} 
where $w=w_{x,T^*(x)}$ is defined in \eqref{scaling}.
Moreover, there exist $\eta_0>0$, 
 such that 
\begin{equation}\label{positiveeta}
{\cal E}_{\eta}(w(s),s)\geq 0, \quad 
\forall s\geq -\ln (T^*(x)),  \qquad \forall \eta \in(0, \eta_0).
\end{equation}
 \end{prop}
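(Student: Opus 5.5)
We prove Proposition \ref{prop2.2} by computing $\frac{d}{ds}H_\eta(w(s),s)$ and showing it is bounded by $\frac{\eta(p_c+3)}{2}H_\eta$ plus a constant, minus a coercive quantity. The exponential weight $e^{-\frac{\eta(p_c+3)s}{2}}$ in \eqref{Geta} is then exactly the integrating factor that absorbs the bad term.

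\emph{Derivative of $E_\eta$.} Multiply \eqref{A1} by $\partial_s w\,\rho_\eta$ and integrate over $B$. The divergence term gives $-\frac{d}{ds}\frac12\int(|\nabla w|^2-(y\cdot\nabla w)^2)\rho_\eta$. The term $-2y\cdot\nabla\partial_s w\,\partial_s w\rho_\eta$ integrates by parts to
\[
n\int(\partial_s w)^2\rho_\eta-2\eta\int(\partial_s w)^2\frac{|y|^2\rho_\eta}{1-|y|^2},
\]
with no boundary term since $\eta>0$. Combining with $-\frac{p_c+3}{p_c-1}\int(\partial_sw)^2\rho_\eta$ and using $n-\frac{p_c+3}{p_c-1}=-\eta\cdot 0+\ldots$, i.e. $\frac{p_c+3}{p_c-1}=\frac{n+1}{2}$ for $p_c$, we obtain the dissipation
\[
-2\eta\int(\partial_sw)^2\frac{\rho_\eta}{1-|y|^2}
\]
plus a lower-order term $C\int(\partial_sw)^2\rho_\eta$ of the correct structure.

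The remaining contributions are as follows. The term $(2\eta-2\alpha)(y\cdot\nabla w)\partial_sw\rho_\eta$ is handled by Cauchy--Schwarz against the dissipation and $\eta\int|\nabla w|^2\rho_\eta$. The time derivative of the nonlinear potential produces
\[
\frac{d}{ds}\Big(e^{-\frac{2(p_c+1)s}{p_c-1}}s^{\frac{2a}{p_c-1}}\Big) f(\phi w)\quad\text{and}\quad \phi'w\,f'(\phi w).
\]
Using $f(u)\sim\frac{1}{p_c+1}|u|^{p_c+1}\ln^a(u^2+2)$ for large $|u|$, together with elementary bounds of $f$ for bounded $u$, these are $O(\frac1s)\cdot\frac{1}{s^a}\int|w|^{p_c+1}\ln^a\rho_\eta+Ce^{-2s}$. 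The leading parts cancel because $\phi$ is built from $\psi$. The $\gamma'(s)$ and $\alpha'(s)$ terms are $O(s^{-2})$ times quadratic quantities.

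\emph{Derivative of $I_\eta$.} Multiply \eqref{A1} by $w\rho_\eta$. This yields $\frac{d}{ds}\int w\partial_sw\rho_\eta$ in terms of
\[
\int(\partial_sw)^2\rho_\eta-\int(|\nabla w|^2-(y\cdot\nabla w)^2)\rho_\eta-\frac{2(p_c+1)}{(p_c-1)^2}\int w^2\rho_\eta+\frac1{s^a}\int|w|^{p_c+1}\ln^a\rho_\eta,
\]
plus the cross terms $\int w\,y\cdot\nabla\partial_sw\rho_\eta$. After integrating by parts, these cross terms are exactly compensated by the $\frac{n-2\alpha}{2}\frac{d}{ds}\int w^2\rho_\eta$ term in \eqref{Ieta}. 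Multiplied by $-(\eta-\alpha)$, this adds to $H_\eta'$ the favorable quantity
\[
-(\eta-\alpha)\Big[\int(|\nabla w|^2-(y\cdot\nabla w)^2)\rho_\eta+c\int w^2\rho_\eta+\frac{1}{s^a}\int|w|^{p_c+1}\ln^a\rho_\eta\Big].
\]
Here we use that the nonlinear term beats $(p_c+1)f$ via the identity $uf'(u)-(p_c+1)f(u)\ge -C\frac{|u|^{p_c+1}\ln^a}{\ln(u^2+2)}-C$ (with $a<0$), so the excess is lower order. It also adds a term $+(\eta-\alpha)\int(\partial_sw)^2\rho_\eta$.

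Writing all of this as $H_\eta'\le\frac{\eta(p_c+3)}2 H_\eta-\lambda_0\mathcal N$-type quantities$+C$ requires the main bookkeeping. The key observation is that the unweighted $\int(\partial_sw)^2\rho_\eta$ terms, of size $O(\eta)$, are controlled by the dissipation $2\eta\int(\partial_sw)^2\rho_\eta/(1-|y|^2)$. We choose $\eta_0$ small and $s\ge-\ln T^*(x)$ large, which holds by \eqref{ss00}, so that $\alpha(s)\ll\eta$ and all $O(1/s)$ errors are absorbed. The remaining constants (the $e^{-2s}$ and bounded-$u$ contributions of $f$) are absorbed by $\theta_2e^{-\frac{\eta(p_c+3)s}{2}}$, whose derivative $-\theta_2\frac{\eta(p_c+3)}2e^{-\cdots}$ dominates them for $\theta_2$ large. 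Integrating from $s$ to $s'$ gives \eqref{pGeta}.

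\emph{Positivity \eqref{positiveeta}.} We use the standard blow-up argument of Antonini--Merle. Suppose $\mathcal E_\eta(w(s_0),s_0)<0$. Then by monotonicity $H_\eta(w(s),s)\le -\epsilon e^{\frac{\eta(p_c+3)s}{2}}$ for $s\ge s_0$. Set $y(s)=\int w^2\rho_\eta$. Combining the identity for $y''$ with $H_\eta<0$ and the superquadratic nonlinearity, we get $yy''-\frac{p_c+3}{4}(y')^2\ge0$ up to controllable errors. This forces $w$ to blow up in finite $s$, contradicting the fact that $w$ is defined for all $s\ge-\ln T^*(x)$, since $T^*(x)\le T(x)$ by non-characteristicity. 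I expect this step, and specifically making the concavity argument robust to the $\alpha(s),\gamma(s)$ and logarithmic perturbations, to be the main obstacle. I would first try the argument with $a<0$ ensuring $\ln^a$ decays only slowly, so that $\frac{1}{s^a}\ln^a(\phi^2w^2+2)$ stays comparable to $1$ in the region where $|w|$ is large.
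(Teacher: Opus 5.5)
Your architecture for \eqref{pGeta} matches the paper's: differentiate $H_\eta=E_\eta+I_\eta$, compare with $\kappa H_\eta$ where $\kappa=\frac{\eta(p_c+3)}{2}$, multiply by $e^{-\kappa s}$ and let $\theta_2$ absorb the constants. However, your computation is wrong at the points where the proof actually happens, and as stated it would not close.

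(i) $\frac{p_c+3}{p_c-1}=n$, not $\frac{n+1}{2}$. This identity cancels the term $n\int_B(\partial_s w)^2\rho_{\eta}$ coming from $-2\int_B(y\cdot\nabla\partial_s w)\,\partial_s w\,\rho_{\eta}$ against the damping. With your value, an order-one anti-damping $\frac{n-1}{2}\int_B(\partial_s w)^2\rho_{\eta}$ would survive, and no $O(\eta)$ dissipation can absorb it.

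(ii) The term $2(\eta-\alpha)\int_B\partial_s w\,(y\cdot\nabla w)\rho_{\eta}$ in $E_\eta'$ cannot be handled by Cauchy--Schwarz. After the integrating factor, the only gradient coercivity left is about $\frac{\eta(p_c-1)}{4}\int_B(|\nabla w|^2-(y\cdot\nabla w)^2)\rho_{\eta}$. For $n\ge 3$, Young's inequality against $2\eta\int_B(\partial_s w)^2\frac{\rho_{\eta}}{1-|y|^2}$ would consume at least the whole dissipation. In the paper this term cancels exactly with $-2(\eta-\alpha)\int_B\partial_s w\,(y\cdot\nabla w)\rho_{\eta}$, which $I_\eta'$ produces when $-2\int_B w\,(y\cdot\nabla\partial_s w)\rho_{\eta}$ is integrated by parts. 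That cancellation, not the $\frac{n-2\alpha}{2}$ term, is what the coefficient $-(\eta-\alpha)$ in \eqref{Ieta} is for.

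(iii) The same integrations by parts, together with that of $(2\eta-2\alpha)\int_B w\,(y\cdot\nabla w)\rho_{\eta}$, leave singular terms that nothing in $I_\eta$ compensates:
\[
\begin{aligned}
&4\eta(\eta-\alpha)\int_B w\,\partial_s w\,\frac{|y|^2\rho_{\eta}}{1-|y|^2}-2\eta(\eta-\alpha)^2\int_B w^2\,\frac{|y|^2\rho_{\eta}}{1-|y|^2}\\
&\qquad=2\eta\int_B(\partial_s w)^2\frac{|y|^2\rho_{\eta}}{1-|y|^2}-2\eta\int_B\big(\partial_s w-(\eta-\alpha)w\big)^2\frac{|y|^2\rho_{\eta}}{1-|y|^2}.
\end{aligned}
\]
This can swallow the entire singular part of the dissipation. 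To keep the first term of \eqref{pGeta}, the paper applies Young with $\mu=\frac{p_c-1}{p_c+15}$ and then bounds $\int_B w^2\frac{|y|^2\rho_{\eta}}{1-|y|^2}$ by the Hardy inequality \eqref{AHardy}. Its gradient cost, $\frac{(\eta-\alpha)^2(p_c-1)}{8\eta}\int_B|\nabla w|^2(1-|y|^2)\rho_{\eta}$, is tuned to fit under the available $\frac{\eta(p_c-1)}{4}$. None of this appears in your proposal.

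(iv) Your signs for $I_\eta'$ are reversed. It contributes the unfavourable terms $+(\eta-\alpha)\int_B(|\nabla w|^2-(y\cdot\nabla w)^2)\rho_{\eta}$ and $+2\eta\big(\frac{p_c+1}{(p_c-1)^2}+\frac{n\eta}{2}\big)\int_B w^2\rho_{\eta}$. The favourable ones are $-\frac{\eta}{s^a}\int_B|w|^{p_c+1}\ln^a(\phi^2w^2+2)\rho_{\eta}$ and $-(\eta-\alpha)\int_B(\partial_s w)^2\rho_{\eta}$. The integrating factor works because $\frac{\kappa}{2}>\eta$, which makes the quadratic terms coercive, and $\frac{\kappa}{p_c+1}<\eta$, which keeps the nonlinear term coercive. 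Both inequalities amount to $p_c>1$.

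For \eqref{positiveeta} you give no argument: the Levine concavity step is exactly what you leave open. It is not routine here, because $\frac{d^2}{ds^2}\int_B w^2\rho_{\eta}$ contains the unsigned term $4\int_B\partial_s w\,(y\cdot\nabla w)\rho_{\eta}$ and the singular term $-8\eta\int_B w\,\partial_s w\,\frac{|y|^2\rho_{\eta}}{1-|y|^2}$, on top of the $\alpha$, $\gamma$ and logarithmic perturbations. The paper needs no blow-up criterion at all. Suppose $\mathcal{E}_\eta(w(s_1),s_1)<0$. Replace $T^*(x)$ by $T^*(x)-\delta$: then $\tilde w^\delta=w_{x,T^*(x)-\delta}$ solves the same equation on $B\times[1+s_1,\infty)$. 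For small $\delta$, continuity gives $\mathcal{E}_\eta(\tilde w^\delta(1+s_1),1+s_1)<0$, so the monotonicity already proved yields $\liminf_{s\to\infty}\mathcal{E}_\eta(\tilde w^\delta(s),s)<0$. On the other hand, absorb $I_\eta\ge-\frac{\eta-\alpha}{2(n-2\alpha)}\int_B(\partial_s\tilde w^\delta)^2\rho_{\eta}$ into $E_\eta$; this is where $\eta<\eta_0$ is used. This gives $\mathcal{E}_\eta(\tilde w^\delta(s),s)\ge -e^{-\kappa s}e^{-\frac{2(p_c+1)s}{p_c-1}}s^{\frac{2a}{p_c-1}}\int_B f(\phi\tilde w^\delta)\rho_{\eta}$. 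Since $T^*(x)-\delta$ is a regular time, formula \eqref{blowupbefore} shows this lower bound tends to $0$, a contradiction. This time-shift argument, which uses only the first part of the proposition and the local regularity of $u$, is the missing idea.
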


\bigskip
 
We begin with  the time derivative of $E_{\eta}(w(s),s)$ in the following lemma.
\begin{lem}\label{lem21}
For all $s\geq -\ln (T^*(x))$, we have
 \begin{eqnarray}\label{mai4}
\frac{d}{ds}(E_{\eta}(w(s),s))&=&
-2\eta\int_{B}(\partial_{s} w)^2\frac{\rho_{\eta}}{1-|y|^2}{\mathrm{d}}y +2(\eta-\alpha(s))\int_{B}\partial_{s} w 
 y\cdot \nabla w \rho_{\eta}{\mathrm{d}}y\no \\
&&+2(\eta- \alpha(s)) \ibint (\partial_{s}w)^2\rho_{\eta}\y+\Sigma_{0}(s),
\end{eqnarray}
where $\Sigma_{0}(s)$ satisfies
    \begin{equation}\label{mai5}
\Sigma_{0}(s)\leq  
\frac{C}{s^{a+1}}\ibint |w|^{p_c+1}\ln^a(\phi^2w^2+2)\rho_{\eta} \y+  
\frac{C}{s^{2}}\int_{B}w^{2}\rho_{\eta} {\mathrm{d}}y+C e^{-2s}.
\end{equation}
\end{lem}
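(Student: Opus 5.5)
We differentiate $E_\eta(w(s),s)$ directly, using equation \eqref{A1} to replace $\partial_s^2 w$. The derivative splits into three parts.

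First, differentiating the quadratic part gives
\[
\int_B \partial_s w\,\partial_s^2 w\,\rho_\eta \,dy+\int_B\big(\nabla w\cdot\nabla\partial_s w-(y\cdot\nabla w)(y\cdot\nabla\partial_s w)\big)\rho_\eta\,dy+\Big(\frac{2(p_c+1)}{(p_c-1)^2}-\gamma(s)\Big)\int_B w\,\partial_s w\,\rho_\eta\,dy .
\]
Second, there is $-\frac{\gamma'(s)}{2}\int_B w^2\rho_\eta\,dy$ from the explicit $s$-dependence of $\gamma$. Third, there are the terms from differentiating $e^{-\frac{2(p_c+1)s}{p_c-1}}s^{\frac{2a}{p_c-1}}\int_B f(\phi w)\rho_\eta\,dy$.

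We substitute \eqref{A1} into the first integral. The divergence term, integrated by parts against $\partial_s w\rho_\eta$, cancels the gradient terms exactly. The boundary term vanishes: the flux $\rho_\eta(\nabla w-(y\cdot\nabla w)y)\cdot y=\rho_\eta(1-|y|^2)\,y\cdot\nabla w$ is zero on $\partial B$. The linear $w$-terms cancel against the third integral. The drift term gives $2(\eta-\alpha)\int_B\partial_s w\, y\cdot\nabla w\,\rho_\eta\,dy$, which is the second term of \eqref{mai4}.

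The key computation is for the damping terms $-\big(\frac{p_c+3}{p_c-1}+2\alpha\big)\int_B(\partial_s w)^2\rho_\eta\,dy-2\int_B\partial_s w\,y\cdot\nabla\partial_s w\,\rho_\eta\,dy$. Writing $-2\int_B\partial_s w\,y\cdot\nabla\partial_s w\,\rho_\eta=-\int_B y\cdot\nabla(\partial_s w)^2\rho_\eta$ and integrating by parts gives
\[
\int_B(\partial_s w)^2\big(n\rho_\eta-2\eta|y|^2(1-|y|^2)^{\eta-1}\big)\,dy .
\]
We then use $|y|^2=1-(1-|y|^2)$ and $n-\frac{p_c+3}{p_c-1}=n-\frac{n+1}{2}\cdot 2\cdot\frac12\cdot2=0$; indeed $\frac{p_c+3}{p_c-1}=n$ when $p_c=1+\frac4{n-1}$. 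This yields exactly
\[
-2\eta\int_B(\partial_s w)^2\frac{\rho_\eta}{1-|y|^2}\,dy+2(\eta-\alpha)\int_B(\partial_s w)^2\rho_\eta\,dy .
\]
The boundary term is again zero since $\eta>0$.

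The nonlinear terms go as follows. The term $\frac1{s^a}\int_B|w|^{p_c-1}w\ln^a(\phi^2w^2+2)\,\partial_s w\,\rho_\eta\,dy$ from the equation cancels with the part of the derivative of the $f$-term in which $\partial_s$ falls on $w$. To see this, note that $e^{-\frac{2(p_c+1)s}{p_c-1}}s^{\frac{2a}{p_c-1}}\phi\, f'(\phi w)=\phi^{-p_c}s^{-a}\cdot s^{a}\cdots$; concretely, $\phi^{p_c+1}=e^{\frac{2(p_c+1)s}{p_c-1}}s^{-\frac{a(p_c+1)}{p_c-1}}$, and checking the powers gives the factor $s^{-a}$.

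All remaining terms are put into $\Sigma_0$. These are $-\frac{\gamma'}2\int_B w^2\rho_\eta\,dy$, which is bounded by $\frac{C}{s^2}\int_B w^2\rho_\eta\,dy$ since $\gamma'=O(s^{-2})$, and the $\partial_s$ derivative of the prefactor and of $\phi$ inside $f(\phi w)$. Set $F(s)=e^{-\frac{2(p_c+1)s}{p_c-1}}s^{\frac{2a}{p_c-1}}$. This residual has the form
\[
-\frac{F'}{F}\,F\!\int_B f(\phi w)\rho_\eta\,dy-F\,\frac{\phi'}{\phi}\int_B \phi w\, f'(\phi w)\rho_\eta\,dy,
\]
with $\frac{\phi'}{\phi}=\frac{2}{p_c-1}-\frac{a}{(p_c-1)s}$.

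The main obstacle is showing that the $O(1)$ contributions cancel up to $O(1/s)$. This cancellation rests on the asymptotics
\[
f(v)=\frac{|v|^{p_c+1}\ln^a(v^2+2)}{p_c+1}\big(1+O(1/\ln(v^2+2))\big)
\]
for large $|v|$, together with $\ln(\phi^2w^2+2)\approx\frac{4s}{p_c-1}$ where $w$ is not tiny. Following \cite{HZjmaa2020,HZ2022}, we would prove an elementary lemma: $\big|(p_c+1)f(v)-v f'(v)\big|\le C|v|^{p_c+1}\ln^{a-1}(v^2+2)+C$.

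With this lemma, the combination equals $\frac{2(p_c+1)}{p_c-1}F\big(\int_B f\rho_\eta\,dy-\frac1{p_c+1}\int_B\phi w f'\rho_\eta\,dy\big)$ plus $O(\frac1s)\,F\int_B|\phi w|^{p_c+1}\ln^a\rho_\eta\,dy$. The first piece is bounded via the lemma by $F\int_B\big(|\phi w|^{p_c+1}\ln^{a-1}+C\big)\rho_\eta\,dy$. We split $B$ into the region where $\phi^2w^2\ge e^{s}$ and its complement. On the first region $\ln(\phi^2w^2+2)\gtrsim s$, which gives $\frac{C}{s^{a+1}}\int_B|w|^{p_c+1}\ln^a(\phi^2w^2+2)\rho_\eta\,dy$. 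On the complement, we use $F\phi^{p_c+1}=s^{-a}$ and the smallness of $\phi w$. This gives the bound $\frac{C}{s^{2}}\int_B w^2\rho_\eta\,dy+Ce^{-2s}$, since there $|w|^{p_c+1}\ln^a\le e^{-(\cdot)s}$-type smallness times $w^2$, while the constant term contributes $CF\le Ce^{-2s}$. This yields \eqref{mai5}.
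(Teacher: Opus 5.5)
You have the identity right, and it is the paper's argument. Multiply \eqref{A1} by $\partial_s w\,\rho_\eta$, integrate by parts, and use $\frac{p_c+3}{p_c-1}=n$ together with $F\phi^{p_c+1}=s^{-a}$, where $F(s)=e^{-\frac{2(p_c+1)s}{p_c-1}}s^{\frac{2a}{p_c-1}}$. The residual you isolate is exactly the paper's $\Sigma_0$, because $f(v)-\frac{vf'(v)}{p_c+1}=f_1(v)+f_2(v)$. Your single lemma $|(p_c+1)f(v)-vf'(v)|\le C|v|^{p_c+1}\ln^{a-1}(v^2+2)+C$ plays the role of \eqref{equiv2}--\eqref{equiv3}. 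Two intermediate expressions are garbled (``$n-\frac{n+1}{2}\cdot2\cdot\frac12\cdot2$'' and ``$\phi^{-p_c}s^{-a}\cdot s^a\cdots$''), but the facts you actually use at those points are correct. Keeping the prefactor $F$ inside the splitting is in fact the careful version of the paper's appendix bounds. Those bounds cannot hold literally for all $s$: for fixed $\phi z$, their right-hand sides stay bounded as $s\to\infty$.

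The step that fails is the last one, for $n=2$. You split at $\phi^2w^2=e^{s}$ and claim the complement contributes at most $\frac{C}{s^2}\int_B w^2\rho_\eta\,dy+Ce^{-2s}$. Write $v=\phi w$. Then $s^{-a}|w|^{p_c-1}=e^{-2s}|v|^{p_c-1}$, and on the complement this is at most $e^{(\frac{p_c-1}{2}-2)s}=e^{-\frac{2(n-2)}{n-1}s}$, which gives no decay when $n=2$ ($p_c=5$). Just below the threshold, where $\ln(\phi^2w^2+2)\approx s$, the integrand $s^{-a}|w|^{p_c+1}\ln^{a-1}(\phi^2w^2+2)$ is then of size $s^{a-1}w^2$. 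That is not $O(s^{-2}w^2)$ when $-1<a<0$, and since $w^2\approx s^{a/2}$ there, it is not exponentially small either. Because $n=2$ is within the paper's range $n\ge 2$, your argument does not establish \eqref{mai5} in that case.

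The repair is to split at $\phi^2w^2=e^{\delta s}$ for some fixed $\delta\in(0,1)$, say $\delta=\frac12$.
\begin{itemize}
\item On $\{\phi^2w^2\ge e^{s/2}\}$ you still have $\ln^{a-1}(\phi^2w^2+2)\le\frac{2}{s}\ln^{a}(\phi^2w^2+2)$, which yields the $\frac{C}{s^{a+1}}$ term.
\item On the complement, $s^{-a}|w|^{p_c-1}\le e^{(\frac{p_c-1}{4}-2)s}\le e^{-s}$ for every $n\ge2$, since $p_c-1\le 4$. This gives a contribution at most $Ce^{-s}\int_B w^2\rho_\eta\,dy\le\frac{C}{s^2}\int_B w^2\rho_\eta\,dy$.
\end{itemize}
With this change your proof is complete and coincides with the paper's.
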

\bigskip

{\it Proof}: 
Multiplying \eqref{A1} by $\partial_{s} w\rho_{\eta}$ and integrating by parts over $B$, we obtain \eqref{mai4}, where
\begin{align}\label{mail1}
\Sigma_0(s)=&
\underbrace{\big(\frac{2p_c+2}{p_c-1}-\frac{2a}{(p_c-1)s}\big)
e^{-\frac{2(p_c+1)s}{p_c-1}}s^{\frac{2a}{p_c-1}}\ibint\big( f_1(\phi w)+f_2(\phi w)\big)
\rho_{\eta} \y}_{\Sigma^1_{0}(s)}\no\\
&+\frac{a}{(p_c+1)s^{a+1}}\ibint |w|^{p_c+1}\ln^a(\phi^2w^2+2)
\rho_{\eta}
\y \underbrace{-{\frac{\gamma' (s)}2\ibint w^2\rho_{\eta}\y}}_{\Sigma_0^2(s)},
 \end{align}
where 
\begin{equation}
f_1(x)= -\frac{ 2a} {(p_c+1)^2}{| x|^{p_c+1}}\ln^{{a-1}}(x^2+2 ),\label{defF2}
\end{equation}
and 
\begin{equation}\label{defF123}
f_2(x)=f(x)
-\frac{ 1} {p_c+1}{| x|^{p_c+1}}\ln^{a}(x^2+2 )-
f_1(x).
\end{equation}
According to \eqref{equiv2} and \eqref{equiv3},  we have
\begin{equation}\label{mais01}
\Sigma_{0}^{1}(s) \le \frac{C}{s^{a+1}} \int_{B} |w|^{p_c+1} \ln^a(\phi^2w^2+2) \rho_{\eta} dy + C e^{-2s}.
\end{equation}
Furthermore, the definition of $\gamma(s)$ in \eqref{defgamma} yields $|\gamma'(s)| \le C s^{-2}$, which implies
\begin{equation}\label{mai2025}
\Sigma_{0}^{2}(s) \le \frac{C}{s^2} \int_{B} w^2 \rho_{\eta}  dy.
\end{equation}
Combining \eqref{mail1}, \eqref{mais01}, and \eqref{mai2025}, we obtain \eqref{mai5}, which completes the proof of Lemma \ref{lem21}.
\Box
       
 \medskip

 We are now going to prove the following estimate for the functional $I_{\eta}(w(s),s)$.
\begin{lem}\label{lem2.4}
For all  $s\geq -\ln (T^*(x))$, we have
 \begin{eqnarray}\label{6b}
\frac{d}{ds}(I_{\eta}(w(s),s))&=&\hspace{-0,3cm}-(\eta-\alpha (s))\int_{B}(\partial_{s} w)^2\rho_{\eta}{\mathrm{d}}y+(\eta-\alpha (s))
\int_{B}(|\nabla w|^2-(y \cdot \nabla w)^2)\rho_{\eta} {\mathrm{d}}y\nonumber \\
&&-2(\eta-\alpha (s))\int_{B}\partial_{s} w y \cdot \nabla w\rho_{\eta}{\mathrm{d}}y+2\eta\Big(\frac{p_{c}+1}{(p_{c}-1)^2}+\frac{n\eta}2\Big)\int_{B}w^2\rho_{\eta} {\mathrm{d}}y\no \\
&&+4\eta (\eta-\alpha(s))\int_{B}w \partial_{s}w\frac{|y|^2\rho_{\eta}}{1-|y|^2}{\mathrm{d}}y-2\eta (\eta-\alpha(s))^2\int_{B}w^2\frac{|y|^2\rho_{\eta}}{1-|y|^2}{\mathrm{d}}y\nonumber \\
&&- \frac{\eta}{s^a} \int_{B} |w|^{p_c+1} \ln^a(\phi^2w^2+2) \rho_{\eta} 
{\mathrm{d}}y+\Sigma_{1}(s),
\end{eqnarray}
where $\Sigma_{1}(s)$ satisfies
  \begin{equation}\label{7}
\Sigma_{1}(s)\leq  \frac{C}{s}\int_{B} \Big(
w^2+(\partial_sw)^2+\frac{1}{s^{a}} |w|^{p_c+1}\ln^a(\phi^2w^2+2)\Big)\rho_{\eta} \y.
\end{equation}
\end{lem}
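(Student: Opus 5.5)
The plan is to differentiate $I_{\eta}(w(s),s)$ term by term: use equation \eqref{A1} to replace $\partial_s^2 w$, and integrate by parts in $y$ over $B$ against the weight $\rho_{\eta}$. Write $\beta(s)=\eta-\alpha(s)$. Then $\beta'(s)=-\alpha'(s)=O(s^{-2})$ and $\alpha(s)=O(1/s)$. This gives
\begin{align*}
\frac{d}{ds}I_{\eta}=&-\beta\int_B(\partial_s w)^2\rho_{\eta}\,\mathrm{d}y-\beta\int_B w\,\partial_s^2 w\,\rho_{\eta}\,\mathrm{d}y+n\beta\int_B w\,\partial_s w\,\rho_{\eta}\,\mathrm{d}y\\
&-\beta'\int_B w\,\partial_s w\,\rho_{\eta}\,\mathrm{d}y+\frac{d}{ds}\Big(\frac{(n-2\alpha)\beta}{2}\Big)\int_B w^2\rho_{\eta}\,\mathrm{d}y-\alpha\beta\int_B 2w\,\partial_s w\,\rho_{\eta}\,\mathrm{d}y.
\end{align*}
The terms carrying $\beta'$ and $\alpha$ are put into $\Sigma_1$. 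They are bounded by Cauchy--Schwarz by $\frac{C}{s}\int_B\big(w^2+(\partial_s w)^2\big)\rho_{\eta}\,\mathrm{d}y$.

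Next we insert \eqref{A1} into $-\beta\int_B w\,\partial_s^2 w\,\rho_{\eta}$ and handle each piece.
\begin{itemize}
\item The divergence part gives $+\beta\int_B\big(|\nabla w|^2-(y\cdot\nabla w)^2\big)\rho_{\eta}$.
\item The drift term $(2\eta-2\alpha)\,y\cdot\nabla w$ gives $-2\beta(\eta-\alpha)\int_B w\,y\cdot\nabla w\,\rho_{\eta}$. Integrating by parts with $\mathrm{div}(y\rho_{\eta})=n\rho_{\eta}-2\eta|y|^2\rho_{\eta}/(1-|y|^2)$, this becomes $n\beta^2\int_B w^2\rho_{\eta}-2\eta\beta^2\int_B w^2\frac{|y|^2\rho_{\eta}}{1-|y|^2}$.
\item The term $-2y\cdot\nabla\partial_s w$ gives $2\beta\int_B w\,y\cdot\nabla\partial_s w\,\rho_{\eta}$. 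Integrating by parts, this equals $-2\beta\int_B\partial_s w\,y\cdot\nabla w\,\rho_{\eta}-2n\beta\int_B w\,\partial_s w\,\rho_{\eta}+4\eta\beta\int_B w\,\partial_s w\frac{|y|^2\rho_{\eta}}{1-|y|^2}$.
\item The damping term $\big(\frac{p_c+3}{p_c-1}+2\alpha\big)\partial_s w$ gives $\beta\big(\frac{p_c+3}{p_c-1}\big)\int_B w\,\partial_s w\,\rho_{\eta}$ plus an $O(1/s)$ remainder.
\end{itemize}
For $p=p_c$ we have $\frac{p_c+3}{p_c-1}=\frac{n+1}{2}+1$. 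The constants in $I_{\eta}$ were chosen so that all the unweighted $w\,\partial_s w$ cross terms cancel, up to $O(1/s)$ coefficients, which go into $\Sigma_1$. I will check this bookkeeping using $(p_c+3)/(p_c-1)=(n+3)/2$, which gives $n\beta+2n\beta\cdot\frac12$-type matching; this is exactly why the coefficient $n$ appears in $I_{\eta}$.

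Then we collect the $w^2$ terms. The linear term $\frac{2(p_c+1)}{(p_c-1)^2}w$ contributes $\beta\frac{2(p_c+1)}{(p_c-1)^2}\int_B w^2\rho_{\eta}$. Adding the $n\beta^2$ term and replacing $\beta$ by $\eta$ gives $2\eta\big(\frac{p_c+1}{(p_c-1)^2}+\frac{n\eta}{2}\big)$. The errors of size $\alpha$ and the $\gamma(s)w$ term are $O(1/s)\int_B w^2\rho_{\eta}$ and go into $\Sigma_1$. The weighted term $-2\eta\beta^2\int_B w^2\frac{|y|^2\rho_{\eta}}{1-|y|^2}$ stays exactly as in \eqref{6b}.

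Finally, the nonlinear term gives $-\beta s^{-a}\int_B|w|^{p_c+1}\ln^a(\phi^2w^2+2)\rho_{\eta}$. This equals $-\eta(\cdots)$ plus $\alpha s^{-a}(\cdots)$, and the latter is bounded by $\frac{C}{s}\,s^{-a}\int_B|w|^{p_c+1}\ln^a(\cdot)\rho_{\eta}$, so it belongs to $\Sigma_1$.

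The main obstacle is the exact bookkeeping of the cross terms $\int_B w\,\partial_s w\,\rho_{\eta}$. These must cancel exactly at order $\eta$, since only $O(1/s)$ residues are allowed in $\Sigma_1$. I would verify this first, using the identity $\frac{p_c+3}{p_c-1}=\frac{n+3}{2}$ together with the time derivative of the $\frac{n\beta}{2}\int_B w^2\rho_{\eta}$ term. I would also justify the boundary terms in the integrations by parts: they vanish because $\rho_{\eta}$ vanishes on $\partial B$ for $\eta>0$.
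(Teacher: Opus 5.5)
Your route is the paper's: differentiate $I_\eta$, substitute \eqref{A1} for $\partial_s^2 w$, integrate by parts against $\rho_\eta$, and put every term with an $O(1/s)$ coefficient into $\Sigma_1$. Each individual contribution you compute is correct. This covers the divergence, drift, transport, linear and nonlinear terms, including the weighted terms $4\eta\beta\int_B w\,\partial_s w\,\frac{|y|^2\rho_\eta}{1-|y|^2}\,\mathrm{d}y$ and $-2\eta\beta^2\int_B w^2\frac{|y|^2\rho_\eta}{1-|y|^2}\,\mathrm{d}y$.

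The gap is in the step you yourself call the crux. You do not carry out the cancellation of the unweighted cross terms, and the identity you propose to use for it is false. Since $p_c-1=\frac{4}{n-1}$ and $p_c+3=\frac{4n}{n-1}$, one has $\frac{p_c+3}{p_c-1}=n$, not $\frac{n+3}{2}$; the two coincide only when $n=3$. With your value the cross terms would add up to $\beta\big(n+\frac{n+3}{2}-2n\big)\int_B w\,\partial_s w\,\rho_\eta\,\mathrm{d}y=\frac{3-n}{2}\,\beta\int_B w\,\partial_s w\,\rho_\eta\,\mathrm{d}y$. This is a term of size $\eta$, not $O(1/s)$. It is absent from the claimed identity and cannot be absorbed into $\Sigma_1$. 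So the verification, done as you describe it, would fail instead of confirming the lemma.

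With the correct identity the bookkeeping closes exactly. The unweighted cross terms are:
\begin{itemize}
\item $n\beta$, from differentiating $\frac{n\beta}{2}\int_B w^2\rho_\eta$;
\item $+n\beta$, from the damping;
\item $-2n\beta$, from integrating $2\beta\int_B w\,y\cdot\nabla\partial_s w\,\rho_\eta$ by parts.
\end{itemize}
These sum to zero. The $\alpha$-dependent pieces also cancel: $-2\alpha\beta$ comes from differentiating $-\alpha\beta\int_B w^2\rho_\eta$, and $+2\alpha\beta$ from the $2\alpha\,\partial_s w$ part of the damping. The only surviving cross term is $\alpha'(s)\int_B w\,\partial_s w\,\rho_\eta$.

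Add to it $-\frac{\alpha'(n+2\eta-4\alpha)}{2}\int_B w^2\rho_\eta$, the $O(\alpha,\gamma)$ corrections to the $w^2$ coefficient, and $\frac{\alpha}{s^a}\int_B |w|^{p_c+1}\ln^a(\phi^2w^2+2)\rho_\eta$. The result is exactly the paper's $\Sigma_1$, and the sum is bounded as in \eqref{7} because $|\alpha|+|\gamma|\le C/s$ and $|\alpha'|\le C/s^2$. The paper uses this same identity explicitly (``substituting $n=\frac{p_c+3}{p_c-1}$'') in the proof of Lemma \ref{L01}.
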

{\it Proof}: Note that $I_{\eta}(w(s),s)$ is a differentiable function, by using equation $\eqref{A1}$ and integrating by part, for all $s\geq -\ln( T^*(x))$ we have  \eqref{6b}, where 
\begin{eqnarray}\label{20b}
\Sigma_1(s)&=&\underbrace{\Big( n\alpha^2(s)-2n\eta \alpha(s)-\eta \gamma(s) 
+\alpha (s)\gamma(s) -\alpha(s)\frac{2p_{c}+2}{(p_{c}-1)^2}\Big)\int_{B}w^2\rho_{\eta} {\mathrm{d}}y}_{\Sigma_{1}^{1}(s)}\no \\
&&+\underbrace{\alpha' (s)\int_{B} w\partial_{s}w \rho_{\eta} dy-\frac{\alpha'(s) (n+2\eta-4\alpha (s))}{2}\int_{ B}w^2\rho_{\eta} {\mathrm{d}}y}_{\Sigma_{1}^{2}(s)}\no\\
&&+
\frac{\alpha(s)}{s^a} \int_{B} |w|^{p_c+1} \ln^a(\phi^2w^2+2) \rho_{\eta} 
{\mathrm{d}}y.
\end{eqnarray}
By applying the bound $|\alpha(s)| + |\alpha'(s)| + |\gamma(s)| \le \frac{C}{s}$ and using  inequality $ab \le a^2 + b^2$, we infer
\begin{equation}\label{18b}
\Sigma_{1}^{1}(s) + \Sigma_{1}^{2}(s) \leq \frac{C}{s} \int_{B}\big( (\partial_s w)^2 + w^2 \big)\rho_{\eta} \, dy.
\end{equation}
Consequently, by combining \eqref{18b} and \eqref{20b}, we  obtain the estimate \eqref{7}. This completes the proof of Lemma \ref{lem2.4}.
\Box

\bigskip

From Lemmas \ref{lem21} and  \ref{lem2.4}, we are in position to prove Proposition \ref{prop2.2}.
\\

{\it Proof of Proposition  \ref{prop2.2}}:
Now, we combine Lemmas 2.2 and 2.3,  the definition of the expression of $H_{\eta}(w(s),s)$ given by \eqref{Heta} and some straightforward computations, we infer
 \begin{eqnarray}\label{mai4bn}
\frac{d}{ds}(H_{\eta}(w(s),s))&=&
-2\eta\int_{B}(\partial_{s} w)^2\frac{\rho_{\eta}}{1-|y|^2}{\mathrm{d}}y  +\frac{\eta(p_c+3)}{2}H_{\eta}(w(s),s)\no \\
&&-\big(\frac{\eta (p_c-1)}4- \alpha(s)\big) \ibint (\partial_{s}w)^2\rho_{\eta}\y\no\\
&&-\big(\frac{\eta (p_c-1)}4- \alpha(s)\big)
\int_{B}(|\nabla w|^2-(y \cdot \nabla w)^2)\rho_{\eta} {\mathrm{d}}y\nonumber \\
&&-\big(\frac{\eta(p_{c}+1)}{2(p_{c}-1)}+\frac{n\eta^2(p_c-1)}4\Big)\int_{B}w^2\rho_{\eta} {\mathrm{d}}y\no \\
&& -\frac{\eta (p_c-1)}{(2p_c+2)s^{a}}\ibint |w|^{p_c+1}\ln^a(\phi^2w^2+2)\rho_{\eta} \y\no\\
&&+\underbrace{4\eta (\eta-\alpha(s))\int_{B}w \partial_{s}w\frac{|y|^2\rho_{\eta}}{1-|y|^2}{\mathrm{d}}y-2\eta (\eta-\alpha(s))^2\int_{B}w^2\frac{|y|^2\rho_{\eta}}{1-|y|^2}{\mathrm{d}}y}_{\Sigma_2^1(s)}\nonumber \\
&&+\underbrace{\frac{\eta(p_c+3)}{2}
e^{-\frac{2(p_c+1)s}{p_c-1}}s^{\frac{2a}{p_c-1}}  \ibint  \Big(f_1(\phi w) +f_2(\phi w)\Big)\rho_{\eta}\y}_{\Sigma_2^2(s)}\no\\
&&+\underbrace{\frac{\eta (p_c+3) ( 2\eta \alpha(s)+n\alpha(s)+\gamma (s)-2\alpha^2(s))}{4}
\int_{ B}w^2\rho_{\eta} {\mathrm{d}}y}_{\Sigma_2^3(s)}\no\\
&&+\underbrace{\frac{\eta (p_c+3)(\eta-\alpha (s))}2\int_{B} w\partial_{s}w \rho_{\eta} dy}_{\Sigma_2^4(s)} +\Sigma^5_{2}(s),\qquad \qquad
\end{eqnarray}
where $\Sigma^5_{2}(s)=\Sigma_{0}(s)+\Sigma_{1}(s).$

The next step is to estimate  each of the last five terms.
First, by  using 
 the  Young's inequality, we write for all $\mu\in (0,1)$   
$$\Sigma_{2}^{1}(s)\leq 2\eta(1-\mu)\int_{B}(\partial_{s} w)^2\frac{|y|^2\rho_{\eta}}{1-|y|^2}{\mathrm{d}}y+\frac{2\eta(\eta-\alpha(s))^2\mu }{1-\mu}\int_{B}w^2\frac{|y|^{2}\rho_{\eta}}{1-|y|^2}{\mathrm{d}}y.$$

Let us recall from  \cite{omar2}. the following Hardy type inequality
\begin{equation}\label{AHardy}
 \int_{B}h^2\frac{|y|^2\rho_{\eta}}{1-|y|^2}{\mathrm{d}}y\leq \frac{1}{\eta^2}\int_{B}|\nabla h|^2(1-|y|^2)\rho_{\eta} {\mathrm{d}}y+\frac{n}{\eta}\int_{B} h^2\rho_{\eta} {\mathrm{d}}y.
\end{equation}
We apply the Hardy type inequality  \eqref{AHardy} to the second term  and we choose $\mu= \frac{p_{c}-1}{p_{c}+15}$, we conclude that
\begin{eqnarray}\label{16b}
\Sigma_{2}^{1}(s)&\leq &\frac{32\eta}{p_{c}+15}\int_{B}(\partial_{s} w)^2\frac{|y|^2\rho_{\eta}}{1-|y|^2}{\mathrm{d}}y +\frac{(\eta-\alpha(s))^2n(p_{c}-1)}{8}\int_{B} w^2\rho_{\eta}{\mathrm{d}}y\nonumber \\
&&+\frac{(\eta-\alpha(s))^2(p_{c}-1)}{8\eta}\int_{B}|\nabla w|^2(1-|y|^2)\rho_{\eta} {\mathrm{d}}y.
\end{eqnarray}
Note from  \eqref{equiv2} and \eqref{equiv3}, we write, 
\begin{equation}\label{Juin1}
\Sigma_{2}^{2}(s)\le
\frac{C}{s^{a+1}}\ibint |w|^{p_c+1}\ln^a(\phi^2w^2+2)\rho_{\eta} \y+  C e^{-2s}.
\end{equation}
Furthermore, by exploiting  the  fact that  $|\alpha(s)|+|\gamma(s)|\le \frac{C}{s},$ we get 
\begin{equation}\label{18bt}
\Sigma_{2}^{3}(s)+\Sigma_{2}^{4}(s)\leq \frac{32\eta}{p_{c}+15}\int_{B} (\partial_sw)^2\rho_{\eta} {\mathrm{d}}y+C (\eta^2+\frac1{s})\int_{B} w^2\rho_{\eta} {\mathrm{d}}y.
\end{equation}
By Lemma \ref{lemm:esth} and \eqref{ss00}, we have $-\ln T^*(x) \ge -\ln T(x_0) \ge \tilde{S}_0$, which leads us to conclude
\begin{equation}\label{17j}
C\eta^2\int_{B} w^2\rho_{\eta} {\mathrm{d}}y
 \le \frac{\eta(p_c-1)}{(4p_c+4)s^a}\ibint |w|^{p_c+1}\ln^a(\phi^2w^2+2)\rho_{\eta} \y+
 C(\eta), \qquad  
 \forall s \ge -\ln T^*(x).
\end{equation}
Additionally, using the definition of $\Sigma^5_{2}(s)$ and inequalities \eqref{mai5} and \eqref{7}, we find
 \begin{equation}\label{bnbn}
 \Sigma^5_{2}(s)\leq  \frac{C}{s}\int_{B} \Big(
w^2+(\partial_sw)^2+\frac{1}{s^{a}} |w|^{p_c+1}\ln^a(\phi^2w^2+2)\Big)\rho_{\eta} \y.
\end{equation}
Hence, by exploiting \eqref{mai4bn} and inequalities \eqref{16b}, \eqref{Juin1}, \eqref{18b}, \eqref{17j}, and \eqref{bnbn}, we infer
 \begin{eqnarray}\label{Hmai4}
\frac{d}{ds}(H_{\eta}(w(s),s))&\le&\frac{\eta(p_c+3)}{2}H_{\eta}(w(s),s)
-\frac{\eta (p_c-1)}{p_c+15}\int_{B}(\partial_{s} w)^2\frac{\rho_{\eta}}{1-|y|^2}{\mathrm{d}}y \no \\
&&-\big(\frac{\eta(p_c-1)}4-\frac{C_0}{s}\big) \ibint (\partial_{s}w)^2\rho_{\eta}\y\no\\
&&-\big(\frac{\eta(p_c-1)}8-\frac{C_0}{s}\big) \ibint (|\nabla w|^2-(y \cdot \nabla w)^2)\rho_{\eta}\y\no\\
&&-\big(\frac{\eta(p_{c}+1)}{2(p_{c}-1)}+\frac{n\eta^2(p_c+1)}2-\frac{C_0}{s}\Big)\int_{B}w^2\rho_{\eta} {\mathrm{d}}y\no\\
&&-(\frac{\eta (p_c-1)}{4p_c+4}-\frac{C_0}{s}) 
\frac1{s^{a}}
\int_{B}   |w|^{p_c+1}\ln^a(\phi^2w^2+2)\rho_{\eta}{\mathrm{d}}y
+C_0.\qquad\qquad
\end{eqnarray}
In view of $T^*(x) \le T(x_0)$ and the assumption in \eqref{ss00} that $T(x_0)$ is sufficiently small, $-\ln(T^*(x))$ is sufficiently large to ensure that, for all $s \ge -\ln(T^*(x))$,
 \begin{eqnarray}\label{Hmai4bis}
\frac{d}{ds}(H_{\eta}(w(s),s))&\le&\frac{\eta(p_c+3)}{2}H_{\eta}(w(s),s)
-\frac{\eta (p_c-1)}{p_c+15}\int_{B}(\partial_{s} w)^2\frac{\rho_{\eta}}{1-|y|^2}{\mathrm{d}}y \\
&&-\frac{\eta(p_c-1)}8 \ibint (|\nabla w|^2-(y \cdot \nabla w)^2)\rho_{\eta}\y-\frac{\eta(p_{c}+1)}{4(p_{c}-1)}\int_{B}w^2\rho_{\eta} {\mathrm{d}}y\no\\
&&-\frac{\eta (p_c-1)}{4(p_c+1) s^{a}}
\int_{B}   |w|^{p_c+1}\ln^a(\phi^2w^2+2)\rho_{\eta}{\mathrm{d}}y+C_0.\no
\end{eqnarray} 
By using the definition of ${\cal E}_{\eta}(w(s),s))$ given  in  \eqref{Geta} together with  the estimate \eqref{Hmai4bis}, we  easily show that,   for all $s\geq -\ln(T^*(x))$,  
 \begin{eqnarray}\label{24}
 \frac{d}{ds}({\cal E}_{\eta}(w(s),s))\! &\!\!\leq &\!\!\!- \lambda_0
e^{\frac{-\eta(p_{c}+3)s}{2}}\int_{B}(\partial_{s} w)^2\frac{\rho_{\eta}}{1-|y|^2}{\mathrm{d}}y
-\lambda_0
e^{\frac{-\eta(p_{c}+3)s}{2}}\int_{B}(|\nabla w|^2-(y\cdot \nabla w)^2)\rho_{\eta} {\mathrm{d}}y\no\\
&&
-\lambda_0 e^{\frac{-\eta(p_{c}+3)s}{2}}\ibint w^2\rho_{\eta}\y-\frac{\lambda_0e^{\frac{-\eta(p_{c}+3)s}{2}}}{ s^a}
\int_{B}   |w|^{p_c+1}\ln^a(\phi^2w^2+2)\rho_{\eta}{\mathrm{d}}y\no\\
&&+\big(C_0-\frac{\theta_2 \eta (p_{c}+3)}{2}\big)e^{\frac{-\eta(p_{c}+3)s}{2}}.
\end{eqnarray}
where  $\lambda_{0}=\eta\min(\frac{ p_{c}-1}{p_{c}+15},\frac{ p_{c}-1}{8(p_{c}+1)},\frac{ p_{c}-1}{8}, \frac{ p_{c}+1}{4(p_c-1)})$.  Clearly, if we choose  $\theta_2 =\frac{2C_0}{\eta  (p_{c}+3)}$,  multiply the    inequality \eqref{24}
and   integrate in time between $s'$ and $s''$, for any $s''\ge s'\geq 
-\ln(T^*(x))$,  we obtain
\eqref{pGeta}.

\medskip

 We  prove    \eqref{positiveeta} here. 
  The argument is the
same as in the corresponding part in \cite{
HZjhde12, HZnonl12,H1, omar1, omar2}. We write the
proof for completeness. Arguing by contradiction, we assume that
there exists   
 $s_1 \geq -\ln (T^*(x))$ such that ${\cal E}_{\eta}(w(s_1),s_1)<0$. Since the energy ${\cal E}_{\eta}(w(s),s)$ decreases in time, we
have ${\cal E}_{\eta}(w(1+s_1),1+s_1)<0$.

 Consider now for $\delta>0$ the function
$\widetilde{w}^{\delta}(y,s)=w_{x,T^{*}(x)-\delta }(y,s)$. From
(\ref{scaling}), we see that for all $(y,s)\in B\times
[1+s_1,+\infty)$
\begin{equation}\label{blowupbefore}
 \widetilde{w}^{\delta}(y,s)=\frac{\phi (-\ln (\delta+e^{-s}))}{\ps}
w( \frac{y}{1+\delta e^s},-\ln (\delta+e^{-s})),
\end{equation}
where $\ps$ defined in \eqref{defphi}. Then, we make the following 3 observations: 
\begin{itemize}
\item  (A) Note that $\widetilde{w}^{\delta}$ is defined in  $ B\times [1+s_1,+\infty)$,
whenever $\delta>0$ is small enough such that
$-\ln(\delta+e^{-1-s_1})\ge s_1.$
\item (B) By construction, $\widetilde{w}^{\delta}$
is also a solution of equation (\ref{A1}).
\item (C) For $\delta$ small enough, we have
${\cal E}_{\eta}(\widetilde{w}^{\delta}(1+s_1),1+s_1)<0$ by continuity of the function
$\delta \mapsto {\cal E}_{\eta}(\widetilde{w}^{\delta}(1+s_1),1+s_1)$.
\end{itemize}
Now, we fix $\delta=\delta_1>0$ such that (A), (B) and (C) hold. Since
 ${\cal E}_{\eta}(\widetilde{w}^{\delta_1}(s),s)$ is decreasing  in time, we have
 \begin{equation}
\label{255}
\liminf_{s\rightarrow +\infty}{\cal E}_{\eta}(\widetilde{w}^{\delta_1}(s),s)\le {\cal E}_{\eta}(\widetilde{w}^{\delta_1}(1+s_1),1+s_1)<0,
 \end{equation}
on the one hand. 
  by using  the following  basic inequality 
$ab\le \frac12( a^2+b^2)$,  we have 
\begin{equation}\label{c55}
I_{\eta}(\widetilde{w}^{\delta_1},s)
\ge- \frac{\eta-\alpha(s)}{2(n-2\alpha(s))}
\ibint(\partial_s\widetilde{w}^{\delta_1})^2\rho_{\eta} {\mathrm{d}}y.
\end{equation}
By  \eqref{c55},  for sufficiently small $  \eta_0>0$, and  by using \eqref{ss00},  we deduce that  for all $\eta \in (0, \eta_0)$,
 for all $s\geq -\ln(T^*(x))$
\begin{equation}
H_{\eta}(\widetilde{w}^{\delta_1}(s),s)\ge 
-e^{-\frac{2(p_c+1)s}{p_c-1}}s^{\frac{2a}{p_c-1}}  \ibint  f(\ps \widetilde{w}^{\delta_1})\rho_{\eta} \y. \nonumber
\end{equation}
Therefore, by  the expression of ${\cal E}_{\eta}$ given by \eqref{Geta}, we have
\begin{equation}
{\cal E}_{\eta}(\widetilde{w}^{\delta_1}(s),s)\ge 
- e^{-\frac{\eta(p_{c}+3)s}{2}} e^{-\frac{2(p_c+1)s}{p_c-1}}s^{\frac{2a}{p_c-1}}  \ibint  f(\ps \widetilde{w}^{\delta_1})\rho_{\eta} \y
. \nonumber
\end{equation}
Due to  \eqref{equiv6},  and the  fact that $0\le \rho_{\eta}\le 1$, we infer,
\begin{align}\label{N0}
{\cal E}_{\eta}(\widetilde{w}^{\delta_1}(s),s)\ge
-C e^{-\frac{\eta(p_{c}+3)s}{2}} e^{-\frac{2(p_c+1)s}{p_c-1}}s^{\frac{2a}{p_c-1}}  \ibint  |\phi(s) \widetilde{w}^{\delta_1}|^{ p_c+1}  \y
-Ce^{-s}.
\end{align}
 Notice that,
after a change of variables defined in \eqref{blowupbefore}, we find that
\begin{equation*}
  \ibint  |\phi(s) \widetilde{w}^{\delta_1}|^{ p_c+1 } \y
=(1+\delta_1e^{s})^n\phi^{ p_c+1}\big(-\ln (\delta_1+e^{-s})\big)  \ibint 
|w(z,-\ln
(\delta_1+e^{-s}))|^{ p_c+1} {\mathrm{d}}z.
\end{equation*}
Since we have $-\ln (\delta_1+e^{-s})\rightarrow -\ln \delta_1$
as $s\rightarrow +\infty$, then
$\phi (-\ln (\delta_1+e^{-s})\rightarrow \phi(-\ln (\delta_1))$. Moreover, by exploiting the fact that 
$ p_c<1+\frac4{n-2}$.  Then    $\|w(s)\|_{L^{p_c+1}(B)}$ is
locally bounded, by a continuity argument, it follows that the
former integral remains bounded and
\begin{align}\label{N0}
{\cal E}_{\eta}(\widetilde{w}^{\delta_1}(s),s)\ge
-C{(\delta_1+e^{-s})^n}e^{-s}s^{\frac{2a}{p_c-1}} 
e^{-\frac{\eta(p_{c}+3)s}{2}}  
-Ce^{-s}
\rightarrow 0.
\end{align}
as $s\rightarrow +\infty$. So, it follows that
\begin{equation}\label{d2}
\liminf_{s\rightarrow +\infty}{\cal E}_{\eta}(\widetilde{w}^{\delta_1}(s),s)\ge 0 .
\end{equation}

\noindent
  From \eqref{255}, this is a contradiction. Thus \eqref{positiveeta} holds.
 This concludes  the  proof of Proposition \ref{prop2.2}.
\Box

According to  Proposition \ref{prop2.2}, we obtain the following corollary which summarizes the principal properties of $H_{\eta}(w(s),s)$.
\begin{cors}\label{cor01}{\bf (Estimate on $H_{\eta}(w(s),s)$).}
For all $\eta \in (0,\eta_0)$,  
for all $T_0 \in (0,T(x_{0})]$, 
  $x\in \R^n,$ such that $|x-x_0|\le \frac{T_0}{\delta_0(x_0)}$,  and $s\geq -\ln (T^*(x)),$  
we have
\begin{equation}
-C\leq H_{\eta}(w(s),s) \leq \Big(\theta_2 +H_{\eta}(w(\tilde s_0),\tilde s_0) \Big)e^{\frac{\eta(p_{c}+3)s}{2}},
\end{equation}
\begin{equation}\label{cor01A}
\int_{s}^{s+1}\int_{B}(\partial_{s} w)^2\frac{\rho_{\eta}}{1-|y|^2}{\mathrm{d}}y{\mathrm{d}}\tau\leq C\Big(\theta_2 +H_{\eta}(w(\tilde s_0),\tilde s_0 ) \Big)e^{\frac{\eta(p_{c}+3)s}{2}}, 
\end{equation}
\begin{equation}  \label{cor01B}
\int_{s}^{s+1}
\int_{B}(w^2+|\nabla w|^2-(y\cdot \nabla w)^2)\rho_{\eta} {\mathrm{d}}y{\mathrm{d}}\tau\leq C\Big(\theta_2 +H_{\eta}(w(\tilde s_0),\tilde s_0 ) \Big)e^{\frac{\eta(p_{c}+3)s}{2}}, 
\end{equation}
\begin{equation} \label{cor01C}
\frac{1}{ s^a} \int_{s}^{s+1} 
\int_{B}   |w|^{p_c+1}\ln^a(\phi^2w^2+2)\rho_{\eta} {\mathrm{d}}y{\mathrm{d}}\tau \leq C\Big(\theta_2 +H_{\eta}(w(\tilde s_0),\tilde s_0 ) \Big)e^{\frac{\eta(p_{c}+3)s}{2}}, 
\end{equation}
where $w=w_{x,T^*(x)}$ is defined in \eqref{scaling}, and where   $\tilde s_0= -\ln (T^*(x))$.
\end{cors}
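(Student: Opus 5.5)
The plan is to derive everything from the monotonicity \eqref{pGeta} and the positivity \eqref{positiveeta} of ${\cal E}_{\eta}$ in Proposition \ref{prop2.2}, using the definition \eqref{Geta}, ${\cal E}_{\eta}=(H_{\eta}+\theta_2)e^{-\frac{\eta(p_c+3)s}{2}}$.

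\textbf{Upper bound on $H_\eta$.} Since ${\cal E}_{\eta}(w(s),s)\le {\cal E}_{\eta}(w(\tilde s_0),\tilde s_0)$ for $s\ge\tilde s_0$, we get
\[
H_{\eta}(w(s),s)\le \big(\theta_2+H_{\eta}(w(\tilde s_0),\tilde s_0)\big)e^{\frac{\eta(p_c+3)(s-\tilde s_0)}{2}}-\theta_2 .
\]
Since $\tilde s_0>0$ by \eqref{ss00}, the factor $e^{-\frac{\eta(p_c+3)\tilde s_0}{2}}$ is at most $1$. Moreover $\theta_2+H_\eta(\tilde s_0)\ge 0$ by positivity. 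Together these give the claimed upper bound.

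\textbf{Lower bound on $H_\eta$.} Positivity gives only $H_\eta\ge-\theta_2$, and $\theta_2=\theta_2(\eta)$. Since $\eta$ is fixed in $(0,\eta_0)$, this constant is acceptable; I would read the $C$ in the statement as allowed to depend on $\eta$. If a constant independent of $\eta$ is really needed, I would instead redo the argument behind \eqref{positiveeta}. That means taking the blow-up-before-$T^*$ contradiction with $\widetilde w^{\delta}$ and replacing the limit $0$ by a constant. The idea is to test that $H_\eta\ge -C$ would otherwise force ${\cal E}_\eta$ negative after adding $\theta_2$. I expect this dependence issue to be the only delicate point, and I would settle it by allowing $C=C(\eta)$.

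\textbf{Dissipation bounds.} Integrate \eqref{pGeta} between $s$ and $s+1$ and use ${\cal E}_\eta(w(s+1),s+1)\ge0$. This gives
\[
\lambda_0\int_s^{s+1}e^{-\frac{\eta(p_c+3)\tau}{2}}\,\mathcal D(\tau)\,d\tau\le {\cal E}_\eta(w(s),s)\le {\cal E}_\eta(w(\tilde s_0),\tilde s_0),
\]
where $\mathcal D$ is the sum of the four nonnegative dissipated quantities. On $[s,s+1]$ we have $e^{-\frac{\eta(p_c+3)\tau}{2}}\ge e^{-\frac{\eta(p_c+3)}{2}}e^{-\frac{\eta(p_c+3)s}{2}}$. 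Multiplying through by $e^{\frac{\eta(p_c+3)s}{2}}/\lambda_0$ then yields \eqref{cor01A}, \eqref{cor01B} and \eqref{cor01C} simultaneously, each with right-hand side $C(\theta_2+H_\eta(w(\tilde s_0),\tilde s_0))e^{\frac{\eta(p_c+3)s}{2}}$, where $C$ depends on $\lambda_0,\eta,n$. For \eqref{cor01B}, the $w^2$ term and the term $|\nabla w|^2-(y\cdot\nabla w)^2$ appear separately in \eqref{pGeta}, so they add directly. This step is routine.
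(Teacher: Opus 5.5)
Your proposal is correct and is essentially the paper's argument. The paper states this corollary without an explicit proof, as a direct consequence of Proposition \ref{prop2.2}: monotonicity \eqref{pGeta} and positivity \eqref{positiveeta} of ${\cal E}_\eta=(H_\eta+\theta_2)e^{-\frac{\eta(p_c+3)s}{2}}$ give $H_\eta\ge-\theta_2(\eta)$ and the upper bound, and integrating over $[s,s+1]$ with ${\cal E}_\eta(w(s+1),s+1)\ge 0$ gives the space-time bounds. The one detail you left implicit is the weight in \eqref{cor01C}, which is harmless: since $a<0$, $s^{-a}\le\tau^{-a}$ for $\tau\in[s,s+1]$.
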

\begin{nb}
Using the definition \eqref{scaling}, we write easily 
$$C\theta_2 +CH_{\eta}(w(\tilde s_{0}),\tilde s_{0}) \leq \tilde K,$$
where $ \tilde K= \tilde K\Big(\eta,T_{0},\|(u(0),\partial_{t}u(0))\|_{H^{1}\times L^{2}(B(x_{0},\frac{T_{0}(x_{0})}{\delta_{0}(x_{0})}))}\Big)$ and $\delta_{0}(x_{0})\in (0,1)$ is defined in \eqref {nonchar}.
\end{nb}

\medskip

From Corollary \ref{cor01}, we are in position to prove the   Proposition \ref{prop21} \\

{\it Proof of  Proposition \ref{prop21} }
Let $\eta \in (0,\eta_1)$. 
Note that the estimate on the space-time $L^{2}$ norm of $\partial_{s} w$ was already proved in \eqref{cor01A}.
 Moreover, by using  the estimates \eqref{cor01B},  and \eqref{cor01C},  we  prove  easily  the boundedness of 
$H^{1}_{loc,u}(\R^n)$ norm of  $ w$,  and the nonlinear norm
 with the ball   $B(0,\frac12 )$.
Thanks to the covering technique (we refer the reader to Merle
 and Zaag \cite{MZimrn05} (pure power case) and Hamza and Zaag in  Lemma 2.8 in \cite{HZjhde12}),  we easily extend this estimate from $B(0,\frac12 )$ to $B$. This concludes the proof
  of the Proposition \ref{prop21}.

\Box

\subsection{Proof of  Proposition \ref{prop123} } 

In this subsection, we assume $\eta\in (0,\eta_0)$, and 
we start by introducing the   three crucial   new
  functionals  defined by the following:
\begin{equation}\label{Po}
M_{\eta}(w(s))=\int_{B}\Big(y\cdot \nabla w \partial_sw+(y\cdot \nabla w)^2
\Big)\p {\mathrm{d}}y,
\end{equation}
\begin{equation}\label{Po1bis}
 J_{\eta}(w(s))= -\displaystyle\int_{B}w(\partial_s w+2y\cdot \grad w ) \frac{\p}{\sqrt{1-|y|^2}}{\mathrm{d}}y- \frac{ n}2 \displaystyle\int_{B}w^2\frac{\p}{\sqrt{1-|y|^2}}
 {\mathrm{d}}y,
\end{equation}
and
\begin{equation}\label{Po1bis1}
 {\cal{L}}_{\eta}(w(s))=M_{\frac12+\eta}(w(s))+(\frac12+\eta){{J}}_{\eta}(w(s)).
\end{equation}

\bigskip

First, we shall show that the above three functionals are well-defined. 
From    the fact that $(w,\partial_sw) \in  H^1(B)\times L^2(B)$, 
and  the basic inequality
$\big|M_{\eta}(w(s))\big|\le C  \int_{B} \big(|\nabla w|^2 +(\partial_sw)^2\big)
 {\mathrm{d}}y$, we conclude that the functional $M_{\eta}(w(s))$ is well-defined.
Next,  by the   Hardy type inequality \eqref{AHardy}
and  the algebraic  identity  $\frac{w^2}{1-|y|^2}=\frac{|y|^2w^2}{1-|y|^2}+w^2$, 
we conclude that 
\begin{equation}\label{Hardy}
 \int_{B}w^2\frac{\p}{{1-|y|^2}}\y\leq C\int_{B}|\nabla w|^2(1-|y|^2)\p \y+C\int_{B} w^2\p \y.
\end{equation}
Therefore, by observing
$\big|J_{\eta}(w(s))\big|\le C  \int_{B} \big(|\nabla w|^2 +(\partial_sw)^2+w^2\frac{\p}{1-|y|^2}\big)
 {\mathrm{d}}y$, exploiting \eqref{Hardy},
 we deduce that  $J_{\eta}(w(s))$ is well defined. 
 Consequently, $\mathcal{L}_{\eta}(w(s))$, being a combination of these terms as defined in \eqref{Po1bis1}, is also well-defined.

\bigskip
%

The first functional, $M_{\eta}(w(s))$, is derived from a Pohozaev-type identity obtained by multiplying equation \eqref{A1} with $(y \cdot \nabla w)\rho$. The second functional, $J_{\eta}(w(s))$, arises by multiplying equation \eqref{A1} by $w(1-|y|^2)^{-1/2}\rho$. We then introduce the composite functional $\mathcal{L}_{\eta}(w(s)) = M_{1/2+\eta}(w(s)) + bJ_{\eta}(w(s))$, where the constant $b$ is  chosen to cancel the singular term$\int_{B} |\nabla_{\theta} w|^2 \frac{|y|^2 \rho_{\eta}}{1-|y|^2} \mathrm{d}y$ upon estimating the time derivative of $\mathcal{L}_{\eta}(w(s))$. This construction enables us to control the time average of the nonlinear term with a singular weight
representing one of the primary contributions of this paper.
Consequently, the preceding analysis allows us to derive a bound for the time average of the singular integral $\int_{B} |\nabla_{\theta} w|^2 \frac{ \rho_{\eta}}{1-|y|^2} \mathrm{d}y.$

\medskip

We begin by estimating the functional $M_{\eta }(w(s))$ in the following lemma:
\begin{lem}\label{L01}
For all     $s'>s \geq -\ln (T^*(x))$, we have 
 \begin{eqnarray}\label{N0t1gg}
M_{\eta}(w(s'))-M_{\eta}(w(s))
 &=&-\eta \int_s^{s'} \int_{B}|\nabla_{\theta}w|^2
\frac{|y|^2\p}{1-|y|^2}\y{\mathrm{d}}\tau
+ \int_s^{s'}  (\eta-2\alpha(\tau))\int_{B}(y\cdot\grad  w)^2
\p{\mathrm{d}}y{\mathrm{d}}\tau\no\\
&&+\frac{n}2\int_s^{s'}  \int_{B}\Big(|\grad w|^2-(y\cdot\grad  w)^2\Big) \p
{\mathrm{d}}y {\mathrm{d}}\tau-\int_s^{s'} \int_{B}|\grad w|^2 \p
{\mathrm{d}}y{\mathrm{d}}\tau\nonumber\\
&&-\int_s^{s'}(n+2\alpha(\tau))  \int_{B}y \cdot \grad w \partial_{s}w\rho_{\eta} \y{\mathrm{d}}\tau\\
&&-\int_s^{s'} (\frac{2p_{c}+2}{(p_{c}-1)^2}-\gamma(\tau)) \int_{ B} wy\cdot \nabla w \rho_{\eta}\y{\mathrm{d}}\tau \no\\
&&- (n+2\eta) \int_s^{s'} e^{-\frac{2(p_c+1)\tau}{p_c-1}}\tau^{\frac{2a}{p_c-1}} \int_{B}
f(\phi w) \rho_{\eta} \y{\mathrm{d}}\tau\no\\
&&+2\eta \int_s^{s'}  e^{-\frac{2(p_c+1)\tau}{p_c-1}}\tau^{\frac{2a}{p_c-1}}  \int_{B}
f(\phi w) \frac{\rho_{\eta}}{1-|y|^2} \y{\mathrm{d}}\tau\no\\
&&-\frac{n}2 \int_s^{s'}  \int_{B}  (\partial_sw)^2 \p {\mathrm{d}}y{\mathrm{d}}\tau
+\eta \int_s^{s'}  \int_{B}  (\partial_sw)^2 \frac{|y|^2\p}{1-|y|^2}
{\mathrm{d}}y{\mathrm{d}}\tau. \nonumber
\end{eqnarray}
\end{lem}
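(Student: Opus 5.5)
The identity \eqref{N0t1gg} is an exact Pohozaev-type identity. I would obtain it by differentiating $M_{\eta}(w(s))$ in $s$, substituting equation \eqref{A1} for $\partial_s^2 w$, integrating by parts over $B$, and finally integrating in time from $s$ to $s'$. To keep the boundary terms harmless, I would first argue for smooth, compactly supported approximations of the data and then pass to the limit by density. Since $\rho_\eta=(1-|y|^2)^\eta$ vanishes on $\partial B$ for $\eta>0$, all boundary contributions disappear. Direct differentiation gives
\[
\frac{d}{ds}M_{\eta}(w(s))=\int_B\Big( y\cdot\nabla\partial_s w\,\partial_s w + y\cdot\nabla w\,\partial_s^2 w+2\, y\cdot\nabla w\; y\cdot\nabla\partial_s w\Big)\rho_\eta\,{\mathrm{d}}y .
\]

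\textbf{Terms without $\partial_s^2 w$.} I would treat the pieces that do not involve \eqref{A1} first.
\begin{itemize}
\item For the first term, write $y\cdot\nabla\partial_s w\,\partial_s w=\frac12 y\cdot\nabla(\partial_s w)^2$ and integrate by parts using $\div(y\rho_\eta)=n\rho_\eta-2\eta|y|^2\rho_\eta/(1-|y|^2)$. This produces $-\frac n2\int(\partial_s w)^2\rho_\eta+\eta\int(\partial_s w)^2\frac{|y|^2\rho_\eta}{1-|y|^2}$, which are the last two terms of \eqref{N0t1gg}.
\item The term $-2y\cdot\nabla\partial_s w$ from \eqref{A1}, multiplied by $y\cdot\nabla w\rho_\eta$, cancels exactly the third term $2\,y\cdot\nabla w\,y\cdot\nabla\partial_s w$.
\end{itemize}

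\textbf{Lower-order terms of \eqref{A1}.} These are read off directly:
\begin{itemize}
\item $(2\eta-2\alpha)y\cdot\nabla w$ gives $(2\eta-2\alpha)\int (y\cdot\nabla w)^2\rho_\eta$.
\item The linear term gives $-(\frac{2p_c+2}{(p_c-1)^2}-\gamma)\int w\,y\cdot\nabla w\rho_\eta$. I would leave it unintegrated, as in the statement.
\item The damping term gives $-(\frac{p_c+3}{p_c-1}+2\alpha)\int y\cdot\nabla w\,\partial_s w\rho_\eta$.
\end{itemize}
The coefficient of $\int y\cdot\nabla w\,\partial_s w\,\rho_\eta$ must equal $-(n+2\alpha)$. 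Using $\frac{p_c+3}{p_c-1}=n$, which holds because $p_c-1=\frac4{n-1}$, this matches.

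\textbf{Nonlinear term.} Set $F(w,s)=e^{-\frac{2(p_c+1)s}{p_c-1}}s^{\frac{2a}{p_c-1}}f(\phi w)$. Then $\partial_w F=s^{-a}|w|^{p_c-1}w\ln^a(\phi^2w^2+2)$, so the nonlinear contribution is $\int y\cdot\nabla_y[F(w,s)]\rho_\eta$. Integrating by parts gives $-n\int F\rho_\eta+2\eta\int F\frac{|y|^2\rho_\eta}{1-|y|^2}$. Writing $\frac{|y|^2}{1-|y|^2}=\frac1{1-|y|^2}-1$ yields $-(n+2\eta)\int F\rho_\eta+2\eta\int F\frac{\rho_\eta}{1-|y|^2}$, as stated.

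\textbf{Divergence term (main step).} This is the only delicate computation. I would write
\[
\int_B y\cdot\nabla w\,\div\big(\rho_\eta(\nabla w-(y\cdot\nabla w)y)\big)\,{\mathrm{d}}y=-\int_B \rho_\eta\,\nabla(y\cdot\nabla w)\cdot(\nabla w-(y\cdot\nabla w)y)\,{\mathrm{d}}y ,
\]
and use $\nabla(y\cdot\nabla w)=\nabla w+D^2w\,y$. The Hessian pieces combine as $\frac12 y\cdot\nabla\big(|\nabla w|^2-(y\cdot\nabla w)^2\big)+(y\cdot\nabla w)^2$. The derivative-of-$y$ piece gives $-\int(|\nabla w|^2-(y\cdot\nabla w)^2)\rho_\eta$.

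Integrating the gradient piece by parts with $\div(y\rho_\eta)$ produces $\frac n2\int(|\nabla w|^2-(y\cdot\nabla w)^2)\rho_\eta$ and a singular term $-\eta\int(|\nabla w|^2-(y\cdot\nabla w)^2)\frac{|y|^2\rho_\eta}{1-|y|^2}$. I would then apply \eqref{12nov6}: $|\nabla w|^2-(y\cdot\nabla w)^2=|\nabla_\theta w|^2+(1-|y|^2)|\nabla_r w|^2$, together with $|y|^2|\nabla_r w|^2=(y\cdot\nabla w)^2$. This splits the singular term into $-\eta\int|\nabla_\theta w|^2\frac{|y|^2\rho_\eta}{1-|y|^2}$ and a regular part $-\eta\int(y\cdot\nabla w)^2\rho_\eta$.

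\textbf{Collecting coefficients.} The total $(y\cdot\nabla w)^2$ coefficient is $-(y\cdot\nabla w)^2\cdot\eta$ from the singular split, $+(y\cdot\nabla w)^2$ from the Hessian identity, and $(2\eta-2\alpha)$ from the drift term. Rewriting $-\int(|\nabla w|^2-(y\cdot\nabla w)^2)\rho_\eta=-\int|\nabla w|^2\rho_\eta+\int(y\cdot\nabla w)^2\rho_\eta$, everything should telescope to $(\eta-2\alpha)\int(y\cdot\nabla w)^2\rho_\eta$ plus $-\int|\nabla w|^2\rho_\eta$. The main obstacle is this exact bookkeeping of the $(y\cdot\nabla w)^2$ terms, and I would double-check it in radial coordinates.
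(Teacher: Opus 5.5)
Your proposal is correct and follows the paper's route: differentiate $M_\eta(w(s))$, substitute \eqref{A1}, and evaluate $\int_B y\cdot\nabla w\,\div(\rho_\eta\nabla w-\rho_\eta(y\cdot\nabla w)y)\,{\mathrm{d}}y$ by integration by parts. The paper's Appendix B splits this term into a Laplacian part, a $\div((y\cdot\nabla w)y)$ part and a $\nabla\rho_\eta$ part, while you move the whole divergence onto $y\cdot\nabla w$ at once, which is equally valid.

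There is one sign slip in your final tally. After the minus sign from that integration by parts, the Hessian identity contributes $-\int_B(y\cdot\nabla w)^2\rho_\eta\,{\mathrm{d}}y$, not $+$. This cancels the $+\int_B(y\cdot\nabla w)^2\rho_\eta\,{\mathrm{d}}y$ coming from rewriting $-\int_B(|\nabla w|^2-(y\cdot\nabla w)^2)\rho_\eta\,{\mathrm{d}}y$, so the net coefficient is $-\eta+(2\eta-2\alpha)=\eta-2\alpha$, as you claimed.
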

{\it Proof:} Note that  $\L$ is a differentiable
function and we get,  for all
 $s\ge     \max(-\ln T^*(x),1)$
\begin{equation}
\frac{d}{ds}\L=  \int_{B}y\cdot \grad  w (\partial^2_sw+2y\cdot \grad \partial_s w)\rho_{\eta} \y+ \int_{ B}y\cdot \grad \partial_sw \partial_sw \rho_{\eta} \y.\no
\end{equation}
By virtue of \eqref{A1} and the divergence identity $\text{div}(\rho_{\eta} y) = n\rho_{\eta} - \frac{2\eta |y|^2 \rho_\eta}{1 - |y|^2}$, an integration by parts shows that
\begin{eqnarray}\label{122}
\frac{d}{ds}\L&=&  \int_{B}y\cdot \grad  w \div (\rho_{\eta}\nabla w-\rho_{\eta} (y \cdot \nabla w)y)\y\no\\
&&+(2\eta-2\alpha(s))  \int_{ B} (y\cdot \nabla w)^2  \rho_{\eta} \y
-(\frac{p_c+3}{p_c-1}+2\alpha(s))
\int_{B}y \cdot \grad w \partial_{s}w\rho_{\eta} \y\no\\
&&-(\frac{2p_{c}+2}{(p_{c}-1)^2}-\gamma(s))
\int_{ B} wy\cdot \nabla w \rho_{\eta}\y \no\\
&&-ne^{-\frac{2(p_c+1)s}{p_c-1}}s^{\frac{2a}{p_c-1}}  \int_{B}
f(\phi w) \rho_{\eta} \y+2\eta e^{-\frac{2(p_c+1)s}{p_c-1}}s^{\frac{2a}{p_c-1}}  \int_{B}
f(\phi w) \frac{|y|^2\rho_{\eta}}{1-|y|^2} \y\no\\
&&-\frac{n}2 \int_{B}  (\partial_sw)^2 \p {\mathrm{d}}y
+\eta  \int_{B}  (\partial_sw)^2 \frac{|y|^2\p}{1-|y|^2}
{\mathrm{d}}y. \nonumber
\end{eqnarray}
Additionally, an integration by parts, detailed in Appendix B, leads to the following identity
\begin{align}\label{mport01}
\int_{B}y\cdot\grad  w \Big(\div(\p \nabla w-\p (y\cdot\nabla w)y)\Big) \y  =-\eta \int_{B}|\nabla_{\theta}w|^2
\frac{|y|^2\p}{1-|y|^2}\y
-\eta \int_{B}(y\cdot\grad  w)^2
\p{\mathrm{d}}y\no\\
+\frac{n}2\int_{B}\Big(|\grad w|^2-(y\cdot\grad  w)^2\Big) \p
{\mathrm{d}}y -\int_{B}|\grad w|^2 \p
{\mathrm{d}}y.\nonumber
\end{align}
By combining the above equality and substituting $n = \frac{p_c+3}{p_c-1}$, we arrive at
\begin{eqnarray}\label{122bb}
\frac{d}{ds}\L
 &=&-\eta \int_{B}|\nabla_{\theta}w|^2
\frac{|y|^2\p}{1-|y|^2}\y
+(\eta-2\alpha(s))  \int_{B}(y\cdot\grad  w)^2
\p{\mathrm{d}}y\\\
&&+\frac{n}2\int_{B}\Big(|\grad w|^2-(y\cdot\grad  w)^2\Big) \p
{\mathrm{d}}y -\int_{B}|\grad w|^2 \p
{\mathrm{d}}y\nonumber\\
&&-\frac{n}2 \int_{B}  (\partial_sw)^2 \p {\mathrm{d}}y
+\eta  \int_{B}  (\partial_sw)^2 \frac{|y|^2\p}{1-|y|^2}
{\mathrm{d}}y \nonumber\\
&&-(n+2\alpha(s))
\int_{B}y \cdot \grad w \partial_{s}w\rho_{\eta} \y\no\\
&&
-(\frac{2p_{c}+2}{(p_{c}-1)^2}-\gamma(s))
\int_{ B} wy\cdot \nabla w \rho_{\eta}\y \no\\
&&-(n+2\eta)e^{-\frac{2(p_c+1)s}{p_c-1}}s^{\frac{2a}{p_c-1}}  \int_{B}
f(\phi w) \rho_{\eta} \y\no\\
&&+2\eta e^{-\frac{2(p_c+1)s}{p_c-1}}s^{\frac{2a}{p_c-1}}  \int_{B}
f(\phi w) \frac{\rho_{\eta}}{1-|y|^2} \y.\no
\end{eqnarray}
Integrating  \eqref{122bb} in time yields 
 \eqref{N0t1gg}.
This ends the proof of Lemma \ref{L01}. 
\Box

\bigskip

Furthermore,  by estimating  the time derivative of ${J}_{\eta }(w(s))$, we conclude  the following lemma:
\begin{lem}\label{L01bis}
For all   
  $s'>\taa$, we have 
 \begin{eqnarray}\label{V01t}
J_{\eta}(w(s'))-J_{\eta}(w(s))&=&-\int_s^{s'} 
\frac{1}{\tau^a}\int_{B}   |w|^{p_c+1}\ln^a(\phi^2w^2+2)\frac{\p}{\sqrt{1-|y|^2}}{\mathrm{d}}y{\mathrm{d}}\tau\\
&&+\int_s^{s'}\int_{B}|\grad w_{\theta}|^2\pp{\mathrm{d}}y{\mathrm{d}}\tau
-\int_s^{s'}\int_{B}(\partial_sw)^2\pp {\mathrm{d}}y{\mathrm{d}}\tau\nonumber\\
&&+  \int_s^{s'}(1-2\eta+2\alpha(\tau))\displaystyle\int_{B}
wy\cdot \grad w \frac{\p}{\sqrt{1-|y|^2}}{\mathrm{d}}y{\mathrm{d}}\tau
 \nonumber\\
&&+\int_s^{s'}(\frac{2(p_c+1)}
{(p_c-1)^2}-\gamma(\tau))\displaystyle\int_{B} w^2 \frac{\p}{\sqrt{1-|y|^2}}{\mathrm{d}}y{\mathrm{d}}\tau\nonumber\\
&&-2 \int_s^{s'}\displaystyle\int_{B}\partial_s wy\cdot \grad w \frac{\p}{\sqrt{1-|y|^2}}{\mathrm{d}}y{\mathrm{d}}\tau\nonumber\\
&&+\int_s^{s'}\int_{B}|\grad w_r|^2{\rho_{\frac12+\eta}}{\mathrm{d}}y{\mathrm{d}}\tau+2\int_s^{s'}\alpha(\tau)\displaystyle\int_{B} \partial_sww \frac{\p}{\sqrt{1-|y|^2}}{\mathrm{d}}y{\mathrm{d}}\tau.\nonumber
\end{eqnarray}
\end{lem}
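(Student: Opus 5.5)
We prove Lemma \ref{L01bis} by differentiating $J_{\eta}(w(s))$ in time, substituting equation \eqref{A1} for $\partial_s^2 w$, integrating by parts against the weight $\frac{\p}{\sqrt{1-|y|^2}}=\rho_{\eta-\frac12}$, and integrating the resulting identity over $[s,s']$. First we compute
\[
\frac{d}{ds}J_{\eta}(w(s))=-\int_B (\partial_s w)^2\pp\y-\int_B w\big(\partial_s^2 w+2y\cdot\grad\partial_s w\big)\pp\y-2\int_B \partial_s w\, y\cdot\grad w\pp\y-n\int_B w\partial_s w\pp\y .
\]
The combination $\partial_s^2w+2y\cdot\grad \partial_s w$ is exactly what \eqref{A1} supplies. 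After substitution, the $-2y\cdot\grad\partial_s w$ of \eqref{A1} cancels the added term, and the damping $-(\frac{p_c+3}{p_c-1}+2\alpha)\partial_s w = -(n+2\alpha)\partial_sw$ (since $n=\frac{p_c+3}{p_c-1}$) produces $+(n+2\alpha)\int w\partial_sw\,\rho_{\eta-\frac12}$. This cancels the $-n\int w\partial_s w\,\rho_{\eta-\frac12}$ term and leaves $2\alpha(s)\int \partial_s w\, w\,\pp$, the last term of \eqref{V01t}. The zeroth-order terms give $(\frac{2(p_c+1)}{(p_c-1)^2}-\gamma)\int w^2\pp$, and the nonlinearity gives $-s^{-a}\int|w|^{p_c+1}\ln^a(\phi^2w^2+2)\pp$ directly, with no need to pass through $f$.

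Next we treat the two spatial terms, $-\int w\,\frac1{\rho_\eta}\div(\rho_\eta\grad w-\rho_\eta(y\cdot\grad w)y)\,\rho_{\eta-\frac12}$ and $-(2\eta-2\alpha)\int w\,y\cdot\grad w\,\rho_{\eta-\frac12}$. Since $\rho_{\eta-\frac12}/\rho_\eta=(1-|y|^2)^{-1/2}$, the divergence term equals $-\int w\,(1-|y|^2)^{-1/2}\div(\cdots)$. Integrating by parts, with no boundary term because the flux carries $\rho_\eta$ and $\rho_\eta(y\cdot\grad w)(1-|y|^2)^{-1/2}\cdot \nu$ vanishes at $|y|=1$ for $\eta>0$, gives
\[
\int \Big(|\grad w|^2-(y\cdot\grad w)^2\Big)\pp\y+\int w\big(\grad w-(y\cdot\grad w)y\big)\cdot\grad\big((1-|y|^2)^{-\frac12}\big)\rho_\eta\y .
\]
Here $\grad((1-|y|^2)^{-1/2})=y(1-|y|^2)^{-3/2}$, and $(\grad w-(y\cdot\grad w)y)\cdot y=(1-|y|^2)\,y\cdot\grad w$, so the second piece is $\int w\,y\cdot\grad w\,\pp$. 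We then use \eqref{12nov6}, which gives $(|\grad w|^2-(y\cdot\grad w)^2)\pp=|\nabla_\theta w|^2\pp+|\nabla_r w|^2\rho_{\frac12+\eta}$. This yields the two gradient terms of \eqref{V01t}. Adding the $-(2\eta-2\alpha)\int w\,y\cdot\grad w\,\rho_{\eta-\frac12}$ contribution gives the coefficient $(1-2\eta+2\alpha)$ in front of $\int wy\cdot\grad w\pp$. The remaining $-2\int\partial_s w\,y\cdot\grad w\,\pp$ is carried over unchanged.

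The main obstacle is justifying these manipulations near $\partial B$, where the weight $\rho_{\eta-\frac12}$ is singular. Finiteness of every term follows from the Hardy inequality \eqref{Hardy} and $(w,\partial_sw)\in H^1\times L^2$ (for the singular $\partial_s w$ term, from the time-integrated control \eqref{feb19eta}). Vanishing of the boundary fluxes we would handle by working on $B(0,r)$ with $r<1$ and letting $r\to1$, or by a density argument with smooth data, using $\eta>0$. After these checks, integrating in time over $[s,s']$ gives \eqref{V01t}.
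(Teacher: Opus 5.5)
Your proposal is correct and follows the paper's route. You differentiate $J_\eta$, substitute \eqref{A1} with the $-2y\cdot\nabla\partial_s w$ and $-n\,w\partial_s w$ cancellations, integrate the divergence term by parts against $(1-|y|^2)^{-1/2}$ as the paper does in Lemma \ref{L52} of Appendix B, split with \eqref{12nov6}, and integrate in time. One correction to your justification: there is no boundary term because $(\nabla w-(y\cdot\nabla w)y)\cdot y=(1-|y|^2)\,y\cdot\nabla w$, so the integrand on $\partial B$ is $w\,(y\cdot\nabla w)\,(1-|y|^2)^{\eta+\frac12}$; it is not because $\rho_\eta(1-|y|^2)^{-1/2}=(1-|y|^2)^{\eta-\frac12}$ vanishes there, since that factor blows up for the small $\eta$ used here.
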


\bigskip

{\it Proof:} Note that   for all
 $\taa$, we have
\begin{eqnarray*}
\frac{d}{ds}{J}_{\eta }(w(s))&=&- \int_{B}(\partial_sw)^2\frac{\p}{\sqrt{1-|y|^2}}{\mathrm{d}}y-
\int_{B}w(\partial^2_{s}w
+2y\cdot \grad \partial_sw) 
\frac{\p}{\sqrt{1-|y|^2}}{\mathrm{d}}y \\
&&-2 \displaystyle\int_{B}\partial_s wy\cdot \grad w \frac{\p}{\sqrt{1-|y|^2}}{\mathrm{d}}y- n \int_{B}w\partial_s w\frac{\p}{\sqrt{1-|y|^2}}
{\mathrm{d}}y.
\end{eqnarray*}
By using  equation (\ref{A1}), 
 we infer
\begin{eqnarray}\label{R0}
\frac{d}{ds}{J}_{\eta }(w(s))&=&- \int_{B}(\partial_sw)^2\pp {\mathrm{d}}y- \int_{B}\div(\p \grad w-\p (y\cdot \grad w)
y)w\frac1{\sqrt{1-|y|^2}}{\mathrm{d}}y \nonumber\\
&&-2(\eta-\alpha(s)) \displaystyle\int_{B}wy\cdot \grad w \frac{\p}{\sqrt{1-|y|^2}}{\mathrm{d}}y
+\big(\frac{2(p_c+1)}
{(p_c-1)^2}-\gamma(s)\big)\displaystyle\int_{B} w^2 \frac{\p}{\sqrt{1-|y|^2}}{\mathrm{d}}y\nonumber\\
&&-\frac{1}{ s^a}\int_{B}   |w|^{p_c+1}\ln^a(\phi^2w^2+2)\frac{\p}{\sqrt{1-|y|^2}}{\mathrm{d}}y+2\alpha(s)\displaystyle\int_{B} \partial_sww \frac{\p}{\sqrt{1-|y|^2}}{\mathrm{d}}y\nonumber\\
&&-2 \displaystyle\int_{B}\partial_s wy\cdot \grad w \frac{\p}{\sqrt{1-|y|^2}}{\mathrm{d}}y.
\end{eqnarray}
Substituting  \eqref{mport1} into  (\ref{R0}),   we deduce
\begin{eqnarray}\label{V01}
\frac{d}{ds}{J}_{\eta }(w(s))&=&- \int_{B}(\partial_sw)^2\pp {\mathrm{d}}y+ \int_{B}|\grad w_{\theta}|^2\pp{\mathrm{d}}y+\int_{B}|\grad w_r|^2{\rho_{\frac12+\eta}}{\mathrm{d}}y
\\
&&(1-2\eta+2\alpha(s)) \displaystyle\int_{B}wy\cdot \grad w \frac{\p}{\sqrt{1-|y|^2}}{\mathrm{d}}y
+\big(\frac{2(p_c+1)}
{(p_c-1)^2}-\gamma(s)\big)\displaystyle\int_{B} w^2 \frac{\p}{\sqrt{1-|y|^2}}{\mathrm{d}}y\nonumber\\
&&-\frac{1}{ s^a}\int_{B}   |w|^{p_c+1}\ln^a(\phi^2w^2+2)\frac{\p}{\sqrt{1-|y|^2}}{\mathrm{d}}y+2\alpha(s)\displaystyle\int_{B} \partial_sww \frac{\p}{\sqrt{1-|y|^2}}{\mathrm{d}}y\nonumber\\
&&-2 \displaystyle\int_{B}\partial_s wy\cdot \grad w \frac{\p}{\sqrt{1-|y|^2}}{\mathrm{d}}y.\no
\end{eqnarray}
Integrating  \eqref{V01} in time yields \eqref{V01t}.
This ends the proof of Lemma \ref{L01bis}. 
\Box

\bigskip

Combining these inequalities with Lemmas \ref{L01} and \ref{L01bis} leads directly to the
 following lemma:
\begin{lem}\label{L01bisbis}
For all     $s'>\taa$, we have 
 \begin{eqnarray}\label{R02bis}
 {\cal{L}}_{\eta }(w(s'))
- {\cal{L}}_{\eta }(w(s)) &=&-(\frac12+\eta)\frac{p_c-1}{p_c+1}\int_s^{s'} 
\frac{1}{\tau^a}\int_{B}   |w|^{p_c+1}\ln^a(\phi^2w^2+2)\frac{\p}{\sqrt{1-|y|^2}}{\mathrm{d}}y{\mathrm{d}}\tau\no\\
&&+(2\eta+1) \int_s^{s'}  e^{-\frac{2(p_c+1)\tau}{p_c-1}}\tau^{\frac{2a}{p_c-1}}  \int_{B}
\big(f_1(\phi w) +f_2(\phi w)\big)\frac{\rho_{\eta}}{\sqrt{1-|y|^2}} \y{\mathrm{d}}\tau\no\\
&&+(\frac{n}2+\eta-\frac12) \int_s^{s'} \int_{B}|\nabla w|^2
\rho_{\frac12+\eta}
\y{\mathrm{d}}\tau  \no\\ 
&&-(\frac{n+1}2+\eta) \int_s^{s'}  \int_{B}  (\partial_sw)^2 \rho_{\frac12+\eta}
{\mathrm{d}}y{\mathrm{d}}\tau\no\\
&&+ \int_s^{s'}  (\frac12+\eta-\frac{n}2-2\alpha(\tau))\int_{B}(y\cdot\grad  w)^2
\rho_{\frac12+\eta}{\mathrm{d}}y{\mathrm{d}}\tau\\
&&-\int_s^{s'}(n+2\alpha(\tau))  \int_{B}y \cdot \grad w \partial_{s}w\rho_{\frac12+\eta} \y{\mathrm{d}}\tau\no\\
&&-\int_s^{s'} (\frac{2p_{c}+2}{(p_{c}-1)^2}-\gamma(\tau)) \int_{ B} wy\cdot \nabla w \rho_{\frac12+\eta}\y{\mathrm{d}}\tau \no\\
&&
+(\frac12+\eta)  \int_s^{s'}(1-2\eta+2\alpha(\tau))\displaystyle\int_{B}
wy\cdot \grad w \frac{\p}{\sqrt{1-|y|^2}}{\mathrm{d}}y{\mathrm{d}}\tau
 \nonumber\\
&&+(\frac12+\eta)\int_s^{s'}(\frac{2(p_c+1)}
{(p_c-1)^2}-\gamma(\tau))\displaystyle\int_{B} w^2 \frac{\p}{\sqrt{1-|y|^2}}{\mathrm{d}}y{\mathrm{d}}\tau\nonumber\\
&&-(1+2\eta) \int_s^{s'}\displaystyle\int_{B}\partial_s wy\cdot \grad w \frac{\p}{\sqrt{1-|y|^2}}{\mathrm{d}}y{\mathrm{d}}\tau\nonumber\\
&&+(1+2\eta)\int_s^{s'}\alpha(\tau)\displaystyle\int_{B} \partial_sww \frac{\p}{\sqrt{1-|y|^2}}{\mathrm{d}}y{\mathrm{d}}\tau\nonumber\\
&&- (n+1+2\eta) \int_s^{s'} e^{-\frac{2(p_c+1)\tau}{p_c-1}}\tau^{\frac{2a}{p_c-1}} \int_{B}
f(\phi w) \rho_{\frac12+\eta} \y{\mathrm{d}}\tau.\no
\end{eqnarray}
\end{lem}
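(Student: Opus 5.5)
Lemma \ref{L01bisbis} is a bookkeeping identity. By definition \eqref{Po1bis1}, ${\cal L}_\eta=M_{\frac12+\eta}+(\frac12+\eta)J_\eta$, so the plan is to apply Lemma \ref{L01} with $\eta$ replaced by $\frac12+\eta$ (so the weight becomes $\rho_{\frac12+\eta}=\p\sqrt{1-|y|^2}$), multiply \eqref{V01t} by $(\frac12+\eta)$, and add. Then I would regroup the terms, using three devices: rewrite the singular-weight terms; eliminate $f$ in favour of $f_1,f_2$; and combine the gradient terms.

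First, the singular terms. The term $-(\frac12+\eta)\int|\nabla_\theta w|^2\frac{|y|^2\rho_{\frac12+\eta}}{1-|y|^2}$ from Lemma \ref{L01} equals $-(\frac12+\eta)\int|\nabla_\theta w|^2\frac{|y|^2\p}{\sqrt{1-|y|^2}}$. The term $(\frac12+\eta)\int|\nabla_\theta w|^2\pp$ coming from $J_\eta$ combines with it, and via $\frac{1-|y|^2}{\sqrt{1-|y|^2}}$ the sum reduces to $(\frac12+\eta)\int|\nabla_\theta w|^2\rho_{\frac12+\eta}$. This is exactly the cancellation the factor $b=\frac12+\eta$ was designed for. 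The same mechanism handles $(\partial_s w)^2$: the term $(\frac12+\eta)\int(\partial_sw)^2\frac{|y|^2\rho_{\frac12+\eta}}{1-|y|^2}$ minus $(\frac12+\eta)\int(\partial_sw)^2\pp$ gives $-(\frac12+\eta)\int(\partial_sw)^2\rho_{\frac12+\eta}$. Adding $-\frac n2\int(\partial_sw)^2\rho_{\frac12+\eta}$ produces the coefficient $-(\frac{n+1}2+\eta)$.

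Next, the gradient terms. Using \eqref{12nov6}, together with $|\nabla_r w|^2|y|^2=(y\cdot\nabla w)^2$ and $|\nabla w|^2=|\nabla_\theta w|^2+|\nabla_r w|^2$, I would express everything in terms of $|\nabla w|^2$ and $(y\cdot\nabla w)^2$ with weight $\rho_{\frac12+\eta}$. The $\int|\nabla w_r|^2\rho_{\frac12+\eta}$ term in \eqref{V01t} has to be tracked carefully here. The target coefficients are $\frac n2+\eta-\frac12$ for $|\nabla w|^2$ and $\frac12+\eta-\frac n2-2\alpha$ for $(y\cdot\nabla w)^2$. I expect this sign-and-coefficient matching, particularly how the radial piece $|\nabla_r w|^2$ interacts with the $(y\cdot\nabla w)^2$ terms, to be the main obstacle, so I would verify it by writing every term in radial/tangential components.

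Finally, the nonlinear terms. The Lemma \ref{L01} term $2(\frac12+\eta)e^{-\frac{2(p_c+1)\tau}{p_c-1}}\tau^{\frac{2a}{p_c-1}}\int f(\phi w)\frac{\rho_{\frac12+\eta}}{1-|y|^2}$ becomes a $\frac{\p}{\sqrt{1-|y|^2}}$-weighted integral. I would decompose $f$ via \eqref{defF123} as $\frac1{p_c+1}|x|^{p_c+1}\ln^a(x^2+2)+f_1+f_2$. Since $e^{-\frac{2(p_c+1)\tau}{p_c-1}}\tau^{\frac{2a}{p_c-1}}|\phi w|^{p_c+1}\ln^a=\tau^{-a}|w|^{p_c+1}\ln^a(\phi^2w^2+2)$, the pure-power part contributes $\frac{1+2\eta}{p_c+1}$ times the singular nonlinear integral. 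This combines with $-(\frac12+\eta)$ times the same integral from \eqref{V01t} to give $-(\frac12+\eta)\frac{p_c-1}{p_c+1}$, while the $f_1+f_2$ part remains as stated. The regular-weight terms $-(n+1+2\eta)\int f\rho_{\frac12+\eta}$ and the mixed terms $wy\cdot\nabla w$, $\partial_sw\,y\cdot\nabla w$, $\alpha\,\partial_sw\,w$ transfer directly from the two lemmas with their coefficients, completing the identity.
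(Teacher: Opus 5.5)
Your proposal is correct and is exactly the paper's argument, which simply adds \eqref{N0t1gg} with $\eta$ replaced by $\frac12+\eta$ to $(\frac12+\eta)$ times \eqref{V01t}. Your treatment of the singular $|\nabla_\theta w|^2$ and $(\partial_s w)^2$ terms checks out, as does the nonlinear coefficient $\frac{1+2\eta}{p_c+1}-(\frac12+\eta)=-(\frac12+\eta)\frac{p_c-1}{p_c+1}$.

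The gradient step you flagged as the main obstacle is in fact immediate. The leftover $(\frac12+\eta)|\nabla_\theta w|^2\rho_{\frac12+\eta}$ plus the $(\frac12+\eta)|\nabla_r w|^2\rho_{\frac12+\eta}$ coming from $J_\eta$ sum to $(\frac12+\eta)|\nabla w|^2\rho_{\frac12+\eta}$. Adding $\big(\frac n2(|\nabla w|^2-(y\cdot\nabla w)^2)-|\nabla w|^2+(\frac12+\eta-2\alpha)(y\cdot\nabla w)^2\big)\rho_{\frac12+\eta}$ then gives exactly the coefficients $\frac n2+\eta-\frac12$ and $\frac12+\eta-\frac n2-2\alpha$. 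No further radial/tangential interaction is needed.
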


{\it Proof:}
Expanding ${\cal{L}}_{\eta }(w(s))$ according to its definition in \eqref{Po1bis1}, we write
\begin{equation}\label{Leta}
  {\cal{L}}_{\eta }(w(s')-
  {\cal{L}}_{\eta }(w(s))=M_{\frac12+\eta}(w(s')-M_{\frac12+\eta}(w(s)))+(\frac12+\eta )
\big(
J_{\eta}(w(s'))-J_{\eta}(w(s))\big).
\end{equation}
 Thanks to
\eqref{Leta}, the identities   \eqref{N0t1gg},  and \eqref{V01t} we get
 \eqref{R02bis}.
This ends the proof of Lemma \ref{L01bisbis}. 
\Box

Before proceeding to the proof of Proposition \ref{prop123}, we state and prove the following Lemma, which may be can be viewed as a corollary to Lemma \ref{lem21}.
\begin{lem}\label{bws010}
For all  
$\ttaa$, we have
\begin{equation}\label{ws1}
\int_{s}^{s+1}\int_{B}(\partial_s w)^2\frac{\p}{1-|y|^2}\y 
{\mathrm{d}}\tau 
 \le C\int_{s-1}^{s+2}{\cal {N}}_{\eta}(w(\tau)){\mathrm{d}}\tau+Ce^{-2s},
\end{equation}
where ${\cal {N}}_{\eta}(w(\tau))$ is given by \eqref{NN}.
\end{lem}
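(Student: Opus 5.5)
The plan is to integrate the energy identity \eqref{mai4} of Lemma \ref{lem21} in time against a cut-off, so that the dissipation term $-2\eta\int_B(\partial_s w)^2\frac{\rho_\eta}{1-|y|^2}$ sits on one side and everything else is controlled by ${\cal N}_\eta$. Identity \eqref{mai4} concerns the unperturbed energy $E_\eta$, not ${\cal E}_\eta$. So we integrate it directly over a time interval and bound the boundary values of $E_\eta$ by averages of ${\cal N}_\eta$.

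\textbf{Step 1: integrate and bound the cross terms.} Fix $\tau_1\in[s-1,s]$ and $\tau_2\in[s+1,s+2]$, and integrate \eqref{mai4} from $\tau_1$ to $\tau_2$:
\[
2\eta\int_{\tau_1}^{\tau_2}\!\!\int_B(\partial_s w)^2\frac{\rho_\eta}{1-|y|^2}\,\mathrm{d}y\,\mathrm{d}\tau = E_\eta(w(\tau_1),\tau_1)-E_\eta(w(\tau_2),\tau_2)+\int_{\tau_1}^{\tau_2}(\text{remaining terms})\,\mathrm{d}\tau .
\]
The cross terms $2(\eta-\alpha)\int_B\partial_s w\, y\cdot\nabla w\,\rho_\eta$ and $2(\eta-\alpha)\int_B(\partial_s w)^2\rho_\eta$ are bounded by $C\int_B\big((\partial_s w)^2+|\nabla w|^2\big)\rho_\eta\le C{\cal N}_\eta(w(\tau))$, using $|\alpha|\le C/s$. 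By \eqref{mai5}, $\Sigma_0(\tau)\le C{\cal N}_\eta(w(\tau))+Ce^{-2\tau}$.

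\textbf{Step 2: bound the boundary values of $E_\eta$.} The quadratic part of $|E_\eta(w(\tau),\tau)|$ is at most $C\int_B(|\nabla w|^2+(\partial_s w)^2+w^2)\rho_\eta$. For the nonlinear part we use the bounds on $f$ invoked earlier (\eqref{equiv6}, \eqref{equiv2}, \eqref{equiv3}, as in the proof of \eqref{N0}). These give
\[
e^{-\frac{2(p_c+1)\tau}{p_c-1}}\tau^{\frac{2a}{p_c-1}}|f(\phi w)|\le \frac{C}{\tau^a}|w|^{p_c+1}\ln^a(\phi^2w^2+2)+Ce^{-2\tau},
\]
since $\phi^{p_c+1}e^{-\frac{2(p_c+1)\tau}{p_c-1}}\tau^{\frac{2a}{p_c-1}}=\tau^{-a}$. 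Hence $|E_\eta(w(\tau),\tau)|\le C{\cal N}_\eta(w(\tau))+Ce^{-2\tau}$.

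\textbf{Step 3: average out the endpoints.} The right side now contains the pointwise values ${\cal N}_\eta(w(\tau_1))$ and ${\cal N}_\eta(w(\tau_2))$, which we cannot control directly. We therefore average over $\tau_1\in[s-1,s]$ and $\tau_2\in[s+1,s+2]$, which turns them into $\int_{s-1}^{s+2}{\cal N}_\eta$. Since $[s,s+1]\subset[\tau_1,\tau_2]$, the left side dominates the integral over $[s,s+1]$. Dividing by $2\eta$ yields \eqref{ws1}, with $C$ depending on $\eta$, which is allowed here.

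The main (mild) obstacle is the bound on the nonlinear term $f(\phi w)$ in Step 2, including the region where $\phi|w|$ is small; that region is exactly where the $Ce^{-2s}$ error comes from. The requirement $s\ge 2-\ln T^*(x)$ ensures $s-1\ge -\ln T^*(x)$, so $w$ is defined on the whole averaging window.
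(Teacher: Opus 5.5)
Your proposal is correct and is essentially the paper's proof. You integrate the identity \eqref{mai4} of Lemma~\ref{lem21} over a window $[\tau_1,\tau_2]\supset[s,s+1]$ with $\tau_1\in[s-1,s]$, $\tau_2\in[s+1,s+2]$, bound the cross terms, $\Sigma_0$ and the endpoint values $|E_\eta|$ by $C{\cal N}_\eta+Ce^{-2\tau}$, and then get rid of the pointwise endpoint values. The paper selects $\tau_1,\tau_2$ by the mean value theorem, while you average over them, which amounts to the same thing. Two remarks are only cosmetic: the nonlinear bound you need is simply \eqref{equiv1} combined with $\phi^{p_c+1}e^{-\frac{2(p_c+1)\tau}{p_c-1}}\tau^{\frac{2a}{p_c-1}}=\tau^{-a}$, and the lemma's hypothesis is $s\ge 1-\ln T^*(x)$ (not $2-\ln T^*(x)$), which is exactly what guarantees $s-1\ge-\ln T^*(x)$.
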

{\it Proof:}
Let  $\ttaa$,  $s_1=s_1(s)\in [s-1,s]$  
and $s_2=s_2(s)\in [s+1,s+2]$ to be chosen later.  From Lemma \ref{lem21}, and utilizing the inequality $ab \le a^2 + b^2$, it follows that:
\begin{eqnarray}\label{ws2}
\int_{s}^{s+1}\int_{B}(\partial_sw)^2\frac{|y|^2\p}{{1-|y|^2}}\y 
{\mathrm{d}}\tau &\le & C\int_{s_1}^{s_2} {\cal {N}}_{\eta}(w(\tau))
{\mathrm{d}}\tau  +C|E_{\eta }(w(s_1))| \nonumber\\
 &&+C|E_{\eta }(w(s_2))|+Ce^{-2s}.
\end{eqnarray}
We now bound the terms on the right-hand side of \eqref{ws2}. Specifically, by using the expression for $E_{\eta }(w(s))$ from \eqref{Eeta} and exploiting \eqref{equiv1}, we obtain
\begin{equation}\label{ws3}
|E_{\eta }(w(s_i))|\le 
C {\cal {N}}_{\eta}(w(s_i))
+Ce^{-2s_i} ,\qquad i \in \{1,2\}.
%
\end{equation}
By using the Mean Value Theorem, let us choose  $s_1=s_1(s)\in
[s-1,s]$ such that
\begin{equation}\label{ws4}
\displaystyle\int_{s-1}^{s}
{\cal {N}}_{\eta}(w(\tau))
{\mathrm{d}}\tau={\cal {N}}_{\eta}(w(s_1)).
\end{equation}
In view of \eqref{ws3},  and (\ref{ws4}) we write
\begin{equation}\label{ws5}
|E_{\eta }(w(s_1))|\le C\displaystyle\int_{s-1}^{s} 
{\cal {N}}_{\eta}(w(\tau))
{\mathrm{d}}\tau+Ce^{-2s}.
\end{equation}
Similarly, by using the Mean Value Theorem, we choose
$s_2=s_2(s)\in [s+1,s+2]$ such that
\begin{equation}\label{ws7}
\displaystyle\int_{s+1}^{s+2} {\cal {N}}_{\eta}(w(\tau))
{\mathrm{d}}\tau= {\cal {N}}_{\eta}(w(s_2)).
\end{equation}
Hence, by (\ref{ws3}) and \eqref{ws7}, we infer
\begin{equation}\label{ws8}
|E_{\eta }(w(s_2))|\le C\displaystyle\int_{s+1}^{s+2} {\cal {N}}_{\eta}(w(\tau)){\mathrm{d}}\tau+Ce^{-2s}.
\end{equation}
Combining \eqref{ws2}, \eqref{ws5}, and \eqref{ws8}, we deduce
\begin{equation}\label{p4bis24}
\int_{s}^{s+1}\int_{B}(\partial_sw)^2\frac{|y|^2\p}{{1-|y|^2}}\y 
{\mathrm{d}}\tau \le
C\displaystyle\int_{s-1}^{s+2}{\cal {N}}_{\eta}(w(\tau)){\mathrm{d}}\tau+Ce^{-2s}.
\end{equation}
Finally, using the identity $\frac{(\partial_sw)^2}{1-|y|^2} = \frac{|y|^2(\partial_sw)^2}{1-|y|^2} + (\partial_sw)^2$ along with \eqref{p4bis24}, we arrive at \eqref{ws1}. This completes the proof of Lemma \ref{bws010}.
\Box

\bigskip

Having proved Lemma \ref{bws010}, we proceed to Proposition \ref{prop123} by employing an analogous argument.

\bigskip
{\it Proof of Proposition \ref{prop123}:}\\
-Proof of  \eqref{w1eta}: \\
Let  $\tttaa $,  $s_3=s_3(s)\in [s-1,s]$  
and $s_4=s_4(s)\in [s+1,s+2]$ to be chosen later. From Lemma \ref{L01bisbis}, 
by using the   inequality, 
  $ab\le a^2+b^2$,  and the Hardy inequality \eqref{Hardy}, we can write 
\begin{eqnarray}\label{p4bis}
&&\frac{p_c-1}{2p_c+2}\int_{s_3}^{s_4} 
\frac{1}{\tau^a}\int_{B}   |w|^{p_c+1}\ln^a(\phi^2w^2+2)\frac{\p}{\sqrt{1-|y|^2}}{\mathrm{d}}y{\mathrm{d}}\tau\no\\
&\le&\underbrace{ \int_{s_3}^{s_4}  e^{-\frac{2(p_c+1)\tau}{p_c-1}}\tau^{\frac{2a}{p_c-1}}  \int_{B}
\Big(f_1(\phi w) +f_2(\phi w)\Big)\frac{\rho_{\eta}}{\sqrt{1-|y|^2}} \y{\mathrm{d}}\tau}_{A_1(s_3,s_4)}\\
& &+ C\underbrace{ \Big|
 \int_{s_3}^{s_4}  e^{-\frac{2(p_c+1)\tau}{p_c-1}}\tau^{\frac{2a}{p_c-1}}  \int_{B}
f(\phi w) \rho_{\frac12+\eta} \y{\mathrm{d}}\tau\Big|}_{A_2(s_3,s_4)}\no\\
&&+ C \int_{s_3}^{s_4}\!\int_{B}\Big(w^2+|\nabla  w|^2+\frac{(\partial_sw)^2}{1-|y|^2} \Big) \p {\mathrm{d}}y{\mathrm{d}}\tau  \nonumber\\
 && 
 +C|{\cal{L}}_{\eta }(w(s_4))|+C|
{\cal{L}}_{\eta }(w(s_3))|.\nonumber
\end{eqnarray}
Now, we control all the terms on the right-hand side
of the relation (\ref{p4bis}).

For the first term, we use \eqref{equiv2} and \eqref{equiv3}, to obtain
\begin{equation}\label{As1}
A_1(s_3,s_4)\le  \frac{C}{s}\int_{s_3}^{s_4} 
\frac{1}{\tau^a}\int_{B}   |w|^{p_c+1}\ln^a(\phi^2w^2+2)\frac{\p}{\sqrt{1-|y|^2}}{\mathrm{d}}y{\mathrm{d}}\tau
+Ce^{-2s}.
\end{equation}
Furthermore, by   exploiting \eqref{equiv1}, we get
\begin{equation}\label{As2}
A_2(s_3,s_4)\le C\int_{s_3}^{s_4} 
\frac{1}{\tau^a}\int_{B}   |w|^{p_c+1}\ln^a(\phi^2w^2+2)\p
{\mathrm{d}}y{\mathrm{d}}\tau
+Ce^{-2s}.
\end{equation}
In view of definition \eqref{Po1bis1}, it follows from the Cauchy-Schwarz and Hardy \eqref{Hardy} inequalities that
\begin{equation}\label{I01}
|{\cal{L}}_{\eta }(w(\tau))|\le C \displaystyle\int_{B}\Big(
w^2(\tau)+
(\partial_s
w(\tau))^2+|\nabla  w(\tau)|^2\Big) \p {\mathrm{d}}y,\qquad \forall \tau\in [s_3,s_4].
\end{equation}
Applying the Mean Value Theorem, we choose $s_3 \in [s-1, s]$ satisfying 
\begin{equation}\label{q04}
\displaystyle\int_{s-1}^{s} \displaystyle\int_{B}\Big(w^2+(\partial_s
w)^2+|\nabla  w|^2\Big)\p {\mathrm{d}}y{\mathrm{d}}\tau= \displaystyle\int_{B}\Big(w^2(s_3)+(\partial_s
w(s_3))^2+|\nabla  w(s_3)|^2\Big)\p {\mathrm{d}}y.
\end{equation}
 It then follows from \eqref{I01} and \eqref{q04} that 
\begin{equation}\label{I11110}
|{\cal{L}}_{\eta }(w(s_3))|\le C\displaystyle\int_{s-1}^{s} \displaystyle\int_{B}\Big(w^2+(\partial_s
w)^2+|\nabla  w|^2\Big)\p {\mathrm{d}}y{\mathrm{d}}\tau.
\end{equation}
Similarly, the Mean Value Theorem  allows us to choose $s_4 \in [s+1, s+2]$ satisfying \begin{equation}\label{q440}
\displaystyle\int_{s+1}^{s+2} \displaystyle\int_{B}\Big(w^2+(\partial_s
w)^2+|\nabla  w|^2\Big)\p {\mathrm{d}}y{\mathrm{d}}\tau= \displaystyle\int_{B}\Big(w^2(s_4)+(\partial_s
w(s_4))^2+|\nabla  w(s_4)|^2\Big) \p{\mathrm{d}}y.
\end{equation}
Combining \eqref{q440} with \eqref{I01} leads to the estimate 
\begin{equation}\label{I2n}
|{\cal{L}}_{\eta }(w(s_4))|\le C\displaystyle\int_{s+1}^{s+2} \displaystyle\int_{B}\Big(w^2+(\partial_s
w)^2+|\nabla  w|^2\Big) \p {\mathrm{d}}y{\mathrm{d}}\tau.
\end{equation}
Gathering  (\ref{p4bis}), \eqref{As1}, \eqref{As2}, (\ref{I11110}),  \eqref{I2n},  and \eqref{ss00}, 
we conclude  that, for all $\ttaa$, we have 
\begin{eqnarray}\label{p4}
\int_{s}^{s+1} 
\frac{1}{\tau^a}\int_{B}   |w|^{p_c+1}\ln^a(\phi^2w^2+2)\frac{\p}{\sqrt{1-|y|^2}}{\mathrm{d}}y{\mathrm{d}}\tau\qquad\qquad\qquad\no\\
\le C\int_{s-1}^{s+2}\!\int_{B}\frac{(\partial_sw)^2}{1-|y|^2}  \p {\mathrm{d}}y{\mathrm{d}}\tau
+C\int_{s-1}^{s+2} {\cal {N}}_{\eta}(w(\tau))
{\mathrm{d}}\tau+Ce^{-2s}.
\end{eqnarray}
Thanks to   Lemma \ref{bws010},  we have  for all  $\tttaa $, we have 
\begin{equation}\label{ws11}
\int_{s-1}^{s+2}\int_{B}(\partial_s w)^2\frac{\p}{1-|y|^2}\y 
{\mathrm{d}}\tau \le    C\int_{s-2}^{s+3}{\cal {N}}_{\eta}(w(\tau)){\mathrm{d}}\tau+Ce^{-2s},
\end{equation}
where ${\cal {N}}_{\eta}(w(\tau))$ is given by \eqref{NN}.
By exploiting  \eqref{p4}, and \eqref{ws11}, 
and \eqref{ss00}, 
we easily deduce
the desired estimate \eqref{w1eta}.

-Proof of  \eqref{wtangeta}:
Let  $\ttttaa $,  $s_5=s_5(s)\in [s-1,s]$  
and $s_6=s_6(s)\in [s+1,s+2]$ to be chosen later. From Lemma \ref{L01bis}, 
by using the   inequality, 
  $ab\le a^2+b^2$,  and the Hardy inequality \eqref{Hardy}, we can write 
\begin{eqnarray}\label{23j1}
\int_{s_5}^{s_6}\int_{B}|\grad w_{\theta}|^2\pp{\mathrm{d}}y{\mathrm{d}}\tau
&\le&
\int_{s-1}^{s+2} 
\frac{1}{\tau^a}\int_{B}   |w|^{p_c+1}\ln^a(\phi^2w^2+2)\frac{\p}{\sqrt{1-|y|^2}}{\mathrm{d}}y{\mathrm{d}}\tau\nonumber\\
&&+ C \int_{s-1}^{s+2}\!\int_{B}\Big(w^2+|\nabla  w|^2+\frac{(\partial_sw)^2}{1-|y|^2} \Big) \p {\mathrm{d}}y{\mathrm{d}}\tau  \nonumber\\
 && 
 +|J_{\eta }(w(s_5))|+|
J_{\eta }(w(s_6))|.
\end{eqnarray}
From \eqref{w1eta},  we obtain
\begin{align}\label{w1eta23}
&\int_{s-1}^{s+2} 
\frac{1}{\tau^a}\int_{B}   |w|^{p_c+1}\ln^a(\phi^2w^2+2)\frac{\p}{\sqrt{1-|y|^2}}{\mathrm{d}}y{\mathrm{d}}\tau
 \le C\int_{s-3}^{s+4}{\cal {N}}_{\eta}(w(\tau)){\mathrm{d}}\tau+Ce^{-2s}.
\end{align}
Thanks to   \eqref{ws11},  and the  definition of ${\cal {N}}_{\eta}(w(\tau))$  given by \eqref{NN}, we have 
\begin{equation}\label{ws11bb}
\int_{s-1}^{s+2}\int_{B} \Big(w^2+|\nabla  w|^2+\frac{(\partial_sw)^2}{1-|y|^2} \Big) 
(\partial_s w)^2\p\y 
{\mathrm{d}}\tau \le    C\int_{s-2}^{s+3}{\cal {N}}_{\eta}(w(\tau)){\mathrm{d}}\tau+Ce^{-2s}.
\end{equation}
In view of definition \eqref{Po1bis}, it follows from the Cauchy-Schwarz and Hardy \eqref{Hardy} inequalities that
\begin{equation*}
|J_{\eta }(w(\tau))|\le C \displaystyle\int_{B}\Big(
w^2(\tau)+
(\partial_s
w(\tau))^2+|\nabla  w(\tau)|^2\Big) \p {\mathrm{d}}y,\qquad \forall \tau\in [s_5,s_6].
\end{equation*}
Similar to the proof of   \eqref{I11110}
we can   choose $s_5 \in [s-1, s]$ and $s_6 \in [s+1, s+2]$ satisfying 
\begin{equation}\label{I11110j23}
|J_{\eta }(w(s_5))|+
|J_{\eta }(w(s_6))|\le C\displaystyle\int_{s-1}^{s+2} \displaystyle\int_{B}\Big(w^2+(\partial_s
w)^2+|\nabla  w|^2\Big)\p {\mathrm{d}}y{\mathrm{d}}\tau.
\end{equation}
By exploiting   \eqref{23j1},
\eqref{w1eta23}, \eqref{ws11bb},  and \eqref{I11110j23}, we easily deduce  \eqref{wtangeta}.
This ends the proof of Proposition \ref{prop123}'. 
\Box


\section{Almost linear bounds 
for the time average of the $H^{1}$ norm 
  and the  non linear term of $w$ with 
singular weigh for  solution of equation 
(\ref{A})}
\label{section3}
Note that we can rewrite equation \eqref{scaling} in $w$ in the following equivalent form:
\begin{align}\label{eqc}
\partial_{s}^2w&=\frac{1}{\rho}\div(\rho \grad w-\rho(y\cdot \grad w)y)
-\frac{2p_c+2}{(p_c-1)^2}w+\gamma(s)w\nonumber\\
&-\Big(\frac{p_c+3}{p_c-1}+2\alpha (s)\Big)\partial_s w
-2y\cdot \grad \partial_{s}w+
\frac{1}{s^a}   |w|^{p_c-1}w\ln^a(\phi^2w^2+2),
\end{align}
with $\rho (y,s)=(1-|y|^2)^{\alpha(s)},$
and where  $\alpha (s), \gamma (s), \ps$ are given respectively  by 
\eqref{alpha}, \eqref{defgamma}
and \eqref{defphi},  respectively.

\bigskip

In this section, we refine the estimates established in Section \ref{section2} by deriving nearly linear bounds. Specifically, we prove that for any $q > 0$, the time-average of the $H^1$ norm of the solution is bounded by $O(s^{1+q})$, providing a control of arbitrarily small super-linear order. 
These results are built upon the exponential bounds derived in the preceding section for the $H^1$ norm, as well as the nonlinear term considered both with and without the singular weight.
  More precisely, this is the aim of this section.

\begin{prop}\label{prop21bb2428}
\noindent Let
 $q>0$. Consider $u $   a solution of ({\ref{gen}}) with
blow-up graph $\Gamma:\{x\mapsto T(x)\}$ and  $x_0$  a non
characteristic point under the condition \eqref{ss00}. Then,  
for all $T_0 \in (0,T(x_{0})]$,  $s\geq 3-\ln T_0,$   and $x\in \R^n,$ such that $|x-x_0|\le \frac{e^{-s}}{\delta_0(x_0)}$,  
we have
\begin{equation}\label{feb19}
\int_{s}^{s+1}\!\ibint \big( w^2+|\grad w|^2 +( \partial_{s}w)^2\frac{\rho}{1-|y|^2}\big)\y \t \leq K_3 s^{q+1}, 
\end{equation}
\begin{equation}\label{feb19alpha}
\frac{1}{s^a}\int_{s+1}^{s}\! \int_{B}   |w|^{p_c+1}\ln^a(\phi^2w^2+2) {\mathrm{d}}y{\mathrm{d}}\tau
\leq K_3 s^{q+1}, 
\end{equation}
where $w=w_{x,T^*(x)}$ is defined in \eqref{scaling}, with
$T^*(x)$ given by \eqref{18dec1},
 and $\delta_{0}(x_{0})$ defined in \eqref{nonchar}.  
\end{prop}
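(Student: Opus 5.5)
We will follow the strategy sketched in the introduction: build a functional $h_1(s)$ with weight $\rho=(1-|y|^2)^{\alpha(s)}$ in place of $\rho_\eta$. Multiplying \eqref{eqc} by $\partial_s w\,\rho$ and by $w\rho$ gives analogues of Lemmas \ref{lem21} and \ref{lem2.4}, now with $\eta$ replaced by $\alpha(s)$. We take
\[
h_1(s)=E(w(s),s)-\alpha(s)\int_B w\partial_s w\,\rho\,\y+\frac{n\alpha(s)}{2}\int_B w^2\rho\,\y .
\]
Because $\rho$ depends on $s$, differentiating produces extra terms of the form $\alpha'(s)\int_B(\cdots)\ln(1-|y|^2)\rho\,\y$ with $|\alpha'|\le C/s^2$. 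The goal is a differential inequality
\[
\frac{d}{ds}h_1(s)\le -\alpha(s)\int_B(\partial_s w)^2\frac{\rho}{1-|y|^2}\y+\frac{C}{s}h_1(s)+R(s),
\]
where the remainder $R(s)$ collects the bad terms that cannot be absorbed.

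The bad terms are of two kinds. The first are the logarithmic-weight terms, e.g. $\frac{1}{s^2}\int_B|\nabla_\theta w|^2|\ln(1-|y|^2)|\rho$, and the analogous ones for the nonlinearity and for $w^2$. The second come from the $w\partial_s w$ cross term, which after Young's inequality produces the singular integral $\alpha\int_B w^2\frac{\rho}{1-|y|^2}$. We handle them as follows.
\begin{itemize}
\item $w^2$ terms: the Hardy inequality \eqref{AHardy} with exponent $\alpha(s)$ converts them into $\frac{1}{\alpha}\int|\nabla w|^2(1-|y|^2)\rho$, which is too costly. Instead we apply \eqref{Hardy} with a fixed small $\eta$, using $\rho\le C\rho_\eta/\sqrt{1-|y|^2}$ near the boundary up to the factor $|\ln|$.
\item Logarithmic weights: we use $|\ln(1-|y|^2)|\le C_\eta(1-|y|^2)^{\eta-1/2}$. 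The $\nabla_\theta w$ term and the nonlinear term are then dominated by the singular-weight quantities in Corollary \ref{corol1}, namely \eqref{14sep1} and \eqref{14sep2}.
\item The radial gradient carries the factor $(1-|y|^2)$, so it is harmless.
\end{itemize}
The key point is that all these remainders come with prefactor $C/s^2$ (or $C\alpha/s$), while Corollary \ref{corol1} bounds their time averages by $K_2e^{\eta(p_c+3)s/2}$. This exponential bound alone is too weak, so we interpolate. We bound $R$ by $\frac{C}{s^2}\int_{s-3}^{s+4}{\cal N}_\eta$ through Proposition \ref{prop123}, and then compare ${\cal N}_\eta$ with the quantity being estimated. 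Since $\rho\ge\rho_\eta$ for $s$ large ($\alpha(s)<\eta$), we have ${\cal N}_\eta\le C(h_1+\text{dissipation})$ up to the nonnegative part. This gives a closed inequality for $\int_s^{s+1}h_1$ with a loss of at most $C/s$.

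We integrate over unit intervals with the mean-value choice of endpoints $s_1\in[s-1,s]$, $s_2\in[s+1,s+2]$, as in Lemma \ref{bws010}. This yields a discrete Gronwall inequality of the form $X(s+1)\le(1+\frac{C}{s})X(s)+Cs^{-1}X_{\rm loc}$, hence $X(s)\le Cs^{C'}$. To reach the exponent $1+q$ with $q$ arbitrary, the coefficient of $h_1/s$ must be made close to $1$. For that we scale the $w\partial_sw$ multiplier by a factor $(1+\epsilon)$, choose $\eta$ small in terms of $q$, and use the positivity argument of Proposition \ref{prop2.2} (blow-up criterion with $\widetilde w^\delta$) to keep $h_1\ge -C$. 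Summing $\frac{d}{ds}(s^{-(1+q)}h_1)\le -s^{-(1+q)}\alpha\int(\partial_sw)^2\frac{\rho}{1-|y|^2}+\ldots$ then bounds $h_1\le Ks^{1+q}$. Integrating the dissipation gives \eqref{feb19} for the $\partial_s w$ term, since $\alpha\sim c/s$ costs exactly one more power of $s$.

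The energy bound converts into control of $w^2+|\nabla w|^2$ and of the nonlinear term on $B(0,\frac12)$. The covering technique, as in the proof of Proposition \ref{prop21}, extends this to $B$, giving \eqref{feb19} and \eqref{feb19alpha}.

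The main obstacle is the second step: checking that, after \eqref{14sep1} and \eqref{14sep2}, the remainder really closes with a $C/s$ coefficient whose constant can be pushed so the growth is $s^{1+q}$ and not $s^{C}$. If the constant cannot be tuned, the fallback is to bootstrap. A first polynomial bound $s^{C}$ would be fed back into the $C/s^2$ remainders, which then become $O(s^{C-2})$, and the exponent would improve iteratively until it reaches $1+q$.
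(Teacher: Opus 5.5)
There is a genuine gap, and it is exactly at the step you flagged. Neither rescaling by $1+\epsilon$ nor bootstrapping can make the growth exponent small.

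Differentiating the correction $-\nu\alpha\int_B w\partial_sw\,\rho+\frac{n\nu\alpha}{2}\int_B w^2\rho$ (you take $\nu=1$ or $1+\epsilon$) produces two positive terms, $\nu\alpha\int_B(|\nabla w|^2-(y\cdot\nabla w)^2)\rho$ and $\frac{2p_c+2}{(p_c-1)^2}\nu\alpha\int_B w^2\rho$. In addition, the nonlinear term must end up with a negative sign. Absorbing all of this through $\frac{C}{s}h_1$ forces $2\nu s\alpha(s)\le C\le (p_c+1)\nu s\alpha(s)$, where $s\alpha(s)=-\frac{a}{p_c-1}$. The paper takes $C=\frac{p_c+3}{2}\nu s\alpha=-\frac{an\nu}{2}$.

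So with $\nu\approx 1$, your Gronwall step only gives $h_1\lesssim s^{C}$ with $C\ge \frac{|a|(n-1)}{2}$. Feeding polynomial bounds back into $R$ cannot lower this, because the exponent is set by the linear term, not by $R$.

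The missing idea is to give the correction a \emph{small} weight. This is the paper's $\mathcal{P}=E+\nu F$ with $\nu<\frac{p_c-1}{160}$, which also keeps the cross terms $\nu\Sigma_4$ absorbable. Then $s^{an\nu/2}\mathcal{P}+\theta_4 s^{an\nu/2-5}$ is nonincreasing and nonnegative (Proposition \ref{proplyap}), so $\mathcal{P}\lesssim s^{q}$ with $q=-\frac{an\nu}{2}$ arbitrarily small.

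Your target is also off by one power: the coefficient must be near $0$, not near $1$, and the functional must be $O(s^q)$ rather than $O(s^{1+q})$. Since $\alpha\sim c/s$, a bound $h_1\le Ks^{1+q}$ only gives $s^{2+q}$ for $\int_s^{s+1}\int_B(\partial_sw)^2\frac{\rho}{1-|y|^2}$. In the paper, the $w^2$, gradient and nonlinear bounds come from the $\nu\alpha$-dissipation integrated against $\tau^{an\nu/2-1}$, followed by covering.

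Small $\nu$ has a price, and your remainder scheme does not pay it. The dissipation on $w^2$ and on $|\nabla w|^2-(y\cdot\nabla w)^2$ is now only of size $\nu\alpha\sim \nu/s$. Hence the errors from $\partial_s\rho=\alpha'\ln(1-|y|^2)\rho$ must satisfy two conditions:
they must be $O(\epsilon/s)$ with $\epsilon\ll\nu$ in the bulk, and near $\partial B$ they must beat the exponential growth $e^{\eta(p_c+3)s/2}$ in \eqref{feb19eta} and \eqref{14sep2}.

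The paper does this by splitting at $1-|y|^2=e^{-8\epsilon s}$. Inside, $|\ln(1-|y|^2)|\le 8\epsilon s$. Outside, $(1-|y|^2)^{1/8}\le e^{-\epsilon s}$, so the error is bounded by $Ce^{-\epsilon s}\int_B(\cdots)\rho\,(1-|y|^2)^{-1/4}$. This is exponentially small once $\eta\ll\epsilon$, which is where choosing $\eta$ small genuinely enters.

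Your substitute does not work. You bound $R$ by $\frac{C}{s^2}\int\mathcal{N}_\eta$ and then assert $\mathcal{N}_\eta\le C(h_1+\text{dissipation})$. But $\mathcal{N}_\eta$ contains the full $|\nabla w|^2\rho_\eta$. Its radial part is controlled neither by $h_1$, which only sees $(1-|y|^2)|\nabla_rw|^2\rho$, nor by the $\partial_sw$ dissipation; that control comes only from covering. Even granting it, $\frac{C}{s^2}\cdot O(s\,h_1)$ re-creates a $\frac{C}{s}h_1$ term with non-small $C$.

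Finally, the cross term carries $\alpha^2$, not $\alpha$. Young's inequality therefore leaves only a multiple of $\nu^2\alpha^3\int_B w^2\frac{|y|^2\rho}{1-|y|^2}$. Applying \eqref{AHardy} with exponent $\alpha(s)$ turns this into the harmless $\nu^2\alpha\int_B|\nabla w|^2(1-|y|^2)\rho+n\nu^2\alpha^2\int_B w^2\rho$. Your alternative, \eqref{Hardy} with a fixed $\eta>\alpha(s)$, cannot control $\int_B w^2\frac{\rho}{1-|y|^2}$, since that weight is more singular.
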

Clearly, by  combining    Proposition \ref{prop21bb2428}
and Proposition \ref{prop123},
we deduce the following 
\begin{cors}\label{corol1alpha}
Let
 $q>0$. Consider $u $   a solution of ({\ref{gen}}) with
blow-up graph $\Gamma:\{x\mapsto T(x)\}$ and  $x_0$  a non
characteristic point under the condition \eqref{ss00}. Then,  
for all $T_0 \in (0,T(x_{0})]$,  $s\geq 6-\ln T_0,$   and $x\in \R^n,$ such that $|x-x_0|\le \frac{e^{-s}}{\delta_0(x_0)}$,  
we have
\begin{equation}\label{14sep1}
\frac{1}{s^a}\int_s^{s+1}\!\! 
\int_{B}   |w|^{p_c+1}\ln^a(\phi^2w^2+2)\frac{\p}{\sqrt{1-|y|^2}}{\mathrm{d}}y{\mathrm{d}}\tau
 \leq K_4 s^{q+1}, \quad 
\end{equation}
and
\begin{align}\label{14sep2b}
&\int_s^{s+1} \!
\int_{B}   |\nabla_{\theta}w|^2\frac{\p}{\sqrt{1-|y|^2}}{\mathrm{d}}y{\mathrm{d}}\tau \le  K_4 s^{q+1}. 
\quad 
\end{align}
\end{cors}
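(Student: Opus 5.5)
The plan is to insert the bound of Proposition \ref{prop21bb2428} into the right-hand sides of \eqref{w1eta} and \eqref{wtangeta} of Proposition \ref{prop123}. Both right-hand sides are controlled by $C\int_{s-3}^{s+4}{\cal N}_{\eta}(w(\tau))\,{\mathrm{d}}\tau$, plus $Ce^{-2s}$ in the case of \eqref{w1eta}. So it is enough to show
\[
\int_{s-3}^{s+4}{\cal N}_{\eta}(w(\tau))\,{\mathrm{d}}\tau\le K\, s^{q+1}, \qquad \forall s\ge 6-\ln T_0 .
\]
The term $Ce^{-2s}$ is harmless, since it is bounded by a constant and hence by $C s^{q+1}$.

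\textbf{Step 1: compare the weights.} I will compare the integrand of ${\cal N}_{\eta}$ in \eqref{NN} with the quantities in \eqref{feb19} and \eqref{feb19alpha}. Since $0\le \p=(1-|y|^2)^{\eta}\le 1$ on $B$, the terms $|\grad w|^2\p$, $w^2\p$ and $\frac1{s^a}|w|^{p_c+1}\ln^a(\phi^2w^2+2)\p$ are bounded pointwise by their unweighted versions. For the time derivative, recall $\rho=(1-|y|^2)^{\alpha(s)}$ with $\alpha(s)=-\frac{a}{(p_c-1)s}\in(0,1)$ for $s$ large, which holds by \eqref{ss00}. Hence
\[
\frac{\rho}{1-|y|^2}=(1-|y|^2)^{\alpha(s)-1}\ge 1\ge \p ,
\]
so $(\partial_s w)^2\p\le (\partial_s w)^2\frac{\rho}{1-|y|^2}$. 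Together these give, for each unit interval,
\[
\int_{\tau_0}^{\tau_0+1}{\cal N}_{\eta}(w(\tau))\,{\mathrm{d}}\tau\le \int_{\tau_0}^{\tau_0+1}\!\!\ibint \Big(w^2+|\grad w|^2+(\partial_s w)^2\frac{\rho}{1-|y|^2}+\frac{1}{\tau^a}|w|^{p_c+1}\ln^a(\phi^2w^2+2)\Big)\y\,{\mathrm{d}}\tau .
\]

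\textbf{Step 2: sum over unit intervals.} I split $[s-3,s+4]$ into the seven intervals $[s+k,s+k+1]$, $k=-3,\dots,3$. On each of them I apply \eqref{feb19} and \eqref{feb19alpha} with $s$ replaced by $s+k$. This requires $s+k\ge 3-\ln T_0$; for $k=-3$ it reads $s\ge 6-\ln T_0$, which is exactly the threshold in the statement. Each interval then contributes at most $2K_3(s+k)^{q+1}\le 2K_3(s+3)^{q+1}$. Since $s\ge 6-\ln T_0\ge 6$, we have $(s+3)^{q+1}\le (3/2)^{q+1}s^{q+1}$. Summing gives the claimed bound. Finally, Proposition \ref{prop123} is applied at $s$, which is admissible because $s\ge 6-\ln T_0\ge 3-\ln T_0$, and this yields \eqref{14sep1} and \eqref{14sep2b} with $K_4=C(1+K_3)$.

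\textbf{Main obstacle: the spatial hypothesis.} The only delicate point is the condition on $x$. As stated, Proposition \ref{prop21bb2428} at time $\tau$ requires $|x-x_0|\le e^{-\tau}/\delta_0(x_0)$. For $\tau\in[s,s+4]$ this is more restrictive than the hypothesis $|x-x_0|\le e^{-s}/\delta_0(x_0)$ of the corollary. I expect to handle this as follows. The bounds of Section \ref{section3}, like those of Corollary \ref{cor01}, are obtained from a Lyapunov-type argument for $w_{x,T^*(x)}$. They are made uniform in $x$ by the covering argument over the region $|x-x_0|\le T_0/\delta_0(x_0)$, and are then valid for all later times $\tau\ge 3-\ln T_0$ for the same fixed $x$. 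I would therefore invoke Proposition \ref{prop21bb2428} in this time-uniform form for the fixed point $x$. If needed, I would enlarge $K_3$ by a factor depending only on $\delta_0(x_0)$, so that the seven intervals can be treated with a single constant.
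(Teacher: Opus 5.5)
Your proposal is correct and follows exactly the paper's one-line argument: you feed the almost-linear bounds of Proposition \ref{prop21bb2428} into the right-hand sides of Proposition \ref{prop123}, and the threshold $s\ge 6-\ln T_0$ comes from the window $[s-3,s+4]$. Your extra bookkeeping fills in details the paper passes over with ``Clearly'': the weight comparison $\rho_\eta\le 1\le \rho/(1-|y|^2)$, the harmless factor $(\tau/s)^{-a}\le 2^{-a}$ when passing from $\tau^{-a}$ to $s^{-a}$, and the remark that the spatial condition $|x-x_0|\le e^{-\tau}/\delta_0(x_0)$ for $\tau\in(s,s+4]$ holds only through the time-uniform, covering-based form of the Section~\ref{section3} bounds.
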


\bigskip

In order to prove the Proposition \ref{prop21bb2428},  we need to construct a Lyapunov functional for
equation \eqref{eqc}. Accordingly, we start by recalling from \eqref{Ealpha} the following functional
\begin{align*}
  E(w(s),s)=&\iint \Big(\frac{1}{2}(\partial_{s}w)^2+\frac{1}{2}(|\grad  w|^2-(y\cdot \grad w)^2)+\big(\frac{p_c+1}{(p_c-1)^2}-\frac12  \gamma (s)\big)w^2\nonumber\\
  &-e^{-\frac{2(p_c+1)s}{p_c-1}}s^{\frac{2a}{p_c-1}}   f(\phi w)\Big)\rho \y,\no
\end{align*}
where $f$ is given by \eqref{defF}. 
Then, we introduce the following functionals:
\begin{align}
F(w(s),s)=&-\alpha(s) \int_{B} w\partial_{s}w \rho \y +\frac{\alpha(s)n }{2}\int_{ B}w^2\rho  {\mathrm{d}}y,\label{Jpoly}\\
{\cal P}(w(s),s)=&E(w(s),s)+\nu F(w(s),s),\label{Ppoly}
\end{align}
where $\nu\in(0,1)$
will be chosen later, such 
that the energy ${\cal {P}} (w(s),s)$ satisfies  the  following inequality: 
\begin{equation}\label{PestimJ}
\frac{d}{ds}{\cal {P}}(w(s),s)\le - \alpha(s) \ibint (\partial_{s}w)^2\frac{\rho}{1-|y|^2}\y
-\frac{an \nu }{2s}{\cal {P}}(w(s),s)
+ \theta_3e^{-c s},
 \end{equation}
for some $c>0$, which implies  that  the functional
\begin{equation}\label{21juin1}
{\cal {F} }(w(s),s)=s^{\frac{an \nu}{2}} {\cal {P} }(w(s),s)+\theta_4s^{\frac{an \nu}{2}-5},
\end{equation}
where $\theta_4$ is a sufficiently large constant that will be determined later.
We will show that the functional  
${\cal {F} }(w(s),s)$
 is a decreasing 
functional.
\subsection{Classical energy estimates}\label{sec3.1}
In 
this subsection, we introduce two lemmas essential for constructing a Lyapunov functional for equation \eqref{eqc}.
We first establish a bound for the time derivative of $E(w(s), s)$ in the following lemma:
\begin{lem}\label{lem32} For all   
  $\taa$, we have  
\begin{align}\label{Lem32b}
\frac{d}{ds}E(w(s),s)=  &- 2\alpha(s) \int_{ B} (\partial_{s}w)^2\frac{\rho}{1-|y|^2}\y+\Sigma_{3}(s),
\end{align}
where $\Sigma_{3}(s)$ satisfies 
\begin{align}\label{Lem32}
\Sigma_{3}(s)
\le &\frac{C}{s^{a+\frac74}}\ibint |w|^{p_c+1}\ln^a(2+\phi^2 w^2)\rho \y\\
&+C(\frac{\e}{s}+\frac1{s^2}+e^{-\e s})\int_{B} \Big(w^2+(\partial_{s}w)^2+|\grad w|^2-(y\cdot \grad w)^2
\Big)\rho  \y \nonumber\\
&+Ce^{-\e s}
\int_{B} \Big((\partial_{s}w)^2+|\grad w|^2-(y\cdot \grad w)^2
\Big)\frac{\rho}{(1-|y|^2)^{\frac14}}  \y+ Ce^{-2s}, \qquad \forall \e>0.\no
\end{align}
\end{lem}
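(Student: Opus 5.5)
Multiply \eqref{eqc} by $\partial_s w\,\rho$ with $\rho=(1-|y|^2)^{\alpha(s)}$ and integrate over $B$, exactly as in Lemma \ref{lem21} with $\eta$ replaced by $\alpha(s)$. Because $\rho$ now depends on $s$, differentiating $E$ also produces the term $\partial_s\rho=\alpha'(s)\ln(1-|y|^2)\rho$ against the whole integrand of $E$.

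The pieces that survive are as follows. The transport term $-2y\cdot\nabla\partial_s w$ together with $\operatorname{div}(\rho y)=n\rho-\frac{2\alpha|y|^2\rho}{1-|y|^2}$ and the damping $-(\frac{p_c+3}{p_c-1}+2\alpha)\partial_sw$ combine, using $n=\frac{p_c+3}{p_c-1}$, to give $-2\alpha(s)\int_B(\partial_sw)^2\frac{\rho}{1-|y|^2}\,\mathrm{d}y$. There is no boundary term, because $\rho$ vanishes on $\partial B$ for $\alpha>0$. There is no cross term $\partial_sw\,y\cdot\nabla w$ either: unlike \eqref{A1}, equation \eqref{eqc} is written with the same weight $\rho$ as $E$. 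The remaining terms make up $\Sigma_3(s)$:
\begin{itemize}
\item $-\frac{\gamma'}{2}\int w^2\rho$, bounded by $\frac{C}{s^2}\int w^2\rho$;
\item the $s$-derivative of the explicit prefactor $e^{-\frac{2(p_c+1)s}{p_c-1}}s^{\frac{2a}{p_c-1}}$ and of $\phi$ inside $f(\phi w)$, which I split through $f=\frac{1}{p_c+1}|x|^{p_c+1}\ln^a+f_1+f_2$ exactly as in \eqref{mail1};
\item the new term $\alpha'(s)\int_B \ln(1-|y|^2)\,e(w)\rho$, where $e(w)$ is the energy density.
\end{itemize}

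The nonlinear pieces are handled as in \eqref{mais01}. The leading parts cancel, since the exponent of the prefactor was chosen precisely for this purpose. What remains is of size $\frac{C}{s^{a+1}}\int|w|^{p_c+1}\ln^a\rho$, coming from $f_1$, plus $Ce^{-2s}$, using \eqref{equiv2}--\eqref{equiv3}. This is weaker than the claimed $s^{-a-7/4}$, so I will need to track the cancellation more carefully: the $f_1$ contribution has coefficient involving $\ln^{a-1}(\phi^2w^2+2)$. Where $\phi^2w^2\ge e^{s}$ (say), one has $\ln(\phi^2w^2+2)\gtrsim s$, which yields an extra factor $1/s$ and hence something of order $s^{-a-2}$. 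On the complementary region $|w|\le e^{-s/2}/\phi\cdot$(small), i.e.\ $|w|\lesssim e^{-\frac{2s}{p_c-1}+\frac s2}$, the whole integrand is exponentially small and goes into $Ce^{-2s}$. Splitting the region $\{\phi^2 w^2 \le e^{\epsilon s}\}$ in the same way also handles the $s^{-a-1}$ leading term. The ratio $\frac{\ln\phi^2w^2}{2s}$ differs from $1$ only where $\ln|w|$ is comparable to $s$, which again is either exponentially small or absorbed. This yields the $s^{-a-7/4}$ bound, with room to spare.

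The main obstacle is the logarithmic term $\alpha'(s)\int \ln(1-|y|^2)e(w)\rho$, where $|\alpha'|\le C/s^2$. For the quadratic parts I split $B$ into $\{1-|y|^2\ge e^{-\epsilon s}\}$ and its complement.
\begin{itemize}
\item On the first set $|\ln(1-|y|^2)|\le\epsilon s$, so the contribution is at most $\frac{C\epsilon}{s}\int(w^2+(\partial_sw)^2+|\nabla w|^2-(y\cdot\nabla w)^2)\rho$.
\item Near the boundary, $|\ln(1-|y|^2)|\le C(1-|y|^2)^{-1/4}(1-|y|^2)^{1/8}\cdot$const, with $(1-|y|^2)^{1/8}\le e^{-\epsilon s/8}$. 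This gives the term $Ce^{-\epsilon s}\int(\dots)\rho(1-|y|^2)^{-1/4}$ after renaming $\epsilon$.
\item The $w^2$ part near the boundary is controlled by the Hardy inequality \eqref{Hardy}, giving $e^{-\epsilon s}\int(w^2+|\nabla w|^2)\rho$-type terms. The $|\nabla_r w|^2(1-|y|^2)$ part fits inside $|\nabla w|^2-(y\cdot\nabla w)^2$ by \eqref{12nov6}.
\end{itemize}
The nonlinear part of the energy density carries the prefactor $\frac{1}{s^2}\cdot\frac{1}{s^a}$ times $|\ln(1-|y|^2)|$. Since $\ln(1-|y|^2)\le0$ and $f\ge$ (positive leading term) $-Ce^{-2s}$-type errors, this term has a favorable sign. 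Using \eqref{equiv1} for the errors, it is either dropped or bounded by $Ce^{-2s}$.

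Collecting all pieces gives \eqref{Lem32b}–\eqref{Lem32}.
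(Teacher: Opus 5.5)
Your derivation of the identity and your handling of the $\alpha'(s)\ln(1-|y|^2)$ terms match the paper. That means the split at $1-|y|^2=e^{-8\varepsilon s}$, Hardy with exponent $\frac34+\alpha(s)$ for the $w^2$ part, and nonpositivity of the $f$ part. The gap is in the nonlinear remainder, which is exactly where $a<0$ is used.

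\textbf{The extra $1/s$ is double counted.} The $f_1$ term is
\[
\frac{2p_c+2}{p_c-1}e^{-\frac{2(p_c+1)s}{p_c-1}}s^{\frac{2a}{p_c-1}}\int_B f_1(\phi w)\rho\,\mathrm{d}y=-\frac{4a}{(p_c+1)(p_c-1)s^{a}}\int_B |w|^{p_c+1}\ln^{a-1}(\phi^2w^2+2)\rho\,\mathrm{d}y\ \ge 0 .
\]
The bound $\ln^{a-1}\le \frac{C}{s}\ln^{a}$ on $\{\phi^2w^2\ge e^{cs}\}$ is precisely what turns $s^{-a}$ into the $s^{-a-1}$ of \eqref{equiv2}; there is no second factor $1/s$. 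For $|w|\sim 1$ this term is positive and of exact order $s^{-a-1}$, so no splitting of it alone can give $s^{-a-7/4}$.

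It must be combined with the other remainder of the same order, $\frac{a}{(p_c+1)s^{a+1}}\int_B|w|^{p_c+1}\ln^a(\phi^2w^2+2)\rho\,\mathrm{d}y$. By itself that term is simply $\le 0$, so treating it separately achieves nothing. Together the two give \eqref{regroup}, whose bracket is
\[
\ln(\phi^2w^2+2)-\frac{4s}{p_c-1}=\ln(2\phi^{-2}+w^2)-\frac{2a}{p_c-1}\ln s .
\]

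\textbf{Your disposal of the small-$|w|$ region does not close.} Even after this combination, the bracket is $\ge -C\ln s$ only where $w^2\ge s^{-K}$ for some fixed $K$. For $|w|\approx e^{-\delta s}$ with $\delta$ small, the bracket is negative of size about $2\delta s$. Since $a<0$, the integrand there is then positive and of size $\delta\, s^{-a-1}|w|^{p_c+1}\ln^a(\phi^2w^2+2)$. This is neither $\le Ce^{-2s}$, because $|w|^{p_c+1}$ decays only like $e^{-(p_c+1)\delta s}$, nor $\le C s^{-a-7/4}|w|^{p_c+1}\ln^a(\phi^2w^2+2)$. So sending the exponentially small region to $Ce^{-2s}$ and calling the rest absorbed does not work. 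Note also that your threshold $\phi^2w^2\le e^{s}$ does not make $|w|$ exponentially small when $n=2$, where $\frac{2}{p_c-1}=\frac12$.

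The missing step is the polynomial threshold of \eqref{27nov1bis}:
\begin{itemize}
\item On $\{w^2\le s^{\frac{2a-4}{p_c-1}}\}$, use $|w|^{p_c+1}\le s^{a-2}w^2$, $|\text{bracket}|\le Cs$ and $\ln^{a-1}(\phi^2w^2+2)\le(\ln 2)^{a-1}$. This gives $\frac{C}{s^2}\int_B w^2\rho\,\mathrm{d}y$, which is what the $\frac{1}{s^2}\int_B w^2\rho$ term in \eqref{Lem32} is there to absorb.
\item On the complement, the bracket is $\ge -\frac{4}{p_c-1}\ln s$ and $\ln(\phi^2w^2+2)\ge \frac{4s}{p_c-1}-\frac{4}{p_c-1}\ln s$. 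Using $a<0$, this yields the bound $\frac{C\ln s}{s^{a+2}}\int_B|w|^{p_c+1}\ln^a(\phi^2w^2+2)\rho\,\mathrm{d}y$, which is $\le \frac{C}{s^{a+7/4}}\int_B|w|^{p_c+1}\ln^a(\phi^2w^2+2)\rho\,\mathrm{d}y$.
\end{itemize}
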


{\it Proof}:
Multiplying $\eqref{eqc}$ by $\partial_{s} w\rho(y,s) $ and integrating over  $B$, we obtain \eqref{Lem32b}, where
\begin{align}\label{JE12}
\Sigma_{3}(s)= &
\underbrace{
\frac{a}{(p_c+1)s^{a+1}}\ibint |w|^{p_c+1}\ln^a(\phi^2w^2+2)
\rho\y+\frac{2p_c+2}{p_c-1}
e^{-\frac{2(p_c+1)s}{p_c-1}}s^{\frac{2a}{p_c-1}}\ibint f_1(\phi w)
\rho \y}_{\chi_1(s)}\no\\
&+\underbrace{\frac{2p_c+2}{p_c-1}
e^{-\frac{2(p_c+1)s}{p_c-1}}s^{\frac{2a}{p_c-1}}\ibint f_2(\phi w)
\rho \y}_{\chi_2(s)}\no\\
&\underbrace{-\frac{2a}{(p_c-1)s}
e^{-\frac{2(p_c+1)s}{p_c-1}}s^{\frac{2a}{p_c-1}}\ibint\big( f_1(\phi w)+f_2(\phi w)\big)
\rho \y}_{\chi_3(s)}\underbrace{-{\frac{\gamma' (s)}2\ibint w^2\rho\y}}_{\chi_4(s)}\no\\
&+\underbrace{\frac{a}{2(p_c-1)s^2}\ibint \Big((\partial_{s}w)^2+|\grad w|^2-(y\cdot \grad w)^2
\Big)\rho \ln(1-|y|^2) \y} _{\chi_5(s)}\nonumber\\
&+\underbrace{\frac{a}{(p_c-1)s^2}  
(\frac{p_c+1}{(p_c-1)^2}-\frac{\gamma(s)}2)\ibint w^2\rho \ln(1-|y|^2) \y}_{\chi_6(s)} \nonumber\\
&-\underbrace{\frac{a}{(p_c-1)s^2}
\ibint e^{-\frac{2(p_c+1)s}{p_c-1}}s^{\frac{2a}{p_c-1}}   f(\phi w)\rho \ln(1-|y|^2) \y}_{\chi_7(s)},
\end{align}
where  $f_1$, and $f_2$ are given by \eqref{defF2}, and \eqref{defF123}.

Now, we control the terms $\chi_{i}(s)$,  for all $i\in \{1,2,3,4,5,6,7\}$.\\
By   using the definition of $f_1$, we  show that
\begin{align}\label{regroup}
\chi_{1}(s)= \frac{a}{(p_c+1)s^{a+1}}\iint  {|  w|^{p_c+1}}\ln^{{a-1}}(2+\phi^2 w^2  )\Big(\ln (\phi^2w^2+2 )-\frac{4s}{p_c-1} \Big)\rho \y.
 \end{align}
Note that,  the  control of   \eqref{regroup}   needs the  use of  the  fact that $a<0$.
 More precisely,
%
for all   $\taa$, we   divide $B$ into two parts
 \begin{equation}\label{27nov1bis}
\tilde B (s)=\{y \in B\,\,|\,\,  w^2(y,s)\leq  s^{\frac{2a-4}{p_c-1}}\}\,\,{\rm and }\,\, \check{ B}(s)=\{y \in B
\,\,|\,\,  w^2(y,s)\ge  s^{\frac{2a-4}{p_c-1}}\}.
\end{equation}
Accordingly, we write  
$\chi_{1}(s)=\Phi_1(s)+\Phi_2(s)$, where
\begin{align}
\Phi_1(s)=&
\frac{a}{(p_c+1)s^{a+1}}\int_{\tilde B(s)}  {|  w|^{p_c+1}}\ln^{{a-1}}(2+\phi^2 w^2  )\Big(\ln (\phi^2w^2+2 )-\frac{4s}{p_c-1} \Big)\rho\y
,
\nonumber\\
\Phi_2(s)=&
\frac{a}{(p_c+1)s^{a+1}}\int_{\check{ B}(s)}  {|  w|^{p_c+1}}\ln^{{a-1}}(2+\phi^2 w^2  )\Big(\ln (\phi^2w^2+2 )-\frac{4s}{p_c-1}\Big)\rho  \y.
\nonumber
\end{align}
On the one hand, by using 
 the definition of the set $\tilde B(s)$ given  in \eqref{27nov1bis} and   the expression of $\ps$ in  \eqref{defphi},   and  integrate over $\tilde B(s),$ we get, for all   $\taa$,
\begin{equation}\label{93b}
\Phi_1(s) \le \frac{C}{s^2}\iint   w^2  \rho \y.
\end{equation}
On the other hand, by using  the definition of the $\phi$ given by \eqref{defphi}, we write the identity
\begin{equation}\label{16dec102b}
\ln (2+\phi^2 w^2)-\frac{4s}{p_c-1}
 =\ln (2\phi^{-2}+ w^2)-\frac{2a}{p_c-1}\ln s.
\end{equation}
Furthermore,   for all    
  $s \geq 1$, we have   $\phi  (s)\ge 1 $. Therefore,
 by exploiting  \eqref{16dec102b}, we write  for all 
 $\taa$, 
\begin{equation}\label{16dec10bb}
\ln (2+\phi^2 w^2)-\frac{4s}{p_c-1}
 \ge - C\ln s.
\end{equation}
Also, by using the definition of the set $\check{B}(s)$ defined in \eqref{27nov1bis}, we can write 
 for all $\taa,$   if $y\in \check{B}(s)$ , we have 
\begin{equation}\label{16dec12bbg}
\ln(\phi^2w^2+2)\ge \ln(\phi^2s^{\frac{2a-4}{p_c-1}})\ge \frac{4s}{p_c-1}-\frac{4 }{p_c-1}\ln s.
\end{equation}
   Therefore,
 by exploiting \eqref{16dec10bb} and  \eqref{16dec12bbg} we have   for all  $\taa$, 
\begin{align}\label{131bisb}
\Phi_{2}(s)\le 
\frac{C\ln s}{s^{a+2}}\int_{B}  {|w|^{p_c+1}}\ln^{{a}}(2+\phi^2 w^2  )\rho \y.
\end{align}
Adding  \eqref{93b} and \eqref{131bisb}  and  using  the fact 
that
$\chi_{1}(s)=\Phi_1(s)+\Phi_2(s)$,   we get
\begin{equation}\label{13janva1}
\chi_{1}(s)\le 
\frac{C}{s^{a+\frac74}}\iint   {|  w|^{p_c+1}}\ln^{{a}}(2+\phi^2 w^2 )\rho \y+\frac{C}{s^2}\iint   w^2  \rho \y.
\end{equation}
Note from    \eqref{equiv2}  and  \eqref{equiv3}  that
\begin{equation}\label{15dec1}
\frac{1}{s}| f_1(\phi w)|+| f_2(\phi w)|\le   C+\frac{C}{s^2}|\phi z|^{p_c+1} \ln^a(2+\phi^2 z^2).
\end{equation}
By \eqref{15dec1},  we have, for all $\taa$,
\begin{equation}\label{sigma11dec18}
\chi_{2}(s)+
\chi_{3}(s)\le 
\frac{C}{s^{a+2}}\iint |w|^{p_c+1}\ln^a(\phi^2w^2+2)\rho \y+  C e^{-2s}.
\end{equation}
By the  expression of $\gamma(s)$ defined in 
  \eqref{defgamma}, we write the inequality  $|\gamma'(s)|\le \frac{C}{s^2},$  and  we get 
\begin{equation}\label{juin2025}
\chi_{4}(s)\leq \frac{C}{s^2}\int_{B} w^2\rho {\mathrm{d}}y.
\end{equation}

The  control of  the  remaining terms  needs the  use of  the  additional information obtained in Proposition \ref{prop123}.
 More precisely,  for all $\e>0$,
for all   
  $\taa$, we   divide $B$ into two parts
 \begin{equation}\label{271}
B_{1}(s)=\{y \in B\,\,|\,\,|y|\le  \sqrt{1-e^{-8\e s}}\}\,\,{\rm and }
\,\,B_{2}(s)=\{y \in B
\,\,|\,\, |y|\ge \sqrt{1-e^{-8\e s}}\}.
\end{equation}
%
%
Accordingly,  we write  $\chi_5(s)=\chi_5^1(s)+\chi_5^2(s)$, where
\begin{align}
\chi_5^{1}(s)=&
\frac{a}{2(p_c-1)s^2}\int_{B_1(s)} \Big((\partial_{s}w)^2+|\grad w|^2-(y\cdot \grad w)^2
\Big)\rho \ln(1-|y|^2) \y,\nonumber\\
\chi_5^{2}(s)=&
\frac{a}{2(p_c-1)s^2}\int_{B_2(s)} \Big(\frac{1}{2}(\partial_{s}w)^2+|\grad w|^2-(y\cdot \grad w)^2
\Big)\rho \ln(1-|y|^2) \y.\nonumber
\end{align}
On the one hand, by using 
 the definition of the set $B_1(s)$ given  in \eqref{271},   we get,  $-\ln(1-|y|^2)\le 8 \e s$, for all  $y\in B_1(s)$.
Hence,  we write
\begin{align}\label{xi41}
\chi_5^{1}(s)\le 
\frac{C\e }{s }\int_{B} \Big((\partial_{s}w)^2+|\grad w|^2-(y\cdot \grad w)^2
\Big)\rho  \y.
\end{align}
On the other hand, by using  the  fact that 
the function  $y\mapsto (1-|y|^2)^{\frac18}  \ln(1-|y|^2)$
 is uniformly bounded on $B$, 
 and  the definition of the set $B_2(s)$ given  in \eqref{271},   we get,  $(1-|y|^2)^{\frac18}\le e^{-\e s}$, for all  $y\in B_2(s)$.
Therefore,  
 \begin{align}\label{xi42}
\chi_5^{2}(s)\le Ce^{-\e s}
\int_{B} \Big((\partial_{s}w)^2+|\grad w|^2-(y\cdot \grad w)^2
\Big)\frac{\rho}{(1-|y|^2)^{\frac14}}  \y.
\end{align}
 Using \eqref{xi41} together with \eqref{xi42}, we see that
\begin{align}\label{xi6}
\chi_5(s)\le &
\frac{C \e}{s }\int_{B} \Big((\partial_{s}w)^2+|\grad w|^2-(y\cdot \grad w)^2
\Big)\rho  \y\\
&+Ce^{-\e s}
\int_{B} \Big((\partial_{s}w)^2+|\grad w|^2-(y\cdot \grad w)^2
\Big)\frac{\rho}{(1-|y|^2)^{\frac14}}  \y.\nonumber
\end{align}
 Similar to the proof of \eqref{xi6},  we easily obtain
 \begin{align}\label{xi51}
\chi_6(s)\le 
\frac{C\e }{s }\int_{B} w^2\rho  \y+ Ce^{-\e s}
\int_{B} w^2\frac{\rho}{(1-|y|^2)^{\frac14}}  \y.
\end{align}
For the particular case where $\eta=\frac34+\alpha(s)$, the Hardy inequality $\eqref{AHardy}$ yields
 \begin{align}\label{xi52b}
\int_{B} w^2\frac{\rho}{(1-|y|^2)^{\frac14}}  \y\le C
\int_{B} w^2\rho  \y+C
\int_{B} |\grad w|^2 (1-|y|^2) \rho \y.
\end{align}
Thus, by  \eqref{xi51}, and  \eqref{xi52b}, we deduce
\begin{align}
\chi_6(s)\le C(\frac{\e}{ s}+e^{-\e s})
\int_{B} w^2\rho  \y+Ce^{-\e s}
\int_{B} |\grad w|^2 (1-|y|^2) \rho \y.
\label{xi5}
\end{align}
Adding the fact  that $\chi_7(s)\le 0,$  
\eqref{13janva1},  \eqref{sigma11dec18}, \eqref{juin2025}, \eqref{xi6}, and \eqref{xi5}, 
we derive \eqref{Lem32}, which ends the proof of Lemma \ref{lem32}.
\Box

 We are now going to prove the following estimate for the functional $F(w(s),s)$.
\begin{lem}\label{lem3.3} For all   
  $\taa$, we have  
\begin{align}\label{6b}
\frac{d}{ds}
F(w(s),s)
=&-\alpha(s)\int_{B} (\partial_{s} w)^2\rho {\mathrm{d}}y+\alpha(s)
\int_{B}(|\nabla w|^2-(y \cdot \nabla w)^2) \rho {\mathrm{d}}y\\
&+\frac{2p_{c}+2}{(p_{c}-1)^2}\alpha(s)\int_{B}w^2 \rho {\mathrm{d}}y
-\frac{\alpha(s)}{s^{a}}\ibint |w|^{p_c+1}\ln^a(2+\phi^2 w^2)\rho \y+\Sigma_4(s),\no
\end{align}
where $\Sigma_4(s)$ satisfies
\begin{eqnarray}\label{6}
\Sigma_4(s)
&\le&(\frac{\alpha(s)}{2\nu}+\frac{C}{s^2})\int_{B}(\partial_{s} w)^2 \frac{\rho}{1-|y|^2} {\mathrm{d}}y
+(20\nu\alpha(s)+Ce^{-\e s}) \int_{B} |\nabla w|^2(1-|y|^2)\rho {\mathrm{d}}y\no\\
&&+\frac{C}{s^2}\int_{B} w^2\rho  {\mathrm{d}}y
+Ce^{-\e s}\int_{B} \frac{(\partial_{s}w)^2}{(1-|y|^2)^{\frac14}}  \y.
\end{eqnarray}
\end{lem}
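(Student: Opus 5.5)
The plan is to differentiate $F(w(s),s)$ directly, exactly as in Lemma \ref{lem2.4}, but now with the time-dependent weight $\rho=(1-|y|^2)^{\alpha(s)}$ and with $\eta$ replaced by $\alpha(s)$. Writing $F=-\alpha\int w\partial_s w\rho+\frac{\alpha n}{2}\int w^2\rho$, the derivative produces four groups of terms. The first comes from $\partial_s$ falling on $\alpha$; this gives $\alpha'(s)$ times the bracket of $F$, and since $|\alpha'|\le C/s^2$, Young's inequality bounds it by $\frac{C}{s^2}\int (w^2+(\partial_sw)^2)\rho$. The second comes from $\partial_s$ falling on $\rho$; this gives $\alpha'(s)\ln(1-|y|^2)\rho$-weighted terms of size $\frac{C}{s^2}$. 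The third is $-\alpha\int(\partial_sw)^2\rho$. The fourth is $-\alpha\int w\,\partial_s^2w\,\rho+\alpha n\int w\partial_sw\rho$.

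In the fourth group I substitute \eqref{eqc}. Integrating $-\alpha\int w\,\mathrm{div}(\rho\nabla w-\rho(y\cdot\nabla w)y)$ by parts gives $\alpha\int(|\nabla w|^2-(y\cdot\nabla w)^2)\rho$. The linear term gives $\frac{2p_c+2}{(p_c-1)^2}\alpha\int w^2\rho$, plus a $\gamma\alpha$ remainder of order $s^{-2}$. The nonlinearity gives exactly $-\frac{\alpha}{s^a}\int|w|^{p_c+1}\ln^a(\cdots)\rho$. The damping and mixed terms need more care. For $\alpha(\frac{p_c+3}{p_c-1}+2\alpha)\int w\partial_sw\rho$ combined with $2\alpha\int w\,y\cdot\nabla\partial_sw\,\rho$, I integrate the second by parts using $\mathrm{div}(y\rho)=n\rho-2\alpha\frac{|y|^2\rho}{1-|y|^2}$ and $n=\frac{p_c+3}{p_c-1}$. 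The order-$\alpha$ terms $\alpha n\int w\partial_sw\rho$ then cancel. What is left is $-2\alpha\int\partial_sw\,y\cdot\nabla w\,\rho$, an $O(\alpha^2)$ term $4\alpha^2\int w\partial_sw\frac{|y|^2\rho}{1-|y|^2}$, and $O(\alpha^2)$ terms of the form $\int w\partial_sw\rho$.

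The cross term $-2\alpha\int\partial_sw\,y\cdot\nabla w\,\rho$ is the delicate one, since it is of the same order $\alpha$ as the main terms. I bound it by Young, splitting with the weight $1-|y|^2$:
\[
2\alpha\Big|\int\partial_sw\,y\cdot\nabla w\,\rho\Big|\le\frac{\alpha}{2\nu}\int(\partial_sw)^2\frac{\rho}{1-|y|^2}+2\nu\alpha\int|\nabla w|^2(1-|y|^2)\rho .
\]
This accounts for the $\frac{\alpha}{2\nu}$ and $20\nu\alpha$ coefficients in \eqref{6}. The $\alpha^2$ singular term is handled by Cauchy–Schwarz together with the Hardy inequality \eqref{AHardy}, applied with exponent $\alpha(s)$. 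Hardy has a $1/\eta^2$ constant, which here is $1/\alpha^2\sim s^2$; the $\alpha^2$ prefactor absorbs it, leaving constant multiples of $\nu\alpha\int|\nabla w|^2(1-|y|^2)\rho$ once the parameter in Young's inequality is chosen as a multiple of $\nu$. This is also why the constant $20$ appears. The remaining $O(\alpha^2)$ pieces are $\le \frac{C}{s^2}\int((\partial_sw)^2+w^2)\rho$.

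Finally, the $\ln(1-|y|^2)$ terms of order $s^{-2}$ coming from $\partial_s\rho$ are treated exactly as $\chi_5,\chi_6$ in Lemma \ref{lem32}, splitting $B$ into $B_1(s)$ and $B_2(s)$. On $B_1$ they contribute $\frac{C\varepsilon}{s^3}$ times $L^2$ quantities, which can be absorbed into the $\frac{C}{s^2}$ terms. On $B_2$ they contribute $e^{-\varepsilon s}$ times quantities with the $(1-|y|^2)^{-1/4}$ weight. For $w^2$ with that weight I apply Hardy \eqref{xi52b}, giving the $Ce^{-\varepsilon s}\int|\nabla w|^2(1-|y|^2)\rho$ term. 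For the $\partial_sw$ part I keep it as $Ce^{-\varepsilon s}\int(\partial_sw)^2(1-|y|^2)^{-1/4}$.

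I expect the main obstacle to be the cancellation bookkeeping in the mixed term: checking that the $O(\alpha)$ contributions of $w\partial_sw$ cancel exactly because $n=\frac{p_c+3}{p_c-1}$, and that the singular $\alpha^2$ term can be closed by Hardy with constants uniform in $s$ despite the degenerating exponent $\alpha(s)\to0$.
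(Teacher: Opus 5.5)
Your proposal is correct and follows the paper's proof essentially step by step. It uses the same decomposition of $\frac{d}{ds}F$: the cross term $-2\alpha\int\partial_s w\,y\cdot\nabla w\,\rho$; the singular term $4\alpha^2\int w\,\partial_s w\frac{|y|^2\rho}{1-|y|^2}$, handled by Young plus the Hardy inequality \eqref{AHardy} with exponent $\alpha(s)$; the $O(s^{-2})$ remainders; and the logarithmic terms from $\partial_s\rho$, treated by the $B_1(s)/B_2(s)$ splitting and \eqref{xi52b}. The only slips are in bookkeeping. First, to land exactly on $\frac{\alpha}{2\nu}$ and $20\nu\alpha$, give each singular piece a budget of $\frac{\alpha}{4\nu}$ rather than spending all of $\frac{\alpha}{2\nu}$ on the cross term; the cross term then yields $4\nu\alpha$ and the $4\alpha^2$ term yields $16\nu\alpha$ after Hardy. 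Second, the $\partial_s\rho$ terms carry the prefactor $\alpha\alpha'=O(s^{-3})$, not $O(s^{-2})$, so on $B_1(s)$ they are $O(\varepsilon s^{-2})$, which is what lets them be absorbed into the $\frac{C}{s^2}$ terms.
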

{\it Proof}: Note that $F(w(s),s)$ is a differentiable function, by using equation \eqref{eqc} and integrating by part, for all  $\taa$ we have \eqref{6b}, where
\begin{eqnarray}
\label{K20}
\Sigma_4(s)
&=&\underbrace{-2\alpha(s) \int_{B}\partial_{s} w y \cdot \nabla w\rho {\mathrm{d}}y}_{\Sigma_4^1(s)}+\underbrace{ \alpha^2(s) \int_{B}\partial_{s} w w\frac{|y|^2\rho}{1-|y|^2} {\mathrm{d}}y}_{\Sigma_4^2(s)}
\no\\
&&+\underbrace{ (\frac{n\alpha'(s)}{2}
-\gamma (s)\alpha(s)) \int_{B}w^2 \rho {\mathrm{d}}y
-(\alpha'(s)+2\alpha^2 (s)) \int_{B} w\partial_{s}w \rho \y } _{\Sigma_4^3(s)}\no\\
&&\underbrace{-\alpha (s) {\alpha}'(s) \ibint \big(w\partial_{s}w-\frac{n}{2}w^2
\big)\rho \ln(1-|y|^2) \y} _{\Sigma_{4}^4(s)}.
\end{eqnarray}
We now estiamte the terms of $\Sigma_4(s)$. From   the following basic inequality
\begin{equation}\label{cs}
ab\le \mu a^2+\frac{1}{4\mu}b^2, \qquad \forall \mu>0,
\end{equation}
we infer
\begin{equation}\label{juin12}
\Sigma_{4}^1(s) \le
\frac{\alpha(s)}{4\nu}\int_{B}(\partial_{s} w)^2 \frac{\rho}{1-|y|^2} {\mathrm{d}}y+4 \nu \alpha(s)\int_{B} |\nabla w|^2(1-|y|^2)\rho {\mathrm{d}}y,
\end{equation}
and
\begin{equation}\label{juin128}
\Sigma_{4}^2(s) \le
\frac{\alpha(s)}{4\nu}\int_{B}(\partial_{s} w)^2 \frac{\rho}{1-|y|^2} {\mathrm{d}}y+16\nu\alpha^3(s) \int_{B} w^2\frac{|y|^2\rho}{1-|y|^2} {\mathrm{d}}y.
\end{equation}
Now, we use   the   Hardy-Sobolev inequality   \eqref{AHardy} to write  
\begin{equation}\label{juin12bn}
\int_{B} w^2\frac{|y|^2\rho}{1-|y|^2} {\mathrm{d}}y\le  \frac1{\alpha^2 (s)}
\int_{B}|\nabla w|^2(1-|y|^2)\rho  {\mathrm{d}}y+\frac{n}{\alpha (s)}
\int_{B} w^2\rho  {\mathrm{d}}y.
\end{equation}
Thus, it follows from  \eqref{juin128}, \eqref{juin12bn} that
\begin{equation}\label{chi8}
\Sigma_4^2(s)\le
\frac{\alpha(s)}{4\nu}\int_{B}(\partial_{s} w)^2 \frac{\rho}{1-|y|^2} {\mathrm{d}}y+
  16\nu\alpha(s) 
\int_{B}|\nabla w|^2(1-|y|^2)\rho  {\mathrm{d}}y+16n\nu \alpha^2 (s)
\int_{B} w^2\rho  {\mathrm{d}}y.
\end{equation}
By using the inequality $|\alpha(s)|+|s\alpha'(s)| +|\gamma(s)|\le \frac{C}{s},$ and \eqref{cs},  we get 
\begin{align}\label{xi9}
\Sigma_{4}^3(s) \le 
\frac{C }{s^2 }\int_{B} \Big(w^2+(\partial_{s}w)^2\Big)\rho  \y.
\end{align}
Proceeding similarly as for the estimates  \eqref{xi5}, we easily deduce
\begin{align}
\Sigma_{4}^4(s) \le& \frac{C}{ s^2}
\int_{B} \Big(w^2+(\partial_{s}w)^2\Big)\rho  \y+Ce^{-\e s}
\int_{B} |\grad w|^2 (1-|y|^2) \rho \y\no\\
&+Ce^{-\e s}
\int_{B}(\partial_{s}w)^2 \frac{1
}{(1-|y|^2)^{\frac14}}  \y.
\label{xi10}
\end{align}
Finally by using \eqref{juin12}, \eqref{chi8}, \eqref{xi9} and $\eqref{xi10}$, and the fact that $\nu\in(0,1)$,  we have easily the estimate
\eqref{6}, which ends the proof of Lemma \ref{lem3.3}.
\Box

\subsection{Existence of a decreasing functional for equation (\ref{A})}
In this subsection, we establish the monotonicity of the functional associated with equation \eqref{eqc} by utilizing Lemmas \ref{lem32} and \ref{lem3.3}. For convenience, we recall the definition of the functional from \eqref{21juin1}:
\begin{equation*}
\mathcal{F}(w(s),s) = s^{\frac{an\nu}{2}} \mathcal{P}(w(s),s) + \theta_4 s^{\frac{an\nu}{2}-5},
\end{equation*}
where $\theta_4 > 0$ is a sufficiently large constant to be determined later. We demonstrate that the functional $\mathcal{F}(w(s),s)$ satisfies the following proposition:
\begin{prop}\label{proplyap} For all $\nu>0$, 
there exists $\lambda_1>0$ such that for all $\ttttaa$, 
we have 
\begin{align}\label{DEC101} 
{\cal {F}} (w(s+1),s+1)
 -{\cal {F}} (w(s),s)
\le  - \lambda_1 \int_{s}^{s+1}\tau^{\frac{an \lambda-2}{2}}\int_{ B} (\partial_{s}w)^2\frac{\rho}{1-|y|^2}\y{\mathrm{d}}\tau \no\\
- \lambda_1 \int_{s}^{s+1} \tau^{\frac{an \lambda-2}{2}}\int_{B}(|\nabla w|^2-(y \cdot \nabla w)^2) \rho {\mathrm{d}}y{\mathrm{d}}\tau
  - \lambda_1\int_{s}^{s+1}\tau^{\frac{an \lambda-2}{2}}
\int_{B}w^2 \rho {\mathrm{d}}y{\mathrm{d}}\tau\no\\
- \lambda_1\int_{s}^{s+1}\tau^{\frac{an \lambda-2}{2}}\int_{ B} |w|^{p_c+1}\ln^a(2+\phi^2 w^2)
\rho \y{\mathrm{d}}\tau.\qquad\qquad
\end{align}
Moreover,  for all $\ttttaa$, 
we have 
 \begin{equation}\label{posi1}
{\cal {F} }(w(s),s)\geq 0.
\end{equation}
\end{prop}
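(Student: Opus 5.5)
The plan is to combine Lemma \ref{lem32} and Lemma \ref{lem3.3}. The aim is the differential inequality \eqref{PestimJ} for ${\cal P}=E+\nu F$, modulo error terms that are integrable in time thanks to the exponential bounds of Section \ref{section2}. Monotonicity of ${\cal F}$ then follows after multiplying by $s^{\frac{an\nu}{2}}$ and integrating on $[s,s+1]$. Positivity \eqref{posi1} is obtained by the same blow-up-before-$T^*$ contradiction argument used for \eqref{positiveeta}.

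\emph{Step 1: the derivative of ${\cal P}$.} Adding \eqref{Lem32b} and $\nu$ times \eqref{6b}, the $\Sigma_4$ contribution costs at most $\frac{\alpha}{2}\int(\partial_s w)^2\frac{\rho}{1-|y|^2}$, which is absorbed by half of the $-2\alpha$ dissipation. The remaining linear terms are written as $-\frac{an\nu}{2s}{\cal P}$, recalling that $\alpha(s)=-\frac{a}{(p_c-1)s}$ and $n=\frac{p_c+3}{p_c-1}$, plus negative multiples of $\alpha\int(\partial_sw)^2\rho$, $\alpha\int(|\nabla w|^2-(y\cdot\nabla w)^2)\rho$, $\alpha\int w^2\rho$ and $\frac{\alpha}{s^a}\int|w|^{p_c+1}\ln^a\rho$. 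This mirrors the computation leading to \eqref{Hmai4bis}, with $\eta$ replaced by $\alpha(s)$.

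To control the primitive $f$ that appears in ${\cal P}$, I will use \eqref{equiv1}--\eqref{equiv3}: $e^{-\frac{2(p_c+1)s}{p_c-1}}s^{\frac{2a}{p_c-1}}f(\phi w)=\frac{1}{(p_c+1)s^a}|w|^{p_c+1}\ln^a+O(\frac1{s^{a+1}}|w|^{p_c+1}\ln^a)+O(e^{-2s})$. The coefficient $\frac{p_c-1}{p_c+1}>0$ then leaves a negative nonlinear term. I will choose $\nu$ small, so that $20\nu^2\alpha\int|\nabla w|^2(1-|y|^2)\rho$ is absorbed by $\nu\alpha\int(|\nabla w|^2-(y\cdot\nabla w)^2)\rho$ via \eqref{12nov6}. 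Terms of order $\frac{\e}{s}$ and $\frac1{s^2}$ are absorbed by taking $\e$ small relative to $\nu|a|$ and $s$ large (this is where \eqref{ss00} enters). The $\frac{1}{s^{a+7/4}}$ term is absorbed into the negative nonlinear term of size $\frac{\nu\alpha}{s^a}$.

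\emph{Step 2: the singular error terms.} This is the step I expect to be the main obstacle. It concerns the terms
\[
Ce^{-\e s}\int_B\big((\partial_sw)^2+|\nabla w|^2-(y\cdot\nabla w)^2+w^2\big)\frac{\rho}{(1-|y|^2)^{1/4}}\,{\mathrm{d}}y,
\]
which carry a weight more singular than $\rho$. Pointwise in time they cannot be absorbed. Instead, after integrating over $[s,s+1]$, I bound them by the quantities already controlled in Section \ref{section2} with $\eta<\frac14$, since then $(1-|y|^2)^{-1/4}\le(1-|y|^2)^{\eta-\frac12}$. Specifically:
\begin{itemize}
\item the $\partial_sw$ part is bounded by \eqref{feb19eta} (weight $\rho_\eta/(1-|y|^2)$);
\item the $\nabla_\theta w$ part is bounded by \eqref{14sep2} (weight $\rho_\eta/\sqrt{1-|y|^2}$);
\item the radial part $(1-|y|^2)|\nabla_rw|^2(1-|y|^2)^{-1/4}$ and the $w^2$ part are bounded via \eqref{Hardy} and \eqref{feb19eta}.
\end{itemize}
This gives a bound $K e^{\frac{\eta(p_c+3)s}{2}}$. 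Choosing $\eta<\frac{2\e}{p_c+3}$, the product is at most $K e^{-cs}$, which yields the remainder $\theta_3e^{-cs}$ in time-averaged form. This is exactly why the statement is given as a difference over $[s,s+1]$ rather than pointwise.

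\emph{Step 3: integration and positivity.} Multiplying by $\tau^{\frac{an\nu}{2}}$, integrating, and choosing $\theta_4$ large so that $\frac{d}{ds}(\theta_4s^{\frac{an\nu}{2}-5})$ dominates $K\tau^{\frac{an\nu}{2}}e^{-c\tau}$ gives \eqref{DEC101}. The exponent $\lambda$ there stands for $\nu$, and the extra $\tau^{-1}$ comes from $\alpha(\tau)\sim\frac{c}{\tau}$. For \eqref{posi1}, I argue by contradiction as in the proof of \eqref{positiveeta}. Suppose ${\cal F}<0$ at some time; use $\widetilde w^{\delta}$ from \eqref{blowupbefore} together with the monotonicity just proved. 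Then ${\cal F}(\widetilde w^{\delta}(s),s)\ge -Cs^{\frac{an\nu}{2}}\big(e^{-s}+\frac{C}{s}\int|\partial_s\widetilde w|^2\big)$, using $F\ge-\frac{\alpha}{2n}\int(\partial_sw)^2\rho$. Since $\widetilde w^\delta$ and its derivatives decay like $e^{-s}$ because the solution is regular at the earlier time, this lower bound tends to $0$, contradicting the negativity.
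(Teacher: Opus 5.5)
Your proposal is correct and follows the paper's proof essentially step by step. You combine Lemmas \ref{lem32} and \ref{lem3.3} with $\nu$ small; the paper likewise needs $\nu<\frac{p_c-1}{160}$, despite the ``for all $\nu>0$'' in the statement, and the $\lambda$ in \eqref{DEC101} is indeed $\nu$. You control the $(1-|y|^2)^{-1/4}$-weighted gradient error only in time average through \eqref{feb19eta} and \eqref{14sep2}, with $\eta$ small compared to $\varepsilon$, then take $\theta_4$ large, and obtain positivity from the $\widetilde w^{\delta}$ argument of Proposition \ref{prop2.2}; all of this matches the paper.

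The one harmless difference concerns the singular $(\partial_s w)^2$ errors. The paper absorbs them pointwise into the dissipation $-\frac32\alpha(s)\int_B(\partial_sw)^2\frac{\rho}{1-|y|^2}\,{\mathrm{d}}y$, using $\alpha(s)<\frac34$ and $e^{-\varepsilon s}\ll\alpha(s)$. So your claim that these terms cannot be absorbed pointwise is too pessimistic, although routing them through \eqref{feb19eta} also works.
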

{\it Proof:} 
It follows from \eqref{ss00} that $\alpha(s) < \frac{3}{4}$ for all  $\taa$.
From  the definition of ${\cal P} (w(s),s)$ given  in \eqref{Ppoly},   Lemmas  \ref{lem32}, \ref{lem3.3}, 
we can write for all
  $\taa$, we have  
\begin{align}\label{PP1}
\frac{d}{ds}{\cal P}(w(s),s)\le  &\frac{\alpha(s)(p_c+3)\nu}2{\cal P}(w(s),s)- \Big(\frac32\alpha(s)-\frac{C_0  }{s^2}-C_0 e^{-\e s}\Big) \int_{ B} (\partial_{s}w)^2\frac{\rho}{1-|y|^2}\y\no\\
&-\Big(\alpha(s)\frac{(p_c+5)\nu}2   -\frac{C_0\e}{s }-\frac{C_0}{s^2}-C_0e^{-\e s}\Big)\int_{B} (\partial_{s} w)^2\rho {\mathrm{d}}y\no\\
&-\Big(\alpha(s)\nu (\frac{p_c-1}2-20\nu)   
 -\frac{C_0\e}{s }-\frac{C_0}{s^2}-C_0e^{-\e s}\Big)
\int_{B}(|\nabla w|^2-(y \cdot \nabla w)^2) \rho {\mathrm{d}}y\no\\
&-\Big(\alpha(s)\frac{(p_c+1)\nu}{2(p_c-1)}  -\frac{C_0\e}{s }-\frac{C_0}{s^2}-C_0e^{-\e s}\Big)
\int_{B}w^2 \rho {\mathrm{d}}y\no\\
&-\Big(\alpha(s)\frac{(p_c-1)\nu}{2(p_c+1)}-\frac{C_0}{s^{\frac74}}\Big)\frac1{s^{a}}\ibint |w|^{p_c+1}\ln^a(2+\phi^2 w^2)\rho \y\no\\
&+\underbrace{\frac{\alpha(s) (p_c+3)\nu}{2}
e^{-\frac{2(p_c+1)s}{p_c-1}}s^{\frac{2a}{p_c-1}}  \ibint  \Big(f_1(\phi w) +f_2(\phi w)\Big)\rho\y}_{\chi_{8}(s)} \no\\
&+C_0e^{-\e s}
\int_{B} \Big(|\grad w|^2-(y\cdot \grad w)^2
\Big)\frac{1}{(1-|y|^2)^{\frac14}}  \y+C_0e^{-2s},\no
\end{align}
where $C_0$  stands for some universal constant depending only on $n,$ and $a$.

By using \eqref{equiv2}, and \eqref{equiv3},  we infer  
\begin{equation}\label{xsi8}
\chi_{8}(s)\le 
\frac{\tilde C_0}{s^{a+2}}\ibint |w|^{p_c+1}\ln^a(2+\phi^2 w^2)\rho \y+\tilde C_0e^{-2s}.
\end{equation}
Utilizing \eqref{ss00}, we deduce that for all $\taa$, the following inequality holds
\begin{eqnarray*}
\frac12\alpha(s)-C_0e^{-\e s}-\frac{C_0}{s^2} \ge 0, \qquad \frac{C_0  }{s^2}+C_0 e^{-\e s}\le  \frac{C_0 \e }{s},\qquad
\alpha(s)\frac{(p_c-1)\nu}{4(p_c+1)}-\frac{C_0}{s^{\frac74}}-\frac{\tilde C_0}{s^2}\ge 0.
\end{eqnarray*}
Furthermore, we choose $\e= \min \left\{ -\frac{(p_c+5)a\nu}{8C_0(p_c-1)}, -\frac{a\nu}{16C_0}, -\frac{(p_c+1)a\nu}{8C_0(p_c-1)^2} \right\}$ and $\nu \in \left( 0, \frac{p_c-1}{160} \right)$. It follows that for all $\taa$, we have:
\begin{align}\label{PP1bis}
\frac{d}{ds}{\cal P} (w(s),s)\le  &\frac{\alpha(s)(p_c+3)\nu}2{\cal P}(w(s),s)- \alpha(s)\int_{ B} (\partial_{s}w)^2\frac{\rho}{1-|y|^2}\y\\
&-\alpha(s)\frac{(p_c+5)\nu}4   \int_{B} (\partial_{s} w)^2\rho {\mathrm{d}}y
-\alpha(s)\frac{(p_c-1)\nu}8  
\int_{B}(|\nabla w|^2-(y \cdot \nabla w)^2) \rho {\mathrm{d}}y\no\\
&-\alpha(s)\frac{(p_c+1)\nu}{4(p_c-1)} 
\int_{B}w^2 \rho {\mathrm{d}}y-\alpha(s)\frac{(p_c-1)\nu}{4(p_c+1)s^{a}}\ibint |w|^{p_c+1}\ln^a(2+\phi^2 w^2)\rho \y\no\\
& +C_0e^{\frac{a\nu}{8C_0} s}
\int_{B} \Big(|\grad w|^2-(y\cdot \grad w)^2
\Big)\frac{1}{(1-|y|^2)^{\frac14}}  \y
+ C_1e^{-2s}.\no
\end{align}
By using the definition of  ${\cal {F} }(w(s),s)$ given  in  \eqref{21juin1} together with  the estimate \eqref{PP1bis}, and the fact that 
$\frac{\alpha(s)(p_c+3)\nu}2=-\frac{an\nu}{2s}$,
 we   derive, for all    $\taa$
\begin{eqnarray}\label{M11}
\frac{d}{ds}
{\cal {F} }(w(s),s)
&\le & - \lambda_1s^{\frac{an \nu-2}{2}} \int_{ B} (\partial_{s}w)^2\frac{\rho}{1-|y|^2}\y\\
&&- \lambda_1s^{\frac{an \nu-2}{2}}  \int_{B}(|\nabla w|^2-(y \cdot \nabla w)^2) \rho {\mathrm{d}}y
  - \lambda_1s^{\frac{an \nu-2}{2}}
\int_{B}w^2 \rho {\mathrm{d}}y\no\\
&&- \lambda_1s^{\frac{an \nu-2}{2}-a}\ibint |w|^{p_c+1}\ln^a(2+\phi^2 w^2)
\rho \y\no\\
&&+C_0e^{\frac{a\nu}{8C_0} s}s^{\frac{an\nu}{2}}
\int_{B} \Big(|\grad w|^2-(y\cdot \grad w)^2
\Big)\frac{1}{(1-|y|^2)^{\frac14}}  \y\no\\
&&+  C_1 s^{\frac{an\nu}{2}}e^{-2s}
-\theta_4(5-\frac{an\nu}{2})s^{\frac{an\nu}{2}-6},\no
\end{eqnarray}
where $ \lambda_1= \inf (-\frac{a}{p_c-1},- \frac{a\nu}{4} ,
 - \frac{a(p_c+1)\nu}{4(p_c-1)^2}
,-\frac{a\nu}{p_c+1})$.
By  integrating  in time between $s$ and $s+1$
the  inequality
(\ref{M11}),  we   prove that for all    $\taa$
\begin{align}\label{step1}
{\cal {F} }(w(s+1),s+1)
 -{\cal F }(w(s),s)
\le  - \lambda_1 \int_{s}^{s+1}\tau^{\frac{an \nu-2}{2}}\int_{ B} (\partial_{s}w)^2\frac{\rho}{1-|y|^2}\y{\mathrm{d}}\tau \no\\
- \lambda_1 \int_{s}^{s+1} \tau^{\frac{an \nu-2}{2}}\int_{B}(|\nabla w|^2-(y \cdot \nabla w)^2) \rho {\mathrm{d}}y{\mathrm{d}}\tau
  - \lambda_1\int_{s}^{s+1}\tau^{\frac{an \nu-2}{2}}
\int_{B}w^2 \rho {\mathrm{d}}y{\mathrm{d}}\tau\no\\
- \lambda_1\int_{s}^{s+1}\tau^{\frac{an \nu-2}{2}}\int_{ B} |w|^{p_c+1}\ln^a(2+\phi^2 w^2)
\rho \y{\mathrm{d}}\tau\qquad \qquad \qquad\quad\ \  \no\\
+Ce^{\frac{a\nu}{8C_0} s}s^{\frac{an\nu}{2}}
\int_{s}^{s+1}\int_{B} \Big(|\grad w|^2-(y\cdot \grad w)^2
\Big)\frac{1}{(1-|y|^2)^{\frac14}}  \y{\mathrm{d}}\tau\qquad\qquad \no\\
+  \big( C_1 e^{-2s}
-\theta_4(5-\frac{an\nu}{2})\frac{1}{s^6}\big)s^{\frac{an\nu}{2}}.\qquad \qquad\qquad\qquad\qquad
\qquad\qquad\quad
\end{align}
Now we will use  the exponential rought estimates obtained in  Proposition \ref{prop21}, and   Corollary \ref{corol1}.
More precisely,  by \ref{feb19eta}, (\ref{feb19etaNL}, and \eqref{14sep2},
 we get, ,  
\begin{equation}\label{es1}
C
\int_{s}^{s+1}\int_{B} \Big(|\grad w|^2-(y\cdot \grad w)^2
\Big)\frac{1}{(1-|y|^2)^{\frac14}}  \y{\mathrm{d}}\tau\le C_2
e^{-\frac{a\nu}{16C_0} s},\qquad \forall \ttttaa.
\end{equation}
We now choose $\theta_4 $,  such that  we have,  
 \begin{equation}\label{theta2}
C_1 e^{-2s}+C_2
e^{\frac{a\nu}{16C_0} s} 
-\theta_4(5-\frac{an\nu}{2})\frac{1}{s^6}
 \le0, \qquad \forall \ttttaa.
\end{equation}
Clearly, by  combining   \eqref{step1}, \eqref{es1} and \eqref{theta2}
we deduce \eqref{DEC101}.\\

The  proof of \eqref{posi1}   is the same as the similar part in  Proposition \ref{prop2.2}.
This concludes the proof of   Proposition \ref{proplyap}.

\Box

\bigskip

Now, we are in position to prove  Proposition \ref{prop21bb2428}
\medskip

{\it Proof of  Proposition \ref{prop21bb2428} }
According to the Proposition \ref{proplyap}, we obtain the following corollary which summarizes the principal properties of ${\cal {P}}(w(s),s)$ and a polynomial  
bound result  of the time-average of the $H^1$ norm  and the nonlinear term with  the weight $\rho$.

\begin{cors}\label{cor01sec3}
For all $\nu>0$,  
for all $T_0 \in (0,T(x_{0})]$, 
  $x\in \R^n,$ such that $|x-x_0|\le \frac{T_0}{\delta_0(x_0)}$,  and $s\geq 3-\ln (T^*(x)),$  
we have
\begin{equation}
-C\leq {\cal {P} }(w(s),s) \leq \Big(\theta_4 +{\cal {P} }(w(\tilde s_1),\tilde s_1) \Big)s^{\frac{-an \nu}{2}} ,
\end{equation}
\begin{equation}\label{cor01Asec3}
\int_{s}^{s+1}\int_{B}(\partial_{s} w)^2\frac{\rho}{1-|y|^2}{\mathrm{d}}y{\mathrm{d}}\tau\leq
C  \Big(\theta_4 +{\cal {P} }(w(\tilde s_1),\tilde s_1) \Big) 
 s^{1-\frac{an \nu}{2}}, 
\end{equation}
\begin{equation}  \label{cor01Bsec3}
\int_{s}^{s+1}
\int_{B}(w^2+|\nabla w|^2-(y\cdot \nabla w)^2)\rho {\mathrm{d}}y{\mathrm{d}}\tau\leq C\Big(\theta_4 +{\cal {P} }(w(\tilde s_1),\tilde s_1 ) \Big)s^{1-\frac{an \nu}{2}}, 
\end{equation}
\begin{equation} \label{cor01Csec3}
\frac{1}{ s^a} \int_{s}^{s+1} 
\int_{B}   |w|^{p_c+1}\ln^a(\phi^2w^2+2)\rho {\mathrm{d}}y{\mathrm{d}}\tau \leq
C\Big(\theta_4 +{\cal {P} }(w(\tilde s_1),\tilde s_1 ) \Big)s^{1-\frac{an \nu}{2}}, 
\end{equation}
where $w=w_{x,T^*(x)}$ is defined in \eqref{scaling}, and where   $\tilde s_1= 3-\ln (T^*(x))$.
\end{cors}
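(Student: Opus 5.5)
The plan is to read off Corollary \ref{cor01sec3} from Proposition \ref{proplyap}, in the same way that Corollary \ref{cor01} was obtained from Proposition \ref{prop2.2}.

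\textbf{Upper bound on ${\cal P}$.} The monotonicity in \eqref{DEC101} gives ${\cal F}(w(s),s)\le {\cal F}(w(\tilde s_1),\tilde s_1)$ for $s\ge \tilde s_1$, once the one-step inequality is iterated on unit intervals (and interpolated for non-integer steps). Since $a<0$, we have $s^{\frac{an\nu}{2}}\le 1$ and $s^{\frac{an\nu}{2}-5}\le 1$ for $s\ge 1$. We also have $\tilde s_1\gg1$ by \eqref{ss00}. Dividing by $s^{\frac{an\nu}{2}}$ then yields
\[
{\cal P}(w(s),s)\le \big(\theta_4+{\cal P}(w(\tilde s_1),\tilde s_1)\big)s^{-\frac{an\nu}{2}}.
\]

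\textbf{Lower bound on ${\cal P}$.} By \eqref{posi1}, ${\cal F}\ge 0$, so ${\cal P}(w(s),s)\ge -\theta_4 s^{-5}\ge -C$.

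\textbf{Space-time bounds.} Integrating \eqref{DEC101} on $[s,s+1]$, the left side ${\cal F}(w(s+1),s+1)-{\cal F}(w(s),s)$ is bounded below by $-{\cal F}(w(s),s)$, using ${\cal F}\ge0$. In turn, ${\cal F}(w(s),s)\le {\cal F}(w(\tilde s_1),\tilde s_1)\le \theta_4+{\cal P}(w(\tilde s_1),\tilde s_1)$. Hence each dissipated quantity $\lambda_1\int_s^{s+1}\tau^{\frac{an\nu-2}{2}}\int_B(\cdots)$ is bounded by $\theta_4+{\cal P}(w(\tilde s_1),\tilde s_1)$. On $[s,s+1]$ the weight $\tau^{\frac{an\nu-2}{2}}$ is comparable to $s^{\frac{an\nu}{2}-1}$. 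Dividing by it gives the factor $s^{1-\frac{an\nu}{2}}$ in \eqref{cor01Asec3}--\eqref{cor01Csec3}.

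\textbf{Treatment of the nonlinear term.} The nonlinear term in \eqref{DEC101} appears without the factor $s^{-a}$. Relative to \eqref{M11}, the integrated inequality lost a power $s^{-a}$. Since $-a>0$, the factor $\tau^{-a}$ is $\ge1$, so the weaker form still gives \eqref{cor01Csec3} by multiplying by $s^{-a}\cdot s^{a}$ bookkeeping. To get the stated scaling directly, I would use the sharper weight $\tau^{\frac{an\nu-2}{2}-a}$ from \eqref{M11} before integrating.

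\textbf{Main obstacle.} The only delicate point is notational. The exponent written as $\lambda$ in \eqref{DEC101} must be read as $\nu$. One also has to check that $\theta_4$ and the constants are independent of $x$, which holds since they depend only on $n,a,\nu$.
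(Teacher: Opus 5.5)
Your route is the one the paper intends. The paper gives no separate argument and reads Corollary \ref{cor01sec3} off Proposition \ref{proplyap}, just as Corollary \ref{cor01} was read off Proposition \ref{prop2.2}. The upper and lower bounds on ${\cal P}$ and the estimates \eqref{cor01Asec3}--\eqref{cor01Bsec3} follow as you describe.

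\textbf{The nonlinear term.} Your claim that the weaker form of \eqref{DEC101}, with weight $\tau^{\frac{an\nu-2}{2}}$ and no $\tau^{-a}$, ``still gives'' \eqref{cor01Csec3} is false. That form only yields
\[
\int_s^{s+1}\int_B |w|^{p_c+1}\ln^a(\phi^2w^2+2)\rho\,{\mathrm{d}}y{\mathrm{d}}\tau\le C\big(\theta_4+{\cal P}(w(\tilde s_1),\tilde s_1)\big)s^{1-\frac{an\nu}{2}} .
\]
Multiplying by $s^{-a}=s^{|a|}\ge 1$ to reach the left side of \eqref{cor01Csec3} worsens the right side by the unbounded factor $s^{|a|}$. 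The fact that $\tau^{-a}\ge 1$ is exactly why the stated \eqref{DEC101} is the weaker inequality. So your fallback is mandatory, not optional. You must integrate \eqref{M11} over $[s,s+1]$, whose nonlinear dissipation carries $s^{\frac{an\nu-2}{2}-a}$; this comes from the factor $\alpha(s)\frac{(p_c-1)\nu}{4(p_c+1)s^a}$ in \eqref{PP1bis} multiplied by $s^{\frac{an\nu}{2}}$. Only after that do you control the $(1-|y|^2)^{-1/4}$ term by \eqref{es1} and \eqref{theta2}.

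\textbf{Non-integer steps.} The one-step inequality alone only gives ${\cal F}(w(s),s)\le {\cal F}(w(\tilde s_1+\vartheta),\tilde s_1+\vartheta)$, where $\vartheta=s-\tilde s_1-\lfloor s-\tilde s_1\rfloor\in[0,1)$. No interpolation of that inequality compares this with ${\cal F}(w(\tilde s_1),\tilde s_1)$. Instead, integrate \eqref{M11} over $[\tilde s_1,\tilde s_1+\vartheta]$, drop the dissipative terms, and bound the remaining terms by \eqref{es1} on $[\tilde s_1,\tilde s_1+1]$. By \eqref{theta2} the error is $O(\theta_4\tilde s_1^{-6})$, which is absorbed by a harmless enlargement of $\theta_4$.

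\textbf{Dependence of $\theta_4$.} It is not true that $\theta_4$ depends only on $n,a,\nu$. It must dominate $C_2$ in \eqref{es1}, which comes from Proposition \ref{prop21} and Corollary \ref{corol1}. Hence $\theta_4$ depends on $\delta_0(x_0)$, $T(x_0)$ and the initial data, uniformly over admissible $x$, which is what you actually need. Consequently your argument gives ${\cal P}\ge -\theta_4 s^{-5}$, a data-dependent lower bound. The paper's ``$-C$'' has the same imprecision.
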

From Corollary \ref{cor01sec3}, we are in position to prove the   Proposition \ref{prop21bb2428}.\\

{\it Proof  of the Proposition \ref{prop21bb2428}:}
The proof is 
the same as the similar part in the Proposition \ref{prop21}. Indeed,  thanks to the covering technique we obtain easily the required estimates, for $q=-\frac{an\nu}{2}$.  This concludes the proof
  of the Proposition \ref{prop21bb2428}.
\Box
\section{Proof of Theorem \ref{t1} }\label{section4}

In this section, we refine the estimates obtained in Lemma \ref{lem32} using the results derived in Section \ref{section3}, namely Proposition \ref{prop21bb2428}, and Corolary \ref{corol1alpha}. These improved bounds are instrumental in constructing a Lyapunov functional and establishing the proof of Theorem \ref{t1}. Indeed, we show that  $L(w(s),s)$  is a  Lyapunov functional by analyzing the energy functional $G(w(s),s)$. Specifically, we demonstrate that $G(w(s),s)$ satisfies the following differential inequality:
\begin{equation}\label{A56}
\frac{d}{ds}G(w(s),s) \le - \alpha(s) \iint \frac{(\partial_{s}w)^2}{1-|y|^2} \rho , dy + \frac{p_c+3}{2s\sqrt{s}}G(w(s),s) + \frac{\tilde \theta_1}{s^{5/2}}.
\end{equation}
This inequality provides a sharper  control than the estimate for $\mathcal{P}(w(s),s)$ established in Section \ref{section3}, effectively improving the decay from almost linear to linear. 
 This section is organized as follows:
\begin{itemize}
    \item In Subsection \ref{4.1}, we provide the derivation of the refined estimates, improving upon the preliminary results of Subsection \ref{sec3.1}.
    \item In Subsection \ref{4.2}, we state and prove a generalized version of Theorem \ref{t1} that holds uniformly for $x$ in a neighborhood of $x_0$. 
    Furthermore, we formulate and prove a blow-up criterion in Lemma \ref{L56}.
\end{itemize}
We recall the functionals $J(w(s),s)$, and $G(w(s),s)$:
\begin{align}
J(w(s),s)=&-\frac{1}{s\sqrt{s}}\int_{B} w\partial_{s}w \rho \y +\frac{n }{2s\sqrt{s}}\int_{ B}w^2\rho  {\mathrm{d}}y,\label{Jpolynew}\\
G(w(s),s)=&E(w(s),s)+J(w(s),s).\label{Ppolynew}
\end{align}
\subsection{Refined  energy estimates}\label{4.1}
In this subsection, we state two lemmas that are crucial for deriving the differential inequality \eqref{A56}. More specifically, we first establish the following result:
\begin{lem}\label{lem41} For all   
  $\taa$, we have  
  \begin{equation}\label{0Lem41}
\frac{d}{ds}E(w(s),s)=- 2\alpha(s) \int_{ B} (\partial_{s}w)^2\frac{\rho}{1-|y|^2}\y+\Sigma_3(s),
\end{equation}
where
\begin{align}\label{Lem41}
\Sigma_3(s)\le 
&\frac{C}{s^{a+\frac74}}\ibint |w|^{p_c+1}\ln^a(2+\phi^2 w^2)\rho \y\\
&+\frac{C\ln s}{s^2}\int_{B} \Big(w^2+(\partial_{s}w)^2+|\grad w|^2-(y\cdot \grad w)^2
\Big)\rho  \y \nonumber\\
&+\frac{C}{s^4}
\int_{B} \Big((\partial_{s}w)^2+|\grad w|^2-(y\cdot \grad w)^2
\Big)\frac{\rho}{(1-|y|^2)^{\frac18}}  \y+Ce^{-2s}.\no
\end{align}
\end{lem}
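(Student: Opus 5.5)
The identity \eqref{0Lem41} is exactly the one already obtained in Lemma \ref{lem32}: we multiply \eqref{eqc} by $\partial_s w\,\rho(y,s)$ and integrate over $B$. This produces the same decomposition \eqref{JE12}, namely $\Sigma_3(s)=\sum_{i=1}^7\chi_i(s)$. So nothing new is needed for the identity. The work is to re-estimate the $\chi_i$ with a sharper splitting of the ball. The terms $\chi_1,\dots,\chi_4$ and $\chi_7$ are handled exactly as before:
\begin{itemize}
\item $\chi_1$ by \eqref{13janva1};
\item $\chi_2+\chi_3$ by \eqref{sigma11dec18};
\item $\chi_4$ by \eqref{juin2025};
\item $\chi_7\le 0$.
\end{itemize}
These already fit into the right-hand side of \eqref{Lem41}, since $C/s^2\le C\ln s/s^2$. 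Only the logarithmic-weight terms $\chi_5$ and $\chi_6$ must be redone.

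For $\chi_5$ and $\chi_6$ we replace the $\e$-dependent splitting \eqref{271} by a polynomial one. Set
\[
B_1(s)=\{|y|^2\le 1-s^{-16}\},\qquad B_2(s)=\{|y|^2\ge 1-s^{-16}\}.
\]
On $B_1(s)$ we have $-\ln(1-|y|^2)\le 16\ln s$. The prefactor $a/(2(p_c-1)s^2)$ in $\chi_5$ then gives a contribution bounded by
\[
\frac{C\ln s}{s^2}\int_B\Big((\partial_s w)^2+|\nabla w|^2-(y\cdot\nabla w)^2\Big)\rho.
\]
On $B_2(s)$ we use that $(1-|y|^2)^{1/16}|\ln(1-|y|^2)|$ is bounded on $B$. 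We also use $(1-|y|^2)^{1/16}\le s^{-1}$ there. Writing $|\ln(1-|y|^2)|=(1-|y|^2)^{-1/8}\cdot(1-|y|^2)^{1/16}|\ln(1-|y|^2)|\cdot(1-|y|^2)^{1/16}$, the contribution is at most
\[
\frac{C}{s^3}\int_B(\cdots)\frac{\rho}{(1-|y|^2)^{1/8}}.
\]
The exponent must be tuned to reach the claimed $C/s^4$. Taking the threshold $1-s^{-32}$ and splitting $|\ln(1-|y|^2)|\le C(1-|y|^2)^{-1/16}$ gives $(1-|y|^2)^{1/16}\le s^{-2}$ on $B_2$, hence a factor $s^{-2}\cdot s^{-2}=s^{-4}$. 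The price on $B_1$ is only $-\ln(1-|y|^2)\le 32\ln s$, which is harmless. The same splitting applies to $\chi_6$. There the boundary part $s^{-4}\int_B w^2\rho(1-|y|^2)^{-1/8}$ is absorbed by the Hardy inequality \eqref{AHardy}, as in \eqref{xi52b}. This yields $\le C\int_B w^2\rho+C\int_B|\nabla w|^2(1-|y|^2)\rho$, and by \eqref{12nov6} this is bounded by the right-hand side of \eqref{Lem41}.

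The step requiring care is the bookkeeping of powers. The splitting threshold must be chosen so that the interior logarithm costs only $\ln s$, while the boundary part gains enough to produce $s^{-4}$ with the weight $(1-|y|^2)^{-1/8}$, uniformly in $s\ge-\ln T^*(x)$. This weight is later controlled by the singular-weight estimates of Corollary \ref{corol1alpha}, since $(1-|y|^2)^{-1/8}\le(1-|y|^2)^{-1/2}$ near the boundary. Apart from this, the proof is a verbatim repetition of Lemma \ref{lem32}, with $\e/s$ replaced by $\ln s/s^2$ and $e^{-\e s}$ replaced by $s^{-4}$.
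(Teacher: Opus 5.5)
Your proposal is correct and follows the paper's proof. You reuse the identity and the decomposition $\Sigma_3=\sum_{i=1}^7\chi_i$ from Lemma \ref{lem32}, keep the bounds on $\chi_1,\dots,\chi_4,\chi_7$, and redo only $\chi_5,\chi_6$ with a polynomial splitting of the ball near $|y|=1$ plus the Hardy inequality for the weight $(1-|y|^2)^{-1/8}$. The only difference is the threshold: the paper uses $1-s^{-64}$, which in fact gives $s^{-6}$ on the boundary layer, while your $1-s^{-32}$ gives exactly $s^{-4}$; you can simply drop the preliminary $s^{-16}$ attempt from the write-up.
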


{\it Proof}:
Let us first recall that \eqref{0Lem41} is given by Lemma \ref{lem32}, and 
\begin{equation}\label{JE12new}
\Sigma_3(s)= \sum_{i=1}^7\chi_i(s),
\end{equation}
where $\chi_{i}(s)$,  for all $i\in \{1,2,3,,4,5,6,7\}$ is given by \eqref{JE12}.
 Let us mention that the estimates  related to  $\chi_{1}(s)$, $\chi_{2}(s)$, $\chi_{3}(s)$, and $\chi_{4}(s)$, given by 
\eqref{13janva1},
\eqref{sigma11dec18}, and
 \eqref{juin2025}  and the nonpositivity $\chi_{7}(s)$
 are acceptable and does not need any improvement.

Now,
by using the additional information obtained    in Corollary \ref{corol1alpha}, we are going to refine the estimate related to $\chi_{5}(s)$,  and $\chi_{6}(s)$
 defined in \eqref{JE12}.
 More precisely, 
for all   
  $\taa$, we   divide $B$ into two parts
 \begin{equation}\label{271new}
B_{3}(s)=\{y \in B\,\,|\,\,|y|\le  \sqrt{1-\frac1{s^{64}} }\}\,\,{\rm and }
\,\,B_4(s)=\{y \in B
\,\,|\,\, |y|\ge \sqrt{1-\frac1{s^{64}}}\}.
\end{equation}
%
Accordingly,  we write  $\chi_5(s)=\tilde \chi_5^1(s)+\tilde \chi_5^2(s)$, where
\begin{align}
\tilde \chi_5^{1}(s)=&
\frac{a}{2(p_c-1)s^2}\int_{B_3(s)} \Big((\partial_{s}w)^2+|\grad w|^2-(y\cdot \grad w)^2
\Big)\rho \ln(1-|y|^2) \y\nonumber\\
\tilde\chi_5^{2}(s)=&
\frac{a}{2(p_c-1)s^2}\int_{B_4(s)} \Big(\frac{1}{2}(\partial_{s}w)^2+|\grad w|^2-(y\cdot \grad w)^2
\Big)\rho \ln(1-|y|^2) \y.\nonumber
\end{align}
On the one hand, by using 
 the definition of the set $B_3(s)$ given  in \eqref{271new},   we get,  $-\ln(1-|y|^2)\le 64\ln s $, for all  $y\in B_3(s)$.
Hence,  we write
\begin{align}\label{xi41new}
\tilde \chi_5^{1}(s)\le 
\frac{C\ln s }{s^2 }\int_{B} \Big((\partial_{s}w)^2+|\grad w|^2-(y\cdot \grad w)^2
\Big)\rho  \y.
\end{align}
On the other hand, by using  the  fact that 
the function  $y\mapsto (1-|y|^2)^{\frac1{16}}  \ln(1-|y|^2)$
 is uniformly bounded on $B$, 
 and  the definition of the set $B_4(s)$ given  in \eqref{271new},   we get,  $(1-|y|^2)^{\frac1{16}}  \le  \frac1{s^4}$, for all  $y\in B_4(s)$.
Therefore,  
 \begin{align}\label{xi42new}
\tilde \chi_5^{2}(s)\le \frac{C}{s^4}
\int_{B} \Big((\partial_{s}w)^2+|\grad w|^2-(y\cdot \grad w)^2
\Big)\frac{\rho}{(1-|y|^2)^{\frac18}}  \y.
\end{align}
 Using \eqref{xi41new} together with \eqref{xi42new}, we see that
\begin{align}\label{xi55}
\chi_5(s)\le &
\frac{C \ln s}{s^2 }\int_{B} \Big((\partial_{s}w)^2+|\grad w|^2-(y\cdot \grad w)^2
\Big)\rho  \y\\
&+\frac{C}{s^4}
\int_{B} \Big((\partial_{s}w)^2+|\grad w|^2-(y\cdot \grad w)^2
\Big)\frac{\rho}{(1-|y|^2)^{\frac18}}  \y.\nonumber
\end{align}
 Similarly,  we write  $\chi_6(s)=\tilde\chi_6^1(s)+\tilde\chi_6^2(s)$, where
\begin{align}
\tilde\chi_6^{1}(s)=&\frac{a}{(p_c-1)s^2}  
(\frac{p_c+1}{(p_c-1)^2}-\frac{\gamma(s)}2)\int_{B_3(s)} w^2\rho \ln(1-|y|^2) \y,\nonumber\\
\tilde \chi_6^{2}(s)=&\frac{a}{(p_c-1)s^2}  
(\frac{p_c+1}{(p_c-1)^2}-\frac{\gamma(s)}2)\int_{B_4(s)} w^2\rho \ln(1-|y|^2) \y.\nonumber
\end{align}
By using 
 the fact that  $-\ln(1-|y|^2)\le 64\ln  s$, for all  $y\in B_3(s)$,  we conclude
\begin{align}\label{xi51new}
\tilde\chi_6^{1}(s)\le 
\frac{C\ln s }{s^2 }\int_{B} w^2\rho  \y.
\end{align}
As before,  by  combining the fact that  
the function  $y\mapsto (1-|y|^2)^{\frac1{16}}  \ln(1-|y|^2)$
 is uniformly bounded on $B$, and  $(1-|y|^2)^{\frac1{16}}\le \frac!{s^4}$, for all  $y\in B_4(s)$, we infer  
 \begin{align}\label{xi52new}
\tilde \chi_6^{2}(s)\le \frac{C}{s^4}
\int_{B} w^2\frac{\rho}{(1-|y|^2)^{\frac18}}  \y.
\end{align}
For the particular case where $\eta=\frac78+\alpha(s)$, the Hardy inequality $\eqref{AHardy}$ yields
 \begin{align}\label{xi52bnew}
\int_{B} w^2\frac{\rho}{(1-|y|^2)^{\frac18}}  \y\le C
\int_{B} w^2\rho  \y+C
\int_{B} |\grad w|^2 (1-|y|^2) \rho \y.
\end{align}
Thus, by  \eqref{xi51new}, \eqref{xi52new}, \eqref{xi52bnew}, we deduce
\begin{align}
\chi_6(s)\le C\frac{\ln s}{ s^2}
\int_{B} w^2\rho  \y+\frac{C}{s^4}
\int_{B} |\grad w|^2 (1-|y|^2) \rho \y.
\label{xi5new}
\end{align}
The result \eqref{Lem41} derives immediately from 
\eqref{13janva1},
\eqref{sigma11dec18}, 
 \eqref{juin2025},  
 \eqref{xi55},  and \eqref{xi5new} and the nonpositivity $\chi_{7}(s)$,
which ends the proof of Lemma \ref{lem41}.
\Box

\bigskip

 We are now going to prove the following estimate for the functional $J(w(s),s)$.
\begin{lem}\label{lem4.2} For all   
  $\taa$, we have  
\begin{eqnarray}\label{6new}
\frac{d}{ds}J(w(s),s)
&=&-\frac1{s\sqrt{s}} \int_{B} (\partial_{s} w)^2\rho {\mathrm{d}}y+\frac1{s\sqrt{s}} 
\int_{B}(|\nabla w|^2-(y \cdot \nabla w)^2) \rho {\mathrm{d}}y \\
&&+\frac{2p_{c}+2}{(p_{c}-1)^2s\sqrt{s}} \int_{B}w^2 \rho {\mathrm{d}}y
-\frac{ 1}{s^{a+1}\sqrt{s}}\ibint |w|^{p_c+1}\ln^a(2+\phi^2 w^2)\rho \y +\Sigma_5(s),\no
\end{eqnarray}
where
\begin{equation}\label{6new30}
\Sigma_5(s)\le
\frac{C}{s^{\frac54}}\int_{B}(\partial_{s} w)^2 \frac{\rho}{1-|y|^2} {\mathrm{d}}y+
  \frac{C}{s^\frac74}
\int_{B}|\nabla w|^2(1-|y|^2)\rho  {\mathrm{d}}y+\frac{C}{s^\frac{5}2}
\int_{B} w^2\rho  {\mathrm{d}}y.
\end{equation}
\end{lem}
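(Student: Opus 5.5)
The plan is to mimic the proof of Lemma \ref{lem3.3}, with $\alpha(s)$ replaced by $\beta(s):=\frac{1}{s\sqrt{s}}$ and $\nu=1$. Write $J(w(s),s)=-\beta(s)\int_B w\partial_s w\rho\,\y+\frac{n\beta(s)}{2}\int_B w^2\rho\,\y$. Differentiating in $s$ produces three groups of terms:
\begin{itemize}
\item terms where the derivative falls on $\beta$, carrying the factor $\beta'(s)=-\frac32 s^{-5/2}$;
\item terms where it falls on $\rho=(1-|y|^2)^{\alpha(s)}$, giving $\alpha'(s)\ln(1-|y|^2)\rho$ with $|\alpha'|\le C s^{-2}$;
\item terms where it falls on $w$.
\end{itemize}
In the last group I substitute $\partial_s^2 w$ from \eqref{eqc} and integrate by parts against $w\rho$. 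Exactly as in \eqref{6b}, the divergence term gives $\beta\int_B(|\nabla w|^2-(y\cdot\nabla w)^2)\rho$. The mass term gives $\frac{2p_c+2}{(p_c-1)^2}\beta\int_B w^2\rho$, and the nonlinearity gives $-\beta s^{-a}\int_B|w|^{p_c+1}\ln^a(2+\phi^2w^2)\rho$. The term $-2y\cdot\nabla\partial_s w$, integrated by parts against $w\rho$, combines with $-\beta\int_B(\partial_s w)^2\rho$ and the damping $\frac{p_c+3}{p_c-1}=n$. Using $\mathrm{div}(\rho y)=n\rho-\frac{2\alpha|y|^2\rho}{1-|y|^2}$ and the $\frac{n}{2}\int w^2\rho$ correction, this cancels the $nw\partial_s w$ term and leaves the main terms of \eqref{6new}. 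Everything else goes into $\Sigma_5(s)$.

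The remainder $\Sigma_5(s)$ consists of the following pieces.
\begin{itemize}
\item $-2\beta\int_B \partial_s w\, y\cdot\nabla w\,\rho$. Using \eqref{cs} with weight $\mu=s^{1/4}$, I bound it by $\beta s^{1/4}\int_B(\partial_s w)^2\frac{\rho}{1-|y|^2}+\beta s^{-1/4}\int_B|\nabla w|^2(1-|y|^2)\rho$. This gives $s^{-5/4}$ and $s^{-7/4}$, which is precisely the split in \eqref{6new30}.
\item $\beta\alpha\int_B\partial_s w\,w\frac{|y|^2\rho}{1-|y|^2}$. I bound it by $Cs^{-5/4}\int_B(\partial_sw)^2\frac{\rho}{1-|y|^2}+Cs^{-7/4}\alpha^2\int_B w^2\frac{|y|^2\rho}{1-|y|^2}$. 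Then I apply the Hardy inequality \eqref{AHardy} with $\eta=\alpha$, as in \eqref{juin12bn}: $\alpha^2\int_B w^2\frac{|y|^2\rho}{1-|y|^2}\le\int_B|\nabla w|^2(1-|y|^2)\rho+n\alpha\int_B w^2\rho$. Since $\alpha\sim c/s$, this fits in $s^{-7/4}\int_B|\nabla w|^2(1-|y|^2)\rho+s^{-5/2}\int_B w^2\rho$ (indeed $s^{-7/4}\cdot s^{-1}\le s^{-5/2}$).
\item The $\beta'$ terms and the $\beta\gamma$, $\beta\alpha$ lower-order terms, which are $O(s^{-5/2})\int_B(|w\partial_sw|+w^2)\rho$. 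Here $\int_B(\partial_sw)^2\rho\le\int_B(\partial_sw)^2\frac{\rho}{1-|y|^2}$, so they are absorbed by \eqref{cs}.
\end{itemize}

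The main obstacle is the $\alpha'(s)\ln(1-|y|^2)$ terms, of size $\beta|\alpha'|\sim s^{-7/2}$. These are $\beta\alpha'\int_B(w\partial_s w-\frac n2 w^2)\rho\ln(1-|y|^2)$, and they carry the unbounded logarithm. I do not split the ball as in \eqref{271new}. Instead I use the pointwise bound $|\ln(1-|y|^2)|\le C(1-|y|^2)^{-1/8}$. For the $w\partial_s w$ part, Cauchy–Schwarz gives $Cs^{-7/2}\big[\int_B(\partial_sw)^2\frac{\rho}{1-|y|^2}+\int_B w^2\frac{\rho}{(1-|y|^2)^{1/4}}\big]$. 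I then apply Hardy \eqref{AHardy} with exponent $\frac34+\alpha$, as in \eqref{xi52b}, to bound $\int_B w^2\rho(1-|y|^2)^{-1/4}$ by $C\int_B(w^2+|\nabla w|^2(1-|y|^2))\rho$. The $w^2\ln$ part is treated the same way. All resulting coefficients are $O(s^{-7/2})$, which is dominated by $s^{-5/2}$, $s^{-7/4}$ and $s^{-5/4}$. Collecting terms yields \eqref{6new30}.
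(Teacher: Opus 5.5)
Your proposal is correct and follows the paper's proof essentially step by step. You isolate the same four remainder pieces as the paper's $\Sigma_5^1,\dots,\Sigma_5^4$ (the $y\cdot\nabla w\,\partial_s w$ term split with weights $s^{-5/4}$ and $s^{-7/4}$, the $\alpha\beta$ term handled by \eqref{AHardy} with exponent $\alpha(s)$, the lower-order $\beta'$, $\beta\gamma$, $\beta\alpha$ terms, and the $\alpha'\ln(1-|y|^2)$ term), and the only deviation is in that last term: your pointwise bound $|\ln(1-|y|^2)|\le C(1-|y|^2)^{-1/8}$ with Hardy at exponent $\frac34+\alpha$ replaces the paper's splitting of $B$ into $B_3(s),B_4(s)$ plus $(1-|y|^2)|\ln(1-|y|^2)|\le C$, which is slightly simpler and suffices because the prefactor $s^{-7/2}$ leaves ample room.
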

{\it Proof}: Note that $J(w(s),s)$ is a differentiable function, by using equation \eqref{eqc} and integrating by part, we have \eqref{6new}, where
\begin{eqnarray}
\label{K20new}
 \Sigma_5(s)
&=&\underbrace{- \frac2{s\sqrt{s}}\int_{B}\partial_{s} w y \cdot \nabla w\rho {\mathrm{d}}y}_{ \Sigma^1_5(s)}+\underbrace{ \frac{4\alpha(s)}{s\sqrt{s}} \int_{B}\partial_{s} w w\frac{|y|^2\rho}{1-|y|^2} {\mathrm{d}}y}_{ \Sigma^2_5(s)}
\no\\
&&\underbrace{- (\frac{3n}{4s}
+\gamma (s))\frac1{s\sqrt{s}} \int_{B}w^2 \rho {\mathrm{d}}y
+(\frac3{2s}+2\alpha (s))\frac1{s\sqrt{s}} \int_{B} w\partial_{s}w \rho \y } _{ \Sigma^3_5(s)}\no\\
&&\underbrace{-\frac{\alpha'(s)}{s\sqrt{s}}  \ibint \big(w\partial_{s}w-\frac{n}{2}w^2
\big)\rho \ln(1-|y|^2) \y} _{ \Sigma^4_5(s)}.\no
\end{eqnarray}
By applying the basic inequality $2ab \le a^2 + b^2$, we obtain the following estimates:
\begin{equation}\label{juin12new}
 \Sigma^1_5(s)\le
\frac{1}{s^{\frac54}}\int_{B}(\partial_{s} w)^2 \frac{\rho}{1-|y|^2} {\mathrm{d}}y+\frac1{s^{\frac74}}\int_{B} |\nabla w|^2(1-|y|^2)\rho {\mathrm{d}}y,
\end{equation}
and
\begin{equation}\label{juin128new}
 \Sigma^2_5(s)\le
\frac{1}{s^{\frac54}}\int_{B}(\partial_{s} w)^2 \frac{\rho}{1-|y|^2} {\mathrm{d}}y+\frac{C}{s^{\frac{15}4}} \int_{B} w^2\frac{|y|^2\rho}{1-|y|^2} {\mathrm{d}}y.
\end{equation}
Furthermore, invoking the estimate $|\alpha(s)| + |\gamma(s)| \le \frac{C}{s}$, we deduce
\begin{align}\label{xi9new}
 \Sigma^3_5(s)\le
\frac{C }{s^{\frac52} }\int_{B} \big(w^2+(\partial_{s}w)^2\big)\rho  \y.
\end{align}
By using the bound $|\alpha'(s)| \le \frac{C}{s^2}$, we arrive at
\begin{align}\label{xi9new30}
 \Sigma^4_5(s)\le
\frac{C }{s^{\frac72} }\int_{B} \big(w^2+(\partial_{s}w)^2\big)\rho(-\ln(1-|y|^2))  \y.
\end{align}
Proceeding similarly as for the estimates  \eqref{xi5new}, we easily deduce
\begin{align}
 \int_{B} w^2\rho(-\ln(1-|y|^2))  \y\le C\ln s
\int_{B} w^2\rho  \y+\frac{C}{s^2}
\int_{B} |\grad w|^2 (1-|y|^2) \rho \y.
\label{xi5new30}
\end{align}
Moreover, utilizing the fact that $-\ln(1-|y|^2)(1-|y|^2) \le C$ for all $y \in B$, we obtain
\begin{align}\label{xi9new30b}
 \int_{B} (\partial_{s}w)^2\rho(-\ln(1-|y|^2))  \y\le C \int_{B} (\partial_{s}w)^2\frac{\rho}{1-|y|^2} \y.
\end{align}
Thus, it follows from  \eqref{xi9new30}, \eqref{xi5new30}, and \eqref{xi9new30b} that 
\begin{align}\label{xi9new30bis}
 \Sigma^4_5(s)\le  \frac{C\ln s}{s^{\frac72}}
\int_{B} w^2\rho  \y+\frac{C}{s^{\frac{11}2}}
\int_{B} |\grad w|^2 (1-|y|^2) \rho \y+
  \frac{C}{s^{\frac72}} \int_{B} (\partial_{s}w)^2\frac{\rho}{1-|y|^2} \y.
\end{align}
Finally by using \eqref{juin12new}, \eqref{juin128new},  \eqref{xi9new},  \eqref{xi9new30bis}, and   the   Hardy-Sobolev inequality   \eqref{juin12bn},
  we have  the estimate \eqref{6new30}, which ends the proof of Lemma \ref{lem4.2}.
\Box

\bigskip

\subsection{A Lyapunov functional
for equation (\ref{A})} \label{4.2}
In this subsection, our aim is to construct a Lyapunov functional for equation \eqref{A}. Recalling that
\begin{equation}\label{21juin1bisBnew}
L(w(s),s)=\exp\left(\frac{p_c+3}{\sqrt{s}}\right) G(w(s),s)+\frac{\theta_1}{ s},
\end{equation} 
where $\theta_1$ is a sufficiently large constant that will be determined later. Indeed, with Lemmas \ref{lem41} and \ref{lem4.2}, we are in a position to state and prove Theorem \ref{t1}', which is a uniform version of Theorem \ref{t1} for $x$ near $x_0$.
We shall demonstrate that the functional $L(w(s),s)$ satisfies the following:

\noindent {\bf{Theorem} \ref{t1}'} {\it (Existence  of a Lyapunov functional for equation }
{\eqref{A}})\\
\label{t1bis}
{\it
Consider   $u $    a solution of ({\ref{gen}}) with blow-up graph
$\Gamma:\{x\mapsto T(x)\}$ and  $x_0$  a non characteristic point under the condition \eqref{ss00}.
Then, for all $T_0\in  (0,T(x_0)]$,  for all  $s\ge 6 -\log(T_0)$  and $x\in \R$, where $|x-x_0|\le \frac{e^{-s}}{\delta_0(x_0)}$,
 we have
 \begin{equation}\label{vv}
L(w(s+1),s+1)  -L(w(s),s) 
 \leq -\int_{s}^{s+1}\!\alpha(\tau) \exp\Big(\frac{p_c+3}{\sqrt{\tau}}\Big) \iint (\partial_{s}w)^2 \frac{\rho}{1-|y|^2}\y{\mathrm{d}}\tau,
 \end{equation}
where  $w=w_{x,T^*(x)}$ and $T^*(x)$  is defined in \eqref{18dec1}.}

\medskip

{\it Proof:} 
From  the definition of $G (w(s),s)$ given  in \eqref{Ppolynew},   Lemmas  \ref{lem41},  and \ref{lem4.2}, 
  using \eqref{0Lem41} , \eqref{Lem41}, \eqref{6new},  and \eqref{6new30}, we can write for all
  $s \geq -\ln (T^*(x))$, we have    
\begin{align}\label{PP1new}
\frac{d}{ds}G (w(s),s) \le  &   \frac{p_c+3}{2s\sqrt{s}} G(w(s),s)- \Big(2\alpha(s)-\frac{C_1}{s^{\frac54}}\Big) \int_{ B} (\partial_{s}w)^2\frac{\rho}{1-|y|^2}\y\no\\
&-\Big(\frac{p_c+5}{2s\sqrt{s}}   -\frac{C_1\ln s}{s^2}-\frac{C_1}{s^3}\Big)\int_{B} (\partial_{s} w)^2\rho {\mathrm{d}}y\no\\
&-\Big(\frac{p_c-1}{2s\sqrt{s}}  
 -\frac{C_1\ln s}{s^2 }-\frac{C_1}{s^{\frac74}}\Big)
\int_{B}(|\nabla w|^2-(y \cdot \nabla w)^2) \rho {\mathrm{d}}y\\
&-\Big(\frac{p_c+1}{2(p_c-1)s\sqrt{s}}  -\frac{C_1\ln s}{s^2 }-\frac{C_1}{s^{\frac52}}-\frac{C_1}{s^3}\Big)
\int_{B}w^2 \rho {\mathrm{d}}y\no\\
&-\Big(\frac{(p_c-1)}{2(p_c+1)s\sqrt{s}}-\frac{C_1}{s^{\frac74}}\Big)\frac1{s^{a}}\ibint |w|^{p_c+1}\ln^a(2+\phi^2 w^2)\rho \y\no\\
&+\underbrace{\frac{p_c+3}{2s\sqrt{s}}
e^{-\frac{2(p_c+1)s}{p_c-1}}s^{\frac{2a}{p_c-1}}  \ibint  \Big(f_1(\phi w) +f_2(\phi w\Big)\rho\y}_{\chi_{9}(s)} \no\\
&+\frac{C_1}{s^4}
\int_{B} \Big(|\grad w|^2-(y\cdot \grad w)^2
\Big)\frac{\rho}{(1-|y|^2)^{\frac18}}  \y
+Ce^{-2s},\no
\end{align}
where $C_1$  stands for some universal constant depending only on $n,$ and $a$.\\
By using \eqref{equiv2}, and \eqref{equiv3},  we infer  
\begin{equation}\label{xsi11new}
\chi_{9}(s)\le 
\frac{C}{s^{a+\frac52}}\ibint |w|^{p_c+1}\ln^a(2+\phi^2 w^2)\rho \y+Ce^{-2s}.
\end{equation}
By exploiting \eqref{ss00}, we esaily   write that   for all   
  $s \geq -\ln (T^*(x))$, we have 
\begin{align}\label{PP1newB}
\frac{d}{ds}G (w(s),s)\le  &\frac{p_c+3}{2s\sqrt{s}} G(w(s),s)-\alpha(s)
  \int_{ B} (\partial_{s}w)^2\frac{\rho}{1-|y|^2}\y\no\\
&+\frac{C}{s^4}
\int_{B} \Big(|\grad w|^2-(y\cdot \grad w)^2
\Big)\frac{\rho}{(1-|y|^2)^{\frac18}}  \y
+ Ce^{-2s}.
\end{align}
It follows from \eqref{21juin1bisBnew} and a direct computation that
  \begin{equation}\label{17dec2new}
  \frac{d}{ds}
L(w(s),s) =\exp\Big(\frac{p_c+3}{\sqrt{s}}\Big)\Big(\frac{d}{ds}
G(w(s),s)-\frac{p_c+3}{2s\sqrt{s}}G(w(s),s)\Big)-\frac{\theta_1}{s^{2}}.
\end{equation}
Therefore, estimates \eqref{PP1newB} and \eqref{17dec2new} lead to the following 
\begin{align}\label{PP1newBH}
 \frac{d}{ds}
L(w(s),s) 
 &\leq -\alpha(s) \exp\Big(\frac{p_c+3}{\sqrt{s}}\Big) \iint (\partial_{s}w)^2 \frac{\rho}{1-|y|^2}\y\\
 &+\frac{C}{s^4} \exp\Big(\frac{p_c+3}{\sqrt{s}}\Big)
\int_{B} \Big(|\grad w|^2-(y\cdot \grad w)^2
\Big)\frac{\rho}{(1-|y|^2)^{\frac18}}  \y\no\\
&+C\exp\Big(\frac{p_c+3}{\sqrt{s}}\Big)e^{-2s}
-\frac{\theta_1}{s^{2}}.\no
\end{align}
A simple integration between $s$ and $s+1$ implies
\begin{eqnarray}\label{PP1newBI}
L(w(s+1),s+1)-L(w(s),s) 
 \leq -\int_{s}^{s+1}\alpha(\tau) \exp\Big(\frac{p_c+3}{\sqrt{\tau}}\Big) \iint (\partial_{s}w)^2 \frac{\rho}{1-|y|^2}\y{\mathrm{d}}\tau\no\\
 + \frac{C_3}{s^4} \exp\Big(\frac{p_c+3}{\sqrt{s}}\Big)
 \int_{s}^{s+1}\int_{B} \Big(|\grad w|^2-(y\cdot \grad w)^2
\Big)\frac{\rho}{(1-|y|^2)^{\frac18}}  \y{\mathrm{d}}\tau\no\\
+C_3\exp\Big(\frac{p_c+3}{\sqrt{s}}\Big)e^{-2s}
-\frac{\theta_1}{(s+1)^2}.\qquad\qquad\qquad\qquad\qquad\qquad\ \quad
\end{eqnarray}
Now we will use  the polynomial rought estimates obtained in  Proposition \ref{prop21bb2428}, and   Corollary \ref{corol1alpha}.
More precisely,  by  (\ref{feb19}, and \eqref{14sep2b}
in the particular case where $q =  \frac12$, we obtain  
\begin{equation}\label{es1v}
C_3\int_{s}^{s+1}
\int_{B} \Big(|\grad w|^2-(y\cdot \grad w)^2
\Big)\frac{1}{(1-|y|^2)^{\frac18}}  \y{\mathrm{d}}\tau\le C_4s^{\frac32}, \qquad \forall s\ge6-\ln (T^*(x)).
\end{equation}
Since we have $1\leq \exp\Big(\frac{p_c+3}{\sqrt{s}}\Big)$, 
We now choose $\theta_1>0$,  such that  we have,  
 \begin{equation}\label{theta3}
C_4\exp\Big(\frac{p_c+3}{\sqrt{s}}\Big)\frac{1}{s^{\frac52}}
+C_3\exp\Big(\frac{p_c+3}{\sqrt{s}}\Big)e^{-2s}
-\frac{\theta_1}{(s+1)^2}
\le0, \qquad \forall \taa
\end{equation}
Clearly, by  combining   \eqref{PP1newBI}, \eqref{es1v} and \eqref{theta3},
we deduce for all  $s\ge6-\ln (T^*(x))$,
\begin{equation}\label{nb1}
L(w(s+1),s+1)-L(w(s),s) 
 \leq -\int_{s}^{s+1}\alpha(\tau) \exp\Big(\frac{p_c+3}{\sqrt{\tau}}\Big) \iint (\partial_{s}w)^2 \frac{\rho}{1-|y|^2}\y{\mathrm{d}}\tau.
 \end{equation}
By using \eqref{ss00}, we conclude  \eqref{vv}.
This concludes the proof
of  Theorem \ref{t1}'.
\Box
\bigskip

We now claim the following lemma:

\begin{lem}\label{L56}
If $L(w(s_8), s_8) < 0$ for some $s_8 \ge -ln(T^*(x))$, then $w$ blows up in some finite time $s_9>s_8$
\end{lem}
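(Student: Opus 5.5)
Following the classical argument of Antonini--Merle and of \cite{HZjhde12, HZnonl12, omar2}, I would argue by contradiction. Suppose $L(w(s_8),s_8)<0$ but $w$ exists for all $s\ge s_8$. Since $L$ is non-increasing along integer steps (Theorem \ref{t1}'), and also on continuous time by \eqref{PP1newBH} once the error terms are absorbed, we get $L(w(s),s)\le L(w(s_8),s_8)=:-\kappa<0$ for all $s\ge s_8$. As $\exp(\frac{p_c+3}{\sqrt s})\to1$ and $\theta_1/s\to0$, this gives $G(w(s),s)\le -\kappa/2$ for $s$ large. The absorbed perturbation $J$ is controlled by $\frac{C}{s^{3/2}}\int(w^2+(\partial_sw)^2)\rho$. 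Hence
\[
E(w(s),s)\le -\frac{\kappa}{2}+\frac{C}{s^{3/2}}\int_B\big(w^2+(\partial_sw)^2\big)\rho\,\mathrm{d}y .
\]

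Next, set $\mathcal{Y}(s)=\int_B w^2\rho\,\mathrm{d}y$. I would differentiate twice, using equation \eqref{eqc} multiplied by $w\rho$, exactly as in the computation leading to Lemma \ref{lem3.3}. This yields
\[
\mathcal{Y}''\ge -C\mathcal{Y}'-(p_c+3)E(w(s),s)\cdot c+\Big(\tfrac{p_c+3}{2}+o(1)\Big)\int_B(\partial_sw)^2\rho\,\mathrm{d}y+\frac{c}{s^a}\int_B|w|^{p_c+1}\ln^a(\phi^2w^2+2)\rho\,\mathrm{d}y-\dots
\]
The key structural input is the elementary inequality $(p_c+1)\,e^{-\frac{2(p_c+1)s}{p_c-1}}s^{\frac{2a}{p_c-1}}f(\phi w)\le \frac{1}{s^a}|w|^{p_c+1}\ln^a(\phi^2w^2+2)+Ce^{-2s}$, which follows from \eqref{equiv1}--\eqref{equiv3} and is valid because $a<0$. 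With it, the nonlinear term can be exchanged against $-E$ with a coefficient strictly larger than $2$. This is the standard convexity trick: it gives
\[
\mathcal{Y}''+C\mathcal{Y}'\ge (2+\delta)\int_B(\partial_sw)^2\rho\,\mathrm{d}y+c\kappa-\frac{C}{s}\big(\mathcal{Y}+\dots\big),
\]
and the $\frac{C}{s^{3/2}}$ contribution of $J$ and all the $\alpha(s),\gamma(s)=O(1/s)$ errors are absorbed for $s$ large. Then $\mathcal{Y}'\to+\infty$, and the Levine-type argument applies: with Cauchy--Schwarz, $(\mathcal{Y}')^2\le 4\mathcal{Y}\int(\partial_sw)^2\rho$ up to small errors, so $\mathcal{Y}^{-\delta/2}$ is eventually concave and decreasing to zero. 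Hence $\mathcal{Y}$ blows up in finite time $s_9$, a contradiction. Alternatively, I could use the quicker route of \cite{omar2}: show $\mathcal{Y}'\ge c\,\mathcal{Y}^{(p_c+1)/2}$-type growth via Jensen and the lower bound of the nonlinear term.

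The main obstacle is the weak, time-dependent weight $\rho=(1-|y|^2)^{\alpha(s)}$ and the logarithmic factor $\ln^a(\phi^2w^2+2)$. Differentiating $\mathcal{Y}$ produces $\alpha'(s)\ln(1-|y|^2)$ terms, and the nonlinearity is not an exact power, so the ratio between $f$ and $|w|^{p_c+1}\ln^a$ is only $\frac{1}{p_c+1}(1+O(1/s))$. I would handle the logarithmic weights by the splitting $B_3(s)\cup B_4(s)$ used in Lemma \ref{lem41}, together with the Hardy inequality \eqref{AHardy}. I would handle the region where $|w|$ is small (where $\ln(\phi^2w^2+2)$ is not comparable to $s$) by the set decomposition \eqref{27nov1bis}, which costs only $O(s^{-2})\mathcal{Y}$ and $O(e^{-2s})$ errors. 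These are harmless against the fixed gap $c\kappa$ and the $\delta$-margin.
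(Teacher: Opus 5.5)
Your route is a Levine-type concavity argument for $\mathcal{Y}(s)=\int_B w^2\rho\,\mathrm{d}y$. This is not the paper's route, and as written it has a genuine gap.

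\textbf{The missing cross term.} When you multiply \eqref{eqc} by $w\rho$ and integrate by parts, the transport term $-2y\cdot\nabla\partial_s w$ produces the cross term $2\int_B\partial_s w\,(y\cdot\nabla w)\rho\,\mathrm{d}y$; it appears explicitly in \eqref{et1}. So $\mathcal{Y}''$ contains $4\int_B\partial_s w\,(y\cdot\nabla w)\rho\,\mathrm{d}y$ with an $O(1)$ coefficient. It is not among the $O(1/s)$ errors you list, and the positive terms cannot absorb it:
\begin{itemize}
\item By \eqref{12nov6}, the energy controls only $|\nabla_\theta w|^2+(1-|y|^2)|\nabla_r w|^2$, whereas $(y\cdot\nabla w)^2=|y|^2|\nabla_r w|^2$.
\item Hence the pointwise quadratic form in $(\partial_s w,\nabla_r w)$ that remains after you reserve $4(1+\delta)\int_B(\partial_s w)^2\rho\,\mathrm{d}y$ for the Cauchy--Schwarz step is indefinite near $|y|=1$.
\item The only quantity that could control such a term is $\int_B(\partial_s w)^2\frac{\rho}{1-|y|^2}\,\mathrm{d}y$. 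It enters the dissipation of $L$ only with the weak weight $\alpha(s)\sim c/s$. At this stage its time averages are only known to be $O(s^{1+q})$ (Proposition \ref{prop21bb2428}), far too large to be beaten by the fixed gap $c\kappa$.
\end{itemize}
This is exactly the boundary degeneracy of the conformal case. Neither your concavity inequality nor the Jensen variant addresses it.

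\textbf{Two smaller points.}
\begin{itemize}
\item Theorem \ref{t1}' gives monotonicity only over unit time steps, because the error in \eqref{PP1newBH} is controlled only in time average. So you do not get $L(w(s),s)\le-\kappa$ for every $s$.
\item For $a<0$ one has $f_1\ge 0$, hence $(p_c+1)f(u)\ge |u|^{p_c+1}\ln^a(u^2+2)$. Your ``key inequality'' therefore holds only with an extra factor $1+C/s$. This is harmless, but $a<0$ works against it rather than for it.
\end{itemize}

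\textbf{What the paper does instead.} The paper avoids any second-order computation by reusing the argument that proves \eqref{positiveeta} in Proposition \ref{prop2.2}.
\begin{itemize}
\item From $L(w(s_8),s_8)<0$ and Theorem \ref{t1}', one gets $L(w(s_8+1),s_8+1)<0$.
\item Replace $T^*(x)$ by $T^*(x)-\delta$. The function $\tilde w^{\delta}$ of \eqref{blowupbefore} solves the same equation and is defined on $B\times[s_8+1,\infty)$. By continuity in $\delta$ it still satisfies $L(\tilde w^{\delta}(s_8+1),s_8+1)<0$.
\item Unit-step monotonicity then gives $\liminf_{s\to\infty}L(\tilde w^{\delta}(s),s)<0$.
\item On the other hand, $\tilde w^{\delta}$ is a rescaling of $u$ around the regular point $(x,T^*(x)-\delta)$. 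The quadratic part of $G$ is nonnegative for large $s$ (with $J$ absorbed). The potential term tends to $0$ by \eqref{equiv6} and the local $L^{p_c+1}$ bound on $w$. So the $\liminf$ is $\ge 0$, a contradiction.
\end{itemize}
This argument uses only first-order information and never meets the cross term. That is why it is the natural argument here, and why your approach would need a genuinely new way to control $\int_B\partial_s w\,(y\cdot\nabla w)\rho\,\mathrm{d}y$.
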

{\it Proof}:
The  prove    is the same as the similar part in  Proposition \ref{prop2.2}.
\Box

\medskip

\textit{Proof of Theorem \ref{t1}.} Clearly, the result follows immediately from Theorem \ref{t1}' and Lemma \ref{L56}. \hfill \(\Box\)

\bigskip

Although Section \ref{section4} establishes a Lyapunov functional, the weak dissipation inherent in the logarithmic sub-conformal structure implies that the methodology of \cite{HZjmaa2020, HZ2022} yields only linear estimates for the $H^1 \times L^2$ time-average. We derive these results in Subsection \ref{section51}. As noted, overcoming this limitation constitutes a central novelty of our analysis, a significant analytical challenge necessitated by the criticality of the problem.

Indeed, by utilizing the linear estimate obtained in Subsection \ref{section51}, we sharpen these results by introducing a conformal energy functional. This yields a logarithmic estimate for the time-average of $\|\partial_s w\|_{L^2}$ and facilitates the improvement of our linear bounds. Finally, we demonstrate that the time-average of $\|\partial_s w\|_{L^2}$ is bounded. Subsequently, applying the methodology of \cite{HZjmaa2020, HZ2022} ensures that all terms within the energy functional remain uniformly bounded, thereby establishing Theorem \ref{t2}.

\section{Proof of Theorem \ref{t2}}\label{section5}

This section is devoted to the proof of Theorem \ref{t2}. We proceed in three parts:
\begin{enumerate}
    \item In Subsection \ref{section51}, leveraging the framework established in Theorem \ref{t1}, we derive linear estimates for the time-averaged $H^1$ norm and the singular weighted nonlinear term associated with the solution to \eqref{eqc}. 

    \item In Subsection \ref{section52}, we refine these results by exploiting the energy functional related to the conformal problem. This approach allows us to circumvent the limitations of the previous bounds and derive sharper estimates for $\partial_s w$ in $L^2(\mathbb{R}^n \times [s, s+1])$.

    \item Finally, in Subsection \ref{section53}, we establish the uniform boundedness of the $H^1 \times L^2$ norm for the solution of \eqref{A}, thereby completing the proof of Theorem \ref{t2}.
\end{enumerate}

\subsection{Linear estimates  for the time average of the $H^{1}$ norm 
   for  solution of equation (\ref{A})}\label{section51}
Based on Theorem \ref{t1}, we derive linear estimates for the time-average of the $H^1$ norm and the singular weighted nonlinear term of $w$ for the solution of \eqref{eqc}. This result improves upon the almost linear bounds established for these same quantities 
in    Proposition \ref{prop21bb2428},
and Corollary \ref{corol1alpha}.
  More precisely, this is the aim of this subsection.
\begin{prop}\label{prop21bb}
\noindent  Consider $u $   a solution of ({\ref{gen}}) with
blow-up graph $\Gamma:\{x\mapsto T(x)\}$ and  $x_0$  a non
characteristic point under the condition \eqref{ss00}. Then,  for all $T_0\in(0,T(x_{0})],$
   $s\geq 7-\ln T_0$ and $x\in \R^n,$ where $|x-x_0|\le \frac{e^{-s}}{\delta_0(x_0)}$, 
we have
\begin{equation}\label{4feb19}
\int_{s}^{s+1}\!\ibint \big( w^2(y,\tau )+|\grad w(y,\tau )|^2 +( \partial_{s}w(y,\tau ))^2\frac{\rho}{1-|y|^2}\big)\y \t \leq K_5 s,
\end{equation}
\begin{equation}\label{4feb19alphab}
\frac{1}{s^a}\int_{s+1}^{s}\! \int_{B}   |w|^{p_c+1}\ln^a(\phi^2w^2+2) {\mathrm{d}}y{\mathrm{d}}\tau
\leq K_5 s,
\end{equation}
where $w=w_{x,T^*(x)}$ is defined in \eqref{scaling}, with
$T^*(x)$ given by \eqref{18dec1},
 and $\delta_{0}(x_{0})$ defined in \eqref{nonchar}.
\end{prop}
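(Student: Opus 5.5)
We will follow the same scheme as in the proof of Proposition \ref{prop21bb2428}: first extract bounds from the Lyapunov functional on the weighted quantities, then remove the weight $\rho$ by the covering technique. The input is Theorem \ref{t1}' together with the positivity \eqref{posi1D}, i.e. Lemma \ref{L56}. For $w=w_{x,T^*(x)}$ and $s\ge 6-\ln T^*(x)$, summing \eqref{vv} over unit intervals gives two facts.

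\begin{itemize}
\item The functional is controlled by its initial value: $0\le L(w(s),s)\le L(w(\tilde s),\tilde s)$ with $\tilde s=6-\ln T^*(x)$. The right-hand side is bounded by a constant $K$ depending on the initial data, exactly as in the remark after Corollary \ref{cor01}.
\item The dissipation is summable: since $\alpha(\tau)=-\frac{a}{(p_c-1)\tau}$, we get $\int_{\tilde s}^{\infty}\frac{1}{\tau}\int_B(\partial_s w)^2\frac{\rho}{1-|y|^2}\,dy\,d\tau\le CK$.
\end{itemize}

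In particular $\int_s^{s+1}\int_B(\partial_s w)^2\frac{\rho}{1-|y|^2}\le CKs$, which is the $\partial_s w$ part of \eqref{4feb19}.

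Next I would bound the potential part by the same two-multiplier trick used for Lemma \ref{bws010}. On $[s,s+1]$ I integrate the identity for $\frac{d}{ds}J$ from Lemma \ref{lem4.2}, multiplied by $s\sqrt s$; equivalently, I use the multiplier $w\rho$ directly, i.e. the $F$-type identity of Lemma \ref{lem3.3}. This gives
\[
\int_s^{s+1}\!\!\int_B\Big(\tfrac{p_c-1}{p_c+1}\tfrac{1}{\tau^a}|w|^{p_c+1}\ln^a(\phi^2w^2+2)-(|\nabla w|^2-(y\cdot\nabla w)^2)-\tfrac{2p_c+2}{(p_c-1)^2}w^2\Big)\rho
\]
bounded by $\int\!\int(\partial_sw)^2\rho$, boundary terms $\big|\int_B w\partial_s w\rho\big|$ at two times, and lower-order errors. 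I then combine this with $E(w(\tau),\tau)\le e^{-(p_c+3)/\sqrt\tau}L\le K$ integrated over $[s,s+1]$; in the integrated energy, $E$ is quadratic minus $\frac{1}{p_c+1}$ times the nonlinear term, up to $f_1,f_2$ corrections handled by \eqref{equiv2}, \eqref{equiv3}. Taking the combination $(p_c+1)E$ plus the $w\rho$-identity, as in the pure-power argument of \cite{MZimrn05,HZnonl12}, yields
\[
\int_s^{s+1}\Big(\mathcal{N}(\tau)\Big)d\tau\le CK+C\int_{s}^{s+1}\!\!\int_B(\partial_sw)^2\rho+C|\textstyle\int_B w\partial_sw\rho|(s_i),
\]
where $\mathcal N$ is ${\cal N}_\eta$ with weight $\rho$ in place of $\rho_\eta$.

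The boundary terms are the delicate point. I will treat them by the mean-value choice of $s_i\in[s-1,s]$ and $s_i\in[s+1,s+2]$ as in \eqref{q04}, together with Young's inequality $|\int w\partial_sw\rho|\le \epsilon\int w^2\rho+C_\epsilon\int(\partial_sw)^2\rho$. The $\epsilon$-part is absorbed into the left side after enlarging the time window. This absorption is not circular, because the a priori bound $Ks^{q+1}$ from Proposition \ref{prop21bb2428} shows the quantities are finite; I will also use a slight shift of windows, as in Lemma \ref{bws010}. The errors of size $\frac{C\ln s}{s^2}\mathcal N$ and $\frac{C}{s^4}\int(\ldots)(1-|y|^2)^{-1/8}$ are controlled by Proposition \ref{prop21bb2428} and Corollary \ref{corol1alpha} with $q=\frac12$, and they contribute $O(1)$. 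Altogether this gives $\int_s^{s+1}\mathcal N\le CKs$ with the weight $\rho$. Since $\rho=(1-|y|^2)^{\alpha(s)}\ge c$ on $B(0,\frac12)$, the bounds hold there without the weight.

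Finally, the covering technique of \cite{MZimrn05} and Lemma 2.8 of \cite{HZjhde12} extends the unweighted bounds from $B(0,\frac12)$ to $B$, uniformly for $|x-x_0|\le e^{-s}/\delta_0(x_0)$. This gives \eqref{4feb19} and \eqref{4feb19alphab}; the shift of the starting time to $7-\ln T_0$ accounts for the enlarged time windows. I expect the main obstacle to be the absorption step for $\int w^2\rho$ and the boundary terms: the dissipation only gives the linear bound $s$ for $\int(\partial_sw)^2\rho$, and one must check that no other term forces a worse power. The $L$-bound alone controls the difference of quadratic and nonlinear parts, not each separately, so the $w\rho$ multiplier identity is indispensable here.
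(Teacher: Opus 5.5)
Your architecture is the paper's: the bounds of Corollary \ref{cor01sec4} from Theorem \ref{t1}', a time-integrated energy combined with the $w\rho$-multiplier identity \eqref{et1}, absorption, then $B(0,\frac12)$ and covering. Taking $(p_c+1)E$ plus \eqref{et1} instead of \eqref{et44} is a legitimate variant. It cancels the nonlinear term and leaves $\frac{p_c+3}{2}(\partial_s w)^2+\frac{p_c-1}{2}(|\nabla w|^2-(y\cdot\nabla w)^2)+\frac{p_c+1}{p_c-1}w^2$ on the left, so \eqref{equiv5} is not needed; you then recover the nonlinear term from $E\ge -C$.

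Two steps, however, fail as written. First, \eqref{et1} contains the bulk term $2\int\!\!\int \partial_s w\,(y\cdot\nabla w)\rho$, which comes from $-2y\cdot\nabla\partial_s w$ and is not a lower-order error. Your left side contains only $\int\!\!\int(|\nabla w|^2-(y\cdot\nabla w)^2)\rho$, in which the radial derivative carries the weight $1-|y|^2$; it does not contain $\int\!\!\int|\nabla w|^2\rho$. So your displayed inequality, with only $C\int\!\!\int(\partial_s w)^2\rho$ on the right, cannot be derived. You must split $|\partial_s w\,y\cdot\nabla w|\le \frac{1}{4\nu_0}\frac{(\partial_s w)^2}{1-|y|^2}+\nu_0(1-|y|^2)|\nabla_r w|^2$ with a fixed small $\nu_0$, and use the singular dissipation bound from your first step. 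This is \eqref{control3}, and it is exactly where the factor $s$ enters. For the same reason, \eqref{6new30} multiplied by $s\sqrt s$ does not work here: it gives $s^{5/4}$.

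Second, the boundary terms. Choosing $s_i$ by the mean-value theorem in the adjacent windows and applying Young's inequality gives $X(s)\le CKs+\epsilon\big(X(s-1)+X(s+1)\big)$, with $X(s)=\int_s^{s+1}\mathcal N(\tau)\,d\tau$. Inserting $X\le K_3 s^{q+1}$ once only yields $CKs+2\epsilon K_3 s^{q+1}$, so finiteness does not close the argument. You would need a forward iteration to $+\infty$, which works since $(2\epsilon)^k(s+k)^q\to 0$. You would also need a bound on the first window that is uniform in $T_0$ and $x$.

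The paper avoids the recursion through \eqref{control1}. For every $\tau\in[s_{10},s_{11}]$, $\int w^2(\tau)\rho$ is bounded by $\int_{s_{10}}^{s_{11}}\!\int (w^2+2|w\,\partial_s w|)\rho$ plus a small contribution from $\partial_s\rho$. These are integrals over the same window, so they can be absorbed. This in-window bound also handles the $-\frac n2\int w^2\rho$ part of the boundary term in \eqref{et1}. Its coefficient is not small, so no $\epsilon$-Young argument can treat it. In your combination it is absorbable because $\frac n2<\frac{n+1}{2}=\frac{p_c+1}{p_c-1}$.
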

Clearly, by  combining    Proposition \ref{prop21bb} and Proposition \ref{prop123},
we deduce the following  estimates:
\begin{cors}\label{4corol1alpha}
Consider $u $   a solution of ({\ref{gen}}) with
blow-up graph $\Gamma:\{x\mapsto T(x)\}$ and  $x_0$  a non
characteristic point under the condition \eqref{ss00}. Then,  for all $T_0\in(0,T(x_{0})],$
   $s\geq 10-\ln T_0$ and $x\in \R^n,$ where $|x-x_0|\le \frac{e^{-s}}{\delta_0(x_0)}$, 
we have
\begin{equation}\label{414sep1}
\int_s^{s+1} 
\frac{1}{\tau^a}\int_{B}   |w|^{p_c+1}\ln^a(\phi^2w^2+2)\frac{\p}{\sqrt{1-|y|^2}}{\mathrm{d}}y{\mathrm{d}}\tau
 \leq K_6 s, \quad \forall \eta\in(0,1),
\end{equation}
and
\begin{align}\label{414sep2b}
&\int_s^{s+1} 
\int_{B}   |\nabla_{\theta}w|^2\frac{\p}{\sqrt{1-|y|^2}}{\mathrm{d}}y{\mathrm{d}}\tau \le  K_6 s, \qquad \forall \eta\in(0,1).
\end{align}
\end{cors}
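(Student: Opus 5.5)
The statement is Corollary \ref{4corol1alpha}. The plan is to feed the linear bounds of Proposition \ref{prop21bb} into the two estimates of Proposition \ref{prop123}, in the same way that Corollary \ref{corol1} and Corollary \ref{corol1alpha} were obtained.

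\textbf{Step 1: bound $\mathcal{N}_\eta(w(\tau))$ by linear averages.} Fix $\eta\in(0,1)$ and $s\ge 10-\ln T_0$. Since $0\le \rho_\eta\le 1$, definition \eqref{NN} gives
\[
\mathcal{N}_\eta(w(\tau))\le \int_B\Big(|\nabla w|^2+(\partial_s w)^2+w^2+\frac{1}{\tau^a}|w|^{p_c+1}\ln^a(\phi^2w^2+2)\Big)\mathrm{d}y .
\]
The term in $(\partial_s w)^2$ is handled using $\rho=(1-|y|^2)^{\alpha(\tau)}\ge 1-|y|^2$ for large $\tau$. This gives $(\partial_s w)^2\le (\partial_s w)^2\frac{\rho}{1-|y|^2}$, which is exactly the quantity controlled in \eqref{4feb19}.

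The nonlinear term is handled with \eqref{4feb19alphab}, reading the integral over $[s,s+1]$. The prefactor $\frac1{\tau^a}$ versus $\frac1{s^a}$ only changes constants, since $\tau/s$ stays bounded.

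Applying Proposition \ref{prop21bb} on each of the unit intervals $[\sigma,\sigma+1]\subset[s-3,s+4]$ and summing the seven pieces then gives
\[
\int_{s-3}^{s+4}\mathcal{N}_\eta(w(\tau))\,\mathrm{d}\tau\le C K_5\, s .
\]
This requires $s-3\ge 7-\ln T_0$, which is why the threshold is raised to $s\ge 10-\ln T_0$.

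\textbf{Step 2: apply Proposition \ref{prop123}.} Inserting this bound into \eqref{w1eta} yields \eqref{414sep1}, with the extra $Ce^{-2s}$ term absorbed into $K_6 s$. Inserting it into \eqref{wtangeta} yields \eqref{414sep2b}. The constant in Proposition \ref{prop123} depends on $\eta$, so $K_6$ depends on $\eta$ as well. The phrase ``for all $\eta\in(0,1)$'' should therefore be read with $K_6=K_6(\eta)$.

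\textbf{Main obstacle: the spatial constraint.} Proposition \ref{prop21bb} is stated for $|x-x_0|\le e^{-\sigma}/\delta_0(x_0)$ at the time $\sigma$ where it is applied. For $\sigma\in[s-3,s+4]$, the case $\sigma<s$ is harmless because $e^{-\sigma}\ge e^{-s}$. For $\sigma$ up to $s+4$, however, the radius $e^{-\sigma}$ is smaller than $e^{-s}$, so the hypothesis is not directly available.

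I would handle this as the earlier corollaries implicitly do. The bounds of Proposition \ref{prop21bb} really come from Theorem \ref{t1}' via the covering argument, and that argument holds for every $x$ with $|x-x_0|\le T_0/\delta_0(x_0)$ once $\sigma\ge 7-\ln T^*(x)$. The proof of the proposition can be rerun in this slightly enlarged region, at the cost of a constant depending on $\delta_0(x_0)$. This bookkeeping is the only point that needs care.
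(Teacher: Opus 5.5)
Your proposal is correct and takes the same route as the paper, which proves this corollary in one line by inserting the linear bounds of Proposition \ref{prop21bb} into \eqref{w1eta}--\eqref{wtangeta} of Proposition \ref{prop123}; your bookkeeping, including raising the threshold to $10-\ln T_0$ to cover the window $[s-3,s+4]$, matches it. The paper never addresses the spatial constraint you flag for $\sigma>s$, and your fix is a sensible way to make that step rigorous: go back to Corollary \ref{cor01sec4}, which holds for all $|x-x_0|\le T_0/\delta_0(x_0)$, and rerun the covering argument there.
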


In order to establish Proposition \ref{prop21bb}, we begin by applying Theorem \ref{t1}', which allows us to derive the following

\begin{cors}\label{cor01sec4}
For all $T_{0}\in (0,T(x_{0})]$,  $x\in \R^n$, such that
 $|x-x_0|\le \frac{T_0}{\delta_0(x_0)}$, and 
 $s \geq  6-\ln (T^*(x))$
\begin{equation}\label{L1212}
-C\leq G(w(s),s)  \leq  C \Big(\theta_1 +G(w(\tilde s_2),\tilde s_2) \Big),
\end{equation}
\begin{equation}\label{cor01Asec4}
\int_{s}^{s+1}\int_{B}(\partial_{s} w)^2\frac{\rho}{1-|y|^2}{\mathrm{d}}y{\mathrm{d}}\tau\leq
 C\Big(\theta_1 +G(w(\tilde s_2),\tilde s_2) \Big)s,
 \end{equation}
where $w=w_{x,T^*(x)}$ is defined in \eqref{scaling},    $\tilde s_2= 6-\ln (T^*(x))$.
\end{cors}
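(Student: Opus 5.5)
We will derive Corollary \ref{cor01sec4} from Theorem \ref{t1}' by telescoping, together with a lower bound on $L$ that comes from the blow-up criterion. This parallels the way Corollaries \ref{cor01} and \ref{cor01sec3} followed from Propositions \ref{prop2.2} and \ref{proplyap}.

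\emph{Lower bound in \eqref{L1212}.} By Lemma \ref{L56}, $L(w(s),s)\ge 0$ for all $s\ge 6-\ln(T^*(x))$. Otherwise $w$ would blow up in finite time $s$, which contradicts the fact that $w=w_{x,T^*(x)}$ is defined for all $s\ge-\ln T^*(x)$, because $(x,T^*(x))$ lies below the blow-up graph by \eqref{nonchar} and \eqref{18dec1}. Since $\exp\big(\frac{p_c+3}{\sqrt{s}}\big)\in[1,C]$ for $s\ge1$, this gives
\[
G(w(s),s)\ge -\theta_1 s^{-1}\exp\Big(-\frac{p_c+3}{\sqrt{s}}\Big)\ge -C .
\]

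\emph{Upper bound in \eqref{L1212}.} Theorem \ref{t1}' gives $L(w(s+1),s+1)\le L(w(s),s)$. To compare arbitrary $s\ge\tilde s_2$ with $\tilde s_2$ (not only integer shifts), we integrate the pointwise inequality \eqref{PP1newBH} together with \eqref{es1v} on $[\tilde s_2,s]$. The error term is bounded by $C\theta_1$-type quantities, since $\sum_k k^{-5/2}<\infty$. Alternatively, we telescope over integer steps and control the last fractional step by the same integrated inequality. Either way,
\[
L(w(s),s)\le L(w(\tilde s_2),\tilde s_2)+C,
\]
and hence $G(w(s),s)\le C\big(\theta_1+G(w(\tilde s_2),\tilde s_2)\big)$. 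Here we enlarge $\theta_1$ if needed to absorb the constants, and we use $G(w(\tilde s_2),\tilde s_2)\ge -C$ to handle signs.

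\emph{Estimate \eqref{cor01Asec4}.} Telescoping the inequality of Theorem \ref{t1}' between $s$ and $s+1$ gives
\[
\int_s^{s+1}\alpha(\tau)\exp\Big(\frac{p_c+3}{\sqrt{\tau}}\Big)\int_B(\partial_sw)^2\frac{\rho}{1-|y|^2}\,\mathrm{d}y\,\mathrm{d}\tau\le L(w(s),s)-L(w(s+1),s+1)\le L(w(s),s),
\]
where the last step uses the nonnegativity of $L$. The right-hand side is at most $C\big(\theta_1+G(w(\tilde s_2),\tilde s_2)\big)$ by the previous step. Since $\alpha(\tau)=-\frac{a}{(p_c-1)\tau}\ge \frac{c}{s}$ for $\tau\in[s,s+1]$ and the exponential factor is at least $1$, dividing by $\alpha$ yields the factor $s$ in \eqref{cor01Asec4}.

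The main obstacle is the lower bound: justifying that $w_{x,T^*(x)}$ is globally defined in $s$, so that Lemma \ref{L56} applies. Once that is secured, the remaining steps are bookkeeping of constants.
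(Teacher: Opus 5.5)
Your proposal is correct and follows the route the paper leaves implicit. You combine the monotonicity of $L$ from Theorem \ref{t1}', the nonnegativity of $L$ from Lemma \ref{L56} (i.e.\ \eqref{posi1D}), telescoping, and division by $\alpha(\tau)\ge c/s$ on $[s,s+1]$ to produce the factor $s$ in \eqref{cor01Asec4}, just as Corollaries \ref{cor01} and \ref{cor01sec3} follow from their Lyapunov propositions. Your treatment of the non-integer step via \eqref{PP1newBH} and \eqref{es1v} fills a detail the paper omits. The step you flag as the main obstacle is in fact immediate: for $|\xi-x|<T^*(x)-t$ one has $t<T_0-\delta_0(x_0)|\xi-x_0|$, so the backward cone of $(x,T^*(x))$ lies inside the region \eqref{nonchar} where $u$ is defined.
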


\medskip

We are now in a position to prove Proposition \ref{prop21bb}. By exploiting the preceding Corollary, the argument proceeds similarly to those in \cite{MZajm03,MZimrn05} except for the treatment of the nonlinear terms and of the perturbation terms.

-Proof of \eqref{4feb19alphab}:
For $s\ge 7-\ln (T^*(x))$, let us work with time integrals betwen $s_{10}$ et $s_{11}$ where $s_{10}\in [s-1,s]$
and $s_{11}\in [s+1,s+2]$.
 By integrating the expression \eqref{Ppolynew}
 of $G(w(s),s)$ in time between $s_{10}$ and $s_{11}$, where $s_{11}>s_{10}>6-\ln (T^*(x))$, we obtain:
\begin{align}\label{et}
\int_{s_{10}}^{s_{11}}\!G(w(\tau),\tau)\t=&\displaystyle\int_{s_{10}}^{s_{11}}\iint\!\!\Big ( \frac{1}{2}(\partial_{s}w)^2
+\frac{p_c+1}{(p_c-1)^2}w^2
-e^{-\frac{2(p_c+1)\tau}{p_c-1}}\tau^{\frac{2a}{p_c-1}}   f(\phi w)\Big )\rho {\mathrm{d}}y\t\nonumber\\
&+\frac{1}{2}\displaystyle\int_{s_{10}}^{s_{11}}\!\!\iint\!\!  (|\grad w|^2-(y\cdot \grad w)^2)\rho {\mathrm{d}}y\t-\frac12\int_{s_{10}}^{s_{11}}\!\gamma(\tau)\!\!\displaystyle\iint\!w^2\rho {\mathrm{d}}y\t\no\\
&-\int_{s_{10}}^{s_{11}}\!\! \frac1{\tau\sqrt{\tau}}\!\!\displaystyle\iint\!\!w\partial_s w\rho {\mathrm{d}}y\t+\int_{s_{10}}^{s_{11}}\!\! \frac{n}{2\tau\sqrt{\tau}}\!\!\displaystyle\iint\!\!w^2\rho {\mathrm{d}}y\t.
\end{align}
By multiplying the equation (\ref{eqc}) by $w\rho$ and integrating both in time and in space over $B\times [s_{10},s_{11}]$  we obtain the following identity, after some
integration by parts:
\begin{eqnarray}\label{et1}
&&\Big [\iint\!\!\Big (w\partial_{s}w-\frac n2w^2\Big ) \rho{\mathrm{d}}y\Big ]_{s_{10}}^{s_{11}}=
\int_{s_{10}}^{s_{11}}\!\!\iint\!\!(\partial_{s}w)^2\rho{\mathrm{d}}y\t\\
&&-\int_{s_{10}}^{s_{11}}\!\!\iint\!\!\big(|\grad w|^2-(y\cdot \grad w)^2\big)\rho{\mathrm{d}}y\t
-\frac{2p_c+2}{(p_c-1)^2}\int_{s_{10}}^{s_{11}}\!\!\iint\!\!w^2\rho{\mathrm{d}}y\t\nonumber\\
&&+\int_{s_{10}}^{s_{11}}\!\frac1{\tau^a}\iint\!\!|w|^{p_c+1}\ln(2+ \phi^2 w^2)\rho{\mathrm{d}}y\t-4\!\int_{s_{10}}^{s_{11}}\!\alpha(\tau)\iint\!\!w\partial_{s}w
\frac{|y|^2\rho}{1-|y|^2}{\mathrm{d}}y\t\nonumber\\
&&+2\!\!\int_{s_{10}}^{s_{11}}\!\!\iint\!\!y\cdot \grad w\partial_{s} w \rho\y\t
+\!\!\int_{s_{10}}^{s_{11}}\!\!\iint\!\!\gamma(\tau) w^2\rho\y\t
-2\int_{s_{10}}^{s_{11}}\alpha (\tau)\iint \partial_s w w\rho \y\t.\nonumber
\end{eqnarray}
Note that, by using the identity  \eqref{defF123}, we get
\begin{eqnarray}\label{23jan12}
\frac1{2s^a}|w|^{p_c+1}\ln(2+ \phi^2 w^2)- e^{-\frac{2(p_c+1)s}{p_c-1}}s^{\frac{2a}{p_c-1}}f(\phi w)=\frac{p_c-1}{2(p_c+1)s^a}|w|^{p_c+1}\ln(2+ \phi^2 w^2)\no\\
-e^{-\frac{2(p_c+1)s}{p_c-1}}s^{\frac{2a}{p_c-1}}\Big(f_1(\phi w)+
f_2(\phi w)\Big).
\end{eqnarray}
By combining the identities (\ref{et}),  (\ref{et1}) and exploiting  \eqref{23jan12}, we obtain
\begin{eqnarray}\label{et44}
&&\frac{p_c-1}{2(p_c+1)}\int_{s_{10}}^{s_{11}}\!\frac1{\tau^a}\iint 
|w|^{p_c+1}\ln(2+ \phi^2 w^2)\rho {\mathrm{d}}y\t=\frac12
\Big [\iint\!\!\big (w\partial_{s}w-\frac n2w^2\big ) \rho{\mathrm{d}}y\Big ]_{s_{10}}^{s_{11}}
\nonumber\\
&&-\int_{s_{10}}^{s_{11}}\!\!\iint\!\!(\partial_{s}w)^2\rho{\mathrm{d}}y\t+\int_{s_{10}}^{s_{11}}\!G(w(\tau),\tau)\t
-\!\!\int_{s_{10}}^{s_{11}}\!\!\iint\!\!y\cdot \grad w\partial_{s} w \rho{\mathrm{d}}y\t\nonumber\\
&&
+\underbrace{2\!\int_{s_{10}}^{s_{11}}\alpha(\tau)\!\iint\!\!w\partial_{s}w
\frac{|y|^2\rho}{1-|y|^2}{\mathrm{d}}y\t}_{D_1}
\underbrace{-\frac12 \int_{s_{10}}^{s_{11}}\!\!\iint\!\!\gamma(\tau) w^2\rho\y\t}_{D_2}
+\underbrace{\int_{s_{10}}^{s_{11}}\iint \alpha(\tau)\partial_s w w\rho \y\t}_{D_3}\no \\
&&+\underbrace{\int_{s_{10}}^{s_{11}}\!\! \frac1{\tau\sqrt{\tau}}\!\!\displaystyle\iint\!\!\big(w\partial_s w-\frac n2 w^2\big)\rho {\mathrm{d}}y\t}_{D_4}+\underbrace{\int_{s_{10}}^{s_{11}}\!\!\iint e^{-\frac{2(p_c+1)\tau}{p_c-1}}\tau^{\frac{2a}{p_c-1}} f_1(\phi w)\rho {\mathrm{d}}y\t}_{D_5}\nonumber\\
&&+\underbrace{\int_{s_{10}}^{s_{11}}\!\!\iint e^{-\frac{2(p_c+1)\tau}{p_c-1}}\tau^{\frac{2a}{p_c-1}} f_2(\phi w)\rho {\mathrm{d}}y\t}_{D_6}.
\end{eqnarray}
We claim that 
the proof of \eqref{4feb19alphab}
follows from the following Lemma where we   control all the terms on the right-hand side of the relation
 (\ref{et44}) in terms of
the left-hand  side:
\begin{lem}\label{g}
 For all  $s \ge 7-\ln (T^*(x))$, 
 for all $\nn >0$,
\begin{equation}\label{control300}
\int_{s_{10}}^{s_{11}}\!\!\!\iint\!|\grad w|^2(1-|y|^2) \rho{\mathrm{d}}y\t
\le K_{7}s +K_{7}\int_{s_{10}}^{s_{11}}\iint\! e^{-\frac{2(p_c+1)\tau}{p_c-1}}\tau^{\frac{2a}{p_c-1}} f(\phi w)\rho{\mathrm{d}}y\t,\qquad
\end{equation}
\begin{equation}\label{control3}
\int_{s_{10}}^{s_{11}}\!\!\!\iint\!|y\cdot \grad w\partial_s w| \rho{\mathrm{d}}y\t
\le \frac{K_{7}}{\nn}s +K_{7}\nn\int_{s_{10}}^{s_{11}}\iint\! e^{-\frac{2(p_c+1)\tau}{p_c-1}}\tau^{\frac{2a}{p_c-1}} f(\phi w)\rho{\mathrm{d}}y\t,\qquad
\end{equation}
\begin{equation}\label{control1}
\sup_{\tau\in [s_{10},s_{11}]}\iint \!\!w^2(y,\tau)\rho{\mathrm{d}}y\le \frac{K_{7}}{\nn} s+K_{7}\nn\int_{s_{10}}^{s_{11}}\iint\! e^{-\frac{2(p_c+1)\tau}{p_c-1}}\tau^{\frac{2a}{p_c-1}} f(\phi w)\rho{\mathrm{d}}y\t,
\end{equation}
\begin{align}\label{control4}
\iint|w\partial_{s}w|\rho {\mathrm{d}}y\le&
\iint (\partial_{s}w)^2\rho {\mathrm{d}}y+
 \frac{K_{7}}{\nn} s+K_{7}\nn\int_{s_{10}}^{s_{11}}\iint\! e^{-\frac{2(p_c+1)\tau}{p_c-1}}\tau^{\frac{2a}{p_c-1}} f(\phi w)\rho{\mathrm{d}}y\t,
\end{align}
\begin{equation}\label{control5}
\iint \big( (\partial_{s}w(y,s_{10}))^2+(\partial_{s}w(y,s_{11}))^2\big)\rho {\mathrm{d}}y
\le K_{7}s,
\end{equation}
\begin{equation}\label{control30}
|D_1|
\le \frac{K_{7}}{\nn}s +K_{7}\nn
\int_{s_{10}}^{s_{11}}\iint\! e^{-\frac{2(p_c+1)\tau}{p_c-1}}\tau^{\frac{2a}{p_c-1}} f(\phi w)\rho{\mathrm{d}}y\t,\qquad
\end{equation}
\begin{equation}\label{A19}
|D_2|+|D_3|+|D_4|
\le 
\frac{K_{7}}{\nn}s +K_{7}\nn\int_{s_{10}}^{s_{11}}\iint\! e^{-\frac{2(p_c+1)\tau}{p_c-1}}\tau^{\frac{2a}{p_c-1}} f(\phi w)\rho{\mathrm{d}}y\t,
\end{equation}
\begin{equation}\label{A30}
|D_5|+|D_6|
\le C+ \frac{C}{ s}\int_{s_{10}}^{s_{11}}\iint\! e^{-\frac{2(p_c+1)\tau}{p_c-1}}\tau^{\frac{2a}{p_c-1}} f(\phi w)\rho{\mathrm{d}}y\t.
\end{equation}
\begin{equation}\label{26control1}
\int_{s_{10}}^{s_{11}}\!\iint\! e^{-\frac{2(p_c+1)\tau}{p_c-1}}\tau^{\frac{2a}{p_c-1}} f(\phi w)\rho{\mathrm{d}}y\t
\le C\int_{s_{10}}^{s_{11}}
\frac{1}{\tau^a} \!\!\int_{B} \!\!  |w|^{p_c+1}\ln^a(\phi^2w^2+2)\rho {\mathrm{d}}y{\mathrm{d}}\tau
+Ce^{-2s},
\end{equation}
\begin{equation}\label{b26control1}
\int_{s_{10}}^{s_{11}}
\frac{1}{\tau^a} \!\int_{B} \!\!  |w|^{p_c+1}\ln^a(\phi^2w^2+2)\rho {\mathrm{d}}y{\mathrm{d}}\tau
\le C\int_{s_{10}}^{s_{11}}\!\iint\! e^{-\frac{2(p_c+1)\tau}{p_c-1}}\tau^{\frac{2a}{p_c-1}} f(\phi w)\rho{\mathrm{d}}y\t
+Ce^{-2s},
\end{equation}
for some 
$s_{10}$ et $s_{11}$, such that  $s_{10}\in [s-1,s]$
and $s_{11}\in [s+1,s+2]$.
\end{lem}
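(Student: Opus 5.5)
The plan follows the Merle--Zaag scheme of \cite{MZajm03,MZimrn05}. The two global inputs come from Corollary \ref{cor01sec4}: the uniform bound $-C\le G(w(\tau),\tau)\le K$ on $[s-1,s+2]$, and the dissipation bound $\int_{s-1}^{s+2}\iint(\partial_s w)^2\frac{\rho}{1-|y|^2}\le K s$. Everything else is Cauchy--Schwarz/Young with a free parameter $\nn$, the Hardy inequality \eqref{AHardy}, and the elementary bounds \eqref{equiv1}--\eqref{equiv6} on $f,f_1,f_2$. We abbreviate $\mathcal{F}:=\int_{s_{10}}^{s_{11}}\iint e^{-\frac{2(p_c+1)\tau}{p_c-1}}\tau^{\frac{2a}{p_c-1}}f(\phi w)\rho\,\y\t$.

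We first dispose of the pointwise comparisons. \eqref{26control1} and \eqref{b26control1} follow directly from \eqref{equiv1}: $f(x)$ is comparable to $\frac1{p_c+1}|x|^{p_c+1}\ln^a(x^2+2)$ up to the corrections $f_1,f_2$ and a bounded term. The prefactor $e^{-\frac{2(p_c+1)\tau}{p_c-1}}\tau^{\frac{2a}{p_c-1}}\phi^{p_c+1}=\tau^{-a}$ turns this into the stated estimates, the constant error producing $Ce^{-2s}$. Likewise \eqref{A30} follows from \eqref{15dec1}, since $|f_1|\lesssim \frac{C}{s}\times$(nonlinear term) and $f_2$ is of even lower order, combined with \eqref{26control1}.

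Next we choose the times. By the mean value theorem we pick $s_{10}\in[s-1,s]$ and $s_{11}\in[s+1,s+2]$ such that $\iint(\partial_sw)^2\rho$ at $s_{10}$ and at $s_{11}$ is bounded by the time average over the unit interval. The dissipation bound then gives \eqref{control5}.

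For \eqref{control300} we integrate the definition \eqref{Ppolynew} of $G$ over $[s_{10},s_{11}]$. The upper bound on $G$ controls the quadratic part, $\frac12\iint(|\nabla w|^2-(y\cdot\nabla w)^2)\rho+\frac{p_c+1}{(p_c-1)^2}\iint w^2\rho$, by $K s+\mathcal{F}$ plus the perturbative terms: the $\gamma$ term and the $J$ terms, which carry the factor $\tau^{-3/2}$. These are absorbed using the Young inequality and the fact that $|\gamma|\le C/\tau$ is small. The bound on $\int|\nabla w|^2(1-|y|^2)\rho$ then follows from \eqref{12nov6}, since $|\nabla w|^2-(y\cdot\nabla w)^2\ge(1-|y|^2)|\nabla w|^2$, and the same computation controls $\int_{s_{10}}^{s_{11}}\iint w^2\rho$.

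Estimate \eqref{control3} is Cauchy--Schwarz:
\[
\iint|y\cdot\nabla w\,\partial_sw|\rho\le \nn^{-1}\iint\frac{(\partial_sw)^2\rho}{1-|y|^2}+\nn\iint|\nabla w|^2(1-|y|^2)\rho,
\]
integrated in time and combined with the dissipation bound and \eqref{control300}. For \eqref{control1} we write $\iint w^2\rho(\tau)=\iint w^2\rho(\tau')+2\int_{\tau'}^{\tau}\iint w\partial_sw\rho+\ldots$, where the extra term comes from $\partial_s\rho=\alpha'\ln(1-|y|^2)\rho$ and is harmless by the splitting used in \eqref{xi5new30}. Choosing $\tau'$ by the mean value theorem and bounding $2|w\partial_sw|\le \nn^{-1}(\partial_sw)^2/(1-|y|^2)+\nn w^2(1-|y|^2)$ gives \eqref{control1}. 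Then \eqref{control4} is immediate from $|w\partial_sw|\le(\partial_sw)^2+\frac14w^2$ together with \eqref{control1}.

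For \eqref{control30}, $D_1$ carries the weight $\alpha|y|^2\rho/(1-|y|^2)$. We split it by Young into $\nn^{-1}\alpha\iint(\partial_sw)^2\frac{\rho}{1-|y|^2}$ and $\nn\alpha\iint w^2\frac{|y|^2\rho}{1-|y|^2}$. For the second piece we use the Hardy inequality \eqref{juin12bn} with exponent $\alpha(\tau)$; the factor $\alpha^{-2}$ it produces is exactly compensated, as in \eqref{chi8}, once we take the Young split with $\alpha$ in both weights. This leaves $\nn\alpha^{-1}\iint|\nabla w|^2(1-|y|^2)\rho$, so the split must be arranged so that the gradient coefficient stays $O(\nn)$ while the $\partial_s w$ term is paid by the dissipation bound. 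This balancing is where I expect the main difficulty: Hardy at the degenerate exponent $\alpha(\tau)\sim c/\tau$ is very lossy. If the direct split fails, I would instead use the $\sqrt{1-|y|^2}$-weighted Hardy estimate \eqref{Hardy} with a fixed $\eta$, bounding $\iint w^2\frac{\rho}{1-|y|^2}$ by $\iint(|\nabla w|^2+w^2)\rho_{\eta}$, which is $O(s)$ by Proposition \ref{prop21bb2428} and its covering argument. Combined with the prefactor $\alpha=O(1/s)$, this makes $D_1$ bounded by $Ks$.

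Finally, $D_2,D_3,D_4$ in \eqref{A19} carry coefficients $O(1/\tau)$. They are bounded by \eqref{control1}, the time integral of $\iint w^2\rho$ already obtained, and the dissipation bound, which gives $\frac{K_7}{\nn}s+K_7\nn\mathcal{F}$.
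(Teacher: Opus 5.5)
Most of your plan matches what the paper does, since the paper defers to Merle--Zaag using \eqref{L1212} and \eqref{cor01Asec4}: the mean-value choice of $s_{10},s_{11}$ for \eqref{control5}, the upper bound on $G$ for \eqref{control300}, Cauchy--Schwarz for \eqref{control3}, and \eqref{equiv1}--\eqref{equiv3} for \eqref{26control1}, \eqref{b26control1}, \eqref{A30}. There are, however, two real gaps.

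\textbf{First gap: \eqref{control1}, and hence \eqref{control4}.} These do not follow from what you wrote. You bound the mean-value term $\iint w^2\rho(\tau')$ by the time integral of $\iint w^2\rho$ coming from the upper bound on $G$. In $G$ the potential term enters with coefficient one, so you only obtain $\frac{K}{\nn}s+K\mathcal F$, with an $O(1)$ coefficient in front of your $\mathcal F$ instead of $K_7\nn$. That small coefficient is the whole point of the lemma. In \eqref{et44} the boundary term $-\frac n4\big[\iint w^2\rho\big]_{s_{10}}^{s_{11}}$ must be absorbed into the left-hand side in \eqref{gg}, and an uncontrolled $O(1)$ multiple of $\mathcal F$ does not allow this. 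The missing ingredient is the superquadratic growth of the nonlinearity, which is the analogue of the H\"older step in Merle--Zaag:
\begin{itemize}
\item apply \eqref{equiv5} with $\varepsilon=\nn$;
\item integrate against $\rho$, whose mass is at most $|B|$;
\item use \eqref{b26control1}.
\end{itemize}
This gives $\int_{s_{10}}^{s_{11}}\iint w^2\rho\le C(\nn)+C\nn\mathcal F+Ce^{-2s}$. Feeding this into both the mean-value term and your Young term gives \eqref{control1}; for the fixed small $\nn$ used later, $C(\nn)$ is absorbed into the $s$-term. Then \eqref{control4} follows as you say.

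\textbf{Second gap: $D_1$.} You leave it unresolved, and your fallback is wrong. Estimate \eqref{Hardy} with a fixed $\eta$ controls $\iint w^2\frac{\rho_\eta}{1-|y|^2}$. But for large $\tau$ one has $\alpha(\tau)<\eta$, so $\frac{\rho}{1-|y|^2}\ge\frac{\rho_\eta}{1-|y|^2}$ and the comparison goes the wrong way. Indeed, $\iint w^2\frac{\rho}{1-|y|^2}$ is genuinely of size $\alpha^{-1}$ times the boundary trace of $w$. Besides, Proposition \ref{prop21bb2428} only gives $s^{1+q}$, not $s$, and using Proposition \ref{prop21bb} would be circular.

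The balancing you were worried about is achieved simply by putting the whole factor $\alpha$ on $w$. Against the weight $\frac{|y|^2\rho}{1-|y|^2}$, use $2\alpha|w\,\partial_sw|\le\frac1{\nn}(\partial_sw)^2+\nn\alpha^2w^2$, which is what the paper does in \eqref{aa1}.
\begin{itemize}
\item The first piece is at most $\frac{K}{\nn}s$ by \eqref{cor01Asec4}, which is exactly what \eqref{control30} allows.
\item In the second piece, $\alpha^2$ exactly cancels the $\alpha^{-2}$ from \eqref{juin12bn}. This leaves $\nn\iint|\nabla w|^2(1-|y|^2)\rho+n\nn\alpha\iint w^2\rho$, which \eqref{control300} and the corrected \eqref{control1} control.
\end{itemize}
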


\bigskip

Indeed, from (\ref{et44}) and  Lemma \ref{g},
we deduce that
\begin{eqnarray}\label{gg}
\int_{s_{10}}^{s_{11}}
\frac{1}{\tau^a} \!\int_{B} \!\!  |w|^{p_c+1}\ln^a(\phi^2w^2+2)\rho {\mathrm{d}}y{\mathrm{d}}\tau
&\!\le&\! C_5(K_{7}\nn +\frac{1}{s})
\int_{s_{10}}^{s_{11}}\frac{1}{\tau^a} \!\int_{B} \!\!  |w|^{p_c+1}\ln^a(\phi^2w^2+2)\rho {\mathrm{d}}y{\mathrm{d}}\tau\no\\
&&+C_5 \frac{K_{7}}{\nn} s+C_5e^{-2s}.
\end{eqnarray}
Now, we use \eqref{ss00} to satisfy
$$\frac{C_5}{7 - \ln(T(x_0))} \le \frac{1}{4}.$$
Using the fact that $s \ge 7- \ln(T^*(x)) \ge 7 - \ln(T(x_0)),$ it follows that $$\frac{C_5}{s} \le \frac{1}{4}.$$
Furthermore, by choosing $\nu_0$ small enough such that $C_5 K_7 \nu_0 \le \frac{1}{4}$. Therefore, \eqref{gg}
implies
\begin{align*}
\int_{s_{10}}^{s_{11}}
\frac{1}{\tau^a} \!\int_{B} \!\!  |w|^{p_c+1}\ln^a(\phi^2w^2+2)\rho {\mathrm{d}}y{\mathrm{d}}\tau
\le C{K_{7}}s, \qquad \forall 
s\ge 7-\ln (T^*(x)).
\end{align*}
Since $[s,s+1]\subset  [s_{10},s_{11}]$, and $s-s_{10}\le 1$,
 we derive  
\begin{equation}\label{feb19alphabnvb}
\frac{1}{s^a}\int_{s}^{s+1}\! \int_{B}   |w|^{p_c+1}\ln^a(\phi^2w^2+2)\rho {\mathrm{d}}y{\mathrm{d}}\tau
\leq K_{8} s.
\end{equation}
 Thanks to the covering technique (we refer the reader to Merle
 and Zaag \cite{MZimrn05} (pure power case) and Hamza and Zaag in  Lemma 2.8 in \cite{HZjhde12}),  
 the estimate \eqref{feb19alphabnvb} can be easily extended to the case where $\rho$ is replaced by $1$, showing that estimate \eqref{4feb19alphab} holds.\\

It remains to prove Lemma \ref{g}.\\

  {\it{Proof of  Lemma \ref{g}:}}
 Thanks to \eqref{L1212}, and   \eqref{cor01Asec4}, we can adapt with no difficulty
 the proof  in the unperturbed case \cite{MZajm03,MZimrn05}  (up to some very minor changes),
in order to get   the proof of the estimates \eqref{control300}, \eqref{control3}, \eqref{control1},  \eqref{control4},   \eqref{control5}, and \eqref{A19}. Furthermore,
by using   \eqref{cor01Asec4}, we conclude 
\begin{equation}\label{aa1}
|D_1|\le \frac{C\tilde K_{5}}{\nn}s +\frac{C\nn}{s^2}\!\int_{s_1}^{s_8}\!\iint\!\!w^2
\frac{|y|^2\rho}{1-|y|^2}{\mathrm{d}}y\t.
\end{equation}
 Also, by using   the   Hardy-Sobolev inequality   \eqref{juin12bn},  and the estimates  \eqref{control300}, and \eqref{control1},     we  conclude 
\eqref{control30}.\\
Finally,
the results \eqref{A30}, \eqref{26control1}, and \eqref{b26control1},
 follows  immediately from 
 \eqref{equiv1},    \eqref{equiv2}  and  \eqref{equiv3}, 
This concludes the proof of Lemma \ref{g} and the estimate \eqref{4feb19alphab}  too.\\

-Proof of \eqref{4feb19}:
Let us use \eqref{control300},
\eqref{control1},    and  \eqref{26control1}   to deduce 
\begin{equation}\label{feb19v0}
\int_{s}^{s+1}\!\ibint \big( w^2+|\grad w|^2(1-|y|^2) \big)\rho\y \t \leq K s.
\end{equation}
Thanks to the covering technique and the estimate  \eqref{cor01Asec4}, we infer \eqref{4feb19}. This concludes the proof of  Proposition \ref{prop21bb}.

\Box
%

%
%
%
%


\subsection{Refined bounds for $\partial_s w$ in $L^{2}(\mathbb{R}^n \times [s, s+1])$}\label{section52}

In this subsection, we refine the $L^2$-estimate for $\partial_s w$ on the interval $[s, s+1]$, sharpening the linear bound obtained in Proposition \ref{prop21bb} to a logarithmic one. Our main result is as follows:
\begin{prop}\label{psec5}
For all $T_{0}\in (0,T(x_{0})]$,  for all
 $s \geq 11-\ln (T_{0})$ and
 $x\in \R^n$, where $|x-x_0|\le \frac{e^{-s}}{\delta_0(x_0)}$,
we have
\begin{equation}\label{L1212sec5}
\int_{s}^{s+1}\int_{\partial B}(\partial_{s} w)^2{\mathrm{d}}\sigma {\mathrm{d}}\tau\leq
 K_9\ln s,
 \end{equation}
 \begin{equation}\label{L1212sec5b}
\int_{s}^{s+1}\int_{B}\Big (\partial_s w(y,\tau
)-\lambda(\tau ,s) w(y,\tau )\Big )^2dy d\tau\le K_9\ln s,
\end{equation}
where $0\le \lambda(\tau ,s) \le C(\delta_0)$, for all $\tau \in
[s,s+1]$.
where $w=w_{x,T^*(x)}$ is defined in \eqref{scaling}.
\end{prop}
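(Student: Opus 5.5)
We will use the unweighted conformal energy $E_0(w(s),s)$ from \eqref{E}, viewing \eqref{Ac} as a perturbation of the pure conformal equation. Multiplying \eqref{Ac} by $\partial_s w$ and integrating over $B$ (no weight, so $\rho$ is replaced by $1$) gives the classical conformal identity
\begin{equation*}
\frac{d}{ds}E_0(w(s),s)=-\int_{\partial B}(\partial_s w)^2\,{\mathrm{d}}\sigma-2\alpha(s)\int_B (\partial_s w)^2\,{\mathrm{d}}y-2\alpha(s)\int_B (y\cdot\nabla w)\,\partial_s w\,{\mathrm{d}}y+\Sigma_6(s),
\end{equation*}
where the boundary term comes from $-2y\cdot\nabla\partial_s w\,\partial_s w$ together with the divergence term, exactly as in the case $a=0$ treated in \cite{MZimrn05}. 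The remainder $\Sigma_6(s)$ collects $\gamma'(s)$ times $\int w^2$, and the time derivatives of the coefficients in the nonlinear term. These are handled as in the estimates of $\chi_1,\dots,\chi_4$ in Lemma \ref{lem32}, using \eqref{equiv2}, \eqref{equiv3}, and the splitting $\tilde B(s)\cup\check B(s)$. They yield $\Sigma_6(s)\le \frac{C\ln s}{s^{a+2}}\int_B|w|^{p_c+1}\ln^a(\phi^2w^2+2)+\frac{C}{s^2}\int_B w^2+Ce^{-2s}$.

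Next we integrate this identity over $[s,s+1]$ and bound every term except the boundary flux by means of Proposition \ref{prop21bb} and Corollary \ref{4corol1alpha}. Since $|\alpha|\le C/s$, the term $2\alpha\int(\partial_s w)^2$ has time average at most $\frac{C}{s}\cdot K_5 s=C$. For the cross term we use Cauchy--Schwarz: $\frac{C}{s}\int (y\cdot\nabla w)\partial_s w\le \frac{C}{s}\int(|\nabla w|^2+(\partial_s w)^2)$, whose time average is also $\le C$ by \eqref{4feb19}. The $\Sigma_6$ terms contribute $O(\ln s/s)\cdot s$ in the worst case, that is, $O(\ln s)$ after time integration. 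This is the source of the $\ln s$ in the statement. The control then reduces to the two endpoint values $|E_0(w(s_{12}),s_{12})|$ and $|E_0(w(s_{13}),s_{13})|$. We choose $s_{12}\in[s-1,s]$ and $s_{13}\in[s+1,s+2]$ by the mean value theorem, as in Lemma \ref{bws010}, so that these endpoint values are bounded by time averages of the unweighted $H^1\times L^2$ norm plus the nonlinear term. Those averages are only $O(s)$, which is not enough.

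\textbf{This is the main obstacle:} the endpoint energies. The way around it is to compare $E_0$ with the Lyapunov functional $L$ instead of bounding $E_0$ crudely. The difference $E_0-E$ is $\int(\ldots)(1-\rho)\,{\mathrm{d}}y$, and $1-\rho=1-(1-|y|^2)^{\alpha(s)}\le C\alpha(s)|\ln(1-|y|^2)|$. Splitting $B=B_3(s)\cup B_4(s)$ as in \eqref{271new}, this difference is bounded by $\frac{C\ln s}{s}$ times the $H^1\times L^2$ norm, plus singular-weight terms near $\partial B$. Those singular-weight terms are handled with \eqref{xi52bnew} and with \eqref{414sep1}, \eqref{414sep2b}, the tangential gradient and nonlinear term with weight $(1-|y|^2)^{-1/2}$. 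After time averaging, this gives $|E_0-E|\lesssim \ln s$. By Theorem \ref{t1} and \eqref{L1212}, $E=G-J$ is bounded uniformly, since $|J|\le Cs^{-3/2}\cdot s$. Hence the time-averaged endpoint values of $E_0$ are $O(\ln s)$. Because $E_0(s_{12})-E_0(s_{13})$ is controlled through these averaged comparisons, we obtain \eqref{L1212sec5} after the covering step that passes from $B(0,\frac12)$ to $B$.

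Finally, \eqref{L1212sec5b} will follow from \eqref{L1212sec5} by the argument of \cite{MZimrn05} (their Proposition on $\partial_s w-\lambda w$). That argument is a one-dimensional trace and ODE estimate along rays, using the uniform bound on $G$ and the boundedness of $E$. It produces $\lambda(\tau,s)\in[0,C(\delta_0)]$ from the non-characteristic covering geometry. The only change is that the perturbative terms now contribute $O(\ln s)$ instead of $O(1)$.
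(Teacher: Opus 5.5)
Your proposal follows the paper's proof essentially step by step. It uses the unweighted conformal identity for $E_0$ (Lemma \ref{lem51}) and controls the $\alpha(s)$-terms and the remainder by the linear bounds of Proposition \ref{prop21bb}. It chooses $s_{12},s_{13}$ by the mean value theorem and bounds the endpoint values of $E_0$ through $\int_s^{s+1}|E-E_0|\le K\ln s$ (interior region where $1-\rho\lesssim \ln s/s$, thin boundary layer via the singular-weight bounds), together with the boundedness of $G$ from \eqref{L1212}. It then defers \eqref{L1212sec5b} to the averaging technique of \cite{MZimrn05}, as the paper does.

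Two harmless slips. With your bound on $\Sigma_6$, its time integral is only $O(\ln s/s)$, so the $\ln s$ actually comes from the interior part of $|E-E_0|$. And no covering step is needed at the end, since every estimate you invoke already holds on the whole ball $B$.
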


\bigskip
 We begin with  the following lemma
\begin{lem}\label{lem51} For all  $s\ge -\ln (T^*(x))$,
 \begin{equation}\label{lem5e1}
\frac{d}{ds}E_0(w(s),s)=\!
-\int_{\partial B}(\partial_{s} w)^2{\mathrm{d}}\sigma -2\alpha(s) \int_{B}\partial_{s} w 
 y\cdot \nabla w {\mathrm{d}}y
-2\alpha(s) \ibint (\partial_{s}w)^2\y+\Sigma_{5}(s),
\end{equation}
where $\Sigma_{5}(s)$ satisfies
    \begin{equation}\label{lem5e2}
\Sigma_{5}(s)\leq 
\frac{C}{s^{a+1}}\ibint |w|^{p_c+1}\ln^a(\phi^2w^2+2)\y
+ \frac{C}{s^2}\int_{B}w^{2} {\mathrm{d}}y
+C e^{-2s}.
\end{equation}
\end{lem}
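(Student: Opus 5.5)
We multiply equation \eqref{Ac} by $\partial_s w$ and integrate over $B$ with the unweighted measure $\mathrm{d}y$. This mirrors the proof of Lemma \ref{lem21} with $\eta=0$ and $\rho$ replaced by $1$, so the dissipation now comes from the boundary flux rather than from a weight.

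\textbf{Linear terms.} First we compute $\frac{d}{ds}E_0(w(s),s)$ directly from \eqref{E}.

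\begin{itemize}
\item The divergence term: $\int_B \partial_s w\,\div(\grad w-(y\cdot\grad w)y)\,\mathrm{d}y$ is integrated by parts. The boundary contribution vanishes, since $\grad w-(y\cdot\grad w)y$ is tangential on $\partial B$ where $|y|=1$. What remains is $-\frac12\frac{d}{ds}\int_B(|\grad w|^2-(y\cdot\grad w)^2)\,\mathrm{d}y$.
\item The transport term: we use $-2\int_B \partial_s w\, y\cdot\grad\partial_s w\,\mathrm{d}y=\int_B(\partial_s w)^2\div y\,\mathrm{d}y-\int_{\partial B}(\partial_s w)^2\mathrm{d}\sigma=n\int_B(\partial_s w)^2\,\mathrm{d}y-\int_{\partial B}(\partial_s w)^2\mathrm{d}\sigma$.
\item Since $n=\frac{p_c+3}{p_c-1}$, the damping term $-\frac{p_c+3}{p_c-1}\int_B(\partial_s w)^2$ exactly cancels $n\int_B(\partial_s w)^2$. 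This leaves $-\int_{\partial B}(\partial_s w)^2\mathrm{d}\sigma-2\alpha(s)\int_B(\partial_s w)^2\,\mathrm{d}y$.
\item The term $-2\alpha(s)\,y\cdot\grad w$ in \eqref{Ac} gives $-2\alpha(s)\int_B\partial_s w\,y\cdot\grad w\,\mathrm{d}y$.
\item The potential term $-\big(\frac{2p_c+2}{(p_c-1)^2}-\gamma(s)\big)w$ gives $-\frac{d}{ds}\int_B\big(\frac{p_c+1}{(p_c-1)^2}-\frac12\gamma(s)\big)w^2$, up to the error $-\frac{\gamma'(s)}{2}\int_B w^2$.
\end{itemize}

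Together these produce exactly the main terms of \eqref{lem5e1}.

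\textbf{Nonlinear term.} We write $e^{-\frac{2(p_c+1)s}{p_c-1}}s^{\frac{2a}{p_c-1}}\phi^{p_c+1}=s^{-a}$, using \eqref{defphi}. Differentiating $-e^{-\frac{2(p_c+1)s}{p_c-1}}s^{\frac{2a}{p_c-1}}\int_B f(\phi w)$ in $s$ gives two contributions:
\begin{itemize}
\item $-\frac{1}{s^a}\int_B|w|^{p_c-1}w\ln^a(\phi^2w^2+2)\,\partial_s w$, which cancels the source term;
\item terms coming from derivatives of the prefactor and of $\phi$.
\end{itemize}
As in \eqref{mail1}, the second group is
\[
\Big(\tfrac{2p_c+2}{p_c-1}-\tfrac{2a}{(p_c-1)s}\Big)e^{-\frac{2(p_c+1)s}{p_c-1}}s^{\frac{2a}{p_c-1}}\int_B\big(f-\tfrac{1}{p_c+1}\,\phi w f'(\phi w)\big)\,\mathrm{d}y,
\]
which equals the same combination $f_1+f_2$ plus $\frac{a}{(p_c+1)s^{a+1}}\int_B|w|^{p_c+1}\ln^a(\phi^2w^2+2)\,\mathrm{d}y$.

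Hence $\Sigma_5$ is the unweighted analogue of $\Sigma_0$ in \eqref{mail1}:
\begin{itemize}
\item the $f_1,f_2$ terms are bounded via \eqref{equiv2}--\eqref{equiv3} (equivalently \eqref{15dec1}) by $\frac{C}{s^{a+1}}\int_B|w|^{p_c+1}\ln^a(\phi^2w^2+2)+Ce^{-2s}$;
\item the explicit $\frac{a}{(p_c+1)s^{a+1}}$ term has the same form (it is even nonpositive, since $a<0$);
\item the $\gamma'$ term is bounded by $\frac{C}{s^2}\int_B w^2$, because $|\gamma'|\le Cs^{-2}$.
\end{itemize}
Note that no $\ln(1-|y|^2)$ terms appear here: the weight is $1$, so it does not depend on $s$.

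\textbf{Main obstacle.} The only delicate point is justifying the boundary integration by parts, since a priori we only have $H^1\times L^2(B)$ regularity and need traces of $\partial_s w$ on $\partial B$. We would argue for smooth data first and pass to the limit using finite speed of propagation: $w$ is defined on a slightly larger ball because $T^*(x)<T(x)$ for non-characteristic $x_0$. Alternatively, we can integrate over $B(0,r)$ with $r<1$ and let $r\to1$. Everything else is bookkeeping identical to Lemma \ref{lem21}.
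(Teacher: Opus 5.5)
Your proposal is correct and is the paper's own proof: multiply \eqref{Ac} by $\partial_s w$, integrate over $B$, and use $n=\frac{p_c+3}{p_c-1}$ so that the interior part of the transport term cancels the damping and leaves $-\int_{\partial B}(\partial_s w)^2\,\mathrm{d}\sigma$; the remainder is then bounded via \eqref{equiv2}--\eqref{equiv3} and $|\gamma'(s)|\le Cs^{-2}$ (your coefficient $\frac{2p_c+2}{p_c-1}-\frac{2a}{(p_c-1)s}$ in front of $f_1+f_2$ is the right one, and the extra exponential inside the bracket in \eqref{0lem5e1} is a typo). One small correction to your side remark on traces: for $x=x_0$ and $T_0=T(x_0)$ one has $T^*(x)=T(x)$, so the extension of $w$ beyond $\bar B$ does not come from $T^*(x)<T(x)$ but from the non-characteristic condition \eqref{nonchar}, which makes $w_{x,T^*(x)}$ defined for $|y|<1/\delta_0(x_0)$.
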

{\it Proof}: 
Multiplying $\eqref{Ac}$ by $\partial_{s} w$ and integrating over  $B$, we obtain \eqref{lem5e1}, where
\begin{align}\label{0lem5e1}
\Sigma_{5}(s)= &
\underbrace{
\frac{a}{(p_c+1)s^{a+1}}\ibint |w|^{p_c+1}\ln^a(\phi^2w^2+2)
\y}_{\Sigma_5^1(s)}\underbrace{-{\frac{\gamma' (s)}2\ibint w^2\y}}_{\Sigma_5^2(s)}\\
&+\underbrace{\big(\frac{2p_c+2}{p_c-1}
e^{-\frac{2(p_c+1)s}{p_c-1}}-\frac{2a}{(p_c-1)s}\big)
e^{-\frac{2(p_c+1)s}{p_c-1}}s^{\frac{2a}{p_c-1}}\ibint\big( f_1(\phi w)+f_2(\phi w)\big)
\y}_{\Sigma_5^3(s)}.\no
\end{align}
Note from    \eqref{equiv2},  and  \eqref{equiv3}  that
\begin{equation}\label{sigma51}
\Sigma_5^1(s)+\Sigma_5^3(s)\le 
\frac{C}{s^{a+1}}\iint |w|^{p_c+1}\ln^a(\phi^2w^2+2) \y+  C e^{-2s}.
\end{equation}
Furthemore, by the inequality  $|\gamma'(s)|\le \frac{C}{s^2},$    we infer 
\begin{equation}\label{juin2026}
\Sigma_5^2(s)\leq \frac{C}{s^2}\int_{B} w^2 {\mathrm{d}}y.
\end{equation}
From \eqref{sigma51}, and \eqref{juin2026}, we conclude 
\eqref{lem5e2},
which ends the proof of Lemma \ref{lem51}.
\Box

\medskip

The control of the term $E(w(s),s)-E_0(w(s),s)$ for large $s$ relies on $1-\rho$ being small if,  $|y|\in [-\sqrt{1-s^{-32}}, \sqrt{1-s^{-32}}]$. In light of Proposition \ref{prop21bb} and Corollary \ref{cor01sec4},
 we are in position to prove the following:
\begin{lem}\label{ws01}
For all  $s\ge 10-\ln (T^*(x))$,  we have
\begin{equation}\label{sec5dif2}
\int_{s}^{s+1}\Big| E(w(\tau),\tau)-E_0 (w(\tau),\tau)\Big|
{\mathrm{d}}\tau \le K_{10}\ln s, 
\end{equation}
and
\begin{equation}\label{sec5dif1}
\int_{s}^{s+1}\Big| G(w(\tau),\tau)-E (w(\tau),\tau)\Big|
{\mathrm{d}}\tau \le \frac{K_{10}}{\sqrt{s}},
\end{equation}
where $E(w(\tau),\tau)$,   $E_0(w(\tau),\tau)$, and $G(w(\tau),\tau)$  are given by \eqref{Ealpha},  \eqref{E}, and \eqref{Lalpha}.
\end{lem}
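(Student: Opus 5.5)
The plan is to prove \eqref{sec5dif1} directly from the definition of $J$, and \eqref{sec5dif2} by splitting the ball near the boundary as in Lemma \ref{lem41}.

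For \eqref{sec5dif1}, note that $G-E=J$ by \eqref{Lalpha}. Then
\[
|J(w(\tau),\tau)|\le \frac{C}{\tau\sqrt{\tau}}\int_B\big(w^2+(\partial_s w)^2\big)\rho\,{\mathrm{d}}y,
\]
using $\rho\le 1$ and $2|w\partial_s w|\le w^2+(\partial_s w)^2$. Integrating over $[s,s+1]$ and invoking \eqref{4feb19}, where $\frac{\rho}{1-|y|^2}\ge\rho$ controls the $\partial_s w$ term, gives a bound $K_5 s\cdot C s^{-3/2}=K s^{-1/2}$. This is exactly \eqref{sec5dif1}.

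For \eqref{sec5dif2}, write
\[
E-E_0=\int_B e(y,\tau)\,(\rho-1)\,{\mathrm{d}}y,
\]
where $e$ is the full energy density: $\frac12(\partial_s w)^2+\frac12(|\nabla w|^2-(y\cdot\nabla w)^2)+\big(\frac{p_c+1}{(p_c-1)^2}-\frac{\gamma}2\big)w^2-e^{-\frac{2(p_c+1)\tau}{p_c-1}}\tau^{\frac{2a}{p_c-1}}f(\phi w)$. I will split $B$ into $B_3(\tau)=\{1-|y|^2\ge \tau^{-32}\}$ and its complement, as in \eqref{271new} with exponent $32$.

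On $B_3$ I will use $|1-\rho|=|1-e^{\alpha\ln(1-|y|^2)}|\le \alpha|\ln(1-|y|^2)|\le C\frac{\ln\tau}{\tau}$. This follows from $\alpha=-a/((p_c-1)\tau)$ and $|\ln(1-|y|^2)|\le 32\ln \tau$, and it also gives $\rho\ge c$ there. Bounding $|e|$ by the quantities in ${\cal N}$, with the $f$ term controlled via \eqref{equiv1} by $C\tau^{-a}|w|^{p_c+1}\ln^a(\phi^2w^2+2)+Ce^{-2\tau}$, the $B_3$ contribution is at most $C\frac{\ln s}{s}\int_s^{s+1}\int_B(\cdots)$. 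By \eqref{4feb19} and \eqref{4feb19alphab} this is $\le K\frac{\ln s}{s}\cdot s=K\ln s$.

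On the complement, $|1-\rho|\le 1$ but the region is a thin shell of width $\le s^{-32}$, and this is the main obstacle, since the energy density is not pointwise controlled there. I will handle it term by term using the singular-weight estimates, which gain a factor $(1-|y|^2)^{1/2-\eta}\le s^{-32(1/2-\eta)}$ on the shell:
\begin{itemize}
\item For the nonlinear term and $|\nabla_\theta w|^2$, I will use \eqref{414sep1} and \eqref{414sep2b} with, say, $\eta=1/4$. These give a shell contribution of order $s^{-8}\cdot Ks$.
\item For $(\partial_s w)^2$, the weight $\frac{\rho}{1-|y|^2}$ in \eqref{4feb19} gives an even better factor $s^{-32}$ (with $\rho\ge c$ since $\alpha$ is small on the relevant range, $\rho\ge(s^{-64})^{C/s}\ge c$ up to $|y|\to1$ handled by $\rho\to0$ only helping since $|1-\rho|\le1$).
\item For $(1-|y|^2)|\nabla_r w|^2$, the weight is already small on the shell.
\item For $w^2$, I will use the Hardy inequality \eqref{Hardy} to get $\int w^2(1-|y|^2)^{-1/2}\le C\int(|\nabla w|^2+w^2)$, which again gains $s^{-16}$.
\end{itemize}
Altogether the shell contributes $O(1)$, and summing the two regions yields \eqref{sec5dif2}.
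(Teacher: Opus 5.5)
Your proposal is correct and follows essentially the paper's proof. For $G-E=J$ you use Young's inequality and \eqref{4feb19}. For $E-E_0$ you make the same split at $1-|y|^2=\tau^{-32}$, with $0\le 1-\rho\le C\frac{\ln\tau}{\tau}$ inside, and on the thin shell you use the singular-weight bounds \eqref{4feb19}, \eqref{414sep1}, \eqref{414sep2b} together with a Hardy inequality. The only difference is that the paper uses one common weight $(1-|y|^2)^{-1/8}$, while you use $(1-|y|^2)^{-1/4}$ term by term.

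One small correction: $\rho$ is not bounded below on the shell, and you do not need it to be. There one has $(\partial_s w)^2=(1-|y|^2)^{1-\alpha}(\partial_s w)^2\frac{\rho}{1-|y|^2}\le \tau^{-32(1-\alpha)}(\partial_s w)^2\frac{\rho}{1-|y|^2}$, which gives the required gain directly.
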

{\it Proof:}\\  
-Proof of \eqref{sec5dif2}: 
For all  $s \ge 10-\ln (T^*(x))$,  we   divide $B$ into two parts
 \begin{equation}\label{271newf}
B_{5}(s)=\{y \in B\,\,|\,\,|y|\le  \sqrt{1-{s^{-32}} }\}\,\,{\rm and }
\,\,B_6(s)=\{y \in B
\,\,|\,\, |y|\ge \sqrt{1-{s^{-32}}}\}.
\end{equation}
Therefore,  we write  
\begin{equation} \label{17AAA}
\int_{s}^{s+1}\Big| E(w(\tau),\tau)-E_0 (w(\tau),\tau)\Big|
{\mathrm{d}}\tau=\int_{s}^{s+1}\big|\tilde \chi_5(\tau)\big|\t+\int_{s}^{s+1}\big|\tilde \chi_6(\tau)\big|\t,
\end{equation}
where
\begin{align}
\tilde \chi_{i}(\tau)=&
\int_{B_i(\tau)} \Big(\frac{1}{2}(\partial_{s}w)^2+\frac{1}{2}|\grad w|^2-\frac12(y\cdot \grad w)^2\no\\
&+(\frac{p_c+1}{(p_c-1)^2}-\frac{\gamma(\tau)}2)w^2-e^{-\frac{2(p_c+1)\tau}{p_c-1}}\tau^{\frac{2a}{p_c-1}}   f(\phi w)\Big)(1-\rho) \y, 
\end{align}
for $i\in \{5,6\}$.
Clearly
 \begin{equation}\label{271newfx}
  0\le 1-\rho(y,\tau)\le \frac{C \ln \tau}{\tau}, \qquad \forall y\in  B_5(\tau).
  \end{equation}
On the one hand, by using    \eqref{271newfx}, and \eqref{equiv1}, we get for all  $\tau \ge 10-\ln (T^*(x))$, 
\begin{align*}
|\tilde \chi_{5}(\tau)|\le \frac{C\ln s}{s}
\int_{B} \Big((\partial_{s}w)^2+|\grad w|^2
+w^2+\frac{1}{\tau^{a}} |w|^{p_c+1}\ln^a(2+\phi^2 w^2)\Big) \y+C e^{-2s}.
\end{align*}
Therefore,
\begin{equation}\label{22A}
\int_{s}^{s+1}
|\tilde \chi_{5}(\tau)|\t\! \le\! \frac{C\ln s}{s}\int_{s}^{s+1}
\int_{B} \Big((\partial_{s}w)^2+|\grad w|^2
+w^2+\frac{1}{\tau^{a}} |w|^{p_c+1}\ln^a(2+\phi^2 w^2)\Big) \y\t
+C e^{-2s}.
\end{equation}
It follows from \eqref{4feb19} and \eqref{4feb19alphab} that
\begin{align}\label{17AA}
\int_{s}^{s+1}
|\tilde \chi_{5}(\tau)|\t \le K\ln s.
\end{align}
On the other hand,   by using  \eqref{equiv1},  and the fact that  
  $(1-|y|^2)^{\frac1{8}}\le \frac1{s^4}$, for all  $y\in B_6(s)$, we get for all  $\tau \ge 10-\ln (T^*(x))$, 
  \begin{align*}
|\tilde \chi_{6}(\tau)|\le \frac{C}{s^4}
\int_{B} \frac{\Big((\partial_{s}w)^2+|\grad w|^2-(y\cdot\grad w)^2
+w^2+\frac{1}{\tau^{a}} |w|^{p_c+1}\ln^a(2+\phi^2 w^2)\Big)}{ 
(1-|y|^2)^{\frac1{8}}}
\y+C e^{-2s}.
\end{align*}
Thus, by   \eqref{4feb19}, \eqref{4feb19alphab},
\eqref{414sep1}, \eqref{414sep2b}
and the Hardy inequality \eqref{xi52bnew}, we deduce
\begin{align}\label{17A}
\int_{s}^{s+1}
|\tilde \chi_{6}(\tau)|\t \le \frac{K}{s^3}.
\end{align}
In light of \eqref{17AAA}, \eqref{17AA}, and \eqref{17A}, the bound \eqref{sec5dif2}  holds.\\
-Proof of \eqref{sec5dif1}: 
 Recalling the definitions of $E(w(s),s),$  $J(w(s),s),$ and $G(w(s),s)$ in \eqref{Ealpha}, \eqref{Jalpha}, and  \eqref{Lalpha}, and applying Young's inequality, we obtain
 \begin{equation}\label{0dif1}
\int_{s}^{s+1}\Big| G(w(\tau),\tau)-E (w(\tau),\tau)\Big|
{\mathrm{d}}\tau \le \frac{C}{s\sqrt{s}}\int_{s}^{s+1}\!\int_{B}\big(w^2+(\partial_sw)^2 \big)  {\mathrm{d}}y{\mathrm{d}}\tau.
\end{equation}
The estimate \eqref{sec5dif1} then follows from \eqref{4feb19}, concluding the proof of Lemma \ref{ws01}.
\Box

\bigskip

In view of Lemmas \ref{lem51} and \ref{ws01}, we  prove Proposition \ref{psec5}.

\medskip

{\it Proof of  Proposition \ref{psec5}}\\
-Proof of  \eqref{L1212sec5}: 
For $s\ge 11-\ln (T^*(x))$, let us work with time integrals betwen $s_{12}$ and $s_{13}$ where $s_{12}\in [s-1,s]$
and $s_{13}\in [s+1,s+2]$.  Thanks to Lemma \ref{lem51}, we write
 \begin{eqnarray}\label{sec5v1}
\int_{s_{12}}^{s_{13}}\int_{\partial B}(\partial_{s} w)^2{\mathrm{d}}\sigma\t &=&E_0(w(s_{12}),s_{12})- E_0(w(s_{13}),s_{13})
 \underbrace{-2\int_{s_{12}}^{s_{13}}\alpha(\tau) \int_{B}\partial_{s} w 
 y\cdot \nabla w {\mathrm{d}}y}_{R_1(s_{12},s_{13})}\t\no \\
&&\underbrace{-2\int_{s_1}^{s_{13}}\alpha(\tau) \ibint (\partial_{s}w)^2\y\t}_{R_2(s_{12},s_{13})}+\int_{s_{12}}^{s_{13}}\Sigma_{5}(\tau)\t,
\end{eqnarray}
where $\Sigma_{5}(s)$ satisfies \eqref{lem5e2}.
Now, we control all the terms on the right-hand side
of the relation (\ref{sec5v1}). Note that, 
by using the Mean Value Theorem, let us choose  $s_{12}=s_{12}(s)\in
[s-1,s]$  and $s_{13}=s_{13}(s)\in [s+1,s+2]$ such that
\begin{equation}\label{ws4s5}
E_0(w(s_{12}),s_{12})=\displaystyle\int_{s-1}^{s} E_0(w(\tau),\tau)\t,  \qquad 
E_0(w(s_{13}),s_{13})=\displaystyle\int_{s+1}^{s+2} E_0(w(\tau),\tau)\t,  
\end{equation}
In view of \eqref{ws4s5}, we have 
\begin{equation}\label{ws5s5} 
\big|E_0(w(s_{12}),s_{12})- E_0(w(s_{13}),s_{13})\big| \le C \int_{s-1}^{s+2} \big|E_0(w(\tau),\tau)\big|  \mathrm{d}\tau. \end{equation}
 Combining \eqref{ws5s5} with the estimates \eqref{sec5dif2} and \eqref{sec5dif1}, it follows that \begin{equation}\label{1ws5s5b} 
 \big|E_0(w(s_{12}),s_{12})- E_0(w(s_{13}),s_{13})\big| \le C \int_{s-1}^{s+2} \big|G(w(\tau),\tau)\big| \mathrm{d}\tau + K \ln s. 
 \end{equation}
 Moreover, \eqref{L1212} yields the uniform bound 
 \begin{equation}\label{ws5s5b1}
   \int_{s-1}^{s+2} \big|G(w(\tau),\tau)\big| , \mathrm{d}\tau \le K. 
 \end{equation} 
    Thus, substituting \eqref{ws5s5b1} into \eqref{1ws5s5b}, we arrive at the logarithmic estimate
 \begin{equation}   \label{ws5s5b26} 
 \big|E_0(w(s_{12}),s_{12})- E_0(w(s_{13}),s_{13})\big| \le K \ln s. 
 \end{equation}
Furthermore, 
it follows from \eqref{4feb19}  that
 \begin{equation}\label{sec5v1bb}
 R_1(s_{12},s_{13})+R_2(s_{12},s_{13})\le \frac{C}{s}
\int_{s-1}^{s+2}\ibint \big((\partial_s w)^2
+ |\nabla  w|^2\big)\y\t\le K.
\end{equation}
In view of \ref{lem5e2}, \eqref{4feb19},  and \eqref{4feb19alphab}, we have
 \begin{eqnarray}\label{sec5v1bbg}
 \int_{s_{12}}^{s_{13}}\Sigma_{5}(\tau)\t\le K.
\end{eqnarray}
 Finally, combining \eqref{sec5v1} with \eqref{ws5s5b26}--\eqref{sec5v1bbg}, we conclude that \eqref{L1212sec5} holds.\\

-Proof of  \eqref{L1212sec5b}
By exploiting \eqref{L1212sec5}, where the space integration is done over the unit sphere, one can use the averaging technique of Proposition 4.2 (page 1147) in   \cite{MZimrn05} to get
\begin{equation}\label{cor00001}
\int_{s}^{s+1}\int_{B}\Big (\partial_s w(y,\tau
)-\lambda(\tau ,s) w(y,\tau )\Big )^2dy d\tau\le K\ln s,
\end{equation}
where $0\le \lambda(\tau ,s) \le C(\delta_0)$, for all $\tau \in
[s,s+1]$.

 This  ends the proof of   Proposition \ref{psec5}.
\Box

\begin{nb}
It is important to note that we lack control over $\partial_s w$ when the singular weight is applied. Specifically, we cannot obtain an estimate for$$\int_s^{s+1}\int_{B} (\partial_{s}w(y,\tau ))^2 \frac{\rho}{1-|y|^2} dy d\tau.$$ Consequently, the strategy employed in Proposition \ref{prop21bb} cannot be adapted to prove Proposition \ref{prop21bbb2} by replacing \eqref{cor01Asec4} with estimate \eqref{L1212sec5b}. This technical obstacle also appeared in the conformal case  and was overcome  in \cite{MZimrn05,MZma05, HZjhde12}.
\end{nb}

\subsection{Uniform boundedness in $H^1 \times L^2$ for the solution of \eqref{A}}\label{section53}

By applying \eqref{L1212sec5b}, we refine the linear bound established in Proposition \ref{prop21bb} into a logarithmic estimate; specifically, we establish the following:

\begin{prop}\label{prop21bbb2}
\noindent  Consider $u $   a solution of ({\ref{gen}}) with
blow-up graph $\Gamma:\{x\mapsto T(x)\}$ and  $x_0$  a non
characteristic point under the condition \eqref{ss00}. Then,  for all $T_0\in(0,T(x_{0})],$
   $s\geq 12-\ln T_0$ and $x\in \R^n,$ where $|x-x_0|\le \frac{e^{-s}}{\delta_0(x_0)}$, 
we have
\begin{equation}\label{feb19b2}
\int_{s}^{s+1}\!\ibint \big( w^2(y,\tau )+|\grad w(y,\tau )|^2 +( \partial_{s}w(y,\tau ))^2\big)\y \t \leq K_{11}\ln s,
\end{equation}
\begin{equation}\label{feb19alphab2}
\frac{1}{s^a}\int_{s+1}^{s}\! \int_{B}   |w|^{p_c+1}\ln^a(\phi^2w^2+2) {\mathrm{d}}y{\mathrm{d}}\tau
\leq K_{11} \ln s,
\end{equation}
where $w=w_{x,T^*(x)}$ is defined in \eqref{scaling}, with
$T^*(x)$ given by \eqref{18dec1}.
\end{prop}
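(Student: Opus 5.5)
The plan is to redo the argument of Proposition \ref{prop21bb}, with the Lyapunov input \eqref{cor01Asec4} (which only gives $O(s)$) replaced by the logarithmic control \eqref{L1212sec5b} from Proposition \ref{psec5}. This follows the conformal-case strategy of \cite{MZimrn05,MZma05,HZjhde12}. First I would remove the weight $\rho$. On $B_5(s)$ we have $1-\rho\le C\ln s/s$. On $B_6(s)$ I would use the singular-weight bounds \eqref{414sep1}, \eqref{414sep2b} together with the Hardy inequality \eqref{xi52bnew}, exactly as in Lemma \ref{ws01}. Combined with the linear bounds \eqref{4feb19}--\eqref{4feb19alphab}, this shows that all unweighted space-time quantities differ from their $\rho$-weighted versions by at most $K\ln s$. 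Likewise $\int_s^{s+1}|G-E_0|\le K\ln s$ by Lemma \ref{ws01}. Since $G$ is bounded by \eqref{L1212}, the time average of $E_0$ is therefore $O(\ln s)$.

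Next I would write the unweighted analogue of the identity \eqref{et44}. I would multiply \eqref{Ac} by $w$, integrate over $B\times[s_{10},s_{11}]$, and combine with the time integral of $E_0$. The left-hand side is then $\frac{p_c-1}{2(p_c+1)}\int\frac1{\tau^a}\int_B|w|^{p_c+1}\ln^a(\phi^2w^2+2)$, up to the $f_1,f_2$ corrections, which are lower order by \eqref{A30}. The right-hand side contains the following terms:
\begin{itemize}
\item the averaged $E_0$, already bounded by $K\ln s$;
\item the boundary brackets $[\int_B(w\partial_sw-\frac n2 w^2)]$, with times $s_{10},s_{11}$ chosen by the mean value theorem;
\item $\int\int(\partial_sw)^2$ and $\int\int y\cdot\nabla w\,\partial_sw$;
\item perturbative terms with coefficients $\alpha,\gamma=O(1/s)$, which the linear bounds \eqref{4feb19} make $O(1)$.
\end{itemize}

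The key substitution handles the terms containing $\partial_s w$. I would write $\partial_sw=(\partial_sw-\lambda w)+\lambda w$ with $0\le\lambda\le C(\delta_0)$. Then \eqref{L1212sec5b} bounds $\int\int(\partial_sw-\lambda w)^2$ by $K\ln s$, so these terms are controlled by $K\ln s$ plus multiples of $\int\int w^2$ and $\int\int|\nabla w|^2$.

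The bounds on $w$ itself would come from the gradient and time-derivative structure, as in \cite{MZimrn05}. By \eqref{L1212sec5b}, $\frac{d}{d\tau}\int_B w^2$ is essentially $2\lambda\int_B w^2$ plus a controlled error. I would compare with the energy and use the critical Gagliardo--Nirenberg/Sobolev bound $\int_B w^2\le \epsilon\int_B|w|^{p_c+1}+C_\epsilon$. Then $\sup_\tau\int_B w^2$ and $\int\int|\nabla w|^2$ should be at most $K\ln s/\nu_0+K\nu_0\times$(the nonlinear term). Absorbing as in \eqref{gg}, with $\nu_0$ small and $s$ large, gives \eqref{feb19alphab2}. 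Feeding this back into the energy then gives the $\nabla w$ and $w$ parts of \eqref{feb19b2}, and the $\partial_sw$ part follows from \eqref{L1212sec5b}. Finally, the covering technique extends everything from the $\rho$-weighted or interior estimates to the whole of $B$.

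The main obstacle is the gradient term. The quantity $E_0$ controls $|\nabla w|^2-(y\cdot\nabla w)^2$, which degenerates at $\partial B$, and, unlike in Proposition \ref{prop21bb}, the singular weight on $\partial_s w$ is no longer available to handle the term $y\cdot\nabla w\,\partial_s w$. My first attempt would be to split $\nabla w=\nabla_\theta w+\nabla_r w$. The tangential part is controlled by \eqref{414sep2b} near the boundary, which is affordable because it is multiplied by the smallness of the set $B_6$. The radial part near $|y|=1$ I would handle through the identity $y\cdot\nabla w\,\partial_sw = y\cdot\nabla w(\partial_sw-\lambda w)+\lambda w\,y\cdot\nabla w$, followed by an integration by parts. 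This integration by parts produces a boundary term $\int_{\partial B}w^2$, which I would control by the trace inequality and the bound \eqref{L1212sec5} on $\int\int_{\partial B}(\partial_sw)^2$.
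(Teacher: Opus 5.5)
Your overall route agrees with the paper's. Its proof of Proposition \ref{prop21bbb2} is only a pointer to the conformal-case argument of \cite{MZimrn05,MZma05,HZjhde12}, fed with the bound \eqref{L1212} on $G$ and with \eqref{L1212sec5b}, with the perturbative and nonlinear terms treated as in Proposition \ref{prop21bb}. Working with the unweighted energy $E_0$ is a sensible way to implement this: its time average is $O(\ln s)$ by Lemma \ref{ws01} and \eqref{L1212}, and the unweighted multiplier identity no longer contains the term $D_1$ of \eqref{et44}. That term needs the singular weight on $\partial_s w$, which the paper's remark says is unavailable. The price is a spatial boundary term $-2\int\!\int_{\partial B} w\,\partial_s w\,d\sigma$, which your list of right-hand-side terms omits. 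More importantly, the step you propose for the main obstacle does not close.

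\emph{Why the proposed fix for the radial term fails.}
\begin{itemize}
\item Since $y\cdot\nabla w=y\cdot\nabla_r w$, splitting off $\nabla_\theta w$ gains nothing for $\int\!\int y\cdot\nabla w\,\partial_s w$. Moreover, the gradient term of $E_0$ already contains $|\nabla_\theta w|^2$ on all of $B$, by \eqref{12nov6}.
\item After writing $\partial_s w=(\partial_s w-\lambda w)+\lambda w$, you still face $\int\!\int y\cdot\nabla w\,(\partial_s w-\lambda w)$. This requires $\int\!\int_B (y\cdot\nabla w)^2$ up to $\partial B$.
\item The trace inequality you invoke for $\int_{\partial B}w^2$, and for the omitted $\int\!\int_{\partial B}w\,\partial_s w$, needs the same full $H^1(B)$ norm.
\item Neither available bound supplies this. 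The gradient part of $E_0$ is $|\nabla_\theta w|^2+(1-|y|^2)|\nabla_r w|^2$, which degenerates radially at the boundary. The a priori bound $\int\!\int_B|\nabla w|^2\le K_5 s$ from \eqref{4feb19} gives only $\sqrt{s\ln s}$ after Cauchy--Schwarz with \eqref{L1212sec5b}.
\end{itemize}
So ``absorbing as in \eqref{gg}'' for a single $x$ is circular, and invoking the covering technique only at the end is too late.

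\emph{What is missing.} You need to use the covering inside the bootstrap.
\begin{itemize}
\item Bound $\int\!\int_B(w_x^2+|\nabla w_x|^2)$ by finitely many $\int\!\int_{B(0,\frac12)}$ integrals of neighbouring $w_{x'}$ at comparable times. On $B(0,\frac12)$ the gradient term of $E_0$ is nondegenerate, so these are controlled by $E_0(w_{x'})$ plus the nonlinear term of $w_{x'}$.
\item Run the $\varepsilon$-absorption on the supremum of the target quantities over the admissible family $\{w_{x,T^*(x)}\}$. This supremum is finite a priori by Proposition \ref{prop21bb}.
\end{itemize}
With this uniform-in-$x$ step, the absorption closes and your boundary terms are handled by \eqref{L1212sec5} plus $\varepsilon$ times full $H^1$ norms. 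Without it, the argument stalls at the radial derivative near $\partial B$, which is precisely what the cited conformal-case argument has to supply.
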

{\it Proof of  Proposition \ref{prop21bbb2}:}
Proposition \ref{prop21bbb2} is proved by exploiting the fact that $G(w(s),s)$ is a bounded functional, as shown in \eqref{L1212}, and the estimate \eqref{L1212sec5b}. This approach follows the strategy developed for the conformal case in \cite{MZimrn05,MZma05, HZjhde12}. Naturally, the perturbative and nonlinear terms are handled using the same techniques as those employed in the proof of Proposition \ref{prop21bb}. This concludes the proof of Proposition \ref{prop21bbb2}.\Box
\\

Thanks to Proposition \ref{prop21bbb2} and Corollary \ref{cor01sec4}, we are now in a position to improve the first estimate shown in Lemma \ref{ws01} and, accordingly, improve Proposition \ref{psec5}. More precisely, we state the following:
\begin{cors}\label{csec5}
For all $T_{0}\in (0,T(x_{0})]$,  for all
 $s \geq 12-\ln (T_{0})$ and
 $x\in \R^n$, where $|x-x_0|\le \frac{e^{-s}}{\delta_0(x_0)}$,
we have
\begin{equation}\label{dif2bis}
\int_{s}^{s+1}\Big| E(w(\tau),\tau)-E_0 (w(\tau),\tau)\Big|
{\mathrm{d}}\tau \le K_{12}\frac{(\ln s)^2}{s}, 
\end{equation}
\begin{equation}\label{L1212sec5bis}
\int_{s}^{s+1}\int_{\partial B}(\partial_{s} w)^2{\mathrm{d}}\sigma {\mathrm{d}}\tau\leq
 K_{12},
 \end{equation}
 \begin{equation}\label{L1212sec5bbis}
\int_{s}^{s+1}\int_{B}\Big (\partial_s w(y,\tau
)-\lambda(\tau ,s) w(y,\tau )\Big )^2dy d\tau\le K_{12},
\end{equation}
where $0\le \lambda(\tau ,s) \le C(\delta_0)$, for all $\tau \in
[s,s+1]$.
\end{cors}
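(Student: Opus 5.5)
The plan is to rerun the proofs of Lemma \ref{ws01} and Proposition \ref{psec5}, now feeding in the logarithmic bounds of Proposition \ref{prop21bbb2} in place of the linear bounds of Proposition \ref{prop21bb}. The three estimates are proved in order: first \eqref{dif2bis}, then \eqref{L1212sec5bis}, and finally \eqref{L1212sec5bbis}.

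\textbf{Step 1: proof of \eqref{dif2bis}.} I use the same splitting $B=B_5(\tau)\cup B_6(\tau)$ as in \eqref{271newf}.
\begin{itemize}
\item On $B_5(\tau)$, the bound \eqref{271newfx} gives $0\le 1-\rho\le C\ln\tau/\tau$. Combined with \eqref{equiv1}, this controls $\int_s^{s+1}|\tilde\chi_5|$ by
$$\frac{C\ln s}{s}\int_s^{s+1}\int_B\Big((\partial_s w)^2+|\nabla w|^2+w^2+\tau^{-a}|w|^{p_c+1}\ln^a(2+\phi^2w^2)\Big)\,dy\,d\tau+Ce^{-2s}.$$
By \eqref{feb19b2} and \eqref{feb19alphab2} this is at most $K(\ln s)^2/s$.
\item The $B_6$ piece $\tilde\chi_6$ was already shown in \eqref{17A} to be $O(s^{-3})$, so no new estimate is needed there.
\end{itemize}
Adding the two pieces gives \eqref{dif2bis}.

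\textbf{Step 2: proof of \eqref{L1212sec5bis}.} I return to the identity \eqref{sec5v1}, with $s_{12}$ and $s_{13}$ chosen by the mean value theorem as in \eqref{ws4s5}.
\begin{itemize}
\item By \eqref{ws5s5}, the boundary term $|E_0(w(s_{12}),s_{12})-E_0(w(s_{13}),s_{13})|$ is bounded by $C\int_{s-1}^{s+2}|E_0|$.
\item To bound $\int_{s-1}^{s+2}|E_0|$, I compare $E_0$ with $E$ using \eqref{dif2bis}, then $E$ with $G$ using \eqref{sec5dif1}. Finally $G$ is controlled by the uniform bound \eqref{L1212}, which gives \eqref{ws5s5b1}. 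Together these bound the boundary term by $K+K(\ln s)^2/s+K/\sqrt s\le K$.
\item The terms $R_1$ and $R_2$ carry the prefactor $\alpha(\tau)\le C/\tau$. By \eqref{feb19b2} they are bounded by $C\ln s/s\le K$.
\item The remainder $\int\Sigma_5$ is controlled through \eqref{lem5e2} together with \eqref{feb19b2} and \eqref{feb19alphab2}. This gives a bound $K\ln s/s\le K$.
\end{itemize}
Collecting these bounds yields $\int_{s_{12}}^{s_{13}}\int_{\partial B}(\partial_s w)^2\,d\sigma\,d\tau\le K$. Since $[s,s+1]\subset[s_{12},s_{13}]$, this implies \eqref{L1212sec5bis}.

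\textbf{Step 3: proof of \eqref{L1212sec5bbis}.} This follows from \eqref{L1212sec5bis} by exactly the averaging argument of Proposition 4.2 in \cite{MZimrn05}, already used to pass from \eqref{L1212sec5} to \eqref{L1212sec5b}. That argument combines the boundary flux with the bounded $H^1$ averages along the family of cones, using the non-characteristic geometry and the covering in $x$. The input it requires is a bounded $\partial B$-flux, and the bounded $H^1$ averages it also uses come from \eqref{feb19b2} combined with \eqref{L1212}.

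\textbf{Main obstacle.} The step that needs care is Step 2, specifically the $E_0$ boundary terms. One must check that every error term is at most $O(1)$, not merely $O(\ln s)$. This works because each error carries a factor of $1/s$ or better, which absorbs the $\ln s$ growth coming from Proposition \ref{prop21bbb2}. The one term without such a factor is $G$ itself, and it is controlled by the Lyapunov bound \eqref{L1212} of Theorem \ref{t1}.
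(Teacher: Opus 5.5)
Your proposal is correct and follows the paper's proof essentially line by line. It uses the same $B_5/B_6$ splitting with \eqref{feb19b2}--\eqref{feb19alphab2} for \eqref{dif2bis}, the same mean-value choice of $s_{12},s_{13}$ with the comparison $E_0\to E\to G$ and the uniform bound \eqref{L1212} for \eqref{L1212sec5bis}, and the same appeal to the Merle--Zaag averaging argument for \eqref{L1212sec5bbis}.

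One correction to your Step 3 aside: \eqref{feb19b2} only gives $K_{11}\ln s$, not bounded $H^1$ averages. The bounded averages (Proposition \ref{prop21bbb255}) are themselves derived from \eqref{L1212sec5bbis}, so they are not available at this point. The argument survives because the averaging step needs only the uniform-in-$x$ boundary flux bound. Its earlier use shows this: it passed from \eqref{L1212sec5} to \eqref{L1212sec5b} with a logarithmic output while only the linear bounds \eqref{4feb19} were known.
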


{\it Proof:}\\ 
-Proof of  \eqref{dif2bis}:  In view of the sharpened estimates \eqref{feb19b2} and \eqref{feb19alphab2}, the bound \eqref{22A}   yields 
\begin{align}\label{17AAbis}
\int_{s}^{s+1}
|\tilde \chi_{5}(\tau)|\t \le K\frac{(\ln s)^2}{s}.
\end{align}
The result $\eqref{dif2bis}$ then follows by combining $\eqref{17AAbis}$, $\eqref{17AAA}$, 
and $\eqref{17A}$.\\

-Proof of  \eqref{L1212sec5bis}: 
We follow the strategy employed in the proof of \eqref{L1212sec5}, with one primary modification: we utilize the refined estimate \eqref{dif2bis} in place of \eqref{sec5dif2}. More precisely, by applying \eqref{ws5s5}, \eqref{dif2bis}, \eqref{sec5dif1}, and \eqref{ws5s5b1}, we arrive at the following uniform bound:
\begin{equation} \label{ws5s5b26bb} \big|E_0(w(s_{12}),s_{12}) - E_0(w(s_{13}),s_{13})\big| \le K. \end{equation}
Finally, by combining \eqref{sec5v1} with \eqref{sec5v1bb}, \eqref{sec5v1bbg}, and the estimate in \eqref{ws5s5b26bb}, we conclude that \eqref{L1212sec5bis} holds.\\

-Proof of  \eqref{L1212sec5bbis}:
By exploiting \eqref{L1212sec5bis},  one can use the averaging technique of Proposition 4.2 (page 1147) in   \cite{MZimrn05} to get \eqref{L1212sec5bbis}.
 This  ends the proof of   Corollary \ref{csec5}.
\Box

\bigskip

By utilizing the boundedness of $G(w(s),s)$ from \eqref{L1212} and the estimate in \eqref{L1212sec5bbis}, we arrive at a new  estimate for the solution of \eqref{A}. This allows us to establish the boundedness of the time-average of the $H^1$ norm, as follows:
\begin{prop}\label{prop21bbb255}
\noindent  Consider $u $   a solution of ({\ref{gen}}) with
blow-up graph $\Gamma:\{x\mapsto T(x)\}$ and  $x_0$  a non
characteristic point under the condition \eqref{ss00}. Then,  for all $T_0\in(0,T(x_{0})],$
   $s\geq 13-\ln T_0$ and $x\in \R^n,$ where $|x-x_0|\le \frac{e^{-s}}{\delta_0(x_0)}$, 
we have
\begin{equation}\label{feb19b25}
\int_{s}^{s+1}\!\ibint \big( w^2(y,\tau )+|\grad w(y,\tau )|^2 +( \partial_{s}w(y,\tau ))^2\big)\y \t \leq K_{13},
\end{equation}
\begin{equation}\label{feb19alphab25}
\frac{1}{s^a}\int_{s+1}^{s}\! \int_{B}   |w|^{p_c+1}\ln^a(\phi^2w^2+2) {\mathrm{d}}y{\mathrm{d}}\tau
\leq K_{13},
\end{equation}
where $w=w_{x,T^*(x)}$ is defined in \eqref{scaling}, with
$T^*(x)$ given by \eqref{18dec1}.
 \end{prop}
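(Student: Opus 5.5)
The plan is to repeat the proof of Proposition \ref{prop21bbb2}, feeding in the sharper inputs from Corollary \ref{csec5} in place of the logarithmic bounds of Proposition \ref{psec5}. The strategy is the conformal-case argument of \cite{MZimrn05,MZma05,HZjhde12}. We first fix $s\ge 13-\ln T^*(x)$ and times $s_{10}\in[s-1,s]$, $s_{11}\in[s+1,s+2]$, chosen by the mean value theorem as in Lemma \ref{g}. We then combine three ingredients. The first is the uniform bound $|G(w(\tau),\tau)|\le K$ from \eqref{L1212}. The second is the identity \eqref{et1} obtained by multiplying \eqref{eqc} by $w\rho$. The third, which is new, is \eqref{L1212sec5bbis}, which says that $\partial_s w$ is close in $L^2(B\times[s,s+1])$, up to $K_{12}$, to $\lambda(\tau,s)w$ with $0\le\lambda\le C(\delta_0)$.

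As in \eqref{et44}, adding \eqref{et} and \eqref{et1} expresses $\frac{p_c-1}{2(p_c+1)}\int\tau^{-a}\int|w|^{p_c+1}\ln^a(\cdot)\rho$ in terms of four groups of terms:
\begin{itemize}
\item the bracket $\big[\int(w\partial_sw-\frac n2w^2)\rho\big]_{s_{10}}^{s_{11}}$;
\item $-\int\int(\partial_sw)^2\rho$ and $\int G$;
\item the cross term $-\int\int y\cdot\nabla w\,\partial_sw\,\rho$;
\item the perturbative terms $D_1,\dots,D_6$.
\end{itemize}

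In the bracket and the cross term, we replace $\partial_sw$ by $\lambda w+(\partial_sw-\lambda w)$. The piece $\lambda\int y\cdot\nabla w\,w\,\rho=-\frac\lambda2\int w^2\,\mathrm{div}(y\rho)$ is an integration by parts that produces only $\int w^2\rho$ plus a nonpositive boundary-weight contribution, because $\lambda\ge0$. The remainder is bounded by $\mu\int|\nabla w|^2\rho+C\mu^{-1}K_{12}$. The $L^2$ norm of $w$ itself is controlled by interpolating between $w^2$ and the nonlinear term, that is, by $\mu\int\tau^{-a}|w|^{p_c+1}\ln^a+C_\mu$. This works since $\tau^{-a}\ln^a(\phi^2w^2+2)\gtrsim1$ where $|w|\ge1$, using that $\ln(\phi^2w^2+2)\le Cs$ there and $a<0$. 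The gradient is controlled through \eqref{et1}, which gives $\int|\nabla w|^2(1-|y|^2)\rho$ in terms of the nonlinear term, as in \eqref{control300}. The tangential part $|\nabla_\theta w|^2$ near the boundary is handled by Proposition \ref{prop123}, applied with the (now logarithmic) input bounds.

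The perturbative terms $D_1,\dots,D_6$ carry factors $\alpha,\gamma=O(1/s)$ or $\tau^{-3/2}$. By Proposition \ref{prop21bbb2} they are $O(\ln s/s)$, so they are bounded. The $\rho$-weighted terms $D_1$ and $\chi_5$-type terms are estimated with the splitting $B_5\cup B_6$ of Lemma \ref{ws01} together with the Hardy inequality \eqref{xi52bnew}. We also use \eqref{26control1} and \eqref{b26control1} to interchange $f(\phi w)$ and the logarithmic nonlinear term up to $O(e^{-2s})$. Absorbing the $\mu$-terms and the $C/s$ terms into the left-hand side then gives the bound $\int_{s_{10}}^{s_{11}}\tau^{-a}\int|w|^{p_c+1}\ln^a\rho\le K$. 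This bound transfers to $\int\int(w^2+|\nabla w|^2(1-|y|^2))\rho\le K$. Next, $\int\int(\partial_sw)^2\le 2\int\int(\partial_sw-\lambda w)^2+2C\int\int w^2\le K$. Finally, the covering technique of \cite{MZimrn05} and Lemma 2.8 in \cite{HZjhde12}, applied with the non-characteristic cone $T^*(x)$, removes the weights $\rho$ and $(1-|y|^2)$ and yields \eqref{feb19b25} and \eqref{feb19alphab25}.

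The main obstacle is the cross term $\int\int y\cdot\nabla w\,\partial_sw\,\rho$. The earlier argument controlled it through \eqref{cor01Asec4}, which grows linearly in $s$, whereas we only have unweighted control of $\partial_sw-\lambda w$. The sign condition $\lambda\ge0$ in the integration by parts, together with the fact that the weight $\rho$ differs from $1$ only by $O(\ln s/s)$ on $B_5$, must be exploited carefully. I would first try to carry out the computation with $\rho$ replaced by $1$, which is exactly the MZ conformal case, and treat $\rho-1$ as an error controlled by Proposition \ref{prop21bbb2}.
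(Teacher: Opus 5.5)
Your overall strategy is the one the paper invokes: the bound on $G$ from \eqref{L1212}, the identity \eqref{et44}, the new input \eqref{L1212sec5bbis}, and then covering. Your handling of the $\lambda w$ part of the cross term is also correct, since the extra piece $-\lambda\alpha\int w^2\frac{|y|^2\rho}{1-|y|^2}$ coming from $\mathrm{div}(y\rho)$ is nonpositive when $\lambda\ge0$. The argument does not close, however, because two terms you treat as harmless are of order one and concentrate in the boundary layer.

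\emph{The term $D_1$.} It is not $O(\ln s/s)$.
The measure $\alpha(\tau)\frac{\rho}{1-|y|^2}\,dy$ has total mass tending to $\frac12|\partial B|$. Since $(s^{-32})^{\alpha(s)}\to1$, essentially all of this mass lies in $B_6$, where $1-|y|^2<s^{-32}$. So $D_1$ is a smoothed copy of the boundary term $\int\!\int_{\partial B}w\,\partial_sw\,d\sigma\,d\tau$ of the unweighted conformal identity. None of the tools you cite controls it:
\begin{itemize}
\item the splitting $B_5\cup B_6$ gives no smallness;
\item \eqref{xi52bnew} only handles the weight $(1-|y|^2)^{-1/8}$;
\item Proposition \ref{prop21bbb2} says nothing about $\partial_sw$ against $\frac{\rho}{1-|y|^2}$, which is exactly the point of the Remark after Proposition \ref{psec5}.
\end{itemize}
What you can do is write $|D_1|\le\mu^{-1}\int\alpha\int(\partial_sw)^2\frac{\rho}{1-|y|^2}+\mu\int\alpha\int w^2\frac{\rho}{1-|y|^2}$. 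The first integral is at most $L(s_{10})\le K$, by the dissipation in Theorem \ref{t1}' together with \eqref{posi1D}. The second is a trace-type quantity. Uniformly in $\alpha$ it is bounded by $C\|w\|^2_{H^1(B)}$, but not by the quantities appearing in $G$. The Hardy inequality \eqref{juin12bn} does not help here, since it costs a factor $\alpha^{-1}\sim s$.

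\emph{The cross-term remainder.} The term $\int\!\int y\cdot\nabla w\,(\partial_sw-\lambda w)\rho$ requires control of $\int\!\int(y\cdot\nabla w)^2\rho$, that is, the radial derivative up to $\partial B$ with no degenerate weight.
\begin{itemize}
\item \eqref{control300} does not supply it, since it only sees $|\nabla w|^2-(y\cdot\nabla w)^2=|\nabla_\theta w|^2+(1-|y|^2)|\nabla_rw|^2$.
\item Proposition \ref{prop123} does not supply it either: it is purely tangential, and fed with the logarithmic bounds it returns only $K\ln s$.
\item Replacing $\rho$ by $1$, as you suggest at the end, does not help, because the unweighted identity contains $\int\!\int_{\partial B}w\,\partial_sw$ explicitly.
\end{itemize}

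\emph{Consequence.} In every version you are left with $\mu\int_{s_{10}}^{s_{11}}\|w\|^2_{H^1(B)}$ on the right-hand side. This is the very quantity you want to bound, and the energy terms on the left do not control it; they do not even control the trace of $w$ on $\partial B$. As written, Cauchy--Schwarz of $K_{12}$ against \eqref{feb19b2} gives only an $O(\sqrt{\ln s})$ bound, not a constant.

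\emph{What is missing.} You need a closing mechanism; the paper delegates this step entirely to the conformal argument of \cite{MZimrn05,MZma05}. One way to supply it:
\begin{itemize}
\item run the estimate for every $x$ near $x_0$;
\item use the covering technique to bound $N(s)=\sup_x\int_s^{s+1}\|(w_x,\partial_sw_x)\|^2_{H^1\times L^2(B)}$ by the weighted quantities on $B(0,\frac12)$ at nearby times;
\item this yields a recursive inequality of the form $N\le C_\mu K+C\mu N$ (or $N\le C+C\sqrt N$);
\item iterate it starting from the a priori bound $K_{11}\ln s$ of Proposition \ref{prop21bbb2}.
\end{itemize}
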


\medskip

{\it Proof:}
Following the strategy for the conformal case established in \cite{MZimrn05, MZma05}, we exploit the boundedness of $G(w(s), s)$ in \eqref{L1212} and the estimate \eqref{L1212sec5bbis}. Since the perturbative and nonlinear components can be handled by the same arguments used for Proposition \ref{prop21bb}, the  result follows. This concludes the proof of Proposition \ref{prop21bbb255}.
 \Box

\medskip

{{\it {Proof of Theorem \ref{t2}}}} :
 The boundedness of the $H^1(B)$ norm for the solution to \eqref{A} follows from the boundedness of the functional $G(w(s),s)$ established in \eqref{L1212}, together with Proposition \ref{prop21bbb255}, by following the strategy developed for the conformal case in \cite{MZimrn05,MZma05}. The perturbative and nonlinear terms are controlled using the estimates \eqref{A30}, \eqref{26control1},  \eqref{b26control1}, and \eqref{equiv6}.  This completes the proof of upper bound of  \eqref{main1}.\\

Regarding the lower bound in \eqref{main1}, the proof in the case of pure nonlinearity follows from well-posedness in the energy space, the finite speed of propagation, the fact that $x_0$ is not characteristic, and scaling arguments. In the present case, while we lose the scaling, the argument extends naturally to the setting where $a < 0$. Indeed, since $|f(u)| \le C(|u|^p + |u|)$, the nonlinear term is controlled by the same Strichartz norms as in the pure power case. For sufficiently small initial data, the additional logarithmic factor does not affect the contraction argument. This completes the proof of Theorem \ref{t2}. $\Box$

 \appendix

 \section{Some elementary lemmas.}
Let the functions $\phi$, $f$, $f_1$, and $f_2$ be defined as in \eqref{defphi}, \eqref{defF}, \eqref{defF2}, and \eqref{defF123}, respectively.
Clearly, we have 
\begin{lem} \ {\  }Let $r>1$,\\ 
\begin{align}
\int_0^u|v|^{r-1}v\ln^{{a}}(2+v^2 )\v  \sim&  \frac{| u|^{r+1}}{r+1}\ln^{{a}}(u^2+2  ),\quad  \text{ as } \;\; |u| \to \infty,
\label{estF0}\\
f(u)  \sim & \frac{| u|^{p_c+1}}{p_c+1}\ln^{{a}}(u^2+2  )  \quad  \text{ as } \;\; |u| \to \infty,\label{estF}\\
f_2(u)\sim & C| u|^{p_c+1}\ln^{{a-2}}(u^2+2) \quad  \text{ as } \;\; |u| \to \infty.\label{estF3}
\end{align}
\end{lem}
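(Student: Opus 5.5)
We prove the three asymptotics by L'Hôpital's rule applied to ratios of a primitive and its candidate equivalent. By evenness in $u$ of all quantities involved (each integrand $|v|^{r-1}v\ln^a(2+v^2)$ is odd, so its primitive from $0$ is even), it suffices to treat $u\to+\infty$.

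For \eqref{estF0}, set $N(u)=\int_0^u v^{r}\ln^a(2+v^2)\,\mathrm{d}v$ and $D(u)=\frac{u^{r+1}}{r+1}\ln^a(u^2+2)$. Then
\[
D'(u)=u^r\ln^a(u^2+2)\Big(1+\frac{2a}{r+1}\frac{u^2}{(u^2+2)\ln(u^2+2)}\Big),
\]
so $N'(u)/D'(u)\to1$. Since $D(u)\to\infty$ (because $r+1>0$ dominates the logarithmic factor even when $a<0$), L'Hôpital gives $N/D\to1$. Taking $r=p_c$ yields \eqref{estF} immediately.

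For \eqref{estF3}, the plan is to differentiate $f_2$ explicitly and show that the two leading orders cancel. Write $L=\ln(u^2+2)$ and $\epsilon(u)=\frac{u^2}{u^2+2}-1=-\frac{2}{u^2+2}$. Then
\[
\frac{d}{du}\Big(\frac{u^{p_c+1}}{p_c+1}L^a\Big)=u^{p_c}L^a+\frac{2a}{p_c+1}u^{p_c}L^{a-1}\frac{u^2}{u^2+2},
\]
and, with $f_1(u)=-\frac{2a}{(p_c+1)^2}u^{p_c+1}L^{a-1}$,
\[
f_1'(u)=-\frac{2a}{p_c+1}u^{p_c}L^{a-1}-\frac{4a(a-1)}{(p_c+1)^2}u^{p_c}L^{a-2}\frac{u^2}{u^2+2}.
\]
Hence
\[
f_2'(u)=\frac{4a(a-1)}{(p_c+1)^2}u^{p_c}L^{a-2}\big(1+\epsilon\big)-\frac{2a}{p_c+1}u^{p_c}L^{a-1}\epsilon .
\]
The $u^{p_c}L^a$ and $u^{p_c}L^{a-1}$ terms cancel exactly, which is the key point. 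The remaining $\epsilon$-terms are $O(u^{p_c-2}L^{a-1})=o(u^{p_c}L^{a-2})$, so $f_2'(u)\sim\frac{4a(a-1)}{(p_c+1)^2}u^{p_c}L^{a-2}$.

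Applying \eqref{estF0} with the exponent $a-2$ in place of $a$ (the argument there is valid for any real exponent), together with a L'Hôpital comparison of $f_2$ against $\int_0^u$ of its equivalent derivative, gives
\[
f_2(u)\sim\frac{4a(a-1)}{(p_c+1)^3}u^{p_c+1}L^{a-2},
\]
that is, \eqref{estF3} with $C=\frac{4a(a-1)}{(p_c+1)^3}$. Here $C>0$ since $a<0$; for $a\neq0,1$ we have $C\neq0$, so the comparison is legitimate.

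The main point of care is this cancellation. One must verify that $f_1$ is chosen precisely to remove the $L^{a-1}$ term, so that no stray lower-order term survives at order $L^{a-1}$. Once that is checked, the rest is routine.
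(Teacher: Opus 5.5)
Your proof is correct and complete. The paper gives no argument of its own for this lemma; it only refers to Lemma A.1 of \cite{HZjmaa2020}. Your L'H\^opital argument therefore serves as a self-contained substitute. Each of its steps checks out:
\begin{itemize}
\item The formula for $D'$ in \eqref{estF0} is right, and it applies to every real exponent.
\item In \eqref{estF3}, the $u^{p_c}\ln^{a}(u^2+2)$ and $u^{p_c}\ln^{a-1}(u^2+2)$ terms of $f_2'$ cancel exactly.
\item The $\epsilon$-remainders are $O(u^{p_c-2}\ln^{a-1}(u^2+2))$, which is negligible.
\item Integrating the equivalent derivative via \eqref{estF0} with exponent $a-2$ gives $C=\frac{4a(a-1)}{(p_c+1)^3}$.
\end{itemize}

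You can cross-check this constant by integrating $\int_0^u v^{p_c}\ln^a(2+v^2)\,\mathrm{d}v$ by parts twice. This yields the expansion $\frac{u^{p_c+1}}{p_c+1}\ln^a(u^2+2)-\frac{2a}{(p_c+1)^2}u^{p_c+1}\ln^{a-1}(u^2+2)+\frac{4a(a-1)}{(p_c+1)^3}u^{p_c+1}\ln^{a-2}(u^2+2)\,(1+o(1))$. It is the same cancellation seen from the integral side rather than the derivative side.

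Your version adds two things to the statement as written. First, it makes the constant explicit. Second, it shows that \eqref{estF3} needs $a\notin\{0,1\}$: for $a=0$ one has $f_2\equiv 0$, and for $a=1$ the leading term is of a different form. This is harmless under the paper's standing assumption $a<0$, which also gives $C>0$.
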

\begin{proof} 
 See Lemma A.1 
 in \cite{HZjmaa2020}.
\end{proof}
\Box

Thanks to \eqref{estF0}, \eqref{estF} and \eqref{estF3}, we can
  state and prove  the following estimates: 
\begin{lem}\label{lemm:esth}
For all  $s \geq 1$,  and $z\in \R$, we have
\begin{align}
 C^{-1}|\phi z|^{p_c+1} \ln^a(\phi^2 z^2+2)\le C+f(\phi z)\le C +C|\phi z|^{p_c+1} \ln^a(2+\phi^2 z^2),\label{equiv1}\\
f_1(\phi z)\le C+\frac{C}{s}|\phi z|^{p_c+1} \ln^a(2+\phi^2 z^2),\quad\quad\label{equiv2}\\
f_2(\phi z)\le C+\frac{C}{s^2}|\phi z|^{p_c+1} \ln^a(2+\phi^2 z^2),\quad\quad\label{equiv3}\\
 \frac{1}{s^a} |z|^{p_c+1} \ln^a(2 + \phi^2 z^2) 
\le  C| z|^{ p_c+1}+Ce^{-s}, \quad \textrm{if}\quad  a<0.\quad\label{equiv6}
\end{align}
Furthermore,    there exists $\tilde{S}_0 > 1$ such that for all $s \ge \tilde{S}_0$,  $\varepsilon > 0$, and $z \in \mathbb{R}$, the following estimate holds:
\begin{equation}\label{equiv5}
z^2 \le \frac{\varepsilon}{s^a} |z|^{p_c+1} \ln^a(2 + \phi^2(s) z^2) + C(\varepsilon), \quad \textrm{if}\quad  a<0.
\end{equation}
\end{lem}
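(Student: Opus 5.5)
The plan is to derive all five estimates from the asymptotics \eqref{estF0}, \eqref{estF}, \eqref{estF3}, each time splitting $\R$ into a compact range of $x=\phi z$ and a range near infinity. On the compact range every quantity is bounded by a constant. On the range near infinity the asymptotic equivalences turn the claims into inequalities between explicit powers and logarithms. Throughout I use $a<0$ and $\phi(s)\ge 1$ for $s\ge1$.

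\textbf{Step 1: \eqref{equiv1}.} Fix $R$ so that for $|x|\ge R$ the quotient $f(x)\big/\big(|x|^{p_c+1}\ln^a(x^2+2)/(p_c+1)\big)$ lies in $[\frac12,2]$. For $|x|\ge R$ this gives both inequalities directly. For $|x|\le R$, both $|f(x)|$ and $|x|^{p_c+1}\ln^a(x^2+2)$ are bounded, and these bounds are absorbed into the constant $C$. Since the argument depends only on $x=\phi z$, the constants are uniform in $s$.

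\textbf{Step 2: \eqref{equiv6} and \eqref{equiv5}.} For \eqref{equiv6}, write
\[
s^{-a}\ln^a(2+\phi^2z^2)=\Big(\frac{\ln(2+\phi^2z^2)}{s}\Big)^{a}.
\]
Since $a<0$, this factor is at most $C$ as soon as $\ln(2+\phi^2z^2)\ge s/C$, which holds whenever $|z|\ge e^{-s}$, because $\ln\phi^2\sim \frac{4s}{p_c-1}$. If instead $|z|\le e^{-s}$, then $|z|^{p_c+1}\le e^{-s}$ and $\ln^a\le \ln^a 2$, while $s^{-a}$ grows only polynomially; choosing the threshold as $|z|\le e^{-2s}$ instead makes this term at most $Ce^{-s}$.

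For \eqref{equiv5}, apply Young's inequality with exponents $\frac{p_c+1}{2}$ and $\frac{p_c+1}{p_c-1}$ in the region where $(\ln(2+\phi^2z^2)/s)^a\ge c$. This gives $z^2\le \varepsilon c|z|^{p_c+1}+C(\varepsilon)$ there. In the complementary region, $\phi^2z^2$ is bounded by $e^{\delta s}$, so $z^2\le e^{\delta s}\phi^{-2}\le 1$ for $s\ge\tilde S_0$.

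\textbf{Step 3: \eqref{equiv2} and \eqref{equiv3}.} Here $f_1\ge0$ because $a<0$, and
\[
f_1(x)=\frac{-2a}{(p_c+1)^2}\,\frac{|x|^{p_c+1}\ln^a(x^2+2)}{\ln(x^2+2)}.
\]
Similarly, by \eqref{estF3}, $|f_2(x)|\le C+C|x|^{p_c+1}\ln^{a}(x^2+2)\,\ln^{-2}(x^2+2)$. So everything reduces to bounding $\ln^{-1}(x^2+2)$ by $C/s$ and $\ln^{-2}(x^2+2)$ by $C/s^2$. I split at $\ln(x^2+2)\ge \kappa s$ for a fixed $\kappa>0$. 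On that region the bounds hold with constant $1/\kappa$ (respectively $1/\kappa^2$), which gives \eqref{equiv2}–\eqref{equiv3} exactly as written.

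The complementary region $\ln(\phi^2z^2+2)<\kappa s$ is the main obstacle. There $|x|^{p_c+1}$ can be as large as $e^{\kappa(p_c+1)s/2}$, so neither "$\le C$" nor "$\le \frac{C}{s}|x|^{p_c+1}\ln^a$" follows. Taking $x=s^{100}$ makes the ratio $f_1/(\frac1s|x|^{p_c+1}\ln^a)$ of order $s/\ln s$, so the inequality cannot hold uniformly in $z$ there.

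What I would try first is to show that on this region $|z|\le e^{\kappa s/2}\phi^{-1}\le e^{-cs}$ once $\kappa<\frac{4}{p_c-1}$. Consequently the $s$-weighted quantities actually appearing in the paper (after multiplication by $e^{-\frac{2(p_c+1)s}{p_c-1}}s^{\frac{2a}{p_c-1}}=\phi^{-(p_c+1)}$) contribute at most $Ce^{-2s}$, by choosing $\kappa$ small. I will state clearly that \eqref{equiv2}–\eqref{equiv3} are established as written on $\{\ln(\phi^2z^2+2)\ge\kappa s\}$. On the small-$|z|$ set, the constant $C$ must be understood as carrying this exponentially small weight, which is exactly how \eqref{mais01}, \eqref{Juin1}, \eqref{xsi8} invoke them.
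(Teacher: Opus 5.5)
\textbf{Where you are right.} Your diagnosis of \eqref{equiv2}--\eqref{equiv3} is correct, and the paper misses it. With $x=\phi z=s^{100}$, the ratio of $f_1(x)$ to $\frac{C}{s}|x|^{p_c+1}\ln^a(x^2+2)$ is of order $s/\ln s$. Likewise, since $f_2\sim\frac{4a(a-1)}{(p_c+1)^3}|x|^{p_c+1}\ln^{a-2}(x^2+2)$ with a positive constant when $a<0$, the corresponding ratio for \eqref{equiv3} is of order $(s/\ln s)^2$. Both inequalities are therefore false as stated with $C=C(n,a)$.

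The paper's split at $z^2\phi=4$ has the same defect: on $\{z^2\phi\le4\}$ it only gives $f_1(\phi z)\le C\phi^{(p_c+1)/2}$. So what it really proves is the weighted version you prove, and that is all \eqref{mais01}, \eqref{Juin1}, \eqref{xsi8} use. Your small $\kappa$ even gives the error $Ce^{-2s}$ for every $n$. The paper's threshold gives only $e^{-\frac{(p_c+1)s}{p_c-1}}=e^{-\frac{n+1}{2}s}$ up to powers of $s$, which is weaker than $e^{-2s}$ for $n=2$. (Minor: $e^{-\frac{2(p_c+1)s}{p_c-1}}s^{\frac{2a}{p_c-1}}=s^{-a}\phi^{-(p_c+1)}$, not $\phi^{-(p_c+1)}$; the extra power of $s$ is harmless.)

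\textbf{Gap in \eqref{equiv6}.} The genuine gaps are in the parts you treated as routine. Since $\frac{4}{p_c-1}=n-1$, one has $\ln(\phi^2e^{-2s})=(n-3)s+O(\ln s)$. So $|z|\ge e^{-s}$ does \emph{not} give $\ln(2+\phi^2z^2)\ge s/C$ when $n=2$ or $n=3$. For $n=3$ and $z=e^{-s}$, $\phi^2z^2=s^{-a}$ and $(\ln(2+\phi^2z^2)/s)^a\approx(s/(|a|\ln s))^{|a|}$, which is unbounded. Moving the threshold to $e^{-2s}$ makes this worse. Split instead at $z^2\phi=4$, as the paper does:
\begin{itemize}
\item for $z^2\phi\ge4$, $\ln(2+\phi^2z^2)\ge\ln\phi\ge\frac{2s}{p_c-1}$, so the factor is at most $(\frac{2}{p_c-1})^a$;
\item for $z^2\phi\le4$, the left side is at most $Cs^{-a}\phi^{-(p_c+1)/2}\le Ce^{-s}$, because $\frac{p_c+1}{p_c-1}>1$.
\end{itemize}

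\textbf{Gap in \eqref{equiv5}.} Here your monotonicity is reversed. Since $a<0$, the set where $(\ln(2+\phi^2z^2)/s)^a\ge c$ is $\{\ln(2+\phi^2z^2)\le c^{1/a}s\}$, i.e.\ bounded $\phi^2z^2$. Its complement is where $\phi^2z^2>e^{c^{1/a}s}-2$ is \emph{large}, not bounded by $e^{\delta s}$.

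That complement is the real content of \eqref{equiv5}. There the weight $s^{-a}\ln^a(2+\phi^2z^2)$ tends to $0$ as $|z|\to\infty$, and you must show that $|z|^{p_c-1}$ beats this logarithmic decay uniformly in $s$. The paper splits at $|z|=\phi$:
\begin{itemize}
\item for $|z|\le\phi$, the weight is at least $(\ln(2+\phi^4)/s)^a\to(\frac{8}{p_c-1})^a>0$, and Young applies (this is your case);
\item for $|z|\ge\phi$, one uses $\phi^2z^2\le z^4$ and $s^{-a}\ge1$ to reduce to the $s$-independent bound $z^2\le\varepsilon|z|^{p_c+1}\ln^a(2+z^4)+C(\varepsilon)$.
\end{itemize}
You need this second case. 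Equivalently, take $c$ small enough that $c^{1/a}>\frac{8}{p_c-1}$; then for large $s$ your complement lies in $\{|z|\ge\phi\}$, and you argue as above.
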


\begin{proof} 
The estimate \eqref{equiv1} follows immediately from \eqref{estF}. The derivation of \eqref{equiv2} and \eqref{equiv3} is obtained by partitioning the domain into $\{z^2\phi \geq 4\}$ and $\{z^2\phi \leq 4\}$ and applying \eqref{estF0}--\eqref{estF3}.
Furthermore, the assumption $a < 0$ yields \eqref{equiv6} via a similar argument. We prove the inequality \eqref{equiv5} by distinguishing two cases regarding the size of $|z|$:
\begin{itemize}
\item \textbf{Case 1:} $|z| \le \phi(s)$: Since the function $\d \xi \mapsto \ln^a(2+\xi)$ is strictly decreasing on $[2,\infty)$, it follows that
\begin{equation}\label{equiv8}
\frac1{s^a} \ln^a(2+\phi^2(s)z^2) \ge \frac1{s^a}\ln^a(2+\phi^4(s)).
\end{equation}
By the asymptotic property $\d \lim_{s \to \infty} \frac{1}{s^a} \ln^a(2 + \phi^4(s)) = \frac{8^a}{(p_c-1)^a}  > 0$, there exists $\tilde{S}_0 > 1$ such that
\begin{equation}\label{equiv8b}
\frac{1}{s^a} \ln^a(2 + \phi^4(s)) \ge \frac{8^a}{2(p_c-1)^a}, \quad \forall s \ge \tilde{S}_0.
\end{equation}
Combining \eqref{equiv8} and \eqref{equiv8b}, we obtain
\begin{equation}\label{equiv8c}\frac1{s^a}\ln^a(2+\phi^2(s)z^2) \ge  \frac{8^a}{2(p_c-1)^a}, \quad \forall s \ge \tilde{S}_0.
\end{equation}
Applying Young’s inequality in the form $z^2 \le \nu |z|^{p_c+1} + C(\nu)$ with the choice $\nu = \frac{\varepsilon 8^a}{2(p_c-1)^a}$, we arrive at \eqref{equiv5}.\item \textbf{Case 2:} $|z| \ge \phi(s)$.Using the monotonicity of the logarithm, we have$$\frac1{s^a}\ln^a(2+\phi^2(s)z^2) \ge \frac1{s^a}\ln^a(2+z^4).$$Standard asymptotic analysis implies that for any $\varepsilon > 0$, there exists a constant $C(\varepsilon) > 0$ such that
\begin{equation}\label{aux_est}z^2 \le \varepsilon |z|^{p_c+1} \ln^a(2 + z^4) + C(\varepsilon).
\end{equation}
Combining \eqref{aux_est} with the observation that $s^{-a} \ge 1$ for $s \ge \tilde{S}_0$, we conclude the proof of \eqref{equiv5}.
\end{itemize}
This concludes the proof of Lemma \ref{lemm:esth}. \Box
\end{proof}

 \section{ Some elementary 
  identities  }
  In this section we prove two identities   which play an instrumental role
in our main argument.
First evaluate  the term
related to the Pohozaev multiplier given by:
\begin{equation}\label{AJ0}
{\cal{A}_{\eta}}(s)=\int_{B}y\cdot\grad  w \Big(\div(\p \nabla w-\p (y\cdot\nabla w)y)\Big) \y,
\end{equation}
where $\eta>0$.  More precisely, we prove   
the following identity:
\begin{lem}\label{L1} 
For all $w\in {\cal  H}$ it holds that
\begin{eqnarray}\label{mport01}
{\cal{A}_{\eta}}(s)  &=&-\eta \int_{B}|\nabla_{\theta}w|^2
\frac{|y|^2\p}{1-|y|^2}\y
-\eta \int_{B}(y\cdot\grad  w)^2
\p{\mathrm{d}}y\noindent\\
&&+\frac{n}2\int_{B}\Big(|\grad w|^2-(y\cdot\grad  w)^2\Big) \p
{\mathrm{d}}y -\int_{B}|\grad w|^2 \p
{\mathrm{d}}y\cdot\nonumber
\end{eqnarray}
\end{lem}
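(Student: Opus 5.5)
This is a pure integration-by-parts identity. The plan is to expand the divergence, integrate by parts once (or use pointwise identities), and then manipulate until the stated combination appears.

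\emph{Setup.} We argue first for smooth $w$ up to the boundary and conclude for $w\in{\cal H}$ by density, since both sides are continuous in the weighted norms. Write $V=\p(\nabla w-(y\cdot\nabla w)y)$. The flux satisfies $V\cdot y=\p(1-|y|^2)\,y\cdot\nabla w$, and this vanishes on $\partial B$ because $\eta>0$. Hence
\[
{\cal A}_\eta=-\int_B V\cdot\nabla(y\cdot\nabla w)\,\y .
\]
Next use $\nabla(y\cdot\nabla w)=\nabla w+D^2w\,y$. The first piece gives
\[
-\int_B(|\nabla w|^2-(y\cdot\nabla w)^2)\p\,\y .
\]
The second piece splits into
\[
-\int_B\p\,\nabla w\cdot D^2w\,y\,\y=-\tfrac12\int_B\p\, y\cdot\nabla(|\nabla w|^2)\,\y
\]
and
\[
\int_B\p (y\cdot\nabla w)\, y\cdot D^2w\,y\,\y .
\]

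\emph{Key steps.}
\begin{itemize}
\item For the term with $y\cdot D^2w\,y$, write $y\cdot D^2w\,y=y\cdot\nabla(y\cdot\nabla w)-y\cdot\nabla w$. This turns it into
\[
\tfrac12\int_B\p\, y\cdot\nabla\big((y\cdot\nabla w)^2\big)\,\y-\int_B(y\cdot\nabla w)^2\p\,\y .
\]
\item Combine the two gradient terms as $-\tfrac12\int_B\p\, y\cdot\nabla\big(|\nabla w|^2-(y\cdot\nabla w)^2\big)\,\y$ and integrate by parts using $\div(\p y)=n\p-\frac{2\eta|y|^2\p}{1-|y|^2}$. The boundary term vanishes because $\p=0$ on $\partial B$. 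This yields
\[
\frac n2\int_B(|\nabla w|^2-(y\cdot\nabla w)^2)\p\,\y-\eta\int_B(|\nabla w|^2-(y\cdot\nabla w)^2)\frac{|y|^2\p}{1-|y|^2}\,\y .
\]
\item Decompose the singular term with \eqref{12nov6}:
\[
(|\nabla w|^2-(y\cdot\nabla w)^2)\frac{|y|^2}{1-|y|^2}=|\nabla_\theta w|^2\frac{|y|^2}{1-|y|^2}+|y|^2|\nabla_r w|^2 .
\]
Since $|y|^2|\nabla_rw|^2=(y\cdot\nabla w)^2$, this produces exactly $-\eta\int|\nabla_\theta w|^2\frac{|y|^2\p}{1-|y|^2}-\eta\int(y\cdot\nabla w)^2\p$.
\item Collect the remaining non-singular terms: $-\int(|\nabla w|^2-(y\cdot\nabla w)^2)\p$ and $-\int(y\cdot\nabla w)^2\p$ add up to $-\int|\nabla w|^2\p$. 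This matches \eqref{mport01}.
\end{itemize}

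\emph{Main obstacle.} The delicate point is justifying the vanishing of the boundary terms and the integrability of the singular weight for general $w\in{\cal H}$. I would handle it by working with smooth approximations and cutting off on $B(0,r)$ with $r\to1$. The boundary terms carry a factor $(1-r^2)^\eta$ against quantities whose integrals over spheres are controlled in average through the weighted $H^1$ norm, so they can be sent to zero along a suitable sequence $r_k\to1$. Alternatively, one can simply state the identity for the class of $w$ for which the right-hand side is finite, which is how it is used in Lemma~\ref{L01}.
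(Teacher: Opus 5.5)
Your computation is correct; it reaches the identity by a different bookkeeping than the paper's proof.

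\textbf{The paper's route.} It splits the divergence before integrating by parts, into three pieces that it computes separately and then recombines:
\begin{itemize}
\item $\mathcal{B}_\eta$, the $\rho_\eta\Delta w$ part, handled by a Rellich-type computation;
\item $\mathcal{C}_\eta$, the $\mathrm{div}((y\cdot\nabla w)y)$ part;
\item $\mathcal{D}_\eta=-2\eta\int_B(y\cdot\nabla w)^2\rho_\eta\,dy$, where the derivative falls on $\rho_\eta$.
\end{itemize}
Along the way it produces $-\eta\int_B|\nabla w|^2\frac{|y|^2\rho_\eta}{1-|y|^2}\,dy$, $2\eta\int_B(y\cdot\nabla w)^2\frac{\rho_\eta}{1-|y|^2}\,dy$ and $-\eta\int_B(y\cdot\nabla w)^2\frac{|y|^2\rho_\eta}{1-|y|^2}\,dy$. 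These put the radial derivative against the singular weight. They collapse to $-\eta\int_B Q\frac{|y|^2\rho_\eta}{1-|y|^2}\,dy$ only after the cancellation $\frac{2}{1-|y|^2}-\frac{|y|^2}{1-|y|^2}-2=\frac{|y|^2}{1-|y|^2}$ with $\mathcal{D}_\eta$. From there \eqref{12nov6} is applied exactly as in your last step.

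\textbf{Your route.} You integrate by parts once against the whole flux, whose normal component $\rho_\eta(1-|y|^2)\,y\cdot\nabla w$ already vanishes on $\partial B$. Your identity $y\cdot D^2w\,y=y\cdot\nabla(y\cdot\nabla w)-y\cdot\nabla w$ then merges the two Hessian terms into the single term $-\frac12\int_B\rho_\eta\,y\cdot\nabla Q\,dy$.

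\textbf{Comparison.} Your route is shorter. The only singular integral you ever write is the one that survives in the final identity. The paper's intermediate terms, by contrast, are finite termwise only if $\int_B|\nabla w|^2\frac{\rho_\eta}{1-|y|^2}\,dy<\infty$, i.e. they need singular-weight control of the radial derivative too. In return, the paper's split consists of standard, self-contained pieces.

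\textbf{A correction to your last paragraph.} The claim that both sides are continuous in the weighted norms is not right. The left side contains $D^2w$, and $\int_B|\nabla_\theta w|^2\frac{\rho_\eta}{1-|y|^2}\,dy$ is not controlled by the ${\cal H}$-norm. Truncating to $B(0,r)$ leaves the boundary terms
\[
\frac r2(1-r^2)^{1+\eta}\int_{|y|=r}|\nabla_r w|^2\,d\sigma-\frac r2(1-r^2)^{\eta}\int_{|y|=r}|\nabla_\theta w|^2\,d\sigma .
\]
These vanish along some sequence $r_k\to1$ when $\int_B|\nabla w|^2\rho_\eta\,dy$ and $\int_B|\nabla_\theta w|^2\frac{\rho_\eta}{1-|y|^2}\,dy$ are finite, but not under ${\cal H}$-finiteness alone. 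So your fallback is the right formulation: state the identity for $w$ with finite right-hand side, reading the left side as $\lim_k\int_{B(0,r_k)}$. The paper does not address this point and works formally with smooth $w$.
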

{\it Proof}: 
We divide ${\cal{A}_{\eta}}(s)  $ into three terms:
${\cal{A}_{\eta}}(s)  ={\cal{B}_{\eta}}(s)  +{\cal{C}_{\eta}}(s)  +{\cal{D}_{\eta}}(s),$ where
\begin{equation*}\label{}
{\cal{B}}_{\eta}(s)=\int_{B}(y\cdot\grad  w) \Delta w\p \y,
\end{equation*}
\begin{equation*}\label{A2}
{\cal{C}}_{\eta}(s)=-\int_{B}(y\cdot \grad  w) \div( (y\cdot \nabla w)y)\p \y,
\end{equation*}
and
\begin{equation*}\label{A0}
{\cal{D}_{\eta}}(s)  =\int_{B}(y\cdot\grad  w) \Big( \nabla w-(y\cdot \nabla w)y \Big)\cdot \nabla \p \y .
\end{equation*}

To estimate ${\cal{A}}_{\eta}(s)$, we start
observe the immediate  identity
\begin{equation}\label{A3}
  (y\cdot\grad w) \Delta w= \sum_{i,j}y_i\partial_i w
\partial^2_jw.
\end{equation}
By  integrating by parts, exploiting \eqref{A3} and  the fact that  $\ds{\sum_{i,j}\delta_{i,j}\partial_i w
\partial_jw=|\grad w|^2}$,      we can write
\begin{eqnarray}\label{A4}
{\cal{B}}_{\eta}(s) &=&-\frac12\sum_{i,j}\int_{B}y_i\partial_i((
\partial_j w)^2)\p \y-\int_{B}|\grad w|^2
\p\y \no\\ &&-\sum_{i,j}\int_{B}y_i\partial_i w
\partial_j w\partial_j\p \y.\nonumber
\end{eqnarray}
By using the identity  $\partial_j\p=- \frac{2\eta y_j}{1-|y|^2}\p$, 
\eqref{A4} and  integrating by part one has that
\begin{eqnarray}\label{A5}
{\cal{B}}_{\eta}(s)  =\frac12\int_{B}|\grad w|^2\div (\p y)
{\mathrm{d}}y-\int_{B}|\grad w|^2 \p
{\mathrm{d}}y+2\eta \int_{B}(y\cdot \grad w)^2\frac{\p}{1-|y|^2}
 \y.
\end{eqnarray}
Then,  from \eqref{A5}, and by using the identity  $\div (\p y)=n\p+y\cdot \nabla \p$,   we get 
\begin{eqnarray}\label{A6}
{\cal{B}}_{\eta}(s)  =-\eta \int_{B}|\grad w|^2\frac{|y|^2\p}{1-|y|^2}
{\mathrm{d}}y+2\eta \int_{B}(y\cdot \grad w)^2\frac{\p}{1-|y|^2}\y+\frac{n-2}2\int_{B}|\grad w|^2 \p
{\mathrm{d}}y .\no
\end{eqnarray}
To estimate ${\cal{C}}_{\eta}(s)$, we start with the use of the classical identity
\begin{equation}\label{A11}
\div( (y\cdot\nabla w)y)= n (y\cdot\nabla w)+ \grad (y\cdot\nabla w)\cdot y,
\end{equation}
and  integrating by part, to obtain
\begin{equation}\label{A12}
{\cal{C}}_{\eta}(s) =-n\int_{B}(y\cdot\grad
w)^2\p{\mathrm{d}}y +\frac12 \int_{B}(y\cdot \grad w)^2 \div (
\p y) {\mathrm{d}}y.
\end{equation}
By using  the basic identity $\div (
\p y) =n\p-2\eta\frac{|y|^2}{1-|y|^2}\p$, 
we write
\begin{eqnarray}\label{A13}
{\cal{C}}_{\eta}(s)&=&-\frac{n}2\int_{B}(y\cdot \grad  w)^2\p
{\mathrm{d}}y-\eta \int_{B}(y\cdot \grad  w)^2
\frac{|y|^2\p}{1-|y|^2} {\mathrm{d}}y.
\end{eqnarray}
Furthermore, by exploiting  the identity $\grad \p=-\frac{2\eta}{1-|y|^2}y,$  we conclude easily
\begin{equation}\label{A14}
{\cal{D}}_{\eta}(s)
=
-2\eta\int_{B}(y\cdot\grad  w)^2 \p\y.
\end{equation}
By combining  \eqref{A6}, \eqref{A13} and  \eqref{A14} 
we deduce easily
\begin{eqnarray}
{\cal{A}}(s)  &=&-\eta \int_{B}(|\grad w|^2-(y\cdot\grad  w)^2)
\frac{|y|^2\p}{1-|y|^2}
{\mathrm{d}}y\\
&&+\frac{n}2\int_{B}(|\grad w|^2-(y\cdot\grad  w)^2)\p
{\mathrm{d}}y-\int_{B}|\grad w|^2 \p
{\mathrm{d}}y.\nonumber
\end{eqnarray}
From the identity $|\grad w|^2-(y\cdot\grad  w)^2)=|\grad_{\theta} w|^2+(1-|y|^2)|\grad_r  w|^2$, the above estimate implies
\eqref{mport1}, which ends the proof of Lemma \ref{L1}. 
\Box

\bigskip

Then, we evaluate  the term
\begin{equation}\label{A9}
{\cal{I}_{\eta}}(s)=- \int_{B}\div(\p \grad w-\p (y\cdot\grad w)
y)w\frac1{\sqrt{1-|y|^2}}{\mathrm{d}}y,
\end{equation}
  More precisely, we prove   
the following identity:
\begin{lem}\label{L52} 
For all $w\in {\cal  H}$ it holds that
\begin{eqnarray}\label{mport1}
{\cal{I}_{\eta}}(s)
 &=& \int_{B}|\grad w_{\theta}|^2\pp{\mathrm{d}}y+\int_{B}|\grad w_r|^2{\rho_{\eta+\frac12}}{\mathrm{d}}y
\\
&&+\int_{B}w y\cdot\grad w
 \frac{\p}{\sqrt{1-|y|^2}}{\mathrm{d}}y.\nonumber
\end{eqnarray}
\end{lem}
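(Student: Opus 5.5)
We prove the identity by direct integration by parts, in the same spirit as Lemma \ref{L1}. First we take $w$ smooth up to the boundary and conclude by density in ${\cal H}$, since each term on the right-hand side is controlled by the Hardy inequality \eqref{Hardy}. We set $\omega:=\p(1-|y|^2)^{-1/2}=\rho_{\eta-\frac12}$ and $V:=\p\grad w-\p(y\cdot\grad w)y$. Integrating by parts gives
\[
{\cal I}_\eta(s)=\int_B V\cdot\grad\Big(\frac{w}{\sqrt{1-|y|^2}}\Big)\y ,
\]
where the boundary term $\int_{\partial B}\frac{w\,V\cdot y}{\sqrt{1-|y|^2}}\,d\sigma$ vanishes. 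Indeed $V\cdot y=\p(1-|y|^2)(y\cdot\grad w)$, so the boundary integrand carries the factor $(1-|y|^2)^{\eta+\frac12}\to0$. For $\eta$ small this is handled as usual, by working on $B(0,r)$ with $r<1$ and letting $r\to1$.

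Next we expand the gradient:
\[
\grad\Big(\frac{w}{\sqrt{1-|y|^2}}\Big)=\frac{\grad w}{\sqrt{1-|y|^2}}+\frac{w\,y}{(1-|y|^2)^{3/2}} .
\]
The first piece gives $\int_B \big(|\grad w|^2-(y\cdot\grad w)^2\big)\omega\,\y$. By \eqref{12nov6}, this equals $\int_B|\grad_\theta w|^2\pp\y+\int_B|\grad_r w|^2\rho_{\eta+\frac12}\y$, which are exactly the first two terms of \eqref{mport1}. The second piece gives
\[
\int_B w\,\frac{V\cdot y}{(1-|y|^2)^{3/2}}\y=\int_B w\,(y\cdot\grad w)\,\frac{\p(1-|y|^2)}{(1-|y|^2)^{3/2}}\y=\int_B w\,y\cdot\grad w\,\pp\y ,
\]
again using $V\cdot y=\p(1-|y|^2)\,y\cdot\grad w$. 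This is the last term of \eqref{mport1}, and summing the two pieces yields the identity.

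The only delicate step is justifying the vanishing boundary term and the finiteness of each integral for $w\in{\cal H}$. The weights $\rho_{\eta-\frac12}$ are singular at $|y|=1$, so finiteness must come from the singular-weight Hardy bound \eqref{Hardy}. We handle this with the truncation to $B(0,r)$ and the limit $r\to1$. On the sphere $|y|=r$ the boundary term is bounded by $(1-r^2)^{\eta+\frac12}\int_{|y|=r}|w|\,|\grad w|\,d\sigma$, and along a suitable sequence $r_k\to1$ this tends to zero by the integrability of $w^2+|\grad w|^2(1-|y|^2)$.
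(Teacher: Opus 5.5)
Your core computation is the paper's proof. You integrate by parts against $w(1-|y|^2)^{-1/2}$, discard the boundary term because $V\cdot y=\rho_\eta(1-|y|^2)\,y\cdot\nabla w$, expand $\nabla\big(w(1-|y|^2)^{-1/2}\big)$, and split $|\nabla w|^2-(y\cdot\nabla w)^2$ with \eqref{12nov6}. For $w$ smooth up to $\partial B$ this is complete, and it is all the paper does.

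The regularity argument you add to reach all of $\mathcal H$ is wrong as written, on two counts.

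First, \eqref{Hardy} only controls $w^2$ against the singular weight $\rho_\eta/(1-|y|^2)$. It handles the cross term once $\nabla w\in L^2(B)$. It says nothing about $\int_B|\nabla_\theta w|^2\rho_\eta(1-|y|^2)^{-1/2}\,dy$, which for $\eta<1/2$ is not bounded by the $\mathcal H$ norm, nor even by the $H^1(B)$ norm. To see this, superpose tangential oscillations of frequency $2^k$ supported in the layers $1-|y|<2^{-k}$, with suitable amplitudes. Controlling this quantity in time average is exactly the content of \eqref{wtangeta} in Proposition \ref{prop123}. So ``conclude by density in $\mathcal H$'' is not available.

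Second, the boundary-term argument along a sequence $r_k\to1$ does not follow. Set $h(r)=\int_{|y|=r}\big(w^2+(1-r^2)|\nabla w|^2\big)\,d\sigma$. Integrability of $h$ only gives $\liminf_{r\to1}(1-r)h(r)=0$. But your bound on the boundary term is, after Cauchy--Schwarz, of size $(1-r^2)^{\eta}h(r)$ with $\eta<1$, so you would need more. Concretely, $w=(1-|y|^2)^{1/8}$ belongs to $\mathcal H$. For $\eta<1/4$ its boundary term on $|y|=r$ is of order $(1-r^2)^{\eta-1/4}\to\infty$. Moreover, on the right-hand side of the identity the radial term is then $+\infty$ and the cross term is $-\infty$.

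So the identity only makes sense for $w$ with extra regularity up to $\partial B$, which is also all the paper's formal argument covers. You should state it that way rather than claim an extension to all of $\mathcal H$ by density.
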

{\it Proof}: 
By integrating by parts, and using the fact that
$$ \int_{\partial B}y\cdot (\p \grad w-\p (y\cdot\grad w)
y)w\frac1{\sqrt{1-|y|^2}}{\mathrm{d}}\sigma= \int_{\partial B}(y\cdot \grad w) 
w{\sqrt{1-|y|^2}}\p {\mathrm{d}}\sigma=0,$$
 we write
\begin{equation}\label{AA1}
{\cal{I}_{\eta}}(s)
 =\int_{B}(\p \grad w-\p (y\cdot \grad w)
y)\nabla (w\frac1{\sqrt{1-|y|^2}}){\mathrm{d}}y.
\end{equation}
A straightforward computation yields
 the identity
\begin{equation}\label{AA0}
\nabla (w\frac1{\sqrt{1-|y|^2}})=\frac1{\sqrt{1-|y|^2}} \nabla w+ \frac{1}{(1-|y|^2)^{\frac32}}wy.
\end{equation}
Substituting \eqref{AA0} into \eqref{AA1}, we have that
\begin{eqnarray}\label{V1}
{\cal{I}_{\eta}}(s)
 &=&\int_{B}\Big(|\grad w|^2-(y\cdot\grad w)^2\Big)\frac{\p}{\sqrt{1-|y|^2}}{\mathrm{d}}y\nonumber\\
&&+\int_{B}w y\cdot\grad w
 \frac{\p}{\sqrt{1-|y|^2}}{\mathrm{d}}y.
\end{eqnarray}
By  using \eqref{12nov6}, we write easily
\begin{equation}\label{V2}
\int_{B}\Big(|\grad w|^2-(y\cdot\grad w)^2\Big)\frac{\p}{\sqrt{1-|y|^2}}{\mathrm{d}}y
  =\int_{B}|\grad w_{\theta}|^2\pp{\mathrm{d}}y+\int_{B}|\grad w_r|^2{\rho_{\eta+\frac12}}{\mathrm{d}}y.
\end{equation}
Substituting  \eqref{V2} into \eqref{V1}, we get \eqref{mport1}. 
This ends the proof of Lemma \ref{L52}. 
\Box

\section*{Acknowledgments}

The author would like to express his deep gratitude to Professor Hatem Zaag for the fruitful discussions that have greatly enriched this work.

%


\noindent{\bf Address}:\\
 Department of Basic Sciences, Deanship of Preparatory and Supporting Studies,
  Imam Abdulrahman Bin Faisal University,
P.O. Box 1982 Dammam, Saudi Arabia.\\
\vspace{-7mm}
\begin{verbatim}
e-mail:  mahamza@iau.edu.sa
\end{verbatim}

\end{document}